\definecolor{MyDarkblue}{rgb}{0,0.08,0.50}
\definecolor{Brickred}{rgb}{0.65,0.08,0}
\newtheorem*{theorem*}{Theorem}
\newtheorem{theorem}{Theorem}[section]
\newtheorem{lemma}[theorem]{Lemma}
\newtheorem{proposition}[theorem]{Proposition}
\newtheorem{corollary}[theorem]{Corollary}
\theoremstyle{definition}
\newtheorem{definition}[theorem]{Definition}
\newtheorem{assumption}[theorem]{Assumption}
\newtheorem{remark}[theorem]{Remark}
\newtheorem{example}[theorem]{Example}
\renewcommand{\P}{\mathbb{P}}
\newcommand{\Pv}{\mathbb{P}}
\newcommand{\eps}{\varepsilon}
\newcommand{\s}[1]{\sum_{#1}}
\newcommand{\cO}{\mathcal{O}}
\newcommand{\cP}{\mathcal{P}}
\newcommand{\cS}{\mathcal{S}}
\newcommand{\e}{{\mathrm e}}
\newcommand{\R}{\mathbb{R}}
\newcommand{\N}{\mathbb{N}}
\newcommand{\Z}{\mathbb{Z}}
\newcommand*{\wt}{\widetilde}
\newcommand*{\be}{\begin{equation}}
	\newcommand*{\ee}{\end{equation}}
\newcommand*{\ba}{\begin{aligned}}
	\newcommand*{\ea}{\end{aligned}}
\newcommand*{\barr}{\begin{array}{c}}
	\newcommand*{\earr}{\end{array}}
\def \toinp    {\buildrel {\Pv}\over{\longrightarrow}}
\def \toindis  {\buildrel {d}\over{\longrightarrow}}
\def \toas     {\buildrel {a.s.}\over{\longrightarrow}}
\newcommand*{\ind}{\mathbbm{1}}
\def\namedlabel#1#2{\begingroup
	#2%
	\def\@currentlabel{#2}%
	\phantomsection\label{#1}\endgroup
}
\newcommand{\bes}{\begin{equation*}}
	\newcommand{\ees}{\end{equation*}}
\renewcommand{\P}[1]{\mathbb{P}\!\left(#1\right)}
\newcommand{\E}[1]{\mathbb{E}\left[#1\right]}
\newcommand{\F}{W}
\newcommand{\Zm}{\mathcal{Z}}
\renewcommand{\N}{\mathbb{N}}
\newcommand{\Ef}[2]{\mathbb{E}_\F#1[#2#1]}
\newcommand{\Pf}[1]{\mathbb{P}_\F\!\left(#1\right)}
\renewcommand{\d}{\mathrm{d}}
\newcommand{\M}{\mathcal{M}}
\newcommand{\zni}{\Zm_n(i)}
\newcommand{\inn}{i\in[n]}
\numberwithin{equation}{section}
\renewcommand{\e}{\mathrm{e}}
\newcommand{\jnn}{j\in[n]}
\newcommand{\znj}{\Zm_n(j)}
\newcommand{\leqnomode}{\tagsleft@true\let\veqno\@@leqno}
\newcommand{\reqnomode}{\tagsleft@false\let\veqno\@@eqno}
\newlength{\tagmarginsep} 
\def\author@andify{%
	\nxandlist {\unskip ,\penalty-1 \space\ignorespaces}%
	{\unskip {} \@@and~}%
	{\unskip \penalty-2 \space \@@and~}%
}
\begin{document}
	
	\title[Maximum degree in WRT with bounded random weights]{Fine asymptotics for the maximum degree in weighted recursive trees with bounded random weights}

	\date{\today}
	\keywords{Weighted recursive graph, Random recursive graph, Uniform DAG, Maximum degree, Degree distribution, Random environment}
	
	\author[Eslava]{Laura Eslava$^\dagger$}
	\address{$^\dagger$Universidad Nacional Autonoma Mexico, Instituto de investigaciones en matematicas y en sistemas, CDMX, 04510, Mexico}
	\email{laura@sigma.iimas.unam.mx}
	\author[Lodewijks]{Bas Lodewijks$^{\dagger\dagger}$}
	\address{$^{\dagger\dagger}$ Universit\'e Jean Monnet, Saint-Etienne, France and  Institut Camille Jordan, Lyon and Saint-Etienne, France}
	\email{bas.lodewijks@univ-st-etienne.fr}
	\author[Ortgiese]{Marcel Ortgiese$^\star$}
	
	\address{$^\star$Department of Mathematical Sciences,
		University of Bath,
		Claverton Down,
		Bath,
		BA2 7AY,
		United Kingdom.}

	\email{m.ortgiese@bath.ac.uk}
	
	\begin{abstract}
		A weighted recursive tree is an evolving tree in which vertices are assigned random vertex-weights and new vertices connect to a predecessor with a probability proportional to its weight. Here, we study the maximum degree and near-maximum degrees in weighted recursive trees when the vertex-weights are almost surely bounded and their distribution function satisfies a mild regularity condition near zero. 
		We are able to specify higher-order corrections to the first order growth of the maximum degree established in prior work. The accuracy of the results depends on the behaviour of the weight distribution near the largest possible value and in certain cases we manage to find the corrections up to random order. Additionally, we describe the tail distribution of the maximum degree, the distribution of the number of vertices attaining the maximum degree, and establish asymptotic normality of the number of vertices with near-maximum degree. Our analysis extends the results proved for random recursive trees (where the weights are constant) to the case of random weights. The main technical result shows that the degrees of several uniformly chosen vertices are asymptotically independent with explicit error corrections.
	\end{abstract}
	
	\maketitle 
	
	\section{Introduction}	
	
	The Weighted Recursive Tree model (WRT), first introduced by Borovkov and Vatutin~\cite{BorVat06}, is a recursive tree process $(T_n,n\in\N)$ and a generalisation of the random recursive tree model. Here we consider a variation, first studied by Hiesmayr and I\c slak~\cite{HieIsl17}, where the first vertex does not necessarily have weight one. Let $(W_i)_{i\in\N}$ be a sequence of positive vertex-weights. Initialise the process with the tree $T_1$, which consists of the vertex $1$ (which denotes the root) and assign vertex-weight $W_1$ to it. Recursively, at every step $n\geq 2$, we obtain $T_n$ by adding to $T_{n-1}$ the vertex $n$, assigning vertex-weight $W_n$ to it and connecting $n$ to a vertex $i\in[n-1]$, which, conditionally on the vertex-weights $W_1,\ldots, W_{n-1}$, is selected with a probability proportional to $W_i$. In this paper, we consider edges to be directed towards the vertex with the smaller label. We note that allowing every vertex to connect to $m\in\N$ many predecessors, each one selected independently, yields the more general Weighted Recursive Graph model (WRG) introduced in~\cite{LodOrt21}. The focus of this paper is the WRT model in the case when the vertex-weights are \emph{almost surely bounded random variables}.
	
	Lodewijks and Ortgiese~\cite{LodOrt21} established that, in the case of positive, bounded random vertex-weights, the maximum degree $\Delta_n$ of the WRG model grows logarithmically and that $\Delta_n/\log n\toas 1/\log \theta_m$, where $\theta_m:=1+\E W/m$ with $\E W$ the mean of the vertex-weight distribution and $m\in\N$ the out-degree of each vertex. Note that setting $m=1$ yields the result for the WRT model. In this paper, we improve this result by describing the higher-order asymptotic behaviour of the maximum degree when the vertex-weights are almost surely bounded. In this case we are able to distinguish several classes of vertex-weight distributions for which different higher-order behaviour can be observed.
	
	Beyond the initial work of Borovkov and Vatutin and also Hiesmayr and I\c slak studying the height, depth and size of branches of the WRT model, other properties such as the degree distribution, large and maximum degrees, and weighted profile and height of the tree have been studied. Mailler and Uribe Bravo~\cite{MaiBra19}, as well as S\'enizergues~\cite{Sen19} and S\'enizergues and Pain~\cite{SenPain21} study the weighted profile and height of the WRT model. Mailler and Uribe Bravo consider random vertex-weights with particular distributions, whereas S\'enizergues and Pain allow for a more general model with both sequences of deterministic as well as random weights.
	
	Iyer~\cite{Iyer20} and the more general work by Fountoulakis and Iyer~\cite{FouIyer21} study the degree distribution of a large class of evolving weighted random trees, and Lodewijks and Ortgiese~\cite{LodOrt21} study the degree distribution of the WRG model. In both cases, the WRT model is a particular example of the models studied and all results prove the existence of an almost sure limiting degree distribution for the empirical degree distribution.
	
	Finally, Lodewijks and Ortgiese~\cite{LodOrt21} and Lodewijks~\cite{Lod21} study the maximum degree and the labels of the maximum degree vertices of the WRG model for a large range of vertex-weight distributions. In particular, a distinction between distributions with unbounded support and bounded support is observed. In the former case the behaviour and size of the label of the maximum degree is mainly controlled by a balance of vertices being old (i.e.\  having a small label) and having a large vertex-weight. In the latter case, due to the fact that the vertex-weights are bounded, the behaviour is instead controlled by a balance of vertices being old and having a degree which significantly exceeds their mean degree.
	
	A particular case of the WRT model is the Random Recursive Tree (RRT) model, which is obtained when each vertex-weight equals one almost surely. As a result, techniques used to study the maximum degree in the RRT model can be adapted to analyse the maximum degree in the WRT model. Lodewijks and Ortgiese~\cite{LodOrt21} demonstrate this by adapting the approach of Devroye and Lu~\cite{DevLu95} for proving the almost sure convergence of the rescaled maximum degree in the Directed Acyclic Graphs model (DAG) (the multigraph case of the RRT model) and using it for the analysis of the maximum degree in the WRG model, as discussed above. Hence, we survey the development of the properties of the maximum degree of the RRT model.
	
	Szyma\'nsky was the first to study the maximum degree of the RRT model and proved its convergence of the mean; $\E{\Delta_n/\log n}\to 1/\log 2$. Later, Devroye and Lu~\cite{DevLu95} extend this to almost sure convergence and extended this to the DAG model as well. Goh and Schmutz~\cite{GohSch02} showed that $\Delta_n-\lfloor \log_2 n\rfloor$ converges in distribution along suitable subsequences and identified possible distributions for the limit. Adarrio-Berry and Eslava~\cite{AddEsl18} provide a precise characterisation of the subsequential limiting distribution of rescaled large degrees in terms of a Poisson point process as well as a central limit theorem result for near-maximum degrees (of order $\log_2 n- i_n$ where $i_n\to\infty,i_n=o(\log n)$). Eslava~\cite{Esl16} extends this to the joint convergence of the degree and depth of high degree vertices.
	
	In this paper we adapt part of the techniques developed by Adarrio-Berry and Eslava in~\cite{AddEsl18}. They consist of two main components: First, they establish an equivalence between the RRT model and a variation of the Kingman $n$-coalescent and use this to provide a detailed asymptotic description of the tail distribution of the degrees of $k$ vertices selected uniformly at random, for any $k\in\N$. This variation of the Kingman $n$-coalescent is a process which starts with $n$ trees, each consisting of only a single root. Then, at every step $1$ through $n-1$, a pair of roots is selected uniformly at random and independently of this selection, each possibility with probability $1/2$, one of the two roots is connected to the other with a directed edge. This reduces the number of trees by one and, after $n-1$ steps, yields a directed tree. It turns out that this directed tree is equal in law to the random recursive tree. In the $n$-coalescent all $n$ roots in the initialisation are equal in law and the degrees of the vertices are exchangeable. This allows Adarrio-Berry and Eslava to obtain the degree tail distribution with a precise error rate. Second, this precise tail distribution is used to obtain joint factorial moments of the quantities 
	\be \ba\label{eq:xnirrt}
	X^{(n)}_i&:=|\{j\in[n]:\Zm_n(j)= \lfloor \log_2 n\rfloor +i\}|,\quad i\in\Z,\\
	X^{(n)}_{\geq i}&:=|\{j\in[n]:\Zm_n(j)\geq \lfloor \log_2 n\rfloor +i\}|,\quad i\in\Z,
	\ea\ee  
	where $\Zm_n(j)$ denotes the in-degree of vertex $j$ in the tree of size $n$. The joint factorial moments of these $X^{(n)}_i,X^{(n)}_i$ are used to identify the limiting distribution of high degrees in the tree. The sub-sequential convergence, as mentioned above, is due to the floor function applied to $\log_2 n$ and the integer-valued in-degrees $\Zm_n(j)$.
	
	For the WRT model, however, it provides no advantage to construct a `weighted' Kingman $n$-coalescent to obtain precise asymptotic expression for the tail distribution of vertex degrees. As pairs of roots in the Kingman $n$-coalescent are selected uniformly at random and hence the roots are equal in law, it is not necessary to keep track of which roots are selected at what step. In a weighted version of the Kingman $n$-coalescent, pairs of roots would have to be selected with probabilities proportional to their weights, so that it is necessary to record which roots are selected at which step. As a result, a weighted Kingman $n$-coalescent is not (more) useful in analysing the tail distribution of vertex degrees. 
	
	Instead, we improve results on the convergence of the empirical degree distribution of the WRT model obtained by Iyer~\cite{Iyer20} and Lodewijks and Ortgiese~\cite{LodOrt21}. We obtain a convergence rate to the limiting degree distribution, the asymptotic empirical degree distribution for degrees $k=k(n)$ which diverge with $n$, as well as asymptotic independence of degrees of vertices selected uniformly at random. We combine this with the joint factorial moments of quantities similar to~\eqref{eq:xnirrt} and use the techniques developed by Adarrio-Berry and Eslava~\cite{AddEsl18} to derive fine asymptotics of the maximum degree in the WRT model. 
	
	\textbf{Notation.} Throughout the paper we use the following notation: we let $\N:=\{1,2,\ldots\}$ be the natural numbers, set $\N_0:=\{0,1,\ldots\}$ to include zero and let $[t]:=\{i\in\N: i\leq t\}$ for any $t\geq 1$. For $x\in\R$, we let $\lceil x\rceil:=\inf\{n\in\Z: n\geq x\}$ and $\lfloor x\rfloor:=\sup\{n\in\Z: n\leq x\}$, and for $x\in\R,k\in\N$, let $(x)_k:=x(x-1)(x-2)\cdots(x-(k-1))$ and $(x)^{(k)}:=x(x+1)(x+2)\cdots(x+(k-1))$, and $(x)_0=(x)^{(0)}:=1$. For $x,y \in \R$, we write $x\wedge y := \min \{ x,y\}$ and $x \vee y := \max\{ x, y\}$. Moreover, for sequences $(a_n,b_n)_{n\in\N}$ such that $b_n$ is positive for all $n$ we say that $a_n=o(b_n), a_n\sim b_n, a_n=\mathcal{O}(b_n)$ if $\lim_{n\to\infty}a_n/b_n=0, \lim_{n\to\infty} a_n/b_n=1$ and if there exists a constant $C>0$ such that $|a_n|\leq Cb_n$ for all $n\in\N$, respectively. For random variables $X,(X_n)_{n\in\N},Y$ we denote $X_n\toindis X, X_n\toinp X$ and $X_n\toas X$ for convergence in distribution, probability and almost sure convergence of $X_n$ to $X$, respectively. Also, we write $X_n=o_\mathbb{P}(1)$ if $X_n \toinp 0$ and $X\preceq Y$ if $Y$ stochastically dominates $X$. Finally, we use the conditional probability measure $\Pf{\cdot}:=\mathbb{P}(\,\cdot\, |(\F_i)_{i\in\N})$ and conditional expectation $\Ef{}{\cdot}:=\E{\,\cdot\,|(\F_i)_{i\in\N}}$, where the $(W_i)_{i\in\N}$ are the i.i.d.\ vertex-weights of the WRT model.

	\section{Definitions and main results}\label{sec:results}
	
	The weighted recursive tree (WRT) model is a growing random tree model that generalises the random recursive tree (RRT), in which vertices are assigned (random) weights and new vertices connect with existing vertices with a probability proportional to the vertex-weights.	
	
	The definition of the WRT model follows the one in~\cite{HieIsl17}:
	
	\begin{definition}[Weighted Recursive Tree]\label{def:WRT}
		Let $(\F_i)_{i\geq 1}$ be a sequence of i.i.d.\ copies of a positive random variable $\F$ such that $\P{\F>0}=1$ and set
		\be 
		S_n:=\sum_{i=1}^n\F_i.
		\ee
		We construct the \emph{weighted recursive tree} as follows:
		\begin{enumerate}
			\item[1)] Initialise the tree with a single vertex $1$, denoted as the root, and assign to the root a vertex-weight $\F_1$. Denote this tree by $T_1$.
			\item[2)] For $n\geq 1$, introduce a new vertex $n+1$ and assign to it the vertex-weight $\F_{n+1}$. Conditionally on $T_n$, connect to some $\inn$ with probability $\F_i/S_n$. Denote the resulting tree by $T_{n+1}$.
		\end{enumerate}
		We treat $T_n$ as a directed tree, where edges are directed from new vertices towards old vertices.
	\end{definition}
	
	\begin{remark}\label{remark:def} 
		$(i)$ Note that the edge connection probabilities are invariant under a rescaling of the vertex-weights. In particular, we may without loss of generality assume for vertex-weight distributions with bounded support that $x_0:=\sup\{x\in\R\,|\, \P{\F\leq x}<1\}=1$.
		
		$(ii)$ The model can be adapted to allow for a \emph{random out-degree}: at every step the newly introduced vertex $n+1$ connects with \emph{every} vertex $\inn$ \emph{independently} with a probability equal to $W_i/S_n$. The results presented below still hold for this model definition as well.
	\end{remark}
	
	Lodewijks and Ortgiese studied certain properties of the Weighted Recursive Graph (WRG) model in~\cite{LodOrt21}. This is a more general version of the WRT model that allows every vertex to connect to $m\in\N$ vertices when introduced, yielding a multigraph when $m>1$. This paper aims to recover and extend some of these results in the tree case ($m=1$) when the vertex-weights are almost surely bounded, i.e.\ $x_0<\infty$. As stated in Remark~\ref{remark:def}$(i)$, we can set $x_0=1$ without loss of generality. To formulate the results we need to assume that the distribution of the weights is sufficiently regular, allowing us to control their extreme value behaviour. In certain cases it is more convenient to formulate the assumptions in terms of the distribution of the random variable $(1-W)^{-1}$:
	\newpage 
	\begin{assumption}[Vertex-weight distribution]\label{ass:weights}
		The vertex-weights $W,(W_i)_{i\in\N}$ are i.i.d.\ strictly positive random variables, which satisfy:
		\begin{itemize}
			\item[\namedlabel{ass:weightsup}{$($C$1)$}] The random variable $W$ takes values in $(0,1]$ and $x_0:=\sup\{x\in \R|\P{W\leq x}<1\}=1$.
			\item[\namedlabel{ass:weightzero}{$($C$2)$}]  
				There exist $C,\rho>0,x_0\in(0,1)$ such that for all $x\in[0,x_0]$ it holds that  $\P{W \leq x} \leq C x^\rho .$
		\end{itemize}
		Furthermore, the vertex-weights satisfy one of the following conditions:
		\begin{enumerate}[labelindent = 1cm, leftmargin = 2.2cm]
			\item[\namedlabel{ass:weightatom}{$($\textbf{Atom}$)$}] The vertex weights follow a distribution that has an atom at one, i.e. there exists a $q_0\in(0,1]$ such that $\P{W=1}=q_0$. (Note that $q_0=1$ recovers the RRT model)
			\item[\namedlabel{ass:weightweibull}{$($\textbf{Weibull}$)$}] The vertex-weights follow a distribution that belongs to the Weibull maximum domain of attraction (MDA). This implies that there exist some $\alpha>1$ and a positive function $\ell$ which is slowly varying at infinity, such that 
			\be 
			\hspace{2cm}\P{W\geq 1-1/x}=\P{(1-W)^{-1}\geq x}=\ell(x)x^{-(\alpha-1)},\qquad  x>0.
			\ee
			\item[\namedlabel{ass:weightgumbel}{$($\textbf{Gumbel}$)$}] The distribution belongs to the Gumbel maximum domain of attraction (MDA) (and $x_0=1$). This implies that there exist sequences $(a_n,b_n)_{n\in\N}$, such that 
			\be 
			\frac{\max_{\inn}\F_i-b_n}{a_n}\toindis \Lambda,
			\ee 
			where $\Lambda$ is a Gumbel random variable. 
			
			\noindent Within this class, we further distinguish the following two sub-classes:
			\begin{enumerate}[labelindent = 1cm, leftmargin = 1.3cm]
				\item[\namedlabel{ass:weighttaufin}{$($\textbf{RV}$)$}]
				There exist $a,c_1,\tau>0$, and $b\in\R$ such that
				\[ \hspace{1.5cm}\P{W>1-1/x}=\P{ (1-\F)^{-1} > x} \sim a x^b 	\e^{-( x/c_1)^\tau} \quad \mbox{as } x \rightarrow \infty .\] 
				\item[\namedlabel{ass:weighttau0}{$($\textbf{RaV}$)$}] 
				There exist $a,c_1>0, b\in\R,$ and $\tau>1$ such that 
				\[ \hspace{1.5cm}\P{W>1-1/x}=\P{ (1-\F)^{-1} > x} \sim a (\log x)^b 	\e^{- (\log (x)/c_1)^\tau} \quad \mbox{as } x \rightarrow \infty .\] 
			\end{enumerate}
		\end{enumerate}  
	\end{assumption}
	
	\begin{remark}
		Condition~\ref{ass:weightzero} is required only for a very specific part of the proof of Proposition~\ref{lemma:degprobasymp}. Though we were unable to omit this assumption, it covers a wide range of vertex-weight distributions. Still, we believe it is a mere technicality that can be overcome.
	\end{remark} 	
	
	Throughout, we will write 
	\[  	\zni := \mbox{ in-degree of vertex } i \mbox{ in } T_n . \]
	Working with the in-degree allows us to (in principle) generalise our methods to graphs with random out-degree, as mentioned in Remark~\ref{remark:def}. Obviously, if the out-degree is fixed, we can recover the results for the degree from our results on the $\zni$.
	
	In~\cite{LodOrt21}, the following results are obtained for the WRG model: if we let $\theta_m:=1+\E{W}/m$,
	\be\label{eq:pk} 
	p_k(m):=\E{\frac{\theta_m-1}{\theta_m-1+W}\Big(\frac{W}{\theta_m-1+W}\Big)^k},\  p_{\geq k}(m):=\sum_{j=k}^\infty p_k(m) =\E{\Big(\frac{W}{\theta_m-1+W}\Big)^k},
	\ee  
	then almost surely for any $k\in\N$ fixed,
	\be \label{eq:pkconv}
	\lim_{n\to\infty}\frac 1n \sum_{i=1}^n \ind_{\{\zni=k\}}=p_k(m)\qquad \text{and} \qquad \lim_{n\to\infty}\frac 1n\sum_{i=1}^n \ind_{\{\zni\geq k\}}=p_{\geq k}(m),
	\ee
	whenever $W$ follows a distribution with a finite mean. In particular, the above is satisfied for all cases in Assumption~\ref{ass:weights}. Moreover, if the vertex-weights are bounded almost surely (without loss of generality $x_0=1$), 
	\be \label{eq:bddmaxconv}
	\max_{\inn}\frac{\zni}{\log_{\theta_m}n}\toas 1.
	\ee 
	In this paper we improve these results when considering the WRT model with almost surely bounded weights. That is, we consider the case $m=1$. For ease of writing, we let 
	\be 
	\theta:=\theta_1=1+\E W\text{ and }p_k:=p_k(1),p_{\geq k}:=p_{\geq k}(1).
	\ee
	First, we are able to extend the result in~\eqref{eq:pkconv} to the case when $k=k(n)$ diverges with $n$ in the sense that the difference between both sides converges to zero in mean, under certain constraints on $k(n)$, and we obtain a convergence rate as well. Combining this result with techniques developed by Addario-Berry and Eslava in~\cite{AddEsl18} for random recursive trees we are then able to identify the higher-order asymptotic behaviour of the maximum degree depending on the cases in Assumption~\ref{ass:weights}. Additionally, in certain cases we are able to derive an asymptotic tail distribution for the maximum degree and obtain an asymptotic normality result for the number of vertices with `near-maximal' degrees (in certain cases). These results can be extended to the model with a \emph{random out-degree} as mentioned in Remark~\ref{remark:def} as well. 
	
	Recall that $\theta:=1+\E W$ and define
	\be\ba \label{eq:xni}
	X^{(n)}_i&:=|\{j\in[n]:\Zm_n(j)= \lfloor \log_\theta n\rfloor +i\}|,\\
	X^{(n)}_{\geq i}&:=|\{j\in[n]:\Zm_n(j)\geq \lfloor \log_\theta n\rfloor +i\}|.
	\ea\ee 	
	For certain classes of vertex-weight distributions, we can prove the distributional convergence of these quantities along subsequences, as is the case for the RRT model in~\cite{AddEsl18}. This result can be formulated in terms of convergence of point processes. Let $\Z^*:=\Z\cup \{\infty\}$ and endow $\Z^*$ with the metric $d(i,j)=|2^{-i}-2^{-j}|$ and $d(i,\infty)=2^{-i}$ for any $i,j\in\Z$, so that $[i,\infty]$ is a compact set. Following the notation in~\cite{DaleyVere-JonesII}, let $\mathcal M^\#_{\Z^*}$ be the space of boundedly finite measures on $\Z^*$ (which in this case corresponds to  locally finite measures) equipped with the vague topology. If we let $\mathcal P$ be a Poisson point process on $\R$ with intensity measure $\lambda(\d x):=q_0\theta^{-x}\log \theta\, \d x,q_0\in(0,1]$, and define 
	\be \label{eq:peps}
	\mathcal P^\eps:=\sum_{x\in \mathcal P}\delta_{\lfloor x+\eps\rfloor},\qquad \mathcal P^{(n)}:=\sum_{\inn}\delta_{\zni-\lfloor \log_\theta n\rfloor},\quad \text{ and } \quad \eps_n:=\log_\theta n-\lfloor \log_\theta n\rfloor,
	\ee 
	then $\mathcal P^\eps$ and $\mathcal P^{(n)}$ are random elements in $\mathcal M^\#_{\Z^*}$ and we can provide conditions such that $\mathcal P^{(n_\ell)}$ converges weakly to $\mathcal P^\eps$, for subsequences $(n_\ell)_{\ell\in\N}$ such that $\eps_{n_\ell}\to\eps$ as $\ell\to\infty$. We abuse notation to write $\mathcal P^\eps(i)=\mathcal P^\eps(\{i\})=|\{x\in\mathcal P: \lfloor x+\eps\rfloor =i\}|=|\{x\in\mathcal P: x\in [i-\eps,i+1-\eps)\}|$.
	The choice of the metric on $\Z^*$ is convenient since, by Theorem 11.1.VII of~\cite{DaleyVere-JonesII}, weak convergence in $\mathcal M^\#_{\Z^*}$ is equivalent to convergence of finite dimensional distributions, see Definition 11.1.IV of~\cite{DaleyVere-JonesII}. In particular, the weak convergence of $\cP^{(n_\ell)}$ implies the convergence in distribution of $X_{\ge i}^{(n_l)} = \cP^{(n_\ell)}[i,\infty)$.
	
	We now state our main results, which we split into several theorems based on the cases in Assumption~\ref{ass:weights}.
	
	\begin{theorem}[High degrees in WRTs \ref{ass:weightatom} case]\label{thrm:mainatom}
		Consider the WRT model in Definition~\ref{def:WRT} with vertex-weights $(W_i)_{\inn}$ that satisfy the \ref{ass:weightatom} case in Assumption~\ref{ass:weights} for some $q_0\in(0,1]$. Fix $\eps\in[0,1]$. Let $(n_\ell)_{\ell\in\N}$ be a positive integer sequence such that $\eps_{n_\ell}\to \eps$ as $\ell\to\infty$. Then $\mathcal P^{(n_\ell)}$ converges weakly in $\mathcal M^\#_{\Z^*}$ to $\mathcal P^\eps$ as $\ell\to \infty$. Equivalently, for any $i<i'\in\Z$, jointly as $\ell\to \infty$,
		\be 
		(X^{(n_\ell)}_i,X^{(n_\ell)}_{i+1},\ldots,X^{(n_\ell)}_{i'-1},X^{(n_\ell)}_{\geq i'})\toindis (\mathcal P^\eps(i),\mathcal P^\eps(i+1),\ldots,\mathcal P^\eps (i'-1),\mathcal P^\eps([i',\infty)).
		\ee 
	\end{theorem}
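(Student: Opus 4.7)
The plan is to deduce the weak convergence of $\mathcal{P}^{(n_\ell)}$ to $\mathcal{P}^\eps$ via the method of factorial moments for Poisson convergence. Since $\mathcal{P}^\eps$ is a Poisson point process whose masses on the disjoint sets $\{i\},\ldots,\{i'-1\},[i',\infty)$ are independent Poisson random variables with means
\begin{equation*}
\mu_s = \int_{s-\eps}^{s+1-\eps} q_0\theta^{-x}\log\theta\,\d x = q_0(\theta-1)\theta^{\eps-s-1},\qquad \mu_{\geq i'}= q_0\theta^{\eps-i'},
\end{equation*}
it suffices to show, for all nonnegative integers $(k_s)_{i\le s<i'}$ and $k_{\geq i'}$, that
\begin{equation*}
\E{\prod_{s=i}^{i'-1}(X^{(n_\ell)}_s)_{k_s}\cdot (X^{(n_\ell)}_{\geq i'})_{k_{\geq i'}}}\longrightarrow \prod_{s=i}^{i'-1}\mu_s^{k_s}\cdot \mu_{\geq i'}^{k_{\geq i'}}.
\end{equation*}
Finite-dimensional convergence then upgrades to weak convergence in $\mathcal{M}^\#_{\Z^*}$ by the cited equivalence from Daley and Vere-Jones.

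I would expand each factorial moment as a sum over tuples of $K:=\sum_s k_s+k_{\geq i'}$ distinct vertex labels, obtaining
\begin{equation*}
\sum_{(j_1,\ldots,j_K)\text{ distinct}} \P{\Zm_{n_\ell}(j_l)\in A_l\text{ for each }l},
\end{equation*}
where each $A_l$ is either the singleton $\{\lfloor\log_\theta n_\ell\rfloor+s\}$ or the half-line $[\lfloor\log_\theta n_\ell\rfloor+i',\infty)$, according to which count variable $j_l$ is assigned to. The central input is the asymptotic independence of the in-degrees of $K$ uniformly chosen vertices with explicit error bounds, the main technical result announced in the introduction. This reduces the above sum, up to a controlled error, to $(n_\ell)_K\prod_{l}\P{\Zm_{n_\ell}(V)\in A_l}$, where $V$ is a single uniformly chosen vertex.

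For the marginal asymptotics, I would use the extension of \eqref{eq:pkconv} to diverging $k=k(n)$ (also proved in this paper), together with the explicit formula $p_k=\E{\frac{\theta-1}{\theta-1+W}\bigl(W/(\theta-1+W)\bigr)^k}$. In the \ref{ass:weightatom} case the map $w\mapsto w/(\theta-1+w)$ attains its maximum $1/\theta$ on $[0,1]$ only at $w=1$, which carries mass $q_0$, so by bounded convergence $\theta^k p_k\to q_0(\theta-1)/\theta$ and $\theta^k p_{\geq k}\to q_0$. Substituting $k=\lfloor\log_\theta n_\ell\rfloor+s$ and using $n_\ell\theta^{-\lfloor\log_\theta n_\ell\rfloor}=\theta^{\eps_{n_\ell}}\to \theta^\eps$ yields $n_\ell \P{\Zm_{n_\ell}(V)=k}\to\mu_s$ and $n_\ell\P{\Zm_{n_\ell}(V)\geq k}\to\mu_{\geq i'}$. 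Combined with $(n_\ell)_K/n_\ell^K\to 1$, the factorial moment limit follows.

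The hard part is the asymptotic independence step: one must control the joint tail probability sharply enough that multiplying by $(n_\ell)_K$ and summing over all $K$-tuples does not destroy the limit, even though in-degrees of different vertices are genuinely correlated through the shared tree structure and the common weight sequence. A secondary subtlety is the tail count $X^{(n_\ell)}_{\geq i'}$: I would need to argue that the contribution from degrees $k$ exceeding $\lfloor\log_\theta n_\ell\rfloor+i'$ by a large additive amount is uniformly negligible, which follows from the geometric decay of $p_k$ established in the \ref{ass:weightatom} regime, allowing a truncation and reducing the half-line sum to a geometric series of singleton contributions. Once these ingredients are in place, the method-of-moments argument closes the proof.
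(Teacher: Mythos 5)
Your proposal is correct and follows essentially the same route as the paper: the paper proves Theorem~\ref{thrm:mainatom} by establishing convergence of joint factorial moments (Proposition~\ref{prop:factmean}), which is itself obtained by expanding the factorial moments over distinct vertex tuples, invoking the asymptotic independence of typical degrees (Proposition~\ref{lemma:degprobasymp}), and inserting the \ref{ass:weightatom}-case asymptotics of $p_k$, before concluding via the standard Poisson method of moments. The only cosmetic difference is that you derive $\theta^k p_k\to q_0(1-\theta^{-1})$ by bounded convergence while the paper cites the quantitative expansion $p_k=q_0(1-\theta^{-1})\theta^{-k}(1+\mathcal O(r_k))$ from Theorem~\ref{thrm:pkasymp}; either suffices here since no error rate is needed for this theorem.
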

	
	We note that this result recovers and extends~\cite[Theorem $1.2$]{AddEsl18}, in which an equivalent result is presented which only holds for the random recursive tree, i.e.\ the particular case of the weighted recursive tree in which $q_0:=\P{W=1}=1$. In Theorem~\ref{thrm:mainatom} we allow $q_0\in(0,1)$ as well, under the additional condition~\ref{ass:weightzero}.
	
	When the vertex-weight distribution belongs to the Weibull MDA, we can prove convergence in probability under a deterministic second-order scaling, but are unable to obtain what we conjecture to be a random third-order term similar to the result in Theorem~\ref{thrm:mainatom}:
	
	\begin{theorem}[High degrees in WRTs, \ref{ass:weightweibull} case]\label{thrm:mainweibull}
		Consider the WRT model in Definition~\ref{def:WRT} with vertex-weights $(W_i)_{\inn}$ that satisfy the \ref{ass:weightweibull} case in Assumption~\ref{ass:weights} for some $\alpha>1$. Then, 
		\be 
		\max_{\inn}\frac{\zni-\log_\theta n}{\log_\theta\log_\theta n}\toinp -(\alpha-1).
		\ee
	\end{theorem}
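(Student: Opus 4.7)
Write $\Delta_n := \max_{i\in[n]} \Zm_n(i)$. The plan is to establish matching upper and lower bounds on $\Delta_n$ via a first- and second-moment analysis of the counts $X^{(n)}_{\geq k}$ from~\eqref{eq:xni} at thresholds $k = \lfloor\log_\theta n\rfloor + i_n$ with $i_n \sim -(\alpha-1)\log_\theta\log_\theta n$. The two essential inputs are the refined convergence of the empirical degree distribution to $p_{\geq k}$ valid for diverging $k=k(n)$, and the asymptotic independence of the degrees of two uniformly chosen vertices, both announced in the introduction.

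\emph{Step 1: Tail of $p_{\geq k}$.} Writing $W = 1-U$, the expansion $\log(W/(\theta-1+W)) = -\log\theta - U(1-1/\theta) + O(U^2)$ for small $U$ gives $p_{\geq k} = \theta^{-k}\E{\exp(-kU(1-1/\theta))(1+o(1))}$. Splitting at $U = k^{-1/2}$, the contribution from $U>k^{-1/2}$ is negligible under~\ref{ass:weightweibull}. On the main region, integration by parts combined with the substitution $v = kU(1-1/\theta)$, the tail identity $F_U(t) := \P{U\leq t} = \ell(1/t)t^{\alpha-1}$ as $t\to 0$, and the uniform convergence theorem for slowly varying functions (via a Potter-bound dominated convergence argument) yields
\be
p_{\geq k} \;\sim\; \theta^{-k}\,\ell(k)\,k^{-(\alpha-1)}\,\Gamma(\alpha)(1-1/\theta)^{-(\alpha-1)} \qquad (k\to\infty).
\ee

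\emph{Steps 2 and 3: Matching bounds on $\Delta_n$.} Fix $\eps\in(0,\alpha-1)$ and set $k^\pm_n := \lfloor\log_\theta n - (\alpha-1\mp\eps)\log_\theta\log_\theta n\rfloor$. For the \emph{upper bound}, a union bound combined with the $L^1$-refinement of~\eqref{eq:pkconv} for diverging thresholds gives
\be
\P{\Delta_n \geq k^+_n} \leq \E{X^{(n)}_{\geq k^+_n - \lfloor\log_\theta n\rfloor}} = (1+o(1))\,n\,p_{\geq k^+_n} = O\bigl(\ell(\log_\theta n)(\log_\theta n)^{-\eps}\bigr) = o(1).
\ee
For the \emph{lower bound}, let $Y_n := X^{(n)}_{\geq k^-_n - \lfloor\log_\theta n\rfloor}$; Step 1 gives $\E{Y_n}\sim c_\alpha\,\ell(\log_\theta n)(\log_\theta n)^{\eps}\to\infty$. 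Expanding
\be
\E{Y_n^2} = \E{Y_n} + \sum_{i\neq j}\P{\Zm_n(i)\geq k^-_n,\,\Zm_n(j)\geq k^-_n},
\ee
the asymptotic independence for pairs of uniformly chosen degrees delivers $\E{Y_n^2} = (1+o(1))(\E{Y_n})^2$, so Paley--Zygmund yields $\P{Y_n\geq 1}\to 1$, i.e.\ $\Delta_n\geq k^-_n$ with high probability. Combined with the upper bound this proves the theorem.

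\emph{Main obstacle.} The sharpest requirement is the asymptotic independence for pairs of degrees at thresholds $k$ drifting with $n$: the joint tail must factorise with multiplicative error $1+o(1)$ uniformly for $|k-\log_\theta n|=O(\log\log n)$, so that the slowly varying prefactor $\ell(\log_\theta n)$ appearing in each marginal cancels exactly against $(\E Y_n)^2$ rather than perturbing the effective threshold by an $\omega(1)$ amount. A secondary technicality is the uniform control of the quadratic remainder $\exp(kO(U^2))$ in Step~1, which is handled by the split at $U=k^{-1/2}$ together with a Potter bound on $\ell(k/v)/\ell(k)$ to justify dominated convergence in the resulting Gamma integral.
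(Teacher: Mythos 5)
Your proposal is correct and follows essentially the same route as the paper: the paper's proof also reduces the theorem to showing $n\,p_{\geq k_n^\pm}\to 0$ or $\infty$ at the thresholds $k_n^\pm=\lfloor\log_\theta n-(\alpha-1\mp\eta)\log_\theta\log_\theta n\rfloor$, using a union bound for the upper bound and a second-moment argument (Chung--Erd\H{o}s in the paper, Paley--Zygmund in yours — equivalent) powered by the asymptotic factorisation of joint degree tails of uniformly chosen vertices (Proposition~\ref{lemma:degprobasymp}), exactly the input you flag as the main obstacle. The only deviation is that you derive a precise asymptotic $p_{\geq k}\sim \Gamma(\alpha)(1-\theta^{-1})^{-(\alpha-1)}\ell(k)k^{-(\alpha-1)}\theta^{-k}$, whereas the paper gets by with the weaker two-sided slowly-varying bounds of Theorem~\ref{thrm:pkasymp} plus the fact that $(\log_\theta n)^{\pm(\alpha-1)\eta}$ dominates any slowly varying factor; your sharper estimate is believable (it is a Karamata Abelian theorem) but unnecessary here, and its justification via the split at $U=k^{-1/2}$ would need a slightly finer cut (e.g.\ $U=k^{-3/4}$) to make the remainder $\exp(kO(U^2))$ uniformly $1+o(1)$ on the main region.
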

	
	Finally, when the vertex-weight distribution belongs to the Gumbel MDA, we have similar results compared to the Weibull MDA case in the above theorem. Here we are also able to obtain a deterministic second-order scaling for both the~\ref{ass:weighttaufin} and~\ref{ass:weighttau0} sub-cases, as well as a third- and fourth-order scaling for the~\ref{ass:weighttau0} sub-case:
	
	\begin{theorem}[High degrees in WRTs, \ref{ass:weightgumbel} case]\label{thrm:maingumbel}
		Consider the WRT model in Definition~\ref{def:WRT} with vertex-weights $(W_i)_{\inn}$ that satisfy the \ref{ass:weightgumbel} case in Assumption~\ref{ass:weights}.\\
		In the~\ref{ass:weighttaufin} sub-case, let $\gamma:=1/(1+\tau)$. Then, 
		\be \label{eq:gumbrv2nd}
		\max_{\inn}\frac{\zni-\log_\theta n}{(\log_\theta n)^{1-\gamma}}\toinp -\frac{\tau^\gamma}{(1-\gamma)\log \theta}\Big(\frac{1-\theta^{-1}}{c_1}\Big)^{1-\gamma}=:-C_{\theta,\tau,c_1}.
		\ee
		In the~\ref{ass:weighttau0} sub-case, 
		\be \label{eq:gumbrav2nd}
		\max_{\inn}\frac{\zni-\log_\theta n+C_1(\log_\theta\log_\theta n)^\tau-C_2(\log_\theta \log_\theta n)^{\tau-1}\log_\theta \log_\theta\log_\theta n}{(\log_\theta\log_\theta n)^{\tau-1}}\toinp C_3,
		\ee 
		where 
		\be\ba \label{eq:c123}
		C_1&:=(\log \theta )^{\tau-1}c_1^{-\tau},\qquad C_2:=(\log \theta )^{\tau-1}\tau(\tau-1)c_1^{-\tau}, \\ 
		C_3&:=\big(\log_\theta(\log\theta) (\tau-1)\log\theta-\log(\e c_1^\tau(1-\theta^{-1})/\tau)\big)(\log \theta )^{\tau-2}\tau c_1^{-\tau}.
		\ea \ee
	\end{theorem}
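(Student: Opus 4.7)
The plan is to establish the convergence in probability by a first-moment/second-moment argument on $X^{(n)}_{\geq k_n}$ from~\eqref{eq:xni}, for appropriately chosen integer sequences $k_n^\pm$, relying on two main inputs: sharp Laplace-type expansions of the tail $p_{\geq k}$ in~\eqref{eq:pk}, and the asymptotic independence of degrees of two uniformly chosen vertices promised as the main technical result of the paper. Writing $U:=1-W$ and expanding about $W=1$ gives $\log(W/(\theta-1+W)) = -\log\theta - (1-\theta^{-1})U + O(U^2)$, so that
\[ p_{\geq k} = \theta^{-k}\,\mathbb{E}\bigl[\exp(-(1-\theta^{-1})kU)\bigr]\,(1+o(1)) \]
uniformly as $k\to\infty$; the $O(U^2)$ remainder contributes at most $O(k(u^*)^2) = o(k^{1-\gamma})$ at the relevant saddle and is negligible.

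The heart of the argument is a saddle-point analysis of $\phi(\lambda):=\mathbb{E}[e^{-\lambda U}]$ as $\lambda\to\infty$. Under~\ref{ass:weighttaufin}, the tail $\mathbb{P}(U<u)\sim a u^{-b}e^{-(c_1u)^{-\tau}}$ yields the stationary point $u^*(\lambda)=(\tau/(c_1^\tau\lambda))^{1/(1+\tau)}$ of $\lambda u + (c_1u)^{-\tau}$, and routine Laplace asymptotics combined with the identity $(\tau+1)\tau^{\gamma-1}=\tau^\gamma/(1-\gamma)$ give
\[ \log\phi(\lambda) = -\frac{\tau^\gamma}{1-\gamma}\Bigl(\frac{\lambda}{c_1}\Bigr)^{1-\gamma} + O(\log\lambda); \]
setting $\lambda=(1-\theta^{-1})k$ then yields $\log p_{\geq k} = -k\log\theta - C_{\theta,\tau,c_1}(\log\theta)\,k^{1-\gamma} + o(k^{1-\gamma})$. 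Under~\ref{ass:weighttau0}, the saddle in the variable $t:=\log(1/u)$ satisfies the transcendental equation $t-(\tau-1)\log t = \log\lambda + \log(c_1^\tau/\tau)$, whose iterated solution $t^* = \log\lambda + (\tau-1)\log\log\lambda + O(1)$, fed back into $(t^*/c_1)^\tau + \tau(t^*/c_1)^{\tau-1}$ and then combined with $\log k = (\log\theta)\log_\theta\log_\theta n + O(1)$, produces the nested logarithmic expansion whose coefficients coincide with $C_1, C_2, C_3$ in~\eqref{eq:c123}.

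Having this sharp form of $p_{\geq k}$, I would choose $k_n^\pm$ to be the integer parts of $\log_\theta n - (C_{\theta,\tau,c_1}\pm\varepsilon)(\log_\theta n)^{1-\gamma}$ in the~\ref{ass:weighttaufin} case (and analogously for~\ref{ass:weighttau0}). A Markov bound
\[ \mathbb{P}\bigl(\max_{i\in[n]}\mathcal{Z}_n(i)\geq k_n^+\bigr) \leq \mathbb{E}[X^{(n)}_{\geq k_n^+}] = n\,p_{\geq k_n^+}(1+o(1)) \to 0 \]
gives the upper bound. For the matching lower bound, asymptotic independence of the degrees of two uniformly chosen vertices yields $\mathbb{E}[X^{(n)}_{\geq k_n^-}(X^{(n)}_{\geq k_n^-}-1)] = (1+o(1))(\mathbb{E}[X^{(n)}_{\geq k_n^-}])^2$; since $\mathbb{E}[X^{(n)}_{\geq k_n^-}]\to\infty$, the second-moment method forces $X^{(n)}_{\geq k_n^-}\geq 1$, hence $\max_{i\in[n]}\mathcal{Z}_n(i)\geq k_n^-$, with high probability. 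Letting $\varepsilon\downarrow 0$ then gives~\eqref{eq:gumbrv2nd} and~\eqref{eq:gumbrav2nd}.

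The main obstacle will be the saddle-point analysis: ensuring that the polynomial prefactor of $\phi(\lambda)$, the $O(U^2)$ remainder in the expansion of $\log(W/(\theta-1+W))$, and the implicit definition of $t^*$ all contribute at orders strictly below the claimed scale, uniformly for $k=k_n$. This is especially delicate under~\ref{ass:weighttau0}, where $t^*$ must be resolved to $\log\log\log$-accuracy to pin down $C_3$, and the Taylor expansion of $(t^*)^\tau$ about its leading part must be tracked two further orders. A secondary difficulty is obtaining the asymptotic independence uniformly in $k = k_n \to\infty$, which should follow from the refined convergence of the empirical degree distribution announced earlier in the excerpt.
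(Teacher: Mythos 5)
Your proposal is correct and follows essentially the same route as the paper: the first-moment/second-moment argument on $X^{(n)}_{\geq k_n}$ via asymptotic independence of the degrees of uniformly chosen vertices is precisely the content of Lemma~\ref{lemma:maxdegwhp} combined with Proposition~\ref{lemma:degprobasymp}, and the choice of $k_n^{\pm}$ with an $\varepsilon$-perturbation of the second-order constant matches the paper's proof. The only difference is that the tail asymptotics of $p_{\geq k}$, which you propose to derive from scratch by a Laplace/saddle-point analysis of $\mathbb{E}[\e^{-\lambda(1-W)}]$, are simply quoted by the paper from~\cite{LodOrt21} as Theorem~\ref{thrm:pkasymp} (together with Remark~\ref{rem:pgeqk}); your saddle-point computation, whose algebra in the~\ref{ass:weighttaufin} case checks out, is a re-derivation of that cited ingredient rather than a genuinely different route to the theorem.
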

	
	We see that only in the~\ref{ass:weightatom} case we are able to obtain the higher-order asymptotics up to random order. This is due to the fact that, in this particular case, the vertices with high degree all have vertex-weight one. In the other classes covered in Theorems~\ref{thrm:mainweibull} and~\ref{thrm:maingumbel} vertices with high degrees have a vertex-weight close to one, which causes their degrees to grow slightly slower. This results in the higher-order asymptotics as observed in these theorems.
	
	We are able to obtain more precise results related to the maximum and near-maximum degree vertices in the~\ref{ass:weightatom} case as well, which again recover and extend the results in~\cite{AddEsl18}.
	
	\begin{theorem}[Asymptotic tail distribution for maximum degree in~\ref{ass:weightatom} case]\label{thrm:maxtail}
		$\,$ \\ Consider the WRT model in Definition~\ref{def:WRT} with vertex-weights $(W_i)_{\inn}$ that satisfy the \ref{ass:weightatom} case in Assumption~\ref{ass:weights} for some $q_0\in(0,1]$ and recall $\eps_n$ from~\eqref{eq:peps}. Then, for any integer-valued $i_n=i(n)$ with $i_n+\log_\theta n <(\theta/(\theta-1)) \log n$ and $\liminf_{n\to\infty}i_n>-\infty$,
		\be 
		\P{\max_{j\in[n]}\Zm_n(j)\geq \lfloor \log_\theta n\rfloor +i_n}=\big(1-\exp(-q_0\theta^{-i_n+\eps_n})\big)(1+o(1)).
		\ee 
		Moreover, let $\M_n\subseteq [n]$ denote the $($random$)$ set of vertices that attain the maximum degree in $T_n$, fix $\eps\in[0,1]$, and let $(n_\ell)_{\ell\in\N}$ be a positive integer sequence such that $\eps_{n_\ell}\to\eps$ as $\ell\to\infty$. Then, $|\M_{n_\ell}|\toindis M_\eps$, where $M_\eps$ has distribution
		\be\label{eq:meps}
		\P{M_\eps=k}=\sum_{j\in\Z}\frac{1}{k!}\big(q_0(1-\theta^{-1})\theta^{-j+\eps}\big)^k \e^{-q_0\theta^{-j+\eps}},\qquad k\in\N.
		\ee 
	\end{theorem}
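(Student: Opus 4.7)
The plan is to deduce both parts from the factorial moment method applied to $X^{(n)}_{\geq i}$ and $X^{(n)}_i$, for which the required input has already been developed in the paper: (a) the asymptotic tail $\mathbb{P}(\Zm_n(j)\geq k)\sim p_{\geq k}$ for uniformly chosen $j$, with $p_{\geq k}\sim q_0\theta^{-k}$ as $k\to\infty$ under \ref{ass:weightatom}+\ref{ass:weightzero}; (b) the asymptotic independence of the degrees of $k$ uniformly chosen vertices (Proposition~\ref{lemma:degprobasymp}); and (c) the point-process convergence $\mathcal{P}^{(n_\ell)}\to\mathcal{P}^\eps$ from Theorem~\ref{thrm:mainatom}.

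For the tail estimate, I would first rewrite
\[
\mathbb{P}\!\left(\max_{j\in[n]}\Zm_n(j)\geq \lfloor\log_\theta n\rfloor+i_n\right)=\mathbb{P}\!\left(X^{(n)}_{\geq i_n}\geq 1\right),
\]
and apply Bonferroni's inequalities to sandwich this probability between partial alternating sums of the factorial moments $\mathbb{E}[(X^{(n)}_{\geq i_n})_k]/k!$. The $k$-th factorial moment expands as a sum over ordered $k$-tuples of distinct vertices of the joint tail probability that each has degree at least $\lfloor\log_\theta n\rfloor+i_n$. Using the asymptotic independence in Proposition~\ref{lemma:degprobasymp} together with $p_{\geq\lfloor\log_\theta n\rfloor+i_n}=q_0\theta^{-i_n+\eps_n}/n\cdot(1+o(1))$, each factorial moment converges to $\lambda_n^k$ with $\lambda_n:=q_0\theta^{-i_n+\eps_n}$, so the Bonferroni bounds collapse to $1-\mathrm{e}^{-\lambda_n}$, yielding the claim. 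The upper bound $i_n+\log_\theta n<(\theta/(\theta-1))\log n$ is the range in which the tail asymptotic $p_{\geq k}\sim q_0\theta^{-k}$ remains sharp enough (after multiplication by $n^{k-1}$) for the factorial-moment estimate to survive; the lower bound $\liminf i_n>-\infty$ keeps $\lambda_n$ bounded so that the Bonferroni truncation error is uniformly controlled.

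For the distribution of $|\mathcal{M}_{n_\ell}|$, note that $|\mathcal{M}_{n_\ell}|=\mathcal{P}^{(n_\ell)}(K_{n_\ell})$, where $K_{n_\ell}:=\max\{i\in\Z:\mathcal{P}^{(n_\ell)}(i)\geq 1\}$. The tail estimate just proved, together with an analogous lower-tail bound, shows that $K_{n_\ell}$ is tight in $\Z$. For fixed integers $I^-<I^+$, a union of events over $I^-\leq j\leq I^+$ gives
\[
\mathbb{P}(|\mathcal{M}_{n_\ell}|=k,\;I^-\leq K_{n_\ell}\leq I^+)=\sum_{j=I^-}^{I^+}\mathbb{P}\!\left(X^{(n_\ell)}_j=k,\;X^{(n_\ell)}_{\geq j+1}=0\right),
\]
and each term converges, by the finite-dimensional weak convergence from Theorem~\ref{thrm:mainatom}, to $\mathbb{P}(\mathcal{P}^\eps(j)=k,\mathcal{P}^\eps([j+1,\infty))=0)=\tfrac{1}{k!}(q_0(1-\theta^{-1})\theta^{-j+\eps})^k \exp(-q_0\theta^{-j+\eps})$, where the final exponential comes from summing the Poisson intensities $\mu_l=q_0(1-\theta^{-1})\theta^{-l+\eps}$ over $l\geq j$ to get $q_0\theta^{-j+\eps}$. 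Sending $I^{\pm}\to\pm\infty$ using tightness of $K_{n_\ell}$ and the (summable) tail of $M_\eps$ yields the distribution~\eqref{eq:meps}.

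The main obstacle is the first step: obtaining the factorial moment estimates with errors that are uniform in $i_n$ across the whole admissible range $\liminf i_n>-\infty$ and $i_n+\log_\theta n<(\theta/(\theta-1))\log n$, rather than merely for fixed $i$. This requires the asymptotic independence of Proposition~\ref{lemma:degprobasymp} with an explicit error control that remains small after multiplication by $\binom{n}{k}$ and after summing over $k$ in the Bonferroni expansion, and it is here that condition~\ref{ass:weightzero} intervenes to guarantee that the contribution of vertex-weights strictly less than $1$ to $p_{\geq k}$ is exponentially subdominant to $q_0\theta^{-k}$.
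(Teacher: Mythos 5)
Your proposal is correct and follows essentially the same route as the paper: both parts rest on the factorial-moment estimates of Proposition~\ref{prop:factmean} (via Proposition~\ref{lemma:degprobasymp}) together with Poisson approximation, and your decomposition of $\{|\M_{n_\ell}|=k\}$ over the disjoint events $\{X^{(n_\ell)}_j=k,\,X^{(n_\ell)}_{\geq j+1}=0\}$ with a truncation in $j$ is exactly the paper's argument. The only (immaterial) organizational difference is in the tail estimate: the paper splits into $i_n=\mathcal O(1)$ (handled by Theorem~\ref{thrm:mainatom} and the subsubsequence principle) and $i_n\to\infty$ (handled by the two-term Bonferroni bound $\E{X^{(n)}_{\geq i_n}}-\tfrac12\E{(X^{(n)}_{\geq i_n})_2}\leq \P{X^{(n)}_{\geq i_n}>0}\leq \E{X^{(n)}_{\geq i_n}}$), whereas you run the full Bonferroni expansion uniformly, which works since $\liminf_n i_n>-\infty$ keeps $\lambda_n$ bounded.
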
 
	
	Finally, we establish an asymptotic normality result for the number of vertices which have `near-maximum' degrees. For a precise definition of `near-maximum', we define sequences $(s_k,r_k)_{k\in\N}$ as 
	\be\ba \label{eq:ek} 
	s_k&:=\inf\big\{x\in(0,1): \P{W\in(x,1)}\leq \exp(-(1-\theta^{-1})(1-x)k)\big\},\\
	r_k&:=\exp(-(1-\theta^{-1})(1-s_k)k).
	\ea\ee
	As a result, $r_k$ can be used as the error term in the asymptotic expression of $p_{\geq k}$ (as in~\eqref{eq:pk}) when the weight distribution satisfies the~\ref{ass:weightatom} case (see Theorem~\ref{thrm:pkasymp}) and is essential in quantifying how much smaller `near-maximum' degrees are relative to the maximum degree of the graph in this case. We note that $r_k$ is decreasing and converges to zero with $k$ (see Lemma~\ref{lemma:rk}), and that in the definition of $s_k$ and $r_k$ we can allow the index to be continuous rather than just an integer (the proof of Lemma~\ref{lemma:rk} can be adapted to still hold in this case). We can then formulate the following theorem:
	
	\begin{theorem}[Asymptotic normality of near-maximum degree vertices, \ref{ass:weightatom} case]\label{thrm:asympnormal}
		$\,$ \\ Consider the WRT model in Definition~\ref{def:WRT} with vertex-weights $(W_i)_{\inn}$ that satisfy the \ref{ass:weightatom} case in Assumption~\ref{ass:weights} for some $q_0\in(0,1]$. Then, for an integer-valued $i_n=i(n)\to-\infty$ such that $i_n=o(\log n \wedge |\log r_{\log_\theta n}|)$,
		\be 
		\frac{X^{(n)}_{i_n}-q_0(1-\theta^{-1})\theta^{-i_n+\eps_n}}{\sqrt{q_0(1-\theta^{-1})\theta^{-i_n+\eps_n}}}\toindis N(0,1).
		\ee 
	\end{theorem}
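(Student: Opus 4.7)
Set $k_n := \lfloor \log_\theta n\rfloor + i_n$ and $\lambda_n := q_0(1-\theta^{-1})\theta^{-i_n+\eps_n}$, so that $n\, q_0(1-\theta^{-1})\theta^{-k_n} = \lambda_n$, and observe that $\lambda_n \to \infty$ since $i_n \to -\infty$. The natural approach, mirroring Addario-Berry and Eslava in the RRT case~\cite{AddEsl18}, is the method of factorial moments: I aim to establish
\begin{equation}
\E{(X^{(n)}_{i_n})_r} = \lambda_n^r (1 + o(1)) \qquad \text{for every fixed } r \in \N.
\end{equation}
These are precisely the factorial moments of a Poisson random variable with mean $\lambda_n$, and since a Poisson$(\lambda_n)$ random variable with $\lambda_n \to \infty$, once centred and rescaled by $\sqrt{\lambda_n}$, is asymptotically standard Gaussian with all moments converging to the Gaussian moments, standard method-of-moments arguments---converting factorial moments to ordinary moments via Stirling numbers of the second kind and then to centred moments---give the stated convergence $(X^{(n)}_{i_n} - \lambda_n)/\sqrt{\lambda_n} \toindis N(0,1)$.

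For the first moment, decomposing $X^{(n)}_{i_n} = \sum_{j \in [n]} \ind{\Zm_n(j) = k_n}$ yields $\E{X^{(n)}_{i_n}} = n\, \P{\Zm_n(V_n) = k_n}$ with $V_n$ uniform on $[n]$. Combining the closed-form $p_k = q_0(1-\theta^{-1})\theta^{-k} + O(r_k)$ valid in the \ref{ass:weightatom} case---the leading term being precisely the contribution of the atom at $W=1$ in \eqref{eq:pk}, the error being Theorem~\ref{thrm:pkasymp}---with the quantitative version of \eqref{eq:pkconv} for diverging argument $k = k(n)$ (Proposition~\ref{lemma:degprobasymp}) produces $\E{X^{(n)}_{i_n}} = \lambda_n(1+o(1))$ under the hypotheses on $i_n$.

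For $r \geq 2$ I expand
\begin{equation}
\E{(X^{(n)}_{i_n})_r} = \sum_{\substack{j_1, \ldots, j_r \in [n] \\ \text{pairwise distinct}}} \P{\Zm_n(j_1) = \cdots = \Zm_n(j_r) = k_n},
\end{equation}
and invoke the paper's main technical result on the asymptotic independence of in-degrees of uniformly chosen vertices, equipped with its explicit error corrections (as announced in the abstract). Applied to a uniformly chosen $r$-tuple of distinct vertices this gives
\begin{equation}
\P{\Zm_n(j_1) = \cdots = \Zm_n(j_r) = k_n} = p_{k_n}^r (1 + o(1))
\end{equation}
uniformly enough that, after multiplying by $(n)_r \sim n^r$ and using the single-vertex asymptotics from the previous paragraph, one obtains $\E{(X^{(n)}_{i_n})_r} = \lambda_n^r(1+o(1))$.

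The main obstacle is calibrating two distinct error sources against the hypothesis $i_n = o(\log n \wedge |\log r_{\log_\theta n}|)$. The condition $-i_n = o(|\log r_{\log_\theta n}|)$ is exactly what forces the relative error $O(r_{k_n}/p_{k_n})$ in the single-vertex asymptotic $p_{k_n} = q_0(1-\theta^{-1})\theta^{-k_n}(1+o(1))$ to vanish, whereas $-i_n = o(\log n)$ keeps $k_n$ sufficiently far below the extremal scale $\log_\theta n$ so that the error in the asymptotic factorisation of the joint degree law stays negligible. Verifying these uniform estimates carefully for each fixed $r$ is the delicate technical step; once in place, the Gaussian limit follows automatically from the factorial-moment computation above.
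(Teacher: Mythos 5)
Your architecture — factorial moments of $X^{(n)}_{i_n}$ via the joint degree asymptotics, then a method-of-moments conclusion — is the paper's architecture, and your reduction of $\E{(X^{(n)}_{i_n})_r}$ to $(n)_r\,p_{k_n}^r(1+o(1))$ is exactly what Proposition~\ref{prop:factmean} delivers. The gap is the final step. Knowing $\E{(X^{(n)}_{i_n})_r}=\lambda_n^r(1+o(1))$ for every fixed $r$, with $\lambda_n\to\infty$, does \emph{not} imply $(X^{(n)}_{i_n}-\lambda_n)/\sqrt{\lambda_n}\toindis N(0,1)$. The $r$-th centred moment of the rescaled variable lives on the scale $\lambda_n^{r/2}$ and is produced from the factorial moments by cancellations among terms of size $\lambda_n^r$; an additive error of $o(\lambda_n^r)$ — which is all a $(1+o(1))$ relative error gives — swamps those cancellations entirely. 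Concretely, $X_n\sim\mathrm{Poi}\big(\lambda_n(1+\lambda_n^{-1/4})\big)$ has $\E{(X_n)_r}=\lambda_n^r(1+o(1))$ for every $r$, yet $(X_n-\lambda_n)/\sqrt{\lambda_n}$ diverges to $+\infty$. So ``the Gaussian limit follows automatically from the factorial-moment computation above'' is false as stated; this is precisely why the theorem assumes $i_n=o(\log n\wedge|\log r_{\log_\theta n}|)$ rather than merely requiring $k_n<c\log n$ as Poisson convergence (Theorem~\ref{thrm:mainatom}) would.

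What the paper does instead is extract a quantitative rate. Proposition~\ref{prop:factmean} gives, for each $1\le a\le b$, the \emph{absolute} error bound $\E{(X^{(n)}_{i_n})_a}-\lambda_n^a=\mathcal O\big(\theta^{-i_na}\big(r_{\lfloor\log_\theta n\rfloor+i_n}\vee n^{-\beta}\big)\big)$, and the two halves of the hypothesis on $i_n$ are then used to show this is $o(\theta^{i_nb})$, i.e.\ $o(\lambda_n^{-b})$, for every fixed $b$: the constraint $|i_n|=o(|\log r_{\log_\theta n}|)$ beats the $r_{\lfloor\log_\theta n\rfloor+i_n}$ term and $|i_n|=o(\log n)$ beats the $n^{-\beta}$ term after multiplication by $\theta^{-i_n(a+b)}$. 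This is the hypothesis of \cite[Theorem $1.24$]{Bol01}, which is the CLT-via-factorial-moments tool you need in place of the ``standard method-of-moments argument''. Your closing paragraph correctly senses that the two constraints on $i_n$ must be calibrated against the two error sources, but calibrating them against a target of $o(1)$ relative error is not enough; the target is an absolute error of $o(\lambda_n^{-b})$ for arbitrary $b$.
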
 
	
	\begin{remark}\label{rem:asympnorm}
		The constraint $i_n=o(\log n\wedge \log r_{\log_\theta n})$ can be simplified by providing more information on the tail of the weight distribution. Only when $W$ has an atom at one and support bounded away from one do we have that $o(\log n\wedge \log r_{\log_\theta n})=o(\log n)$. That is, when there exists an $s\in(0,1)$ such that $\P{W\in(s,1)}=0$. In that case, we can set $s_k= s$ and $r_k= \exp(-(1-\theta^{-1})(1-s)k)$ for all $k$ large, so that 
		\be
		\log r_{ \log_\theta n}= -(1-\theta^{-1})(1-s)\log_\theta n,
		\ee 
		so that indeed $o(\log n\wedge \log r_{\log_\theta n})=o(\log n)$. In all other cases it follows that $s_k\uparrow 1$, so that $\log r_{\log_\theta n}=o(\log n)$ and the constraint simplifies to $i_n=o(\log r_{\log_\theta n})$. 
	\end{remark} 
	
	\textbf{Outline of the paper}\\
	In Section~\ref{sec:overview} we provide a short overview and intuitive idea of the proofs of Theorems~\ref{thrm:mainatom},~\ref{thrm:mainweibull}, \ref{thrm:maingumbel},~\ref{thrm:maxtail} and~\ref{thrm:asympnormal}. In Section~\ref{sec:exres} we discuss two examples of vertex-weight distributions which satisfy the~\ref{ass:weightweibull} and~\ref{ass:weightgumbel} cases, respectively, for which more precise results can be obtained. We then provide the key concepts and results that are used in the proofs of the main theorems in Section~\ref{sec:taildeg}. We use these results to prove the main theorems in Section~\ref{sec:mainproof}. Finally, in Section~\ref{sec:ex} we provide the necessary techniques and results, comparable to what is presented in Section \ref{sec:taildeg}, to prove the statements regarding the examples of Section \ref{sec:exres}.
	
	\section{Intuitive idea of (the proof of) the main theorems}\label{sec:overview}
	
	We provide a short intuitive idea as to why the results stated in Section~\ref{sec:results} hold.
	
	The main elements in obtaining a more precise understanding of the behaviour of the maximum degree of the WRT are the following:
	\begin{enumerate}
		\item[\namedlabel{item:i}{$(i)$}] A precise expression of the tail distribution of the in-degree of uniformly at random selected vertices $(v_\ell)_{\ell\in[k]}$, for any $k\in\N$. That is,
		\be 
		\P{\Zm_n(v_\ell)\geq m_\ell\text{ for all } \ell\in[k]}=\prod_{\ell=1}^k p_{\geq m_\ell}(1+o(n^{-\beta})),
		\ee 
		for some $\beta>0$ and where the $m_\ell\in\N$ are such that $m_\ell\leq c \log n$ for some $c\in(0,\theta/(\theta-1))$. This extends~\eqref{eq:pkconv} in the sense of convergence in mean to $k\in\N$ many uniformly at random selected vertices rather than just one, and allows the $m_\ell$ to grow with $n$ rather than being fixed. Moreover, the error term $1+o(n^{-\beta})$ extends previously known results as well, for which no convergence rate was known.
		\item[\namedlabel{item:ii}{$(ii)$}] The asymptotic behaviour of $p_{\geq k}$, as defined in~\eqref{eq:pk}, as $k\to\infty$ for each case in Assumption~\ref{ass:weights}. 
	\end{enumerate}
	Element \ref{item:i}, which is proved in Proposition~\ref{lemma:degprobasymp}, allows us to obtain bounds on the probability of the event $\{\max_{\jnn}\znj\geq k_n\}$ for any sequence $k_n\to\infty $ as $n\to\infty$. These probabilities can be expressed in terms of $n p_{\geq k_n}$ (using union bounds and 
	the second moment method in the form of the Chung-Erd\H os inequality, as is shown in Lemma~\ref{lemma:maxdegwhp}). By~\ref{item:ii} we can then precisely quantify $k_n$ such that these bounds either tend to zero or one, which implies whether $\{\max_{\jnn}\zni\geq k_n\}$ does or does not hold with high probability. This is the main approach for Theorems~\ref{thrm:mainweibull} and~\ref{thrm:maingumbel}.
	
	To obtain the random limits described in terms of the Poisson process $\mathcal P^\eps$, as in Theorem~\ref{thrm:mainatom}, we use a similar approach as in~\cite{AddEsl18}. Both~\ref{item:i} and~\ref{item:ii} are still essential, but are now used to obtain factorial moments of the quantities $X^{(n)}_i$ and $X^{(n)}_{\geq i}$, defined in~\eqref{eq:xni}, as shown in Proposition~\ref{prop:factmean}. More specifically, for any $i<i'\in\Z$ and $a_i,\ldots,a_{i'}\in\N_0$, and recalling that $(x)_k:=x(x-1)\ldots (x-(k-1))$,
	\be\label{eq:factmean}
	\E{\Big(X^{(n)}_{\geq i'}\Big)_{a_{i'}}\prod_{k=i}^{i'-1}\Big(X^{(n)}_i\Big)_{a_i}}=\Big(q_0\theta^{-i'+\eps_n}\Big)^{a_{i'}}\prod_{k=i}^{i'-1}\Big(q_0(1-\theta^{-1})\theta^{-k+\eps_n}\Big)^{a_k}(1+o(1)).
	\ee 
	We stress that the specific form of the right-hand side is due to the underlying assumption in Theorem~\ref{thrm:mainatom} that the vertex-weight distribution has an atom at one, as in the~\ref{ass:weightatom} case of Assumption~\ref{ass:weights}. The error term can be specified in more detail, but we omit this as it serves no further purpose here. The result essentially follows directly from these estimates by observing that the right-hand side of~\eqref{eq:factmean} can be understood as an approximation of the factorial moment of the Poisson random variables $\mathcal P^\eps([i-\eps,i+1-\eps)),\ldots, \mathcal P^\eps([i'-\eps,\infty))$, when $\eps_n$ converges to some $\eps$.
	
	The equality in~\eqref{eq:factmean} follows from the fact that $X^{(n)}_i$ and $X^{(n)}_{\geq i}$ can be expressed as sums of indicator random variables of disjoint events, so that their factorial means can be understood via the probabilities in~\ref{item:i}. Then, again using the asymptotic behaviour of $p_{\geq k}$ (as in~\ref{item:ii}), allows us to obtain the right-hand side of~\eqref{eq:factmean}.
	
	Finally, Theorems~\ref{thrm:maxtail} and~\ref{thrm:asympnormal} are also a result of~\eqref{eq:factmean}. This is due to the fact that the events $\{\max_{\jnn}\znj \geq \lfloor\log_\theta n\rfloor +i\}$ can be understood via the events $\{X^{(n)}_{\geq i}>0\}$. Again using ideas similar to ones developed in~\cite{AddEsl18} then allow us to obtain the results.
	
	\section{Examples}\label{sec:exres}
	
	In this section we discuss some particular choices of distributions for the vertex-weights for which more precise statements can be made compared to those stated in Section~\ref{sec:results}. The reason we can improve on these more general results is due to a better understanding of the asymptotic behaviour of $p_k$ and $p_{\geq k}$ (see~\eqref{eq:pk}) as $k\to\infty$. As discussed in Section~\ref{sec:overview}, to understand the asymptotic behaviour of the (near-)maximum degree(s) up to random order a very precise asymptotic expression for $p_{\geq k}$ is required. Though not possible in general in the~\ref{ass:weightweibull} and~\ref{ass:weightgumbel} cases of Assumption~\ref{ass:weights}, certain choices of vertex-weight distributions do allow for a more explicit formulation of $p_{\geq k}$, yielding improved asymptotics. The proofs of the results presented here are deferred to Section \ref{sec:ex}.
	
	\begin{example}[Beta distribution]\label{ex:beta}
		We consider a random variable $W$ with a tail distribution 
		\be \label{eq:betacdf}
		\P{W\geq x}=\int_x^1 \frac{\Gamma(\alpha+\beta)}{\Gamma(\alpha)\Gamma(\beta)}s^{\alpha-1}(1-s)^{\beta-1}\,\d s, \qquad x\in[0,1),		
		\ee 
		for some $\alpha,\beta>0$. Clearly, $W$ belongs to the~\ref{ass:weightweibull} case of Assumption~\ref{ass:weights}. Moreover, $W$ satisfies condition~\ref{ass:weightzero} of Assumption~\ref{ass:weights}. Since for any $x_0\in(0,1)$ we can bound $(1-s)^{\beta-1}\leq 1\vee (1-x_0)^{\beta-1}$ for all $s\in(0,x_0)$, it follows that for any $x\in[0,x_0]$,
			\be 
			\P{W\leq x}=\int_0^x \frac{\Gamma(\alpha+\beta)}{\Gamma(\alpha)\Gamma(\beta)}s^{\alpha-1}(1-s)^{\beta-1}\,\d s\leq (1\vee (1-x_0)^{\beta-1})\frac{\Gamma(\alpha+\beta)}{\Gamma(\alpha+1)\Gamma(\beta)}x^\alpha, 
			\ee 
			so that condition~\ref{ass:weightzero} is satisfied with $C=(1\vee (1-x_0)^{\beta-1})\Gamma(\alpha+\beta)/(\Gamma(\alpha+1)\Gamma(\beta))$, the constant $x_0\in(0,1)$ arbitrary, and $\rho=\alpha$. 			
		
		We set, for $\theta:=1+\E W\in(1,2),$
		\be\ba\label{eq:epsnbeta}
		X^{(n)}_i&:=|\{\jnn: \znj=\lfloor \log_\theta n-\beta\log_\theta\log_\theta n \rfloor +i\}|,\\ 
		X^{(n)}_{\geq i}&:=|\{\jnn: \znj\geq \lfloor \log_\theta n-\beta \log_\theta\log_\theta n\rfloor +i\}|,\\
		\eps_n&:=(\log_\theta n-\beta \log_\theta\log_\theta n)-\lfloor\log_\theta n-\beta\log_\theta\log_\theta n \rfloor,\\
		c_{\alpha,\beta,\theta}&:=\frac{\Gamma(\alpha+\beta)}{\Gamma(\alpha)}(1-\theta^{-1})^{-\beta}.
		\ea\ee 
		Then, we can formulate the following results.
		
		\begin{theorem}\label{thrm:betappp}
			Consider the WRT model in Definition~\ref{def:WRT} with vertex-weights $(W_i)_{\inn}$ whose distribution satisfies~\eqref{eq:betacdf} for some $\alpha,\beta>0$, and recall $\eps_n$ and $c_{\alpha,\beta,\theta}$ from~\eqref{eq:epsnbeta}. Fix $\eps\in[0,1]$, let $(n_\ell)_{\ell\in\N}$ be an increasing integer sequence such that $\eps_{n_\ell}\to \eps$ as $\ell \to \infty$ and let $\mathcal P$ be a Poisson point process on $\R$ with intensity measure $\lambda(x)=c_{\alpha,\beta,\theta}\theta^{-x}\log \theta\,\d x$. Define
			\be 
			\mathcal P^\eps:=\sum_{x\in \mathcal P}\delta_{\lfloor x+\eps\rfloor} \quad \text{ and } \quad \mathcal P^{(n)}:=\sum_{\inn}\delta_{\zni-\lfloor \log_\theta n-\beta \log_\theta\log_\theta n \rfloor}.
			\ee 
			Then in $\mathcal M^\#_{\Z^*}$ $($the space of boundedly finite measures on $\Z^*=\Z\cup\{\infty\}$ with the metric defined in Section~\ref{sec:results}$)$, $\mathcal P^{(n_\ell)}$ converges weakly to $\mathcal P^\eps$ as $\ell\to \infty$. Equivalently, for any $i<i'\in\Z$, jointly as $\ell\to \infty$,
			\be 
			(X^{(n_\ell)}_i,X^{(n_\ell)}_{i+1},\ldots,X^{(n_\ell)}_{i'-1},X^{(n_\ell)}_{\geq i'})\toindis (\mathcal P^\eps(i),\mathcal P^\eps(i+1),\ldots,\mathcal P^\eps (i'-1),\mathcal P^\eps([i',\infty))).
			\ee 
		\end{theorem}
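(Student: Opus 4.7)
The plan is to copy the factorial-moment strategy used for Theorem~\ref{thrm:mainatom} almost verbatim, replacing the atom asymptotic $p_{\geq k}\sim q_0\theta^{-k}$ by a Beta-specific asymptotic $p_{\geq k}\sim c_{\alpha,\beta,\theta}\theta^{-k}k^{-\beta}$ and absorbing the polynomial factor $k^{-\beta}$ into the enlarged deterministic shift $\beta\log_\theta\log_\theta n$ built into~\eqref{eq:epsnbeta}. The Beta distribution belongs to~\ref{ass:weightweibull} and, as verified just after~\eqref{eq:betacdf}, also satisfies~\ref{ass:weightzero}, so the joint tail estimate of Proposition~\ref{lemma:degprobasymp} and the factorial-moment machinery of Proposition~\ref{prop:factmean} are available without modification. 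The target is to show that, for any $i<i'\in\Z$ and nonnegative integers $a_i,\ldots,a_{i'}$,
\begin{align}
\E{\Big(X^{(n_\ell)}_{\geq i'}\Big)_{a_{i'}}\prod_{k=i}^{i'-1}\Big(X^{(n_\ell)}_k\Big)_{a_k}}\to \Big(c_{\alpha,\beta,\theta}\theta^{-i'+\eps}\Big)^{a_{i'}}\prod_{k=i}^{i'-1}\Big(c_{\alpha,\beta,\theta}(1-\theta^{-1})\theta^{-k+\eps}\Big)^{a_k},
\end{align}
which are precisely the joint factorial moments of independent Poissons with the $\mathcal{P}^\eps$-masses on the cells $\{i\},\ldots,\{i'-1\},[i',\infty)$; the method of moments then yields weak convergence $\mathcal{P}^{(n_\ell)}\Rightarrow\mathcal{P}^\eps$ in $\mathcal{M}^\#_{\Z^*}$, which by Theorem~11.1.VII of~\cite{DaleyVere-JonesII} is equivalent to the stated finite-dimensional convergence.

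The first (and main new) step is the sharp asymptotic for $p_{\geq k}$. Substituting $w=1-u$ into~\eqref{eq:pk} and using $\log\frac{1-u}{\theta-u}=-\log\theta-(1-\theta^{-1})u+O(u^2)$ as $u\downarrow 0$ rewrites
\begin{align}
p_{\geq k}=\theta^{-k}\frac{\Gamma(\alpha+\beta)}{\Gamma(\alpha)\Gamma(\beta)}\int_0^1\e^{-k(1-\theta^{-1})u(1+O(u))}u^{\beta-1}(1-u)^{\alpha-1}\,\d u,
\end{align}
which, after rescaling $u\mapsto v/(k(1-\theta^{-1}))$ and applying dominated convergence against the integrable $v^{\beta-1}\e^{-v}$, produces $p_{\geq k}=c_{\alpha,\beta,\theta}\theta^{-k}k^{-\beta}(1+O(1/k))$; an identical analysis of $p_k=p_{\geq k}-p_{\geq k+1}$ gives $p_k=c_{\alpha,\beta,\theta}(1-\theta^{-1})\theta^{-k}k^{-\beta}(1+O(1/k))$. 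With $k_n:=\lfloor\log_\theta n-\beta\log_\theta\log_\theta n\rfloor+i$, the factor $k_n^{-\beta}\sim(\log_\theta n)^{-\beta}$ exactly cancels the $(\log_\theta n)^\beta$ produced by the extra $\beta\log_\theta\log_\theta n$ shift, so $np_{\geq k_n}\to c_{\alpha,\beta,\theta}\theta^{-i+\eps}=\E{\mathcal{P}^\eps([i,\infty))}$ and $np_{k_n+j}\to c_{\alpha,\beta,\theta}(1-\theta^{-1})\theta^{-(i+j)+\eps}=\E{\mathcal{P}^\eps(i+j)}$ along $(n_\ell)$.

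The second step reuses the factorial-moment computation from the proof of Proposition~\ref{prop:factmean} without essential change. Expanding each $X^{(n)}_{\geq i'}$ and $X^{(n)}_k$ as a sum of indicators over distinct vertices, the factorial moment becomes a sum over ordered $a$-tuples (with $a=a_{i'}+\sum_k a_k$) of joint tail probabilities. Proposition~\ref{lemma:degprobasymp} factorises these into products of one-vertex probabilities up to an error $1+o(n^{-\beta_0})$ for some $\beta_0>0$, the combinatorial prefactor $(n)_a/n^a\to 1$, and the admissibility hypothesis $k_n<(\theta/(\theta-1))\log n$ is comfortably met since $k_n\sim\log n/\log\theta$ and the convexity inequality $\log\theta>1-1/\theta$ gives $1/\log\theta<\theta/(\theta-1)$. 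Combining with Step~1 yields the displayed factorial-moment limit. I expect the main obstacle to be the quantitative control in Step~1: the Laplace-method remainder must be $o(1)$ uniformly over the range of $k$ appearing in the sum, and the factorisation error in Proposition~\ref{lemma:degprobasymp} must dominate the $\Theta(n^a)$ tuple count, so both corrections have to be tracked explicitly rather than bounded by a blunt dominated-convergence argument --- the extra $k^{-\beta}$ factor forces this accounting to be slightly more delicate than in the atom case, but the structure of the proof is otherwise identical.
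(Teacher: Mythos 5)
Your overall architecture is exactly the paper's: establish sharp asymptotics for $p_k$ and $p_{\geq k}$ (the paper's Lemma~\ref{lemma:betapkasymp}), feed them into the factorial-moment machinery built on Proposition~\ref{lemma:degprobasymp} (the paper's Proposition~\ref{prop:betafactmean}), and conclude by the method of moments as in the proof of Theorem~\ref{thrm:mainatom}. Your bookkeeping is also right: the constant $c_{\alpha,\beta,\theta}=\Gamma(\alpha+\beta)\Gamma(\alpha)^{-1}(1-\theta^{-1})^{-\beta}$, the cancellation of $k_n^{-\beta}\sim(\log_\theta n)^{-\beta}$ against the $(\log_\theta n)^{\beta}$ coming from the extra shift, the admissibility $1/\log\theta<\theta/(\theta-1)$, and the identification of the limits with the factorial moments of $\mathcal P^\eps(i),\ldots,\mathcal P^\eps([i',\infty))$ are all as in the paper.

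The one genuinely different ingredient is how you prove the key asymptotic $p_{\geq k}=c_{\alpha,\beta,\theta}k^{-\beta}\theta^{-k}(1+\mathcal O(1/k))$. The paper computes the integral exactly via Euler's integral representation of the hypergeometric function, followed by Euler and Pfaff transformations and a term-by-term comparison of the resulting power series with $\sum_j\beta^{(j)}\theta^{-j}/j!=(\theta/(\theta-1))^{\beta}$; this is an identity-driven argument that even yields a closed form when $\alpha+\beta=1$. You instead substitute $w=1-u$, expand $\log\frac{1-u}{\theta-u}$ near $u=0$, rescale $u\mapsto v/(k(1-\theta^{-1}))$ and invoke Watson's lemma. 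Your route is more elementary and more portable (it would work for any weight density behaving like $(1-x)^{\beta-1}$ near $1$ up to lower-order corrections), at the cost of having to justify uniformity of the remainder: you need to truncate at some small $\delta$ so that $k\,\mathcal O(u^2)\leq\tfrac12 k(1-\theta^{-1})u$ on $[0,\delta]$ (giving an integrable dominating function after rescaling) and to note that the contribution from $u\geq\delta$ is $\mathcal O(((1-\delta)/(\theta-\delta))^k)$, exponentially smaller than $\theta^{-k}$. With that, the $1+\mathcal O(1/k)$ relative error you need (which becomes $\mathcal O(1/\log n)$ for $k\asymp\log_\theta n$, matching the error in Proposition~\ref{prop:betafactmean}) follows, and the rest of your argument goes through verbatim. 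The concern you raise about the factorisation error versus the $\Theta(n^a)$ tuple count is not actually an obstacle: Proposition~\ref{lemma:degprobasymp} gives a \emph{multiplicative} error $1+o(n^{-\beta_0})$ uniformly over $m_\ell<c\log n$, so it multiplies the already-bounded main term $(n)_K\prod_u p_{\geq m_u}$ rather than being summed $n^a$ times.
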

		
		We remark that the second-order term $\beta \log_\theta\log_\theta n$ is established in Theorem~\ref{thrm:mainweibull} as well and that the above theorem recovers this result and extends it to the random third-order term, which is similar to the result in Theorem~\ref{thrm:mainatom}.
		
		\begin{theorem}\label{thrm:betamaxtail}
			Consider the WRT model in Definition~\ref{def:WRT} with vertex-weights $(W_i)_{\inn}$ whose distribution satisfies~\eqref{eq:betacdf} for some $\alpha,\beta>0$, and recall $\eps_n$ and $c_{\alpha,\beta,\theta}$ from~\eqref{eq:epsnbeta}. Then, for any integer-valued $i_n$ with $i_n= \delta\log_\theta n+o(\log n)$ for some $\delta\in[0,\theta\log\theta/(\theta-1)-1)$ and $\liminf_{n\to\infty}i_n>-\infty$,
			\be \ba
			\mathbb P{}&\Big(\max_{\jnn}\znj\geq \lfloor \log_\theta n-\beta \log_\theta\log_\theta n\rfloor +i_n\Big)=\big(1-\exp\big(-c_{\alpha,\beta,\theta}(1+\delta)^{-\beta}\theta^{-i_n+\eps_n}\big)\big)(1+o(1)).
			\ea \ee 
			Moreover, let $\M_n\subseteq [n]$ denote the (random) set of vertices that attain the maximum degree in $T_n$, fix $\eps\in[0,1]$ and let $(n_\ell)_{\ell\in\N}$ be a positive integer sequence such that $\eps_{n_\ell}\to\eps$ as $\ell\to\infty$. Then, $|\M_{n_\ell}|\toindis M_\eps$, where $M_\eps$ has distribution
			\be
			\P{M_\eps=k}=\sum_{j\in\Z}\frac{1}{k!}\big(c_{\alpha,\beta,\theta}(1-\theta^{-1})\theta^{-j+\eps}\Big)^k\exp\big(-c_{\alpha,\beta,\theta}\theta^{-j+\eps}\big),\qquad k\in\N.
			\ee
		\end{theorem}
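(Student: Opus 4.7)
The plan is to mirror the proof of Theorem~\ref{thrm:maxtail} (the~\ref{ass:weightatom} case), with the Beta-specific $p_{\geq k}$ asymptotic replacing the one used there and Theorem~\ref{thrm:betappp} playing the role of Theorem~\ref{thrm:mainatom}. Set $K_n := \lfloor \log_\theta n - \beta \log_\theta\log_\theta n \rfloor + i_n$ and note that the first claim is equivalent to computing $\P{X^{(n)}_{\geq i_n}\geq 1}$, while for the second claim the disjoint decomposition $\{|\M_n|=k\}=\bigsqcup_{j\in\Z}\{X^{(n)}_j = k,\,X^{(n)}_{\geq j+1}=0\}$ holds. Both claims thus reduce to controlling the joint law of $(X^{(n)}_j, X^{(n)}_{\geq j})$ via the tail-degree and factorial-moment machinery of element~\ref{item:i}.

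For the tail asymptotic, I would first establish the Beta analogue of element~\ref{item:ii}, namely $p_{\geq k} \sim c_{\alpha,\beta,\theta}\theta^{-k}k^{-\beta}$ as $k\to\infty$, directly from the explicit Beta density via a Laplace-type estimate of the integral defining $p_{\geq k}$ in~\eqref{eq:pk}. Substituting $K_n \sim (1+\delta)\log_\theta n$ yields $np_{\geq K_n} = c_{\alpha,\beta,\theta}(1+\delta)^{-\beta}\theta^{-i_n+\eps_n}(1+o(1))$; the constraint $\delta < \theta\log\theta/(\theta-1)-1$ is precisely what keeps $K_n \leq c\log n$ for some $c<\theta/(\theta-1)$, so element~\ref{item:i} applies. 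Combining with the sum-of-indicators representation of $X^{(n)}_{\geq i_n}$, this gives the Beta analogue of~\eqref{eq:factmean}: $\E{(X^{(n)}_{\geq i_n})_r} = \mu_n^r(1+o(1))$, where $\mu_n := c_{\alpha,\beta,\theta}(1+\delta)^{-\beta}\theta^{-i_n+\eps_n}$. A standard Bonferroni two-sided inequality in the factorial moments then yields $\P{X^{(n)}_{\geq i_n}\geq 1} = (1-\e^{-\mu_n})(1+o(1))$. The hypothesis $\liminf i_n > -\infty$ keeps $\mu_n$ bounded above so no truncation of $\mu_n$ is needed; when $i_n \to \infty$ both sides are asymptotic to $\mu_n$ and the identity holds trivially.

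For the distribution of $|\M_{n_\ell}|$, the joint weak convergence in Theorem~\ref{thrm:betappp} combined with the independence of the Poisson bins of $\cP^\eps$ gives, for any finite window $[-J,J]$ and as $\ell\to\infty$,
\begin{equation*}
\sum_{j=-J}^J\P{X^{(n_\ell)}_j = k,\, X^{(n_\ell)}_{\geq j+1}=0}\;\longrightarrow\; \sum_{j=-J}^J\frac{(c_{\alpha,\beta,\theta}(1-\theta^{-1})\theta^{-j+\eps})^k}{k!}\exp\Big(-\sum_{l\geq j}c_{\alpha,\beta,\theta}(1-\theta^{-1})\theta^{-l+\eps}\Big).
\end{equation*}
The geometric-sum identity $\sum_{l\geq j}(1-\theta^{-1})\theta^{-l+\eps} = \theta^{-j+\eps}$ collapses the exponent to $-c_{\alpha,\beta,\theta}\theta^{-j+\eps}$, reproducing the formula in~\eqref{eq:meps} (with the correct Beta prefactor) after letting $J\to\infty$.

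The main obstacle is the uniform control needed to exchange the limit and the infinite sum over $j\in\Z$. For the upper tail $j\to+\infty$ this is easy since $\P{X^{(n)}_j\geq 1}\leq \E{X^{(n)}_j}$ decays geometrically in $j$ uniformly in $n$ by the Beta $p_{\geq k}$ asymptotic. The lower tail $j\to-\infty$ is delicate because both $\E{X^{(n)}_j}$ and $\E{X^{(n)}_{\geq j+1}}$ grow like $\theta^{-j}$, so the probability of having exactly $k$ vertices at level $j$ and none above is a competition between a polynomially large factor $(\theta^{-j})^k$ and an exponentially small factor $\P{X^{(n)}_{\geq j+1}=0}$. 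I would bound the latter via a Chung-Erd\H{o}s / second-factorial-moment argument to obtain $\P{X^{(n)}_{\geq j+1}=0}\leq \exp(-c\theta^{-j})$ for some $c>0$, uniformly in $n$ large; this yields a super-exponentially small summable majorant and the interchange is then justified by dominated convergence.
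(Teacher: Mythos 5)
Your overall architecture matches the route the paper intends for this theorem (the paper omits the proof, stating that it follows the proof of Theorem~\ref{thrm:maxtail} with Proposition~\ref{prop:betafactmean} and Lemma~\ref{lemma:betapkasymp} in place of Proposition~\ref{prop:factmean} and Theorem~\ref{thrm:pkasymp}): the computation $np_{\geq K_n}=c_{\alpha,\beta,\theta}(1+\delta)^{-\beta}\theta^{-i_n+\eps_n}(1+o(1))$, the role of the constraint on $\delta$, and the disjoint decomposition over $\mathcal E_{j,k}=\{X^{(n)}_j=k,\,X^{(n)}_{\geq j+1}=0\}$ are all right. For the first claim your unified Bonferroni argument is a legitimate variant of the paper's case split ($i_n=\mathcal O(1)$ handled via the point-process limit and a subsubsequence argument; $i_n\to\infty$ via the two-term bound $\E{X}-\tfrac12\E{(X)_2}\leq\P{X>0}\leq\E{X}$); just be aware that when $\mu_n\asymp 1$ you genuinely need the full alternating inequalities with all factorial moments up to a large fixed order, not only two.

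The one step that would fail as written is the lower-tail bound $\P{X^{(n)}_{\geq j+1}=0}\leq\exp(-c\theta^{-j})$ ``via Chung--Erd\H os / second-factorial-moment''. Second-moment methods give $\P{X=0}\leq\Var(X)/\E{X}^2$, which with $\E{(X)_2}=(\E X)^2(1+o(1))$ yields only $\P{X^{(n)}_{\geq j+1}=0}=\mathcal O(\theta^{j})$ --- geometrically, not super-exponentially, small. A super-exponential bound would require something like Janson's or Suen's inequality exploiting the negative dependence of the indicators. The geometric bound is still summable over $j\leq -M$, so your plan is repairable, but you would then also have to worry about uniformity of the factorial-moment asymptotics in $n$ over the whole range of negative $j$ (Proposition~\ref{prop:betafactmean} requires $\log_\theta n+i_n>0$ and its error term degrades as $|i_n|$ grows). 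The paper's template sidesteps all of this with a containment argument: on $\mathcal E_{j,k}$ with $j\leq -(M+1)$ one has $X^{(n)}_{\geq -M}\leq X^{(n)}_{\geq j+1}=0$, so $\P{\bigcup_{j\leq-(M+1)}\mathcal E_{j,k}}\leq\P{X^{(n)}_{\geq -M}=0}$, a single event whose probability converges (by Theorem~\ref{thrm:betappp}) to $\exp(-c_{\alpha,\beta,\theta}\theta^{M+\eps})\to 0$ as $M\to\infty$. This requires the limit theorem only at the fixed index $-M$ and removes the need for any uniform-in-$j$ majorant; I recommend adopting it.
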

		
		\begin{theorem}\label{thrm:betaasympnorm}
			Consider the WRT model in Definition~\ref{def:WRT} with vertex-weights $(W_i)_{\inn}$ whose distribution satisfies~\eqref{eq:betacdf} for some $\alpha,\beta>0$ and recall $\eps_n$ and $c_{\alpha,\beta,\theta}$ from~\eqref{eq:epsnbeta}. Then, for an integer-valued $i_n\to-\infty$ such that $i_n=o(\log\log n)$, 
			\be 
			\frac{X^{(n)}_{i_n}-c_{\alpha,\beta,\theta}(1-\theta^{-1})\theta^{-i_n+\eps_n}}{\sqrt{c_{\alpha,\beta,\theta}(1-\theta^{-1})\theta^{-i_n+\eps_n}}}\toindis N(0,1).
			\ee 
		\end{theorem}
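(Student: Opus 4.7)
The plan is to follow the method-of-moments strategy used for Theorem~\ref{thrm:asympnormal}, with the Beta-specific asymptotics $p_{\geq k}\sim c_{\alpha,\beta,\theta}k^{-\beta}\theta^{-k}$ and $p_k\sim c_{\alpha,\beta,\theta}(1-\theta^{-1})k^{-\beta}\theta^{-k}$ replacing the~\ref{ass:weightatom} asymptotics; the polynomial factor $k^{-\beta}$ is precisely what produces the extra $\beta\log_\theta\log_\theta n$ in the centering of $X^{(n)}_{i_n}$. Set $\mu_n:=c_{\alpha,\beta,\theta}(1-\theta^{-1})\theta^{-i_n+\eps_n}$. The goal is to show that the factorial moments of $X^{(n)}_{i_n}$ match those of $\Poi(\mu_n)$ up to a $1+o(1)$ multiplicative error, then invoke the standard Poisson-to-Gaussian passage: since $i_n=o(\log\log n)$ forces $\mu_n=(\log n)^{o(1)}$, and in particular $\mu_n\to\infty$, this matching upgrades to convergence of all ordinary moments of $(X^{(n)}_{i_n}-\mu_n)/\sqrt{\mu_n}$ to the Gaussian moments, which yields the CLT by the method of moments.

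First I would establish the Beta analogue of~\eqref{eq:factmean}: for each fixed $k\in\N$ and uniformly over integer-valued $i_n\to-\infty$ with $i_n=o(\log\log n)$,
\[
\E{\big(X^{(n)}_{i_n}\big)_k}=\mu_n^k(1+o(1)).
\]
Writing $X^{(n)}_{i_n}=\sum_{j\in[n]}\1{\znj=m_n}$ with $m_n:=\lfloor\log_\theta n-\beta\log_\theta\log_\theta n\rfloor+i_n$, the descending factorial becomes $(n)_k$ times the joint probability $\P{\znj=m_n\text{ for all }j\in V}$ for a uniformly chosen $k$-set $V$ of distinct vertices. The Beta analogue of item~\ref{item:i} of Section~\ref{sec:overview} (a multivariate degree-tail estimate with rate $1+o(n^{-\beta_0})$ for some $\beta_0>0$, proved by the methods of Section~\ref{sec:ex}) reduces this joint probability to $p_{m_n}^k(1+o(n^{-\beta_0}))$, and the Beta asymptotic of $p_k$ gives $np_{m_n}=\mu_n(1+o(1))$. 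Combining with $(n)_k\sim n^k$ yields the claim. The regime $i_n=o(\log\log n)$ is the range in which $\mu_n^k$ stays polylogarithmic, so the errors $\mu_n^k\cdot o(n^{-\beta_0})$ and $\mu_n^k\cdot o((\log\log n)/\log n)$ remain negligible for each fixed $k$.

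Having matched factorial moments, I would convert to ordinary moments via Stirling numbers of the second kind, $x^r=\sum_{k=0}^rS(r,k)(x)_k$, and exploit the fact that for $Z_\mu\sim\Poi(\mu)$ the centred moments $\E{(Z_\mu-\mu)^r}$ are polynomials in $\mu$ of degree $\lfloor r/2\rfloor$, hence $\E{\big((Z_\mu-\mu)/\sqrt{\mu}\big)^r}\to\E{Z^r}$ for $Z\sim N(0,1)$ as $\mu\to\infty$. The $1+o(1)$ matching from Step~1, together with $\mu_n^r=(\log n)^{o(1)}$ for each fixed $r$, ensures that the same limit holds when $Z_\mu$ is replaced by $X^{(n)}_{i_n}$ and $\mu$ by $\mu_n$. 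Since the standard normal law is determined by its moments, the method of moments concludes $(X^{(n)}_{i_n}-\mu_n)/\sqrt{\mu_n}\toindis N(0,1)$.

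I expect the technical heart to lie in Step~1, namely in porting Proposition~\ref{prop:factmean} to the Beta setting while tracking both the polynomial correction $k^{-\beta}$ inside $p_k$ and the multiplicative $1+o(n^{-\beta_0})$ error of the multivariate tail estimate uniformly in $i_n$. The condition $i_n=o(\log\log n)$ here is sharper than the $o(\log n\wedge|\log r_{\log_\theta n}|)$ of Theorem~\ref{thrm:asympnormal} precisely because the Beta correction to $p_k$ is only polynomial in $k$, rather than the super-polynomially sharp behaviour the atom at one generates in the~\ref{ass:weightatom} case: the polylogarithmic control on $\mu_n$ is what keeps the error in $p_{m_n}\sim\mu_n/n$ compatible with a fixed-order moment analysis.
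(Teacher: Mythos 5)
Your overall route is the same as the paper's: establish the Beta analogue of the factorial-moment estimate (this is exactly Proposition~\ref{prop:betafactmean}, with multiplicative error $\mathcal O\big(\tfrac{\log\log n}{\log n}\big)$ in the regime $i_n=o(\log\log n)$, $\delta=0$) and then pass from factorial moments to asymptotic normality; the paper does the latter via \cite[Theorem $1.24$]{Bol01} as in the proof of Theorem~\ref{thrm:asympnormal}. So the architecture is right, and the quantitative ingredients you assemble are in fact sufficient.

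There is, however, a genuine gap in your final step. You assert that matching the factorial moments of $X^{(n)}_{i_n}$ with those of $\Poi(\mu_n)$ ``up to a $1+o(1)$ multiplicative error'' suffices to transfer the Poisson-to-Gaussian limit. It does not. Writing $\E{\big(X^{(n)}_{i_n}\big)_k}=\mu_n^k(1+\epsilon_k)$ with only $\epsilon_k=o(1)$, the centred and rescaled first moment is $\mu_n\epsilon_1/\sqrt{\mu_n}=\epsilon_1\sqrt{\mu_n}$, which need not tend to zero when $\mu_n\to\infty$; likewise $\mathrm{Var}\big(X^{(n)}_{i_n}\big)/\mu_n=1+\mu_n(\epsilon_2-2\epsilon_1)+o(1)$ need not tend to $1$. (A shifted Poisson $Z_{\mu_n}+\lfloor\mu_n^{3/4}\rfloor$ satisfies the $1+o(1)$ matching but fails the CLT with your centering.) The condition actually needed — and the one verified in the paper's proof of Theorem~\ref{thrm:asympnormal} before invoking \cite[Theorem $1.24$]{Bol01} — is a bound on the \emph{absolute} errors, namely $\E{\big(X^{(n)}_{i_n}\big)_a}-\mu_n^a=o\big(\mu_n^{-b}\big)$ for all $1\le a\le b$. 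Your own intermediate observation already delivers this: the absolute error is $\mu_n^a\cdot\mathcal O\big(\tfrac{\log\log n}{\log n}\big)=\mathcal O\big((\log n)^{o(1)-1}\log\log n\big)=o(1)$, while $\mu_n^{-b}=\theta^{\,b(i_n-\eps_n)}(c_{\alpha,\beta,\theta}(1-\theta^{-1}))^{-b}=(\log n)^{-o(1)}$ decays slower than any fixed power of $\log n$ precisely because $i_n=o(\log\log n)$; hence the required comparison holds. So the fix is one line, but as written the deduction from ``$1+o(1)$ matching'' to the CLT is invalid, and the statement of what your Step~1 must deliver should be strengthened accordingly.
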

		
		The three theorems are the analogue of Theorems~\ref{thrm:mainatom},~\ref{thrm:maxtail} and~\ref{thrm:asympnormal}, respectively, where we now consider vertex-weights distributed according to a beta distribution rather than a distribution with an atom at one.		
	\end{example} 
	
	\begin{example}[Fraction of `gamma' random variables]\label{ex:gumb}
		We consider a random variable $W$ with a tail distribution 
		\be \label{eq:gumbex}
		\P{W\geq x}=(1-x)^{-b}\e^{-x/(c_1(1-x))},\qquad x\in[0,1),
		\ee
		for some $b\in\R,c_1>0$ with $bc_1\leq 1$. The random variable $(1-W)^{-1}$ belongs to the Gumbel maximum domain of attraction, as 
		\be 
		\P{(1-W)^{-1}\geq x}=\P{W\geq 1-1/x}=\e^{1/c_1}x^b\e^{-x/c_1},\qquad x\geq 1,
		\ee 
		so that $W$ belongs to the Gumbel MDA as well by~\cite[Lemma $2.6$]{LodOrt21}, and satisfies the \ref{ass:weightgumbel}-\ref{ass:weighttaufin} sub-case with $a=\e^{1/c_1},b\in\R,c_1>0$, and $\tau=1$. As a result, $X:=(1-W)^{-1}$ is a `gamma' random variable in the sense that its tail distribution is asymptotically equal to that of a gamma random variable, up to constants. We can then write $W=(X-1)/X$, a fraction of these `gamma' random variables.
		
		Let $f_W$ denote the probability density function of $W$. It follows from~\eqref{eq:gumbex} that 
		\be 
		f_W(x)=(1/c_1 – b(1-x)) (1-x)^{-(b+2)} \e^{ - x/ (c_1 (1-x))}.
		\ee 
		Observe that the requirement $bc_1\leq 1$ ensures that $f_W(x)\geq 0$ for all $x\in[0,1)$. As $\lim_{x\uparrow 1}f_W(x)=0$ and the density is bounded from above in a neighbourhood of the origin, it follows that $f_W(x)\leq C$ for some $C>0$ and all $x\in[0,1)$. As a result, this distribution satisfies condition~\ref{ass:weightzero} of Assumption~\ref{ass:weights} with $\rho=1$, $x_0\in(0,1)$ arbitrary, and some sufficiently large $C>0$.
		
		Recall $C_{\theta,\tau,c_1}$ from~\eqref{eq:gumbrv2nd}. We set, for $\theta:=1+\E W\in(1,2),$
		\be \label{eq:c}
		C:=\e^{c_1^{-1}(1-\theta^{-1})/2}\sqrt{\pi}c_1^{-1/4+b/2}(1-\theta^{-1})^{1/4+b/2},\qquad \text{and}\quad c_{c_1,b,\theta}:=C\theta^{C_{\theta,1,c_1}^2/2},
		\ee
		and
		\be\ba\label{eq:epsngamma}
		X^{(n)}_i:={}&\big|\big\{\jnn: \znj=\big\lfloor \log_\theta n-C_{\theta,1,c_1}\sqrt{\log_\theta n}+(b/2+1/4) \log_\theta \log_\theta n \big\rfloor +i\big\}\big|,\\ 
		X^{(n)}_{\geq i}:={}&\big|\big\{\jnn: \znj\geq \big\lfloor \log_\theta n-C_{\theta,1,c_1}\sqrt{\log_\theta n}+(b/2+1/4) \log_\theta \log_\theta n \big\rfloor +i\big\}\big|,\\
		\eps_n:={}&\big(\log_\theta n-C_{\theta,1,c_1}\sqrt{\log_\theta n}+(b/2+1/4) \log_\theta \log_\theta n\big )\\
		&-\big\lfloor\log_\theta n-C_{\theta,1,c_1}\sqrt{\log_\theta n}+(b/2+1/4) \log_\theta \log_\theta n \big \rfloor.
		\ea\ee 
		Then, we can formulate the following results.
		
		\begin{theorem}\label{thrm:gumbppp}
			Consider the WRT model in Definition~\ref{def:WRT} with vertex-weights $(W_i)_{\inn}$ whose distribution satisfies~\eqref{eq:gumbex} for some $b\in\R,c_1>0$ such that $bc_1\leq 1$, and recall $C_{\theta,\tau,c_1}$ from~\eqref{eq:gumbrav2nd}. Then, 
			\be \label{eq:gumb3rd}
			\max_{\inn}\frac{\zni-\log_\theta n+C_{\theta,1,c_1}\sqrt{\log_\theta n}}{\log_\theta \log_\theta n}\toinp \frac{b}{2}+\frac 14. 
			\ee 
			Furthermore, recall $\eps_n$ from~\eqref{eq:epsngamma}, fix $\eps\in[0,1]$, and let $(n_\ell)_{\ell\in\N}$ be an increasing integer sequence such that $\eps_{n_\ell}\to \eps$ as $\ell \to \infty$. Let $\mathcal P$ be a Poisson point process on $\R$ with intensity measure $\lambda(x)=c_{c_1,b,\theta}\theta^{-x}\log \theta\, \d x$, where we recall $c_{c_1,b,\theta}$ from~\eqref{eq:c}. Define
			\be 
			\mathcal P^\eps:=\sum_{x\in \mathcal P}\delta_{\lfloor x+\eps\rfloor} \quad \text{ and }\quad \mathcal P^{(n)}:=\sum_{\inn}\delta_{\zni-\lfloor \log_\theta n-C_{\theta,1,c_1}\sqrt{\log_\theta n}+(b/2+1/4) \log_\theta \log_\theta n \rfloor}.
			\ee 
			Then in $\mathcal M^\#_{\Z^*}$ $($the space of boundedly finite measures on $\Z^*=\Z\cup\{\infty\}$ with the metric defined in Section~\ref{sec:results}$)$, $\mathcal P^{(n_\ell)}$ converges weakly to $\mathcal P^\eps$ as $\ell\to \infty$. Equivalently, for any $i<i'\in\Z$, jointly as $\ell\to \infty$,
			\be 
			(X^{(n_\ell)}_i,X^{(n_\ell)}_{i+1},\ldots,X^{(n_\ell)}_{i'-1},X^{(n_\ell)}_{\geq i'})\toindis (\mathcal P^\eps(i),\mathcal P^\eps(i+1),\ldots,\mathcal P^\eps (i'-1),\mathcal P^\eps([i',\infty))).
			\ee 
		\end{theorem}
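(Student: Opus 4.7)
The plan is to reduce both conclusions of Theorem~\ref{thrm:gumbppp} to a single sharp asymptotic for the tail $p_{\geq k}$ defined in~\eqref{eq:pk}, and then feed this into the general machinery built in Section~\ref{sec:taildeg} exactly as was done for Theorems~\ref{thrm:mainatom}--\ref{thrm:maingumbel} in Section~\ref{sec:mainproof}. Concretely, the key estimate to establish is
\begin{equation*}
p_{\geq k} = C\, k^{b/2+1/4}\, \theta^{-k}\exp\!\Big(-2\sqrt{k(1-\theta^{-1})/c_1}\Big)\big(1+o(1)\big)\qquad\text{as }k\to\infty,
\end{equation*}
with $C$ the constant of~\eqref{eq:c}. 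This plays the role for the present gamma-type density that Theorem~\ref{thrm:pkasymp} plays in the~\ref{ass:weightatom} case and supplies ingredient~\ref{item:ii} of the strategy summarised in Section~\ref{sec:overview}.

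To obtain this asymptotic, I would substitute $y=1-x$ in $p_{\geq k}=\int_0^1(x/(\theta-1+x))^k f_W(x)\,\d y$, extract $\theta^{-k}$ from $((1-y)/(\theta-y))^k$, and rewrite $\exp(-x/(c_1(1-x)))=e^{1/c_1}\exp(-1/(c_1y))$. The expansion $\log((1-y)/(1-y/\theta))=-y(1-\theta^{-1})-y^2(1-\theta^{-2})/2-O(y^3)$ identifies the effective phase $\phi(y)=ky(1-\theta^{-1})+1/(c_1y)$, which has a unique interior saddle at $y^\star=1/\sqrt{kc_1(1-\theta^{-1})}$ with $\phi(y^\star)=2\sqrt{k(1-\theta^{-1})/c_1}$. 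A standard Laplace approximation around $y^\star$ yields the $k^{b/2+1/4}$ prefactor and the constants $\sqrt\pi c_1^{b/2-1/4}(1-\theta^{-1})^{b/2+1/4}$. The delicate point is the $O(1)$ correction: at the saddle, the quadratic term $ky^2(1-\theta^{-2})/2$ evaluates to $(1+\theta^{-1})/(2c_1)$ and must be retained, while the cubic and higher terms are $O(k(y^\star)^3)=o(1)$ across the Gaussian window of width $\sigma=O(k^{-3/4})$ around $y^\star$; combined with the $e^{1/c_1}$ factor this produces exactly the $\exp((1-\theta^{-1})/(2c_1))$ appearing in $C$. Tails outside $[y^\star/K,Ky^\star]$ are discarded by crude bounds on $\phi$.

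With this in hand, the convergence~\eqref{eq:gumb3rd} follows the template used for Theorem~\ref{thrm:maingumbel}: for $k_n^\pm:=\lfloor\log_\theta n-C_{\theta,1,c_1}\sqrt{\log_\theta n}+(b/2+1/4\pm\delta)\log_\theta\log_\theta n\rfloor$ with small $\delta>0$, the Taylor expansion $\sqrt{k_n^\pm}=\sqrt{\log_\theta n}-C_{\theta,1,c_1}/2+o(1)$ cancels the deterministic factor $\theta^{C_{\theta,1,c_1}\sqrt{\log_\theta n}}$ and produces the `free' constant $\exp(C_{\theta,1,c_1}^2\log\theta/2)=\theta^{C_{\theta,1,c_1}^2/2}$, so that $np_{\geq k_n^+}\to 0$ and $np_{\geq k_n^-}\to\infty$. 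The Chung--Erd\H{o}s and union-bound arguments codified in Lemma~\ref{lemma:maxdegwhp} then deliver the claimed in-probability limit. For the point-process statement I would apply Proposition~\ref{prop:factmean}: taking $k_n:=\lfloor\log_\theta n-C_{\theta,1,c_1}\sqrt{\log_\theta n}+(b/2+1/4)\log_\theta\log_\theta n\rfloor$, the same substitution now gives $np_{\geq k_n+i}\to c_{c_1,b,\theta}\theta^{-i+\eps}$ along any subsequence with $\eps_{n_\ell}\to\eps$, and since $p_k=p_{\geq k}-p_{\geq k+1}\sim(1-\theta^{-1})p_{\geq k}$ also $np_{k_n+i}\to c_{c_1,b,\theta}(1-\theta^{-1})\theta^{-i+\eps}$. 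Combining with Proposition~\ref{lemma:degprobasymp} shows that the joint factorial moments of $(X^{(n_\ell)}_i,\ldots,X^{(n_\ell)}_{i'-1},X^{(n_\ell)}_{\geq i'})$ converge to the factorial moments of independent Poisson variables with precisely the means $\int_{[j-\eps,j+1-\eps)}c_{c_1,b,\theta}\theta^{-x}\log\theta\,\d x$ and $\int_{[i',\infty)}c_{c_1,b,\theta}\theta^{-x}\log\theta\,\d x$; the method of moments then concludes, and weak convergence of $\mathcal P^{(n_\ell)}$ in $\cM^\#_{\Z^*}$ follows by Theorem~11.1.VII of~\cite{DaleyVere-JonesII}.

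The hard part is clearly Step~1: carrying the Laplace expansion to the precision demanded by the constant $c_{c_1,b,\theta}$, since the $e^{(1-\theta^{-1})/(2c_1)}$ inside $C$ arises only by simultaneously tracking the $O(1)$ saddle value of $ky^2$ and the explicit $1/c_1$ from the exponential shift. One must moreover propagate the $(1+o(1))$ error uniformly as $k$ varies over a window of width $O(\log\log n)$ about $\log_\theta n-C_{\theta,1,c_1}\sqrt{\log_\theta n}$, because the later arguments plug in $k=k_n+i$ with bounded $i$ and then rely on a delicate cancellation when computing $np_{\geq k}$.
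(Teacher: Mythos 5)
Your overall architecture coincides with the paper's: a sharp asymptotic for $p_{\geq k}$, then factorial moments via Proposition~\ref{lemma:degprobasymp} and an inclusion--exclusion identity, then the method of moments for the point-process statement and Lemma~\ref{lemma:maxdegwhp} for~\eqref{eq:gumb3rd}. (Note that Proposition~\ref{prop:factmean} itself is stated only for the \ref{ass:weightatom} case; what you actually describe is its analogue for this example, which is the paper's Proposition~\ref{prop:gumbfactmean}, and your derivation of it is the right one.) The genuine difference is in how the key estimate $p_{\geq k}=C k^{b/2+1/4}\theta^{-k}\exp(-2\sqrt{c_1^{-1}(1-\theta^{-1})k})(1+o(1))$ is obtained. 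The paper (Lemma~\ref{lemma:gumbpkasymp}) proceeds exactly, via the substitution $u=x/(1-x)$, a negative-binomial expansion of $(1-(\theta(1+u))^{-1})^{-k}$, recognition of the resulting integrals as confluent hypergeometric functions $U(a,d,z)$, a Kummer transform, and finally Temme's uniform asymptotics for $\Gamma(a)U(a,d,z^2)$ in terms of modified Bessel functions; this yields the explicit error $\mathcal O(1/\sqrt k)$. You instead run a direct Laplace/saddle-point analysis of the phase $\phi(y)=ky(1-\theta^{-1})+1/(c_1y)$ after the substitution $y=1-x$. I checked your computation: the saddle $y^\star=(kc_1(1-\theta^{-1}))^{-1/2}$ gives $\phi(y^\star)=C_{\theta,1,c_1}\log\theta\,\sqrt k$, the Gaussian factor $\sqrt{2\pi/\phi''(y^\star)}$ combined with $(y^\star)^{-(b+2)}/c_1$ produces exactly $\sqrt\pi\,c_1^{b/2-1/4}(1-\theta^{-1})^{b/2+1/4}k^{b/2+1/4}$, and retaining $k(y^\star)^2(1-\theta^{-2})/2=(1+\theta^{-1})/(2c_1)$ together with the shift $\e^{1/c_1}$ yields $\e^{(1-\theta^{-1})/(2c_1)}$, reproducing the constant $C$ of~\eqref{eq:c}; the cubic corrections are $O(k^{-1/4})$ across the window and the $-b(1-x)^{-(b+1)}$ part of the density contributes at relative order $y^\star=O(k^{-1/2})$, so both are absorbed into $1+o(1)$. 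Your route is more elementary and self-contained (no special-function identities), and it makes transparent where each piece of $c_{c_1,b,\theta}$ comes from; the paper's route buys a cleaner quantitative error $\mathcal O(1/\sqrt k)$ essentially for free from the Bessel asymptotics, though your Laplace expansion would deliver the same rate with one more order of bookkeeping. The remaining steps of your proposal — the Taylor cancellation $\sqrt{k_n}=\sqrt{\log_\theta n}-C_{\theta,1,c_1}/2+o(1)$ producing $\theta^{C_{\theta,1,c_1}^2/2}$, the limits $np_{\geq k_n^{\pm}}\to 0$ or $\infty$, and the Poisson factorial-moment limits $c_{c_1,b,\theta}\theta^{-i+\eps}$ — all check out and match the paper's calculations.
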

		
		We remark that the second-order term in~\eqref{eq:gumb3rd} is established in Theorem~\ref{thrm:maingumbel},~\eqref{eq:gumbrv2nd}, as well. The above theorem recovers this former result and extends it to the third-order rescaling and to the random fourth-order term, which is similar to the result in Theorem~\ref{thrm:mainatom}.
		
		\begin{theorem}\label{thrm:gumbmaxtail}
			Consider the WRT model in Definition~\ref{def:WRT} with vertex-weights $(W_i)_{\inn}$ whose distribution satisfies~\eqref{eq:gumbex} for some $b\in \R,c_1>0$ such that $bc_1\leq 1$, and recall $C_{\theta,\tau,c_1}$, $c_{c_1,b,\theta}$, and $\eps_n$ from~\eqref{eq:gumbrav2nd}, \eqref{eq:c}, and~\eqref{eq:epsngamma}, respectively. Then, for any integer-valued $i_n$ with $i_n= \delta\sqrt{\log_\theta n}+o(\sqrt{\log n})$ for some $\delta\geq 0$ and $\liminf_{n\to\infty}i_n>-\infty$,
			\be \ba
			\mathbb P{}&\Big(\max_{\jnn}\znj\geq \lfloor \log_\theta n-C_{\theta,1,c_1}\sqrt{\log_\theta n}+(b/2+1/4) \log_\theta \log_\theta n \rfloor+i_n\Big)\\
			&=\big(1-\exp\big(-c_{c_1,b,\theta}\theta^{-i_n+\eps_n-\delta C_{\theta,1,c_1}/2}\big)\big)(1+o(1)).
			\ea \ee 
			Moreover, let $\M_n\subseteq [n]$ denote the $($random$)$ set of vertices that attain the maximum degree in $T_n$, fix $\eps\in[0,1]$ and let $(n_\ell)_{\ell\in\N}$ be a positive integer sequence such that $\eps_{n_\ell}\to\eps$ as $\ell\to\infty$. Then, $|\M_{n_\ell}|\toindis M_\eps$, where $M_\eps$ has distribution
			\be
			\P{M_\eps=k}=\sum_{j\in\Z}\frac{1}{k!}\big(c_{c_1,b,\theta}(1-\theta^{-1})\theta^{-j+\eps}\big)^k\exp\big(-c_{c_1,b,\theta}\theta^{-j+\eps}\big),\qquad k\in\N.
			\ee
		\end{theorem}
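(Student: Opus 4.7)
My plan is to mirror the proofs of Theorem~\ref{thrm:maxtail} and Theorem~\ref{thrm:betamaxtail}, the only substantial new input being a sharper asymptotic for $p_{\geq k}$ tailored to the distribution in~\eqref{eq:gumbex}. Writing $K_n := \lfloor \log_\theta n - C_{\theta,1,c_1}\sqrt{\log_\theta n} + (b/2+1/4)\log_\theta\log_\theta n \rfloor$ so that $\{\max_{\jnn}\znj \geq K_n + i_n\} = \{X^{(n)}_{\geq i_n} \geq 1\}$ by~\eqref{eq:epsngamma}, the goal is to show
\begin{equation}
\E{X^{(n)}_{\geq i_n}} = n\,p_{\geq K_n + i_n} \longrightarrow c_{c_1,b,\theta}\,\theta^{-i_n + \eps_n - \delta C_{\theta,1,c_1}/2},
\end{equation}
and that higher factorial moments of $X^{(n)}_{\geq i_n}$ factorise asymptotically in the sense of~\eqref{eq:factmean}. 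The method of moments then yields Poisson convergence of $X^{(n)}_{\geq i_n}$; the tail asymptotic follows from $\P{X^{(n)}_{\geq i_n} \geq 1} \sim 1 - \e^{-\lambda_n}$, and joint Poisson convergence of $(X^{(n)}_i)_{i\in\Z}$ identifies $|\M_n|$ with the size of the highest-level atom, reproducing~\eqref{eq:meps} verbatim with $c_{c_1,b,\theta}$ in place of $q_0$.

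The heart of the matter is a refined Laplace-method asymptotic for $p_{\geq k} = \E{(W/(\theta-1+W))^k}$. Expanding $\log(w/(\theta-1+w))$ around $w = 1$ and substituting $u = 1-w$ gives $k\log h(1-u) = -k\log\theta - k(1-\theta^{-1})u - k(1-\theta^{-2})u^2/2 + O(ku^3)$. Combining this with the explicit density $f_W(1-u) = (c_1^{-1} - bu)u^{-(b+2)}\e^{-1/(c_1 u)+1/c_1}$ derived from~\eqref{eq:gumbex}, the expression for $p_{\geq k}$ becomes $\theta^{-k}\e^{1/c_1}$ times
\begin{equation}
\int_0^1 \exp\!\left(-k(1-\theta^{-1})u - \frac{1}{c_1 u} - \tfrac{k(1-\theta^{-2})}{2}u^2 + O(ku^3)\right)(c_1^{-1} - bu)\,u^{-(b+2)}\,\d u.
\end{equation}
The interior saddle of $-k(1-\theta^{-1})u - 1/(c_1 u)$ is at $u_\ast = (c_1 k(1-\theta^{-1}))^{-1/2}$, with saddle value $-2\sqrt{k(1-\theta^{-1})/c_1} = -C_{\theta,1,c_1}\log\theta\sqrt{k}$ by the $\tau = 1$ specialisation of~\eqref{eq:gumbrv2nd}. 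The quadratic Taylor term contributes a bounded multiplicative factor $\e^{-(1+\theta^{-1})/(2c_1)}$ at $u_\ast$, which combines with $\e^{1/c_1}$ to produce the factor $\e^{(1-\theta^{-1})/(2c_1)}$ appearing in $C$ in~\eqref{eq:c}. Standard Laplace asymptotics then yield
\begin{equation}
p_{\geq k} \sim C\,\theta^{-k}\,k^{b/2+1/4}\,\e^{-C_{\theta,1,c_1}\log\theta\sqrt{k}},
\end{equation}
with $C$ as in~\eqref{eq:c}; the power $k^{b/2+1/4}$ is the product of the Gaussian normalisation $k^{-3/4}$ and the singular prefactor $u_\ast^{-(b+2)} \asymp k^{(b+2)/2}$.

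To conclude, set $k = K_n + i_n$ with $i_n = \delta\sqrt{\log_\theta n} + o(\sqrt{\log n})$. Then $\sqrt{k} = \sqrt{\log_\theta n} + (\delta - C_{\theta,1,c_1})/2 + o(1)$, so $n\theta^{-k}k^{b/2+1/4} \sim \theta^{C_{\theta,1,c_1}\sqrt{\log_\theta n}}\theta^{\eps_n - i_n}$, while $\e^{-C_{\theta,1,c_1}\log\theta\sqrt{k}} \sim \theta^{-C_{\theta,1,c_1}\sqrt{\log_\theta n}}\theta^{C_{\theta,1,c_1}^2/2 - \delta C_{\theta,1,c_1}/2}$. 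The diverging $\theta^{\pm C_{\theta,1,c_1}\sqrt{\log_\theta n}}$ factors cancel exactly, the residual $\theta^{C_{\theta,1,c_1}^2/2}$ is absorbed into $c_{c_1,b,\theta}$ via~\eqref{eq:c}, and the claimed limit $n\,p_{\geq K_n + i_n} \to c_{c_1,b,\theta}\,\theta^{-i_n + \eps_n - \delta C_{\theta,1,c_1}/2}$ emerges.

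The main technical obstacle will be executing the Laplace analysis with enough uniformity in $i_n$ across the admissible range $\delta\sqrt{\log_\theta n} + o(\sqrt{\log n})$: the $O(ku^3)$ remainder must remain $o(1)$ at the saddle $u_\ast = O(k^{-1/2})$ (which it does, being $O(k^{-1/2})$), and the tail contributions $\int_0^{u_\ast/M}$ and $\int_{M u_\ast}^1$ for large $M$ must be shown to be negligible compared with the Gaussian peak. Upgrading the first-moment computation to higher factorial moments requires, additionally, that the error $1 + o(n^{-\beta})$ in Proposition~\ref{lemma:degprobasymp} stays negligible relative to $p_{\geq K_n + i_n}^r$; since each factor is $n^{-1}$ times a subpolynomial correction, the aggregate error is $O(n^{-\beta})$, preserving the Poisson limit uniformly in $i_n$ and in the choice of finitely many indices used for the joint convergence needed to pin down the distribution of $|\M_n|$.
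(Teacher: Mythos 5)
Your overall architecture is the one the paper uses -- reduce both claims to factorial moments of $X^{(n)}_{\geq i_n}$ (and of the $X^{(n)}_j$ jointly, for the law of $|\M_{n_\ell}|$) via Proposition~\ref{lemma:degprobasymp}, then feed in a sharp asymptotic for $p_{\geq k}$ -- but your route to that asymptotic is genuinely different. The paper's Lemma~\ref{lemma:gumbpkasymp} substitutes $u=x/(1-x)$, expands $(1-(\theta(1+u))^{-1})^{-k}$ via a negative binomial identity, and lands on the confluent hypergeometric function $U(k-b,-b,c_1^{-1}(1-\theta^{-1}))$, whose large-first-parameter asymptotics (via modified Bessel functions) deliver $p_{\geq k}=Ck^{b/2+1/4}\e^{-2\sqrt{c_1^{-1}(1-\theta^{-1})k}}\theta^{-k}(1+\mathcal O(1/\sqrt k))$ with the error term essentially for free. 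You instead do a direct Laplace analysis at the saddle $u_*=(c_1k(1-\theta^{-1}))^{-1/2}$, and your bookkeeping is correct: the saddle value gives $-C_{\theta,1,c_1}\log\theta\sqrt k$, the quadratic Taylor term contributes $\e^{-(1+\theta^{-1})/(2c_1)}$ which combines with $\e^{1/c_1}$ to give the $\e^{c_1^{-1}(1-\theta^{-1})/2}$ in~\eqref{eq:c}, and the exponent $k^{-3/4}\cdot k^{(b+2)/2}=k^{b/2+1/4}$ together with the powers of $c_1$ and $1-\theta^{-1}$ reproduces $C$ exactly; the subsequent cancellation of the $\theta^{\pm C_{\theta,1,c_1}\sqrt{\log_\theta n}}$ factors and absorption of $\theta^{C^2_{\theta,1,c_1}/2}$ into $c_{c_1,b,\theta}$ is also right. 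What the paper's method buys is that the uniform error control is outsourced to known asymptotics of $U$; what yours buys is transparency about where each constant comes from, at the cost of having to verify the tail estimates away from $u_*$, the $\mathcal O(ku^3)$ remainder (which is $\mathcal O(k^{-1/2})$ at the saddle, as you note), and the lower-order $-b(1-x)$ piece of the density -- all routine but your responsibility. One small point to tighten when writing this up: for $i_n\to\infty$ the Poisson parameter $\lambda_n\to 0$, so "Poisson convergence" alone does not give the first display; as in the proof of Theorem~\ref{thrm:maxtail} you need the two-sided Bonferroni bound~\eqref{eq:probbounds} with the first and second factorial moments (which your plan supplies), and the bounded-$i_n$ case is handled by the subsubsequence principle via the point-process limit.
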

		
		\begin{theorem}\label{thrm:gumbasympnorm}
			Consider the WRT model in Definition~\ref{def:WRT} with vertex-weights $(W_i)_{\inn}$ whose distribution satisfies~\eqref{eq:gumbex} for some $b\in \R,c_1>0$ such that $bc_1\leq 1$, and recall $c_{c_1,b,\theta}$ and $\eps_n$ from~\eqref{eq:c} and~\eqref{eq:epsngamma}, respectively. Then, for an integer-valued $i_n\to-\infty$ such that $i_n=o(\log\log n)$, 
			\be 
			\frac{X^{(n)}_{i_n}-c_{c_1,b,\theta}(1-\theta^{-1})\theta^{-i_n+\eps_n}}{\sqrt{c_{c_1,b,\theta}(1-\theta^{-1})\theta^{-i_n+\eps_n}}}\toindis N(0,1).
			\ee 
		\end{theorem}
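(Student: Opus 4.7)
The plan is to mirror the method-of-moments argument used to prove Theorem~\ref{thrm:asympnormal} in the~\ref{ass:weightatom} case, but feeding in the sharper asymptotic expansion of $p_{\geq k}$ available for the specific weight distribution~\eqref{eq:gumbex}. Set
\[
\mu_n:=c_{c_1,b,\theta}(1-\theta^{-1})\theta^{-i_n+\eps_n}.
\]
Under the hypothesis $i_n\to-\infty$ with $i_n=o(\log\log n)$ the mean $\mu_n$ diverges yet $\mu_n=(\log n)^{o(1)}$, so the strategy is to show that $X^{(n)}_{i_n}$ is close to a Poisson variable of mean $\mu_n$ and then invoke the classical fact that a Poisson with diverging mean becomes normal after centring by its mean and rescaling by its standard deviation.

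The first step is to prove the analogue of Proposition~\ref{prop:factmean} for the distribution~\eqref{eq:gumbex}: for every fixed $r\in\N$,
\[
\E{\big(X^{(n)}_{i_n}\big)_r}=\mu_n^r(1+o(1)).
\]
As in the derivation of~\eqref{eq:factmean}, one rewrites the $r$-th descending factorial as a sum over ordered $r$-tuples of distinct vertices of the probability that each selected vertex has in-degree equal to the target value, and applies the tail-probability estimate of item~\ref{item:i} (Proposition~\ref{lemma:degprobasymp}) to factorise this joint probability as a product of marginals up to a $1+o(n^{-\beta})$ error. Each marginal takes the form $p_{\geq m_n}-p_{\geq m_n+1}$ for the appropriate threshold $m_n$; here the precise expansion of $p_{\geq k}$ for the distribution~\eqref{eq:gumbex}, which will be worked out in Section~\ref{sec:ex} by a Laplace/saddle-point analysis of $\E{(W/(\theta-1+W))^k}$, feeds in the constant $c_{c_1,b,\theta}$ and the deterministic shifts $C_{\theta,1,c_1}\sqrt{\log_\theta n}$ and $(b/2+1/4)\log_\theta\log_\theta n$ that appear in the centering of $X^{(n)}_{i_n}$.

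The second step is the standard conversion from factorial to central moments. Using $x^k=\sum_{r=0}^{k}\stirling{k}{r}(x)_r$ to expand $\E{(X^{(n)}_{i_n}-\mu_n)^k}$, every power of $\mu_n$ below $\mu_n^{k/2}$ cancels out exactly as it would for a genuine Poisson$(\mu_n)$ variable, leaving
\[
\mu_n^{-k/2}\E{\big(X^{(n)}_{i_n}-\mu_n\big)^k}\longrightarrow\E{Z^k},\qquad Z\sim N(0,1),
\]
for every fixed $k\in\N$. Since the standard normal law is determined by its moments, this implies the claimed convergence in distribution.

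The principal technical obstacle lies in the sharpness of the $p_{\geq k}$ expansion. Because $\mu_n$ diverges, the $(1+o(1))$ error in the factorial moment estimate is amplified when one takes finite combinatorial combinations and multiplies by powers of $\mu_n$, and it must remain $o(1)$ throughout. The condition $i_n=o(\log\log n)$ is precisely what guarantees this: it forces the remainder in the saddle-point expansion of $p_{\geq k}$ (controlled by negative powers of $k$) and the $n^{-\beta}$ factorisation error from Proposition~\ref{lemma:degprobasymp} to be comfortably negligible for every fixed $r$. Once the explicit asymptotics of $p_{\geq k}$ are in place and the Gumbel-case analogue of Proposition~\ref{prop:factmean} is stated, the remainder is a direct transcription of the atom-case proof of Theorem~\ref{thrm:asympnormal}.
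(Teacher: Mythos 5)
Your proposal matches the paper's route: the paper proves the Gumbel-case analogue of Proposition~\ref{prop:factmean} (namely Proposition~\ref{prop:gumbfactmean}, fed by the sharp expansion of $p_{\geq k}$ in Lemma~\ref{lemma:gumbpkasymp}) and then transcribes the proof of Theorem~\ref{thrm:asympnormal}, which converts factorial moments into a normal limit via~\cite[Theorem $1.24$]{Bol01}; your hand-done factorial-to-central-moment conversion is exactly what that citation encapsulates, and you correctly identify that the error in the factorial moments must be quantitative (it is $\mathcal O(\log\log n/\sqrt{\log n})$, which is $o(\mu_n^{-k/2})=o((\log n)^{-o(1)})$ precisely because $i_n=o(\log\log n)$). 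The only quibble is a slip of wording in your second step: for a Poisson variable the powers of $\mu_n$ \emph{above} $\mu_n^{\lfloor k/2\rfloor}$ cancel in the $k$-th central moment, not those below, but this does not affect the argument.
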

		
		The three theorems are the analogue of Theorems~\ref{thrm:mainatom},~\ref{thrm:maxtail} and~\ref{thrm:asympnormal}, respectively, where we now consider vertex-weights distributed according to a distribution as in~\eqref{eq:gumbex} rather than a distribution with an atom at one.
		
	\end{example}
	
	In both examples we see that a better understanding of the asymptotic behaviour of the tail of the degree distribution, $(p_{\geq k})_{k\in\N_0}$, allows us to identify the higher-order asymptotic behaviour of the (near-)maximum degree(s). It also shows that a higher-order random limit as in the sense of Theorems~\ref{thrm:betappp} and~\ref{thrm:gumbppp} is not expressed just by vertex-weights whose distribution has an atom at one, and we conjecture that this result is in fact universal for \emph{all} vertex-weights distributions with bounded support.
	
	\section{Degree tail distributions and factorial moments}\label{sec:taildeg}
	
	In this section we state and prove the key elements required to prove the main results as stated in Section~\ref{sec:results}. We stress that the results presented and proved in this section cover all the classes introduced in Assumption~\ref{ass:weights} (in fact, they cover any vertex-weight $W$ that satisfies conditions~\ref{ass:weightsup} and~\ref{ass:weightzero}) and that the distinction between the classes of Assumption~\ref{ass:weights} follows in Section~\ref{sec:mainproof}. The intermediate results presented here form the main technical contribution, and their proofs are the main body of the paper. 
	
	\subsection{Statement of intermediate results and main ideas}

	As discussed in Section~\ref{sec:overview}, to understand the asymptotic behaviour of the maximum degree and near-maximum degrees we require a more precise understanding of the convergence in mean of the empirical degree distribution. To that end, we present the following result:
	
	\begin{proposition}[Distribution of typical vertex degrees]\label{lemma:degprobasymp}
		Let $W$ be a positive random variable that satisfies conditions~\ref{ass:weightsup} and~\ref{ass:weightzero} of Assumption~\ref{ass:weights}. Consider the WRT model in Definition~\ref{def:WRT} with vertex-weights $(W_i)_{\inn}$ which are i.i.d.\ copies of $W$, fix $k\in\N$, and let $(v_\ell)_{\ell\in[k]}$ be $k$ vertices selected uniformly at random without replacement from $[n]$. For a fixed $c\in (0,\theta/(\theta-1))$, there exist $\beta\geq \beta'>0$ such that uniformly over non-negative integers $m_\ell< c\log n$ with $\ell\in[k]$,
		\begin{align}
			\P{\Zm_n(v_\ell)=m_\ell,\text{ for all }\ell\in[k]}&= \prod_{\ell=1}^k \E{\frac{\E{W}}{\E{W}+W}\Big(\frac{W}{\E{W}+W}\Big)^{m_\ell}}\big(1+o\big(n^{-\beta}\big)\big),\label{eq:degdist}
			\intertext{and}
			\P{\Zm_n(v_\ell)\geq m_\ell,\text{ for all }\ell\in[k]}&= \prod_{\ell=1}^k \E{\Big(\frac{W}{\E{W}+W}\Big)^{m_\ell}}\big(1+o\big(n^{-\beta'}\big)\big).	\label{eq:degtail}
		\end{align}
	\end{proposition}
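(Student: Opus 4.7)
The plan is to condition on the i.i.d.\ vertex-weights $(W_i)$ and on the labels $j_1<\cdots<j_k$ of the uniformly chosen vertices, and to exploit the fact that in the WRT dynamics the parent choices $(C_i)_{i\ge 2}$ are, conditional on the weights, independent with $\Pf{C_i=j}=W_j/S_{i-1}$. Writing $\Zm_n(j_\ell)=\sum_{i>j_\ell}\ind{C_i=j_\ell}$ and grouping the steps $i$ according to which of the $j_\ell$'s are eligible, the joint conditional probability generating function reads
\[
\Ef{}{\prod_{\ell=1}^k z_\ell^{\Zm_n(j_\ell)}}=\prod_{s=1}^k\prod_{i=j_s+1}^{j_{s+1}}\Big(1-\sum_{\ell=1}^s(1-z_\ell)\tfrac{W_{j_\ell}}{S_{i-1}}\Big),\qquad j_{k+1}:=n.
\]
The whole argument then reduces to controlling this product.

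I would next invoke a quantitative SLLN for $S_i$: by Hoeffding's inequality, since $W\in(0,1]$, the event $\{|S_i-i\E W|\le i^{1/2+\eta}\text{ for all } i\ge n^\delta\}$ has probability at least $1-O(\e^{-n^{2\delta\eta}})$. On this event, replacing each $S_{i-1}$ by $(i-1)\E W$, expanding $\log(1-x)=-x+O(x^2)$, summing $1/(i-1)$ across each interval $(j_s,j_{s+1}]$, and rearranging the double sum over $s$ and $\ell$ yields, for $j_\ell\ge n^\delta$,
\[
\Ef{}{\prod_{\ell=1}^k z_\ell^{\Zm_n(j_\ell)}}=\prod_{\ell=1}^k\exp\!\Big(-(1-z_\ell)\tfrac{W_{j_\ell}}{\E W}\log(n/j_\ell)\Big)\big(1+O(n^{-\beta})\big),
\]
i.e.\ a product of $k$ independent Poisson PGFs. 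Extracting the $\prod_\ell z_\ell^{m_\ell}$ coefficient, averaging over $j_\ell/n$ (asymptotically uniform on $[0,1]$) via the identity
\[
\int_0^1 u^{w/\E W}\frac{(-w\log u/\E W)^{m}}{m!}\,\d u=\frac{\E W}{\E W+w}\Big(\frac{w}{\E W+w}\Big)^{m},
\]
obtained by the substitution $t=-\log u$ and the Gamma integral, and then taking expectation over $(W_{j_\ell})$, produces \eqref{eq:degdist}. The tail estimate \eqref{eq:degtail} follows by summing \eqref{eq:degdist} over $m\ge m_\ell$ for each $\ell$, using the geometric bound $p_m\le\theta^{-(m+1)}\E W$ to cover the range $m\ge c\log n$ where the pointwise estimate is unavailable; this accounts for the possibly smaller exponent $\beta'\le\beta$.

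The main technical obstacle is to preserve a relative error of size $o(n^{-\beta})$ \emph{uniformly} over $m_\ell<c\log n$ and over the random weights. Three error sources must be balanced carefully: the concentration error for $S_i$ (tuned by $\eta$), the contribution of labels $j<n^\delta$ (which is $O(n^{\delta-1})$ after averaging over the uniform selection and is absorbed into the error), and the contribution of atypically small weights $W_{j_\ell}$, for which the approximations $\log(1-W_j/S_{i-1})\approx -W_j/S_{i-1}$ and $S_{i-1}\approx(i-1)\E W$ interact delicately. This last source is precisely where condition \ref{ass:weightzero} is needed: since $\P{W\le x}\le Cx^\rho$, one may restrict to the event $\{W_{j_\ell}\ge n^{-\gamma}\text{ for all }\ell\}$ at a price $O(n^{-\gamma\rho})$, and on this event the expansion is controllable. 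Finally, the restriction $c<\theta/(\theta-1)$ guarantees that $p_{m_\ell}\gg n^{-\beta}$ throughout the relevant range, making the multiplicative $1+o(n^{-\beta})$ form of the estimate meaningful.
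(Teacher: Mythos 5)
Your generating-function formulation of the main term is sound and is essentially a repackaging of the paper's own argument: conditionally on the weights the parent choices are independent, the concentration event for $S_i$ lets you replace $S_{i-1}$ by $(i-1)\E W$, the in-degree of a label $j\geq n^{\delta}$ becomes approximately Poisson with random mean $W_j\log(n/j)/\E W$, and your Gamma-integral identity over the (asymptotically uniform) label recovers $p_{m}$. This matches Lemma~\ref{lemma:in0eps} and the heuristic in Section~\ref{sec:nonrigour}.

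The genuine gap is your treatment of the labels $j<n^{\delta}$. You dismiss their contribution as $O(n^{\delta-1})$ and "absorbed into the error", but the statement requires a \emph{relative} error $o(n^{-\beta})$ against the main term $\prod_{\ell}p_{m_\ell}$, and for $m_\ell=c_\ell\log n+o(\log n)$ with $c_\ell\in[1/\log\theta,c)$ one has $p_{m_\ell}$ of order $n^{-c_\ell\log\theta}$, where $c_\ell\log\theta$ may exceed $1$ (for $\theta=2$ and $c_\ell$ near $\theta/(\theta-1)=2$ the exponent is near $2\log 2>1$). Hence $\prod_\ell p_{m_\ell}$ can be far smaller than $n^{\delta-1}$, and the trivial bound fails exactly in the regime relevant to the maximum degree. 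The paper's Lemma~\ref{lemma:inepsterm} handles this with a Chernoff bound on $\Pf{\Zm_n(j)\ge m_\ell}$ for $j<n^{\eps}$, combined with the negative quadrant dependence of the degrees under $\mathbb P_W$ to decouple the $k$ events; the inequality making the Chernoff exponent beat $-c_\ell\log\theta$ is precisely where the constraint $c<\theta/(\theta-1)$ enters, so your claim that this constraint merely ensures $p_{m_\ell}\gg n^{-\beta}$ is also off. Relatedly, condition~\ref{ass:weightzero} is not needed to exclude small individual weights $W_{j_\ell}$ (a small weight only makes the degree event less likely and does no harm to the expansion); it is needed to lower-bound the early partial sums $S_j$ for $j=O(\log n)$, which control the quantity $\sum_{j\ge i}W_i/S_j$ appearing in that Chernoff bound. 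Finally, for \eqref{eq:degtail} you cannot simply sum the limiting masses $p_m\le\theta^{-(m+1)}\E W$ over $m\ge c\log n$: what must be bounded is the finite-$n$ probability $\P{\Zm_n(v_i)\ge\lceil(1+\eta_i)m_i\rceil,\ \Zm_n(v_\ell)\ge m_\ell\text{ for }\ell\neq i}$, which the paper again controls via negative quadrant dependence together with the same Gamma/Chernoff machinery; without such an argument the passage from \eqref{eq:degdist} to \eqref{eq:degtail} is incomplete.
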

	
	\begin{remark}\label{rem:degtail}
		$(i)$ In~\cite[Lemma $1$]{DevLu95}, it is proved that the degrees $(\znj)_{\jnn}$ are negative quadrant dependent when considering the random recursive tree model (the WRT with deterministic weights, all equal to $1$). That is, for any $k\in\N$ and any $j_1\neq \ldots \neq j_k\in[n]$ and $m_1,\ldots,m_k\in\N$, 
		\be
		\P{\bigcap_{\ell=1}^k\{\Zm_n(j_\ell)\geq m_\ell\}}\leq\prod_{j=1}^k \P{\Zm_n(j_\ell)\geq m_\ell}.
		\ee 
		This property only holds for the conditional probability measure $\mathbb P_W$ when considering the WRT (or, more generally, the WRG) model, as follows from~\cite[Lemma $7.1$]{LodOrt21}, and can be obtained `asymptotically' for the probability measure $\mathbb P$, as in the proof of~\cite[Theorem $2.8$, Bounded case]{LodOrt21}. Proposition~\ref{lemma:degprobasymp} improves on this by establishing \emph{asymptotic independence} under the non-conditional probability measure $\mathbb P$ of the degrees of typical vertices, which allows us to extend the results in~\cite{LodOrt21} to more precise asymptotics.
		
		$(ii)$ We note that the result only requires the two main conditions in Assumption~\ref{ass:weights} on the behaviour of the vertex-weight distribution close to zero and one. Hence, results for other vertex-weight distributions that do not satisfy any of the particular cases outlined in this assumption can be obtained as well using the methods presented in this paper.
		
		$(iii)$ The result in Proposition~\ref{lemma:degprobasymp} improves on known results, especially those in~\cite{FouIyerMaiSulz19,Iyer20}. In these papers similar techniques are used to prove a weaker result, in which the $m_\ell$ are not allowed to diverge with $n$ and where no convergence rate is provided.
		
		$(iv)$ The constraint $c\in(0,\theta/(\theta-1))$ arises from the following: when $m=c\log n+o(\log n)$, as we shall encounter later in the proof of Lemma~\ref{lemma:inepsterm}, see Equation~\eqref{eq:remarkref}, for $\eps>0$ small,
			\be 
			\frac1n \sum_{i<n^\eps}\P{\Zm_n(v_1)\geq m}\leq \exp\Big(\log n\Big(-(1-\eps)+(c+o(1))\Big(1-\frac{1}{c(\theta-1)}+\log\Big(\frac{1}{c(\theta-1)}\Big)\Big)\Big)\Big). 
			\ee 
			We want the contribution of left-hand side, compared to the left-hand side of~\eqref{eq:degtail} to be negligible. Hence, it needs to be $o(p_{\geq m})=o(\theta^{-m})$ (recall $p_{\geq m}$ from~\eqref{eq:pk}). We thus have the requirement that
			\be \label{eq:cineq}
			-(1-\eps)+c\Big(1-\frac{1}{c(\theta-1)}+\log\Big(\frac{1}{c(\theta-1)}\Big)\Big)>-c\log \theta	
			\ee 
			is satisfied. The inequality in~\eqref{eq:cineq} becomes an equality exactly when $c=\theta/(\theta-1)$ and $\eps=0$. Hence the need for $c$ to be strictly smaller than $\theta/(\theta-1)$. Also, a more technical reason is  that certain error terms no longer converge to zero with $n$ when $c\geq \theta/(\theta-1)$. Moreover, we observe that $\frac{1}{\log\theta}< \frac{\theta}{\theta-1}$ when $\theta\in(1,2]$, so that Proposition~\ref{lemma:degprobasymp} covers degrees that extend \emph{beyond} the value of the maximum degree (see~\eqref{eq:bddmaxconv}). This is a crucial element of Proposition~\ref{lemma:degprobasymp} and the reason that strong claims regarding the largest degrees in the tree can be proved.
	\end{remark}
	
	To use this (tail) distribution of $k$ typical vertices $v_1,\ldots,v_k$, a precise expression for the expected values on the right-hand side in Proposition~\ref{lemma:degprobasymp} is required. Recall $p_k$ from~\eqref{eq:pk}. The following theorem comes from~\cite[Theorem $2.7$]{LodOrt21}, in which the maximum degree of weighted recursive graphs is studied for a large class of vertex-weight distribution and in which asymptotic expressions of $p_k$ are presented.
	
	\begin{theorem}[\cite{LodOrt21}, Asymptotic behaviour of $p_k$]\label{thrm:pkasymp}
		Recall that $\theta:=1+\E W$. We consider the different cases with respect to the vertex-weights as in Assumption~\ref{ass:weights}, but do not require condition~\ref{ass:weightzero} to hold.
		\begin{enumerate}[labelindent = 1cm, leftmargin = 2.2cm]
			\item[\namedlabel{thrm:pkatom}{$(\mathrm{\mathbf{Atom}})$}] Recall that $q_0=\P{W=1}>0$ and recall $r_k$ from~\eqref{eq:ek}. Then,
			\be \label{eq:pkatom}
			\hspace{2cm} p_k=q_0(1-\theta^{-1})\theta^{-k}\big(1+\mathcal O(r_k)\big).
			\ee 
			\item[\namedlabel{thrm:pkweibull}{$(\mathrm{\mathbf{Weibull}})$}] Recall that $\alpha>1$ is the power-law exponent. Then, for all $k>1/\E W$,
			\be\label{eq:pkbddweibull}
			\underline L(k)k^{-(\alpha-1)} \theta^{-k}\leq p_k\leq \overline L(k)k^{-(\alpha-1)}\theta^{-k},
			\ee
			where $\underline L$ and $\overline L$ are slowly varying at infinity.\\
			\item[\namedlabel{thrm:pkgumbel}{$(\mathrm{\mathbf{Gumbel}})$}]
			\begin{enumerate}
				\item[$(i)$] If $W$ satisfies the~\ref{ass:weighttaufin} sub-case with parameter $\tau>0$, set $\gamma:=1/(\tau+1)$. Then,
				\be\ba\label{eq:pkbddgumbelrv}
				p_k=\exp\Big(-\frac{\tau^\gamma}{1-\gamma}\Big(\frac{(1-\theta^{-1})k}{c_1}\Big)^{1-\gamma}(1+o(1))\Big)\theta^{-k}.
				\ea\ee
				\item[$(ii)$] If $W$ satisfies the~\ref{ass:weighttau0} sub-case with parameter $\tau>1$,
				\be\ba\label{eq:pkbddgumbelrav}
				\hspace{1.8cm}p_k=\exp\Big(-\Big(\frac{\log k}{c_1}\Big)^\tau\Big(1-\tau(\tau-1)\frac{\log\log k}{\log k}+\frac{K_{\tau,c_1,\theta}}{\log k}(1+o(1))\Big)\Big)\theta^{-k}.
				\ea\ee
				where $K_{\tau,c_1,\theta}:=\tau\log(\e c_1^\tau(1-\theta^{-1})/\tau)$.
			\end{enumerate}
		\end{enumerate}
	\end{theorem}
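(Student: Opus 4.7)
The key observation driving every case is that $p_k = \E{\frac{\theta-1}{\theta-1+W} g(W)^k}$ where $g(W) := W/(\theta-1+W)$ is strictly increasing on $[0,1]$ with $g(1) = 1/\theta$. Hence $g(W)^k$ is sharply peaked near $W = 1$ for large $k$, and the asymptotics reduce to a Laplace-type analysis of the weight distribution near $1$. The plan is to introduce $U := 1-W$ and Taylor expand near $U = 0$: writing $\log g(1-U) = -\log\theta - (1-\theta^{-1})U + O(U^2)$, we get
\[
g(W)^k = \theta^{-k}\exp\bigl(-(1-\theta^{-1})kU + O(kU^2)\bigr),
\]
while the prefactor $\frac{\theta-1}{\theta-U}$ equals $(1-\theta^{-1})(1+O(U))$. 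Consequently
\[
p_k = (1-\theta^{-1})\theta^{-k}\,\E{e^{-(1-\theta^{-1})kU}}(1+o(1)),
\]
provided one can show that the region where $U$ is not small contributes negligibly. This last point is handled in every case by splitting the expectation at a scale $1-s_k$ that grows slowly with $k$ and bounding the tail either by $g(s_k)^k$ or by $\P{W>s_k}$.

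For the Atom case, the atom at $U=0$ contributes $q_0(1-\theta^{-1})\theta^{-k}$, and on $\{U>0\}$ the natural split is at $s_k$: for $W\le s_k$ use $g(W)^k \le g(s_k)^k$, which a short computation shows equals $\theta^{-k}(1+O(r_k))$ times a factor corresponding to $r_k$; for $s_k<W<1$ use the definition of $s_k$ to bound the probability by $r_k$ while $g(W)^k\le \theta^{-k}$. Both remainders are $O(r_k)\theta^{-k}$, giving the stated error. For the Weibull case, the relation $\P{U\le y} = \ell(1/y)\,y^{\alpha-1}$ combined with the change of variables $z=(1-\theta^{-1})ky$ in $\int e^{-(1-\theta^{-1})ky}\,dF_U(y)$ and Karamata's theorem yields $\E{e^{-(1-\theta^{-1})kU}} \asymp \Gamma(\alpha)\ell((1-\theta^{-1})k)/((1-\theta^{-1})k)^{\alpha-1}$, producing the upper and lower slowly varying bounds $\overline L$, $\underline L$.

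For the Gumbel \ref{ass:weighttaufin} sub-case one applies Laplace's method to $\int e^{-(1-\theta^{-1})ky - (c_1 y)^{-\tau}}\,dy$, the leading-order approximation of $\int e^{-(1-\theta^{-1})ky}\,dF_U(y)$. Differentiating the exponent gives saddle point $y^{*} = (\tau c_1^{-\tau}/((1-\theta^{-1})k))^{1/(\tau+1)}$; plugging into the exponent and using the identity $(\tau+1)\tau^{-\tau/(\tau+1)} = \tau^{\gamma}/(1-\gamma)$ with $\gamma = 1/(\tau+1)$ delivers exactly the exponent stated in \eqref{eq:pkbddgumbelrv}. The main obstacle sits in the \ref{ass:weighttau0} sub-case, since here three leading terms (in $\log k$, $\log\log k$ and the constant $K_{\tau,c_1,\theta}$) must be tracked. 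The cleanest route is the substitution $v = -\log y$, which converts the integral to $\int e^{-(1-\theta^{-1})ke^{-v} - (v/c_1)^{\tau} - v}\,dv$; the saddle $v^{*}$ satisfies $(1-\theta^{-1})k\,e^{-v^{*}} = \tau (v^{*})^{\tau-1}/c_1^{\tau} + 1$, which iteratively yields $v^{*} = \log((1-\theta^{-1})k) - (\tau-1)\log\log k - \log(\tau/c_1^{\tau}) + o(1)$. Substituting back and expanding $(v^{*})^{\tau}$ to second order produces the three leading terms in the exponent, with $K_{\tau,c_1,\theta}$ arising from combining the constant shift of the saddle with the Gaussian prefactor from the Laplace method; careful control of the remainder, uniform in $k$, is the genuinely delicate part of the argument.
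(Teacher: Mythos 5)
First, note that the paper does not prove this statement: Theorem~\ref{thrm:pkasymp} is imported verbatim from \cite[Theorem~2.7]{LodOrt21}, so there is no in-paper argument to compare yours against. Your Laplace-type strategy --- localising the expectation near $W=1$, writing $U=1-W$, and reducing everything to the behaviour of the law of $U$ (equivalently of $(1-W)^{-1}$) near the endpoint --- is the natural route and is in substance what the cited reference does. Your treatment of the \ref{ass:weightatom} and \ref{ass:weightweibull} cases is correct: for the atom case, concavity of $x\mapsto\log\big(x/(\theta-1+x)\big)$ gives $\big(s_k/(\theta-1+s_k)\big)^k\leq\theta^{-k}r_k$ directly, the bound $\P{W\in(s_k,1)}\leq r_k$ follows from right-continuity as in Lemma~\ref{lemma:rk}, and the Karamata/Tauberian step in the Weibull case in fact yields sharper (matched) asymptotics than the stated two-sided bound.

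Two points in the Gumbel cases need repair. (i) The intermediate identity $p_k=(1-\theta^{-1})\theta^{-k}\,\E{\e^{-(1-\theta^{-1})kU}}(1+o(1))$ is false in the \ref{ass:weighttaufin} case for $\tau\geq1$: the discarded $O(kU^2)$ term, evaluated at the saddle scale $U\asymp k^{-\gamma}$, is of order $k^{1-2\gamma}\to\infty$, so it cannot be pulled out as a multiplicative $1+o(1)$. It is $o(k^{1-\gamma})$, hence harmless for \eqref{eq:pkbddgumbelrv}, but it must be carried inside the exponent rather than asserted away. (ii) More seriously, the change of variables in the \ref{ass:weighttau0} case is mishandled: integrating $\e^{-\lambda y}$ against $\d F_U(y)$ and setting $v=\log(1/y)$ does not leave a $-v$ in the exponent, because the Jacobian $\e^{-v}$ is cancelled by the factor $\e^{+v}$ arising when the tail $a v^b\e^{-(v/c_1)^\tau}$ is differentiated in $y$ (equivalently, by the prefactor $\lambda$ gained after integrating by parts). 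As written, your integral is too small by a factor of order $\e^{-v^*}\asymp(\log k)^{\tau-1}/k$; since $\log k\gg(\log k)^{\tau-1}$ for $\tau\in(1,2)$, this error exceeds the term $K_{\tau,c_1,\theta}(\log k)^{\tau-1}/c_1^\tau$ you are trying to identify. Relatedly, the ``$\e$'' inside $K_{\tau,c_1,\theta}$ does not come from the Gaussian prefactor, which only contributes $\exp(O(\log\log k))$; it comes from the value $\lambda\e^{-v^*}=\tau c_1^{-\tau}(v^*)^{\tau-1}(1+o(1))$ of the first exponent term at the (correct) saddle, which adds $\tau(\log k)^{\tau-1}/c_1^\tau$ to $(v^*/c_1)^\tau$ and combines with the constant shift of $v^*$ to produce $\tau\log\big(\e c_1^\tau(1-\theta^{-1})/\tau\big)$. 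With these corrections your computation does reproduce \eqref{eq:pkbddgumbelrav}.
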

	
	\begin{remark}\label{rem:pgeqk}
		Equivalent upper and lower bounds can be obtained for $p_{\geq k}$ as in~\eqref{eq:pk} (with $m=1$), by adjusting the multiplicative constants in front of the asymptotic expressions by a factor $(1-\theta^{-1})^{-1}$ only. This is due to the fact that 
		\be 
		(1-\theta^{-1})^{-1}p_k=\E{\frac{1}{1-\theta^{-1}}\frac{\theta-1}{\theta-1+W}\Big(\frac{W}{\theta-1+W}\Big)^k}\geq \E{\Big(\frac{W}{\theta-1+W}\Big)^k}=p_{\geq k}\geq p_k.
		\ee 
		Equivalently, the proof of~\cite[Theorem $2.7$]{LodOrt21} can be adapted to work for $p_{\geq k}$ in which case the same asymptotic behaviour is established for $p_{\geq k}$, but with different constants in the front.
	\end{remark}
	
	We also provide less precise but more general bounds on the degree distribution.
	
	\begin{lemma}\label{lemma:pkbound} 
		Let $W$ be a positive random variable with $x_0:=\sup\{x>0:\P{W\leq x}<1\}=1$. Then, for any $\xi>0$ and $k$ sufficiently large,
		\be 
		(\theta+\xi)^{-k}\leq p_k\leq p_{\geq k}\leq \theta^{-k}.
		\ee 
	\end{lemma}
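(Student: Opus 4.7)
The plan is to prove the three inequalities separately, using only the definitions of $p_k$ and $p_{\geq k}$ from~\eqref{eq:pk} together with the assumption $x_0 = 1$.

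For the \emph{upper bound} $p_{\geq k} \leq \theta^{-k}$, I would observe that the map $x \mapsto x/(\theta - 1 + x)$ is increasing on $[0,1]$, so $W \leq 1$ almost surely gives
\begin{equation}
\frac{W}{\theta - 1 + W} \leq \frac{1}{\theta - 1 + 1} = \theta^{-1}
\end{equation}
almost surely. Raising to the $k$-th power and taking expectation yields $p_{\geq k} = \E{(W/(\theta-1+W))^k} \leq \theta^{-k}$. The \emph{middle inequality} $p_k \leq p_{\geq k}$ is immediate from $p_{\geq k} = \sum_{j\geq k} p_j \geq p_k$, or directly from the integrand identity $\frac{\theta-1}{\theta-1+W} \leq 1$.

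For the \emph{lower bound}, I would localize to the event that $W$ is close to $1$. Given $\xi > 0$, choose $\delta \in (0, \xi/(\theta + \xi))$ so that $(1-\delta)(1 + \xi/\theta) > 1$. Since $x_0 = 1$, the quantity $q := \P{W > 1 - \delta}$ is strictly positive. Restricting the expectation defining $p_k$ to the event $\{W > 1 - \delta\}$ and using monotonicity of $x \mapsto x/(\theta-1+x)$ and $x \mapsto (\theta-1)/(\theta-1+x)$ gives
\begin{equation}
p_k \geq \frac{\theta - 1}{\theta}\left(\frac{1-\delta}{\theta}\right)^k q = \frac{(\theta-1)q}{\theta}\cdot \theta^{-k}(1-\delta)^k.
\end{equation}
Writing $(\theta+\xi)^{-k} = \theta^{-k}(1+\xi/\theta)^{-k}$, it suffices to show $\frac{(\theta-1)q}{\theta}\bigl[(1-\delta)(1+\xi/\theta)\bigr]^k \geq 1$ for all $k$ large, which holds by the choice of $\delta$ since the bracketed base exceeds $1$.

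There is no genuine obstacle here; the only thing to be careful about is the order of quantifiers, namely that $\delta$ must be chosen depending on $\xi$ (but not on $k$) and that the threshold $k_0(\xi)$ then depends on $\delta$ and $q$. Everything else is a direct monotonicity argument, so the lemma is essentially elementary given the standing normalization $x_0 = 1$.
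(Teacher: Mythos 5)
Your proof is correct and follows essentially the same route as the paper: monotonicity of $x\mapsto x/(\theta-1+x)$ for the upper bound, and for the lower bound a localization to $\{W>1-\delta\}$ (which has positive probability because $x_0=1$) followed by the observation that $[(1-\delta)(1+\xi/\theta)]^k$ grows exponentially and swallows the constant. The only cosmetic difference is that the paper keeps the sharper bound $(1-\delta)/(\theta-\delta)$ and correspondingly allows $\delta<\xi/(\theta-1+\xi)$, whereas you discard $\delta$ in the denominator and take the slightly smaller range $\delta<\xi/(\theta+\xi)$; both choices work.
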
 
	
	\begin{proof} 
		The upper bound on $p_{\geq k}$ directly follows from the fact that $x\mapsto (x/(\theta-1+ x))^k$ is increasing in $x$, so that 
		\be 
		p_{\geq k}=\E{\Big(\frac{W}{\theta-1+W}\Big)^k}\leq \Big(\frac{1}{\theta-1+1}\Big)^k=\theta^{-k}.
		\ee 
		For the lower bound, let us take some $\delta\in(0,\xi/(\theta-1+\xi))$ and define
		\be \label{eq:f}
		f_k(\theta,x):=\frac{\theta-1}{\theta-1+x}\Big(\frac{x}{\theta-1+x}\Big)^k.
		\ee 
		Note that $p_k=\E{f_k(\theta,W)}$. Then, since $f_k(\theta,x)$ is increasing in $x$ on $(0,1]$ for $k$ sufficiently large, 
		\be 
		\E{f_k(\theta,W)}\geq \E{f_k(\theta,W)\ind_{\{W>1-\delta\}}}\geq \P{W>1-\delta}\frac{\theta-1}{\theta-\delta}\Big(\frac{1-\delta}{\theta-\delta}\Big)^k.
		\ee 
		We note that, since $x_0=1$, it holds that $\P{W>1-\delta}>0$ for any $\delta>0$. Now, by the choice of $\delta$, we have $(\theta+\xi)(1-\delta)/(\theta-\delta)>1$, so we can find some $\zeta>0$ sufficiently small so that 
		\be 
		\E{f_k(\theta,W)}(\theta+\xi)^k \geq \P{W>1-\delta}\frac{\theta-1}{\theta-\delta}\Big(\frac{(\theta+\xi)(1-\delta)}{\theta-\delta}\Big)^k\geq (1+\zeta)^k\geq 1,
		\ee 
		as required.	
	\end{proof} 
	Recall the definition of $X_i^{(n)},X_{\geq i}^{(n)}$ and $\eps_n$ from~\eqref{eq:xni} and~\eqref{eq:peps}, respectively. Proposition~\ref{lemma:degprobasymp} combined with Theorem~\ref{thrm:pkasymp} then allows us to obtain the following result.
	
	\begin{proposition}[Factorial moments when vertex-weights satisfy the~\ref{ass:weightatom} case]\label{prop:factmean}
		$\,$\\ Consider the WRT model as in Definition~\ref{def:WRT} with vertex-weights $(W_i)_{\inn}$ that satisfy the~\ref{ass:weightatom} case in Assumption~\ref{ass:weights} for some $q_0\in(0,1]$. Recall $r_k$ from~\eqref{eq:ek}, recall that $\theta:=1+\E W$, and that $(x)_k:=x(x-1)\cdots (x-(k-1))$ for $x\in\R,k\in\N$, and $(x)_0:=1$. For fixed $K\in\N$ and $c\in(0,\theta/(\theta-1))$, there exists $\beta>0$ such that the following holds. For any integer-valued $i_n$ and $i_n',$ such that $0<i_n+\log_\theta n<i_n'+\log_\theta n< c\log n$ and any $(a_j)_{i_n\le j\le i_n'}$ non-negative integer sequence such that $\sum_{j=i_n}^{i_n'}a_j=K$,
		\be \ba
		\E{\Big(X^{(n)}_{\geq i_n'}\Big)_{a_{i_n'}}\prod_{k=i_n}^{i_n'-1}\Big(X_k^{(n)}\Big)_{a_k}}={}&\Big(q_0\theta^{-i_n'+\eps_n}\Big)^{a_{i_n'}}\prod_{k=i_n}^{i_n'-1}\Big(q_0(1-\theta^{-1})\theta^{-k+\eps_n}\Big)^{a_k}\\
		&\times\big(1+\mathcal O\big(r_{\lfloor \log_\theta n\rfloor +i_n}\vee n^{-\beta}\big)\big).
		\ea\ee 
	\end{proposition}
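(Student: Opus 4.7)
The approach is to expand the factorial products into sums of indicators over ordered tuples of distinct vertices, apply a mixed form of Proposition~\ref{lemma:degprobasymp} to each term, and finally plug in the explicit Atom-case asymptotics from Theorem~\ref{thrm:pkasymp}.

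First, I would use the identity $(X)_a = \sum_{(v_1,\ldots,v_a)\text{ distinct}} \prod_{\ell=1}^a \mathbf{1}_{A_{v_\ell}}$ applied to each factor $(X^{(n)}_k)_{a_k}$ and to $(X^{(n)}_{\geq i_n'})_{a_{i_n'}}$. Multiplying out, the integrand becomes a sum over collections of vertex labels, one ordered tuple of length $a_k$ per block. Since the events $\{Z_n(v)=\lfloor \log_\theta n\rfloor+k\}$ for distinct $k<i_n'$ and $\{Z_n(v)\geq \lfloor\log_\theta n\rfloor+i_n'\}$ are pairwise disjoint, any collection where a label is reused across two blocks contributes zero. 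What remains is a sum over the $(n)_K$ ordered $K$-tuples of pairwise distinct vertex labels of the probability that the first $a_{i_n}$ labels have degree exactly $\lfloor \log_\theta n\rfloor+i_n$, the next $a_{i_n+1}$ have degree exactly $\lfloor \log_\theta n\rfloor+i_n+1$, and so on, with the last $a_{i_n'}$ having degree at least $\lfloor \log_\theta n\rfloor+i_n'$. This sum equals $(n)_K$ times the corresponding probability for a uniformly chosen distinct $K$-tuple, which is exactly the setting of Proposition~\ref{lemma:degprobasymp}; the hypothesis $i_n'+\log_\theta n<c\log n$ matches its uniformity requirement $m_\ell<c\log n$.

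Next, I need a mixed form of Proposition~\ref{lemma:degprobasymp} that combines equality events with a single tail event. This can be obtained from the tail statement~\eqref{eq:degtail} by inclusion–exclusion, writing $\mathbf{1}_{\{Z_n(v)=m\}}=\mathbf{1}_{\{Z_n(v)\geq m\}}-\mathbf{1}_{\{Z_n(v)\geq m+1\}}$ and then using the telescoping $p_{\geq m}-p_{\geq m+1}=p_m$ to reorganise the resulting signed sum. Because in the~\ref{ass:weightatom} case $p_{\geq m+1}/p_{\geq m}\to \theta^{-1}<1$, no catastrophic cancellation occurs and the multiplicative error from~\eqref{eq:degtail} is preserved. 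The mixed version then yields the product $p_{\geq \lfloor\log_\theta n\rfloor+i_n'}^{a_{i_n'}}\prod_{k=i_n}^{i_n'-1}p_{\lfloor\log_\theta n\rfloor+k}^{a_k}$ up to a factor $1+o(n^{-\beta})$.

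Finally, I substitute $p_m=q_0(1-\theta^{-1})\theta^{-m}(1+\mathcal{O}(r_m))$ from the Atom case of Theorem~\ref{thrm:pkasymp} and the analogous $p_{\geq m}=q_0\theta^{-m}(1+\mathcal{O}(r_m))$ from Remark~\ref{rem:pgeqk}, evaluated at $m=\lfloor\log_\theta n\rfloor+k$. Using $n\theta^{-(\lfloor\log_\theta n\rfloor+k)}=\theta^{\eps_n-k}$, the $K=\sum a_k$ factors of $n$ hidden in $(n)_K$ combine with the $\theta^{-m}$ terms to reproduce the stated prefactor, while $(n)_K/n^K=1+\mathcal{O}(1/n)$ is absorbed into the error. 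By monotonicity of $k\mapsto r_k$ (Lemma~\ref{lemma:rk}) the dominant $r$-contribution is $r_{\lfloor\log_\theta n\rfloor+i_n}$, giving the overall error $\mathcal{O}(r_{\lfloor\log_\theta n\rfloor+i_n}\vee n^{-\beta})$. The main technical obstacle is the mixed form of Proposition~\ref{lemma:degprobasymp}: one must verify that the signed inclusion–exclusion does not inflate the relative error, which holds here because the Atom case provides a uniform lower bound on $(p_{\geq m}-p_{\geq m+1})/p_{\geq m}$; in the other classes of Assumption~\ref{ass:weights} this step would require a finer argument.
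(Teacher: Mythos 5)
Your proposal is correct and follows essentially the same route as the paper: the paper likewise reduces the factorial moment to $(n)_K$ times a joint (exact-plus-tail) degree probability for a uniform distinct $K$-tuple, converts the exact-degree events to tail events by inclusion--exclusion (citing a lemma of Addario-Berry and Eslava rather than your per-coordinate telescoping, but these are the same computation), applies Proposition~\ref{lemma:degprobasymp}, and then inserts the \ref{ass:weightatom}-case asymptotics with the monotonicity of $r_k$. Your observation that the alternating sum does not inflate the relative error because $(p_{\geq m}-p_{\geq m+1})/p_{\geq m}$ is bounded below is exactly the point the paper implicitly uses when evaluating $\sum_\ell \binom{K'}{\ell}(-1)^\ell\theta^{K'-\ell}=(\theta-1)^{K'}>0$.
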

	
	\begin{remark}
		$(i)$ Intuitively, the result of this proposition tells us that the random variables $(X^{(n)}_k)_{k=i_n}^{i_n'-1}$ and $X(n)_{\geq i_n'}$ are, jointly, approximately Poisson distributed, since their factorial moments approximate those of Poisson random variables. This result hence plays an essential role in proving the weak convergence to a Poisson point process in Theorem~\ref{thrm:mainatom}.
		
		$(ii)$ Related to Remark~\ref{rem:asympnorm}, the error term decays polynomially only if $W$ has an atom at one and support bounded away from one and $\log_\theta n+i_n>\eta \log n$ for some $\eta>0$. That is, when there exists an $s\in(0,1)$ such that $\P{W\in(s,1)}=0$. In that case, $s_k\leq s$ and $r_k\leq \exp(-(1-\theta^{-1})(1-s)k)$ for all $k$ large, so that 
		\be
		r_{\lfloor \log_\theta n\rfloor +i_n}\vee n^{-\beta}\leq \exp(-(1-\theta^{-1})(1-s)\eta \log n)\vee n^{-\beta}=n^{-\min\{\eta(1-\theta^{-1})(1-s),\beta\}}.
		\ee 
		In all other cases, the error term decays slower than polynomially. 
	\end{remark} 
	
	\begin{proof}[Proof of Proposition~\ref{prop:factmean} subject to Proposition~\ref{lemma:degprobasymp}]
		We closely follow the approach in~\cite[Proposition $2.1$]{AddEsl18}, where an analogue result is presented for the case $q_0=1$, i.e.\ for the random recursive tree. Set $K':=K-a_{i_n'}$ and for each $i_n\leq k\leq i_n'$ and each $u\in\N$ such that $\sum_{\ell=i_n}^{k-1}a_\ell<u\leq \sum_{\ell=i_n}^k a_\ell$, let $m_u=\lfloor \log_\theta n \rfloor +k$. We note that $m_u<\log_\theta n+i_n'<c\log n$, so that the results in Proposition~\ref{lemma:degprobasymp} can be used. Also, let $(v_u)_{u\in[K]}$ be $K$ vertices selected uniformly at random without replacement from $[n]$, and define $I:=[K]\backslash[K']$. Then, as the $X^{(n)}_{\geq k}$ and $X^{(n)}_{k}$ can be expressed as sums of indicators,
		\be \ba \label{eq:meanex}
		\hspace{-0.2cm}\mathbb E\bigg[\!\Big(X^{(n)}_{\geq i_n'}\Big)_{a_{i_n'}}\!\prod_{k=i_n}^{i_n'-1}\!\Big(X_k^{(n)}\Big)_{a_k}\!\bigg]\!&=(n)_K\P{\Zm_n(v_u)=m_u, \Zm_n(v_w)\geq m_w\text{ for all } u\in[K'],w\in I}\\
		&=(n)_K\sum_{\ell=0}^{K'}\sum_{\substack{S\subseteq [K']\\ |S|=\ell}}\!\!(-1)^\ell \P{\Zm_n(v_u)\geq m_u+\ind_{\{u\in S\}}\text{ for all } u\in [K]},
		\ea \ee
		where the second step follows from~\cite[Lemma $5.1$]{AddEsl18} and is based on an inclusion-exclusion argument. We can now use Proposition~\ref{lemma:degprobasymp}. First, we note that there exists a $\beta>0$ such that for non-negative integers $m_1,\ldots, m_K< c\log n$, 
		\be \label{eq:tailprob}
		\P{\Zm_n(v_u)\geq m_u+\ind_{\{u\in S\}}\text{ for all } u\in [K]}=\prod_{u=1}^K \mathbb E\bigg[\Big(\frac{W}{\E{W}+W}\Big)^{m_u+\ind_{\{u\in S\}}}\bigg]\big(1+o\big(n^{-\beta}\big)\big).
		\ee 
		Now, by Theorem~\ref{thrm:pkasymp} and the definition of $r_k$ in~\eqref{eq:ek} and as $r_k$ is decreasing by Lemma~\ref{lemma:rk} in the \hyperref[sec:appendix]{Appendix}, when $|S|=\ell$,
		\be\label{eq:pkprod}
		\prod_{u=1}^K \E{\Big(\frac{W}{\E{W}+W}\Big)^{m_u+\ind_{\{u\in S\}}}}=q_0^K\theta^{-\ell-\sum_{u=1}^K m_u}\big(1+\mathcal O\big(r_{\lfloor \log_\theta n\rfloor+i_n}\vee n^{-\beta'}\big)\big), 
		\ee  
		as the smallest $m_u$ equals $\lfloor \log_\theta n\rfloor+i_n$. We have 
		\be \ba \label{eq:finstep}
		(n)_K \sum_{\ell=0}^{K'}\sum_{\substack{S\subseteq [K']\\ |S|=\ell}}(-1)^\ell q_0^K \theta^{-\ell-\sum_{u=1}^K m_u}&=(n)_Kq_0^K \theta^{-K'-\sum_{u=1}^K m_u}\sum_{\ell=0}^{K'}\binom{K'}{\ell}(-1)^\ell \theta^{K'-\ell}\\
		&=(n)_Kq_0^K(1-\theta^{-1})^{K'}\theta^{-\sum_{u=1}^K m_u}.
		\ea \ee
		We then observe that $(n)_K=\theta^{K\log_\theta n}(1+\mathcal O(1/n))$. Moreover, we recall that $K=\sum_{k=i_n}^{i_n'}a_k$ and $K'=\sum_{k=i_n}^{i_n'-1}a_k$, while $m_u=\lfloor \log_\theta n\rfloor +k$ if $\sum_{\ell=i_n}^{k-1}a_\ell\leq u<\sum_{\ell=i_n}^k a_\ell$ for $i_n\leq \ell\leq i_n'$; in addition, recall $\eps_n$ from~\eqref{eq:peps}. Using~\eqref{eq:finstep} combined with~\eqref{eq:tailprob} and~\eqref{eq:pkprod} in~\eqref{eq:meanex}, we obtain 
		\be \ba
		\mathbb E\bigg[\!\Big(\!X^{(n)}_{\geq i_n'}\!\Big)_{a_{i_n'}}\!\prod_{k=i_n}^{i_n'-1}\!\Big(\!X_k^{(n)}\!\Big)_{a_k}\!\bigg]\!={}&	q_0^K(1-\theta^{-1})^{K'}\theta^{K\log_\theta n-\sum_{u=1}^K m_u}\big(1+\mathcal O\big(r_{\lfloor \log_\theta n\rfloor +i_n}\vee n^{-\beta}\big)\big)\\
		={}&\Big(q_0\theta^{-i_n'+\eps_n}\Big)^{a_{i_n'}}\prod_{k=i_n}^{i_n'-1}\Big(q_0(1-\theta^{-1})\theta^{-k+\eps_n}\Big)^{a_k}\\
		&\times\big(1+\mathcal O\big(r_{\lfloor \log_\theta n\rfloor +i_n}\vee n^{-\beta}\big)\big),
		\ea\ee  
		as desired.
	\end{proof}
	
	The next lemma builds on~\cite[Lemma $7.1$]{LodOrt21} and~\cite[Lemma $1$]{DevLu95} and provides bounds on the maximum degree that hold with high probability.
	
	\begin{lemma}\label{lemma:maxdegwhp}
		Let $W$ be a positive random variable that satisfies conditions~\ref{ass:weightsup} and~\ref{ass:weightzero} of Assumption~\ref{ass:weights}. Consider the WRT model in Definition~\ref{def:WRT} with vertex-weights $(W_i)_{\inn}$ which are i.i.d.\ copies of $W$. Fix $c\in(0,\theta/(\theta-1))$ and let $(k_n)_{n\in\N}$ be a non-negative, diverging integer sequence such that $k_n< c\log n$ and let $v_1$ be a vertex selected uniformly at random from $[n]$. If $\lim_{n\to\infty}n\P{\Zm_n(v_1)\geq k_n}=0$, then
		\be
		\lim_{n\to\infty}\P{\max_{\inn}\zni\geq k_n}=0.
		\ee 
		Similarly, when instead $\lim_{n\to\infty}n\P{\Zm_n(v_1)\geq k_n}=\infty$,
		\be
		\lim_{n\to\infty}\P{\max_{\inn}\zni \geq k_n}=1.
		\ee 
	\end{lemma}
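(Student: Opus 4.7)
The plan is to handle the two statements separately using the classical first and second moment methods, with Proposition~\ref{lemma:degprobasymp} supplying the asymptotic independence needed in the second moment computation.

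For the first statement ($n\P{\Zm_n(v_1)\geq k_n}\to 0$), I would simply use a union bound. Since $v_1$ is uniform on $[n]$, we have $\P{\Zm_n(v_1)\geq k_n}=\tfrac{1}{n}\sum_{i=1}^n \P{\zni\geq k_n}$, so that
\[
\P{\max_{\inn}\zni\geq k_n}\leq \sum_{i=1}^n \P{\zni\geq k_n}=n\,\P{\Zm_n(v_1)\geq k_n}\longrightarrow 0.
\]
No further input is needed here.

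For the second statement, I would invoke the Chung--Erd\H{o}s (second moment) inequality applied to the events $A_i:=\{\zni\geq k_n\}$, namely
\[
\P{\bigcup_{i=1}^n A_i}\ \geq\ \frac{\bigl(\sum_{i=1}^n\P{A_i}\bigr)^2}{\sum_{i,j=1}^n\P{A_i\cap A_j}}.
\]
The numerator equals $\bigl(n\,\P{\Zm_n(v_1)\geq k_n}\bigr)^2$. For the denominator, split the double sum into the diagonal and off-diagonal contributions. The diagonal gives $n\,\P{\Zm_n(v_1)\geq k_n}$. For the off-diagonal, if $(v_1,v_2)$ is a uniformly random pair of distinct vertices, then
\[
\sum_{i\neq j}\P{A_i\cap A_j}=n(n-1)\,\P{\Zm_n(v_1)\geq k_n,\ \Zm_n(v_2)\geq k_n},
\]
and here Proposition~\ref{lemma:degprobasymp} with $k=2$ and $m_1=m_2=k_n<c\log n$ yields
\[
\P{\Zm_n(v_1)\geq k_n,\Zm_n(v_2)\geq k_n}=\P{\Zm_n(v_1)\geq k_n}^{2}\bigl(1+o(n^{-\beta'})\bigr).
\]
Substituting and writing $\mu_n:=n\,\P{\Zm_n(v_1)\geq k_n}$, the Chung--Erd\H{o}s bound becomes
\[
\P{\max_{\inn}\zni\geq k_n}\geq \frac{\mu_n^{2}}{\mu_n+\mu_n^{2}(1+o(n^{-\beta'}))}=\frac{1}{\mu_n^{-1}+1+o(n^{-\beta'})},
\]
which tends to $1$ under the assumption $\mu_n\to\infty$.

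The only subtlety is making sure Proposition~\ref{lemma:degprobasymp} is applicable: its hypotheses require $k_n<c\log n$ for some $c\in(0,\theta/(\theta-1))$, which is exactly what is assumed, and that $k_n$ diverges, which ensures the error term $o(n^{-\beta'})$ controls the right-hand side uniformly. No genuinely hard step is involved — the work has already been done in establishing the asymptotic pairwise independence of typical vertex degrees, and the lemma is essentially a direct consequence of that plus Chung--Erd\H{o}s.
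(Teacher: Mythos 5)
Your proposal is correct and follows essentially the same route as the paper: a union bound for the first claim, and the Chung--Erd\H{o}s inequality combined with the $k=2$ case of Proposition~\ref{lemma:degprobasymp} (to replace the off-diagonal pair probabilities by $\P{\Zm_n(v_1)\geq k_n}^2(1+o(1))$) for the second. The only cosmetic difference is that you write the denominator as the full double sum split into diagonal and off-diagonal parts, whereas the paper states the inequality with the two sums already separated; this is the same computation.
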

	
	\begin{remark}
		Similar to what is discussed in Remark \ref{rem:degtail}$(i)$, the result in this lemma is stronger than the results presented in~\cite[Lemma $7.1$]{LodOrt21} and~\cite[Lemma $1$]{DevLu95}. It extends the latter to the WRT model rather than just the RRT model, and improves the former as the result holds for the non-conditional probability measure $\mathbb P$ rather than $\mathbb P_W$, which is what is used in~\cite{LodOrt21}. Due to the difficulties of working with the conditional probability measure, only a first order asymptotic result can be proved there. With the improved understanding of the degree distribution, as in Proposition~\ref{lemma:degprobasymp}, the above result can be obtained, which allows for finer asymptotics to be proved.
	\end{remark}
	
	\begin{proof}[Proof of Lemma~\ref{lemma:maxdegwhp} subject to Proposition~\ref{lemma:degprobasymp}]
		The first result immediately follows from a union bound and the fact that
		\be \label{eq:uarprob}
		n\P{\Zm_n(v_1)\geq k_n}=\sum_{i=1}^n \P{\zni\geq k_n}.
		\ee 
		For the second result, let $A_{n,i}:=\{\zni\geq k_n\},i\in[n]$. Then, by the Chung-Erd{\H o}s inequality (which is one way of formalizing the second moment method), 
		\be 
		\P{\max_{\inn}\zni \geq k_n}=\P{\cup_{i=1}^n A_{n,i}}\geq \frac{\Big(\sum_{i=1}^n \P{A_{n,i}}\Big)^2}{\sum_{i\neq j}\P{A_{n,i}\cap A_{n,j}}+\sum_{i=1}^n \P{A_{n,i}}}.
		\ee 
		By~\eqref{eq:uarprob} it follows that $\sum_{i=1}^n\P{A_{n,i}}=n\P{\Zm_n(v_1)\geq k_n}$. Furthermore, by Proposition~\ref{lemma:degprobasymp}, 
		\be 
		\sum_{i\neq j}\P{A_{n,i}\cap A_{n,j}}=n(n-1)\P{A_{n,v_1}\cap A_{n,v_2}}=(n\P{A_{n,v_1}})^2(1+o(1)),
		\ee
		where $v_2$ is another vertex selected uniformly at random, unequal to $v_1$. Note that the condition that $k_n<c\log n$ is required for this to hold. Together with the above lower bound, these two observations yield
		\be 
		\P{\cup_{i=1}^n A_{n,i}}\geq \frac{(n\P{A_{n,v_1}})^2}{(n\P{A_{n,v_1}})^2(1+o(1))+n\P{A_{n,v_1}}}=\frac{n\P{A_{n,v_1}}}{n\P{A_{n,v_1}}(1+o(1))+1}. 
		\ee 
		Hence, when $n\P{A_{n,v_1}}=n\P{\Zm_n(v_1)\geq k_n}$ diverges with $n$, we obtain the desired result.
	\end{proof}
	
	\subsection{Non-rigorous proof of Proposition~\ref{lemma:degprobasymp} when $k=1$}\label{sec:nonrigour}
	What remains for this section is to prove Proposition~\ref{lemma:degprobasymp}. As the proof is rather long and involved, we first provide a non-rigorous step-by-step proof of the case $k=1$. That is, we provide an asymptotic estimate for $\P{\Zm_n(v)=m}$, where $v$ is a vertex selected uniformly at random. Though this part is not essential for the proof of Proposition~\ref{lemma:degprobasymp}, it helps the reader in understanding the complete proof of the proposition later in this section. In the next sub-section, we discuss the strategy of the full proof and take care of certain parts of the proof in separate lemmas. Whilst proving these lemmas, we refer back to the simpler non-rigorous proof provided here for guidance.
		The main part of the proof is dedicated to proving~\eqref{eq:degdist}. Once this is established,~\eqref{eq:degtail} follows without much effort. We thus focus on discussing the proof of~\eqref{eq:degdist} here first. 
	
	To provide a non-rigorous proof of the asymptotic estimate of $\P{\Zm_n(v)=m}$, we first make the following simplification: we assume that $S_\ell=(\ell+1)\E W$ for all $\ell\in\N$. Naturally, the law of large numbers implies that $S_\ell/\ell\toas \E W$, so that, up to lower-order error terms, this equality holds. Using this simplification, however, allows us to omit many details and focus on the most important steps of the proof.
		
		Let us fix an $\eps\in(0,1)$, which we shall choose sufficiently small later. We then first condition  on the value of the uniform vertex $v$, and distinguish between its values being smaller or larger than~$n^\eps$. That is, 
		\be \ba\label{eq:vprobsplit}
		\P{\Zm_n(v)=m}&=\frac1n \sum_{j=1}^n \E{\Pf{\Zm_n(j)=m}}\\
		&=\frac1n \sum_{1\leq j< n^\eps}\E{\Pf{\Zm_n(j)=m}}+\frac1n \sum_{n^\eps \leq j\leq n}\E{\Pf{\Zm_n(j)=m}}.
		\ea \ee  
		We first take care of the second sum, and start by determining $\Pf{\Zm_n(j)=m}$ when $j\geq n^\eps$, under the simplification that $S_\ell=(\ell+1)\E W$. Since $\Zm_n(j)$ is a sum of independent indicator random variables, we will see that $\Zm_n(j)$ is well-approximated by a Poisson random variable, whose rate equals the sum of the success probabilities of the indicator random variables. By the simplification $\cS_\ell=(\ell+1)\E W$, we can thus determine that 
		\be 
		\Zm_n(j)=\sum_{i=j+1}^n \ind_{\{i\to j\}}=\sum_{i=j+1}^n \text{Ber}\Big(\frac{W_j}{S_{i-1}}\Big)\approx \text{Poi}\Big(\sum_{j=i+1}^n \frac{W_j}{i\E W}\Big)\approx \text{Poi}\Big(\frac{W_j}{\E W}\log(n/j)\Big)=:P_j.
		\ee
		It thus follows that $\Pf{\Zm_n(j)=m}\approx \Pf{P_j=m}$. We now provide the necessary steps to obtain this intuitive result. The degree of vertex $j$ in $T_n$ equals exactly $m$ when there exist vertices $i_1,\ldots ,i_m$ that connect to $j$, and all other vertices do not. Since these connections are independent,
		\be \ba\label{eq:probexplicit}
		\Pf{\Zm_n(j)=m}&=\sum_{j<i_1<\ldots<i_m\leq n}\prod_{s=1}^m \frac{W_j}{S_{i_s-1}}\prod_{\substack{s=j+1\\ s\neq i_\ell, \ell \in[m]}}^n \Big(1-\frac{W_j}{S_{s-1}}\Big)\\
		&=\sum_{j<i_1<\ldots<i_m\leq n}\prod_{s=1}^m \frac{W_j}{i_s\E W}\prod_{\substack{s=j+1\\ s\neq i_\ell, \ell \in[m]}}^n \Big(1-\frac{W_j}{s\E W}\Big)
		\ea \ee 
		Here, we sum over all possible choices of vertices $i_1<\ldots <i_m$ that connect to $j$, and the final step follows from the simplifying assumption. We now include the terms $s \in \{i_1, \ldots, i_m\}$
		in the second product, by changing the denominator in the fraction in the first product to $i_s\E W- W_j$. This yields
		\be \label{eq:connectsplit}
		\Pf{\Zm_n(j)=m}=\sum_{j<i_1<\ldots<i_m\leq n}\prod_{s=1}^m \frac{W_j}{i_s\E W-W_j}\prod_{s=j+1}^n \Big(1-\frac{W_j}{s\E W}\Big).
		\ee 
		Since $W_j\in(0,1]$ almost surely, $i_s>j\geq n^\eps$, and $m<c\log n$, it follows that for any $\xi\in(0,1)$, 
		\be\label{eq:prodasymp} 
		\prod_{s=1}^m \frac{1}{i_s\E W-W_j}=\prod_{s=1}^m \frac{1}{i_s\E W(1+\cO(n^{-\eps}))}=(1+\cO(n^{-(1-\xi)\eps}))\prod_{s=1}^m \frac{1}{i_s\E W}.
		\ee
		Similarly, 
		\be
		\prod_{s=j+1}^n \Big(1-\frac{W_j}{s\E W}\Big)=\exp\Big(\sum_{s=j+1}^n \log\Big(1-\frac{W_j}{s\E W}\Big)\Big)=\exp\Big(-(1+\cO(n^{-\eps}))\sum_{s=j+1}^n \frac{W_j}{s\E W}\Big).
		\ee 
		As  
		\be 
		\sum_{s=j+1}^n \frac1s\approx \log(n/j),
		\ee
		we arrive at
		\be \label{eq:nonconnectasymp}
		\prod_{s=j+1}^n \Big(1-\frac{W_j}{s\E W}\Big)\approx \Big(\frac jn\Big)^{(1+\zeta_n)W_j/\E W}, 
		\ee 
		where $\zeta_n=\cO(n^{-\eps})$. Combined with~\eqref{eq:prodasymp} in~\eqref{eq:connectsplit}, this yields
		\be 
		\Pf{\Zm_n(j)=m}=(1+\cO(n^{-(1-\xi)\eps}))\Big(\frac jn\Big)^{(1+\zeta_n)W_j/\E W}\Big( \frac{W_j}{\E W}\Big)^m\sum_{j<i_1<\ldots<i_m\leq n}\prod_{s=1}^m \frac{1}{i_s}.
		\ee 
		We can approximate the sum by the multiple integrals 
		\be 
		\int_j^n \int_{x_1}^n \cdots \int_{x_{m-1}}^n \prod_{s=1}^m x_s^{-1}\,\d x_m\ldots \d x_1. 
		\ee 
		By Lemma~\ref{lemma:logints} we obtain that this equals $(\log(n/j))^m/m!$. So, we finally have 
		\be \ba
		\Pf{\Zm_n(j)=m}&\approx \Big(\frac jn\Big)^{W_j/\E W}\Big(\frac{W_j}{\E W}\log(n/j)\Big)^m\frac{1}{m!}\\
		&=\exp\Big(-\frac{W_j}{\E W}\log(n/j)\Big)\Big(\frac{W_j}{\E W}\log(n/j)\Big)^m\frac{1}{m!}\\
		&=\Pf{P_j=m},
		\ea \ee 
		as desired. Since all vertex-weights are i.i.d., when we take an expectation over the weights, we can in fact omit the index~$j$ from $W_j$ and use a general random variable $W$ instead, so that $P_j$ has mean $\log(n/j)W/\E W$. Using this expression in the second sum in~\eqref{eq:vprobsplit} approximately yields
		\be \label{eq:poiprob}
		\frac{1}{n} \sum_{n^\eps \leq j\leq n}\E{ \Pf{P_j=m}}.
		\ee 
		To avoid confusion between different parametrisations, we say that a random variable $X$ has a Gamma$(\alpha,\beta)$ distribution for some $\alpha,\beta>0$ when it has a probability density function $f:\R_+\to \R_+$ with 
		\be \label{eq:gammapdf}
		f(x)=\frac{\beta^\alpha}{\Gamma(\alpha)}x^{\alpha-1}\e^{-\beta x},\qquad x>0.
		\ee 
		We now use the following duality between Poisson and gamma random variables. Let \\$G\sim \text{Gamma}(m,1)$ be a gamma random variable for some integer $m$. Note that we can also interpret $G$ as a sum of $m$ independent rate one exponential random variables. Then, conditionally on $W$, the event $\{P_j=m\}$ can be thought of as the event that in a rate one Poisson process exactly $m$ particles have arrived before time $\log(n/j)W/\E W$. This is equivalent to the sum of the first $m$ inter-arrival times (which are rate one exponentially distributed) being at most $\log(n/j)W/\E W$, and the sum of the first $m+1$ inter-arrival times exceeding this quantity. As we mentioned, this sum of $m$ rate one exponential random variables is, in law, identical to $G$. So, if we let $V$ be the $m+1^{\text{st}}$ inter-arrival time, independent of $G$, then
		\be\ba
		\Pf{P_j=m}&=\Pf{G\leq \log(n/j)W/\E W, G+V>\log(n/j)W/\E W}\\
		&=\Pf{Y\leq \log(n/j), Y+\wt V>\log(n/j)}.
		\ea\ee 
		where, conditionally on $W$, we have $Y\sim \text{Gamma}(m,W/\E W)$ and $ \wt V\sim \text{Exp}(W/\E W)$. In~\eqref{eq:poiprob}, this yields
		\be 
		\frac1n \E{\sum_{n^\eps\leq j\leq n}\Pf{Y\leq \log(n/j),Y+\wt V>\log(n/j)}}\approx \E{\Pf{Y\leq \log(n/v),Y+\wt V>\log (n/v)}}, 
		\ee 
		where we recall that $v$ is a uniform element of $[n]$. Observe that $\log(n/v)\approx T$, where $T\sim \text{Exp}(1)$. As a result, the conditional probability can be expressed, using that $Y$ can be viewed as a sum of $m$ i.i.d.\ copies of $\wt V$ and the memoryless property of exponential random variables, as 
		\be 
		\E{\Pf{Y\leq T, Y+\wt V\geq T} }=\E{\frac{\E W}{W+\E W}\Big(\frac{W}{W+\E W}\Big)^m},
		\ee 
		as desired. We provide some more details for this derivation. First, we write the sum as 
		\be \label{eq:gammadif}
		\frac{1}{n}\E{ \sum_{n^\eps \leq j\leq n}\Pf{Y\leq \log(n/j)}-\Pf{Y+\wt V\leq \log(n/j)} }.
		\ee 
		Since $Y+\wt V\sim \text{Gamma}(m+1,W/\E W)$, it suffices to deal with the first conditional probability only. We can approximate the sum by
		\be  \ba
		\sum_{n^\eps \leq j\leq n}\Pf{Y\leq \log(n/j)}&\approx \int_{n^\eps}^n \Pf{Y\leq \log(n/x)}\,\d x=n\int_0^{(1-\eps)\log n}\e^{-y}\Pf{Y\leq y}\,\d y.
		\ea\ee  
		The second step follows from a variable transformation $y=\log(n/x)$. We write $\Pf{Y\leq y}=\int_0^y f_{Y|W}(x)\d x$, where $f_{Y|W}(x)$ is as in~\eqref{eq:gammapdf} with $\alpha=m-1, \beta=W/\E W$, the conditional probability density function of $Y$, conditionally on $W$. Changing the order of integration  yields
		\be\ba 
 		n{}&\int_0^{(1-\eps)\log n}\e^{-x} f_{Y|W}(x)\,\d x-n^\eps \Pf{Y\leq (1-\eps)\log n}\\
		&=n\Big(\frac{W}{\E W+W}\Big)^m \Pf{Y'\leq (1-\eps)\log n}-n^\eps \Pf{Y\leq (1-\eps)\log n}, 
		\ea \ee 
		where $Y'\sim \text{Gamma}(m,1+W/\E W)$, conditionally on $W$. A similar result with $m+1$ and $Y''\sim\text{Gamma}(m+1,1+W/\E W)$ instead of $m$ and $Y'$ follows for the probability of the event $\{Y+\wt V\leq \log(n/j)\}$. We remark that, via the construction of the random variables $Y'$ and $Y''$ using $Y$ and $Y+\wt V$, respectively, they are defined on the same probability space and that in fact $Y''$ stochastically dominates $Y'$. As such, combining both in~\eqref{eq:gammadif} approximately provides
			\be \ba 
			\mathbb E\bigg[\Big(\frac{W}{\E W+W}{}&\Big)^m \Big(\Pf{Y'\leq (1-\eps)\log n}-\frac{W}{\E W+W}\Pf{Y''\leq (1-\eps)\log n}\Big)\bigg]\\
			={}&(1+\cO(n^{-(1-\xi)\eps}))\E{\frac{\E W}{\E W+W}\Big(\frac{W}{\E W+W}\Big)^m\Pf{Y'\leq (1-\eps)\log n}}\\
			&+\E{\Big(\frac{W}{\E W+W}\Big)^m\frac{W}{\E W+W}\Pf{Y'\leq (1-\eps)\log n, Y''>(1-\eps)\log n}}.
			\ea \ee 
			Here, the second step follows from the fact that $Y'\preceq Y''$. We finally show that this quantity equals 
		\be 
		\E{\frac{\E W}{\E W+W}\Big(\frac{W}{\E W+W}\Big)^m}(1+o(n^{-\beta})),
		\ee 
		for some small $\beta>0$, when $\eps$ is sufficiently small, by proving that $\Pf{Y'\leq (1-\eps)\log n}$ and $\Pf{Y''>(1-\eps)\log n}$ are sufficiently close to one and zero, respectively. We thus conclude that 
		\be \ba \label{eq:1sttermasymp}
		\frac 1n \sum_{n^\eps \leq j\leq n}\E{\Pf{\Zm_n(j)=m}}&=\E{\frac{\E W}{\E W+W}\Big(\frac{W}{\E W+W}\Big)^m}(1+o(n^{-\beta}))\\
		&=p_m(1+o(n^{-\beta})),
		\ea \ee 
		where $\beta$ and $\eps$ are sufficiently small and where we recall $p_m$ from~\eqref{eq:pk}. 
		
		It remains to show that the first term on the right-hand side of~\eqref{eq:vprobsplit} can be included in the $o(n^{-\beta})$ term. The main difficulty here for the actual proof is controlling the partial sums $S_\ell$, which, for `too small' $\ell$, do not concentrate around the expected value with probability sufficiently close to one. In this simplified non-rigorous proof, however, the assumption that $S_\ell=(\ell+1)\E W$ allows us to ignore this technical difficulty for now and focus on the main probabilistic arguments. The aim is to show that
		\be \label{eq:2ndtermbound}
		\frac 1n \sum_{1\leq j<n^\eps}\E{\Pf{\Zm_n(j)=m}}=o(p_mn^{-\beta}), 
		\ee  
		for some small $\beta$ when $\eps$ is sufficiently small. 
		
		We first focus on the case that $m=\wt c\log n+o(\log n)$ with $\wt c<1/\log \theta$. In this case, we can simply bound the conditional probability from above by one to obtain the upper bound $n^{-(1-\eps)}$. Then, by Lemma~\ref{lemma:pkbound}, we  have that $p_m\geq (\theta+\xi)^{-m}=n^{-\wt c\log(\theta+\xi)}$ for any $\xi>0$. Since $\wt c<1/\log \theta$, it follows that we can choose $\xi, \beta$, and $\eps$ sufficiently small such that $n^{-(1-\eps)}=o(n^{-\beta-\wt c\log(\theta+\xi)})=o(p_mn^{-\beta})$, from which the claim in~\eqref{eq:2ndtermbound} follows. 
		
		When $\wt c\in[1/\log \theta,c)$ instead (where we recall that $c<\theta/(\theta-1)$), a more careful approach is required. 
		In this case, we bound $\Pf{\Zm_n(j)=m}\leq \Pf{\Zm_n(j)\geq m}$ and use a Chernoff bound on the right-hand side of the inequality. This bound follows from the proof of~\cite[Proposition $7.2$]{LodOrt21} and yields
		\be \label{eq:chernbound}
		\frac1n \sum_{1\leq j<n^\eps}\E{\Pf{\Zm_n(j)=m}}\leq \frac1n \sum_{1\leq j<n^\eps}\E{\exp(m(1-u_j+\log u_j))}, 
		\ee 
		where 
		\be
		u_j:=\frac{1}{m}\sum_{\ell=j}^{n-1}\frac{W_j}{S_\ell}=\frac1m\sum_{\ell=j+1}^n\frac{W_j}{\ell\E W}\leq \frac{\log(n/j)}{m\E W}\leq \frac{\log n}{m\E W}=\frac{1+o(1)}{\wt c \E W}.
		\ee 
		Note that the second step follows from our simplifying assumption on $S_\ell$ and we bound $W_j$ from above by one in the inequality. Using that $x\mapsto 1-x+\log x$ is increasing on $(0,1)$, that $1/(\wt c\E W)\leq \log \theta/(\theta-1)<1$, and that we have a bound for $u_j$ uniformly in $1\leq j\leq n^\eps$, \eqref{eq:chernbound} yields
		\be \ba \label{eq:uiluse}
		\frac1n \sum_{1\leq j<n^\eps}\!\!\!\!\E{\Pf{\Zm_n(j)=m}}&\leq \frac{1}{n^{1-\eps}}\exp\Big((1+o(1))\wt c\log n \Big(1-\frac{1}{\wt c(\theta-1)}+\log\Big(\frac{1}{\wt c (\theta-1)}\Big)\Big)\Big)\\
		&=  n^{-(1-\eps)+(1+o(1))\wt c(1-1/(\wt c(\theta-1))+\log(1/(\wt c (\theta-1))))}.
		\ea \ee 
		The aim is to show that this last expression is $o(p_mn^{-\beta})$ for some sufficiently small $\beta>0$. Again using that $p_mn^{-\beta}\geq n^{-\wt c\log(\theta+\xi)-\beta}$, it suffices to show that 
		\be 
		-(1-\eps)+\wt c\Big(1-\frac{1}{\wt c(\theta-1)}+\log\Big(\frac{1}{\wt c (\theta-1)}\Big)\Big)<-\wt c\log(\theta+\xi)-\beta
		\ee 
		holds when we choose $\xi,\eps,$ and $\beta$ sufficiently small. Since the right-hand side is continuous in $\xi$ and $\beta$, and the left-hand side is continuous in $\eps$, it suffices to prove that 
		\be 
		-1+\wt c\Big(1-\frac{1}{\wt c(\theta-1)}+\log\Big(\frac{1}{\wt c (\theta-1)}\Big)\Big)<-\wt c\log\theta
		\ee 
		holds for any $\theta\in(1,2]$ and any $\wt c<\theta/(\theta-1)$. As the second derivative of the mapping $x\mapsto -1+x-1/(\theta-1)+x\log(1/(x(\theta-1)))$ is negative and the mapping $x\mapsto -x\log \theta$ is tangent to the former mapping at $x=\theta/(\theta-1)$, the inequality follows for all $\wt c<\theta/(\theta-1)$. This completes the proof of~\eqref{eq:2ndtermbound} for some sufficiently small $\eps$ and $\beta$. Combined with~\eqref{eq:1sttermasymp}, this yields~\eqref{eq:degdist} for $k=1$. 
	
	\subsection{Complete proof of Proposition~\ref{lemma:degprobasymp}}
	
	Before we provide the complete proof, we first discuss the proof strategy. Though this is similar to the approach of the non-rigorous proof provided in the previous sub-section, dealing with $k>1$ many uniformly selected vertices provides technical challenges worth addressing. Let us start by introducing the following notation: for two sequences $f(n)$ and $g(n)$, we let $f(n)\leq j_1\neq \ldots \neq j_k\leq g(n)$ denote the set 
		\be\label{eq:setnot}
		\{(j_1,\ldots, j_k)\in\N^k: j_\ell\in[f(n),g(n)]\text{ for all }\ell\in[k], \text{ and }j_{\ell_1}\neq j_{\ell_2}\text{ for any }1\leq \ell_1<\ell_2\leq k\}.
		\ee 
		In words, summing over all indices $f(n)\leq j_1\neq \ldots \neq j_k\leq g(n)$ denotes summing over all distinct indices $j_1,\ldots, j_k$ which are at least $f(n)$ and at most $g(n)$.
	
	The left-hand side of~\eqref{eq:degdist} can be expressed by conditioning on the values of the typical vertices, and splitting between cases of young and old vertices. That is, 
	\be\ba\label{eq:splitsum2}
	\P{\Zm_n(v_\ell)=m_\ell\text{ for all }\ell\in[k]}={}&\frac{1}{(n)_k}\sum_{1\leq j_1\neq \ldots \neq j_k\leq n}\!\!\!\P{\Zm_n(j_\ell)=m_\ell\text{ for all }\ell\in[k]}\\
	={}&\frac{1}{(n)_k}\sum_{n^{ \eps}\leq j_1\neq \ldots \neq j_k\leq n}\!\!\!\!\!\!\!\!\!\!\!\!\!\P{\Zm_n(j_\ell)=m_\ell\text{ for all }\ell\in[k]}\\
	&+\frac{1}{(n)_k}\sum_{\textbf{j}\in I_n(\eps)}\!\!\P{\Zm_n(j_\ell)=m_\ell\text{ for all }\ell\in[k]},
	\ea \ee 
	where
	\be\label{eq:ineps}
	I_n(\eps):=\{\textbf{j}=(j_1,\ldots,j_k):1\leq j_1\neq\ldots\neq j_k\leq n,\ \exists i\in[k]\ j_i<n^{ \eps}\}
	\ee 
	for any $\eps\in(0,1)$. Splitting the sum on the first line into the two sums on the second and third line allows us to deal with them in a different way, and is similar to the distinction made in~\eqref{eq:vprobsplit}. In the sum on the second line, in which all indices are at least $n^{ \eps}$, we can apply the law of large numbers on sums of vertex-weights to gain more control over the conditional probability of the event $\{\Zm_n(j_\ell)=m_\ell\text{ for all }\ell\in[k]\}$. The aim is to show that this first sum has the desired form, as on the right-hand side of~\eqref{eq:degdist}. This uses the same ideas as in the non-rigorous proof in the previous sub-section, but increases in difficulty due to multiple summations, integrals, and careful `book-keeping' of a large number of distinct indices.
	
	The sum on the third line, in which at least one of the indices takes on values strictly smaller than $n^{\eps}$ can be shown to be negligible compared to the first sum. Especially when $m_\ell$ is large, this is non-trivial. To do this, we consider the tail events $\{\Zm_n(j_\ell)\geq m_\ell\text{ for all }\ell\in[k]\}$ and use the negative quadrant dependence of the degrees (see Remark~\ref{rem:degtail} and~\cite[Lemma $7.1$]{LodOrt21}), so that we can deal with the more tractable probabilities $\Pf{\Zm_n(j_\ell)\geq m_\ell}$ for all $\ell\in[k]$, rather than the probability of the intersection of all tail degree events. We observe that this is not required when $k=1$ (that is, using the negative quadrant dependence), and significantly increases the technical difficulty of this part of the proof compared to the non-rigorous proof provided in the previous sub-section. Depending on whether the indices in $I_n(\eps)$ are at most or at least $n^{\eps}$, we then use bounds similar to one developed in the proof of~\cite[Lemma $7.1$]{LodOrt21} or use an approach similar to what we use to bound the sum on the second line of~\eqref{eq:splitsum2}, respectively.
	
	In the following lemma, we deal with the sum on the second line of~\eqref{eq:splitsum2}.
	
	\begin{lemma}\label{lemma:in0eps}
		Let $W$ be a positive random variable that satisfies condition~\ref{ass:weightsup} of Assumption~\ref{ass:weights}. Consider the WRT model in Definition~\ref{def:WRT} with vertex-weights $(W_i)_{\inn}$ which are i.i.d.\ copies of $W$ and fix $k\in\N$ and $c\in (0,\theta/(\theta-1))$. Then, there exist $\beta>0$ and an $\eps\in(0,1)$ that can be made arbitrarily small, such that uniformly over non-negative integers $m_\ell< c\log n, \ell\in[k]$,
		\be 
		\frac{1}{(n)_k}\sum_{n^{\eps}\leq j_1\neq \ldots \neq j_k\leq n}\!\!\!\!\!\!\!\!\!\!\!\!\P{\Zm_n(j_\ell)=m_\ell\text{ for all } \ell\in[k]}=\prod_{\ell=1}^k \mathbb E\bigg[\frac{\E W}{\E W+W}\Big(\frac{W}{\E W+W}\Big)^{m_\ell}\bigg]\big(1+o\big(n^{-\beta}\big)\big).
		\ee 
	\end{lemma}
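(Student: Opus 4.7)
The plan is to lift the single-vertex argument of Section~\ref{sec:nonrigour} to the case of $k$ typical vertices by decoupling the $k$ joint degree events, reducing to a product of one-vertex estimates, and then applying the Poisson--Gamma duality factor by factor. Throughout, we work on the high-probability event $\mathcal{E}_n := \bigcap_{\ell\geq n^{\eps/2}}\{|S_\ell-\ell\E W|\leq \ell^{1/2+\eta}\}$ for small $\eta>0$; since $W\in(0,1]$, Hoeffding gives $\P{\mathcal{E}_n^c}=O(n^{-A})$ for any $A$, which is absorbed into the $o(n^{-\beta})$ error. On $\mathcal{E}_n$, $S_{i-1}=(i-1)\E W\bigl(1+O(i^{-1/2+\eta})\bigr)$ uniformly in $i\geq n^\eps$.

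Fix indices $n^\eps\leq j_1\neq\ldots\neq j_k\leq n$ and WLOG relabel them so that $j_1<\cdots<j_k$. Conditional on $(W_i)_{i\in\N}$, parent choices at distinct later vertices are independent, so the joint event factors as a sum over \emph{disjoint} subsets $I_\ell\subseteq(j_\ell,n]$ with $|I_\ell|=m_\ell$:
\begin{equation*}
\Pf{\Zm_n(j_\ell)=m_\ell\ \forall\ell}=\sum_{I_1,\ldots,I_k\ \mathrm{disjoint}}\prod_{\ell=1}^k\prod_{i\in I_\ell}\frac{W_{j_\ell}/S_{i-1}}{1-\sum_{\ell'}\frac{W_{j_{\ell'}}}{S_{i-1}}\mathbbm{1}_{\{j_{\ell'}<i\}}}\prod_{i>j_1}\Bigl(1-\sum_{\ell':j_{\ell'}<i}\frac{W_{j_{\ell'}}}{S_{i-1}}\Bigr),
\end{equation*}
where the denominators come from pushing the $I_\ell$-vertices into the no-attachment product. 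On $\mathcal{E}_n$ the denominator corrections are $1+O(n^{-\eps})$ and the no-attachment product splits, up to multiplicative error $1+O(n^{-(1-\xi)\eps})$, into $\prod_\ell\prod_{i>j_\ell}(1-W_{j_\ell}/S_{i-1})\asymp\prod_\ell (j_\ell/n)^{W_{j_\ell}/\E W}$. The sum over Cartesian $I_1\times\cdots\times I_k$ (ignoring disjointness) yields, via Lemma~\ref{lemma:logints}, $\prod_\ell (W_{j_\ell}/\E W)^{m_\ell}(\log(n/j_\ell))^{m_\ell}/m_\ell!$; the overcounts from non-disjoint choices contribute a factor $O\bigl((\sum_\ell m_\ell)^2 n^{-\eps}\bigr)=O((\log n)^2 n^{-\eps})$, again absorbed in $n^{-\beta}$. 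Combining, we obtain
\begin{equation*}
\Pf{\Zm_n(j_\ell)=m_\ell\ \forall\ell}=\prod_{\ell=1}^k\Pf{P_{j_\ell}=m_\ell}\,(1+O(n^{-\beta_0}))\qquad\text{on }\mathcal{E}_n,
\end{equation*}
where $P_{j_\ell}\sim\mathrm{Poi}\bigl(W_{j_\ell}\log(n/j_\ell)/\E W\bigr)$ and $\beta_0<\eps$.

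Because the $W_{j_\ell}$ are i.i.d.\ and independent across distinct indices, taking expectations and summing over $n^\eps\leq j_1\neq\ldots\neq j_k\leq n$ then factorises: using $(n)_k/n^k=1+O(n^{-1})$ and replacing the distinct-index sum by the product of single-index sums (the missed diagonal contributes $O(n^{k-1})$ out of $n^k$ tuples, negligible), one gets
\begin{equation*}
\frac{1}{(n)_k}\!\!\sum_{n^\eps\leq j_1\neq\ldots\neq j_k\leq n}\!\!\!\!\!\E{\prod_{\ell=1}^k\Pf{P_{j_\ell}=m_\ell}}=\prod_{\ell=1}^k\frac{1}{n}\sum_{j=\lceil n^\eps\rceil}^n\E{\Pf{P_j=m_\ell}}(1+O(n^{-\beta_1})).
\end{equation*}
Each factor on the right equals $p_{m_\ell}(1+o(n^{-\beta}))$ by the exact Poisson--Gamma duality computation carried out in Section~\ref{sec:nonrigour} for $k=1$ (valid uniformly in $m_\ell<c\log n$ because $c<\theta/(\theta-1)$), which finishes the proof with $\beta:=\min(\beta_0,\beta_1,\beta)/2$.

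\emph{Main obstacle.} The delicate point is making the decoupling step quantitative: we must simultaneously control (i) the concentration error from replacing $S_{i-1}$ by $(i-1)\E W$, uniformly over all $i\geq n^\eps$ that appear in any of the $k$ products, (ii) the denominator correction $(1-\sum_{\ell'}W_{j_{\ell'}}/S_{i-1})^{-1}$, which couples the $k$ factors, and (iii) the combinatorial overcount from passing to Cartesian $I_\ell$'s, all uniformly in $m_\ell<c\log n$. The constraint $c<\theta/(\theta-1)$ enters precisely here, ensuring that the $m_\ell$-dependent error $(\log n)^{k}/n^\eps$ stays of order $n^{-\beta'}$ with $\beta'>0$ when $\eps$ is chosen small relative to $\theta/(\theta-1)-c$.
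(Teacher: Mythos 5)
Your overall strategy---concentration of $S_j$ for $j\geq n^{\eps}$, explicit expansion over attachment times, comparison with iterated logarithmic integrals, and Poisson--Gamma duality applied factor by factor---is the same as the paper's, and the decoupling issues (i)--(iii) you flag are handled there essentially as you sketch (with the caveat that the paper drops the disjointness of the $I_\ell$ and the distinctness of the $j_\ell$ only in the \emph{upper} bound, where adding non-negative terms is free, and obtains the lower bound by shrinking the index ranges instead). The genuine gap is in your final step. You assert that each factor $\frac1n\sum_{j\geq n^{\eps}}\E{\Pf{P_j=m_\ell}}$ equals $p_{m_\ell}(1+o(n^{-\beta}))$ ``by the exact Poisson--Gamma duality computation of Section~\ref{sec:nonrigour}'', but that section is explicitly non-rigorous, and making it uniform over all $m_\ell<c\log n$ is the hardest part of the proof. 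The point is that the error must be measured \emph{relative to} $p_{m_\ell}\asymp\theta^{-m_\ell}$: since $c$ is allowed to exceed $1/\log\theta$ (note $1/\log\theta<\theta/(\theta-1)$), the quantity $np_{m_\ell}$ need not diverge and can tend to zero polynomially fast. In that regime none of the additive boundary terms produced by the sum-to-integral comparison --- e.g.\ the term $n^{1-\eps}(a/(1+a))^{m_\ell+1}$ or $n(a/(1+a))^{m_\ell}\Pf{Y''_\ell>\log(n/\lceil n^{\eps}\rceil)}$ --- is automatically $o(p_{m_\ell}n^{-\beta})$. The paper must split into the cases $c_\ell\leq 1/(\theta-1)$ and $c_\ell\in(1/(\theta-1),c)$ and, in the latter, control $\E{(a/(1+a))^{m_\ell}\Pf{Y''_\ell>(1-\eps)\log n}}$ via a further split on $\{1+a<c_\ell/(1-\mu)\}$ combined with a large-deviation bound for Gamma tails; this is where the constraint $c<\theta/(\theta-1)$ is actually consumed, not only in the combinatorial overcount you point to.

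The same issue undermines your ``negligible diagonal'' claim: discarding $O(n^{k-1})$ of the $n^k$ tuples is harmless only after comparing the discarded summands with $n^k\prod_\ell p_{m_\ell}$, and when some $np_{m_\ell}\to0$ this fails unless you also track where $j\mapsto(j/n)^{a}(a\log(n/j))^{m_\ell}/m_\ell!$ attains its maximum (value $\e^{-m_\ell}m_\ell^{m_\ell}/m_\ell!$ at $j=n\e^{-m_\ell/a}$, which drops below $n^{\eps}$ exactly in the large-$c_\ell$ regime --- the paper's lower bound exploits this by deleting the smallest $j$'s rather than the diagonal). Your multiplicative bookkeeping for the non-disjointness overcount would likewise need to be one-sided or quantified against the main term. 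So the skeleton is right, but the lemma is not proved until the large-$m_\ell$ regime is treated with the quantitative Gamma-tail estimates.
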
 
	
	We note that condition~\ref{ass:weightzero} of Assumption~\ref{ass:weights} is not required for this result to hold. To prove this lemma, we sum over all possible $m_\ell$ vertices that connect to $j_\ell$ for each $\ell\in[k]$ and use the fact that the $j_1,\ldots, j_k$ are at least $n^{\eps}$ to precisely control the connection probabilities and to evaluate the sums over all the possible $m_\ell$ vertices for all $\ell\in[k]$, as well as the sum over the indices $j_1,\ldots, j_k$.
	
	In the following lemma, we show the sum on the third line of~\eqref{eq:splitsum2} is negligible compared to the sum on the second line.
	
	\begin{lemma}\label{lemma:inepsterm}
		Let $W$ be a positive random variable that satisfies conditions~\ref{ass:weightsup} and~\ref{ass:weightzero} of Assumption~\ref{ass:weights}. Consider the WRT model in Definition~\ref{def:WRT} with vertex-weights $(W_i)_{\inn}$ which are i.i.d.\ copies of $W$. Fix $k\in\N,c\in (0,\theta/(\theta-1))$ and recall $I_n(\eps)$ from~\eqref{eq:ineps}. There exist $\eps\in(0,1)$ and a $\beta>0$ such that uniformly over non-negative integers $m_\ell< c\log n, \ell\in[k]$,
		\be 
		\frac{1}{(n)_k}\sum_{\textbf{j}\in I_n(\eps)}\!\!\P{\Zm_n(j_\ell)=m_\ell\text{ for all }\ell\in[k]}=o\bigg(\prod_{\ell=1}^k \E{\frac{\E W}{\E W+W}\Big(\frac{W}{\E W+W}\Big)^{m_\ell}} n^{-\beta}\bigg).
		\ee
	\end{lemma}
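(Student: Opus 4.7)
The plan is to adapt the non-rigorous argument of Section~5.2 for $k=1$ to the multi-index setting, using negative quadrant dependence under the conditional measure $\mathbb P_W$ to reduce to single-index estimates.

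First I would stratify $I_n(\eps)$ by the set $S\subseteq[k]$ of coordinates that are ``young'', that is, for which $j_\ell<n^{\eps}$. Since the events $\{\Zm_n(j_\ell)=m_\ell\}$ force the events $\{\Zm_n(j_\ell)\geq m_\ell\}$, and since the latter events are negatively quadrant dependent under $\mathbb P_W$ by~\cite[Lemma~7.1]{LodOrt21}, I would bound
\[ \Pf{\Zm_n(j_\ell)=m_\ell\text{ for all }\ell\in[k]} \leq \prod_{\ell\in[k]}\Pf{\Zm_n(j_\ell)\geq m_\ell},\]
and then take expectation. This reduces the proof to bounding, for any nonempty $S\subseteq[k]$, the sum
\[ \frac{1}{(n)_k}\!\!\!\sum_{\substack{1\leq j_1\neq\ldots\neq j_k\leq n\\ j_\ell<n^{\eps} \text{ iff }\ell\in S}}\!\!\prod_{\ell\in S}\E{\Pf{\Zm_n(j_\ell)\geq m_\ell}}\prod_{\ell\notin S}\E{\Pf{\Zm_n(j_\ell)\geq m_\ell}}\]
(using that the weights are i.i.d.\ and a mild rearrangement to make the factors independent; the minor correlation introduced by sharing the denominators $S_i$ across indices can be absorbed in an error term, using condition~\ref{ass:weightzero} to rule out atypically small partial sums for $i\geq n^{\eps}$).

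The factors with $\ell\notin S$ are of the same nature as in Lemma~\ref{lemma:in0eps} and each contributes at most $C\,p_{\geq m_\ell}$ for a constant $C$. The real work is the young factors with $\ell\in S$. For each such $\ell$ I would split according to the size of $m_\ell$. In the regime $m_\ell\leq \tilde c\log n$ with $\tilde c<1/\log\theta$, I bound the probability trivially by $1$ so that each young coordinate contributes a factor $n^{-(1-\eps)}$ after summation over $j_\ell<n^\eps$; since $p_{\geq m_\ell}\geq(\theta+\xi)^{-m_\ell}\geq n^{-\tilde c\log(\theta+\xi)}$ by Lemma~\ref{lemma:pkbound} for arbitrarily small $\xi>0$, the gain $n^{-(1-\eps)}$ beats $p_{\geq m_\ell}$ by a positive power of $n$ provided $\eps,\xi,\beta$ are small. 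In the complementary regime $1/\log\theta\leq m_\ell/\log n<c<\theta/(\theta-1)$, I would use the Chernoff estimate from the proof of~\cite[Proposition~7.2]{LodOrt21}, which gives
\[ \Pf{\Zm_n(j_\ell)\geq m_\ell}\leq \exp\big(m_\ell(1-u_{j_\ell}+\log u_{j_\ell})\big),\]
with $u_{j_\ell}\leq (1+o(1))/(\tilde c\E W)$ uniformly in $j_\ell<n^\eps$, exactly as in~\eqref{eq:uiluse}. The concavity calculation at the end of Section~5.2 (using that $x\mapsto -1+x-1/(\theta-1)+x\log(1/(x(\theta-1)))$ lies strictly below $x\mapsto -x\log\theta$ on $(0,\theta/(\theta-1))$, being tangent only at the endpoint) yields a strict inequality and hence a power saving $n^{-\beta}$ per young coordinate against $p_{\geq m_\ell}$, uniformly in $\tilde c\in[1/\log\theta,c)$, for $\eps$ and $\beta$ small enough.

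Summing these contributions over all nonempty $S\subseteq[k]$, each young coordinate gains a factor $n^{-\beta'}$ relative to the corresponding factor $p_{\geq m_\ell}$ in the main term, and the combinatorial sum over $S$ contributes only a constant depending on $k$. This delivers the claimed bound of $o(\prod_\ell p_{\geq m_\ell}\,n^{-\beta})$, and hence, after replacing $p_{\geq m_\ell}$ by the expectation appearing in the statement (both differ by a bounded factor), the conclusion. The main obstacle is the uniform handling of the young regime: one must verify that the concavity inequality has a strict margin that survives to all $\tilde c<\theta/(\theta-1)$, and simultaneously control the partial sums $S_\ell$ for small~$\ell$, where the law of large numbers fails to give uniform concentration; this is where condition~\ref{ass:weightzero} enters, providing a polynomial lower tail for $\min_{\ell\geq n^\eps}S_\ell/\ell$ that is strong enough to absorb the exceptional event into the error term.
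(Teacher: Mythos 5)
Your proposal is correct and follows essentially the same route as the paper: stratification of $I_n(\eps)$ by the set of young coordinates, reduction via negative quadrant dependence under $\mathbb P_W$, the trivial bound versus Chernoff-bound dichotomy according to whether $m_\ell/\log n$ is below or above $1/\log\theta$, the same concavity/tangency inequality against $-\wt c\log\theta$, and condition~\ref{ass:weightzero} entering only to control $S_\ell$ for very small $\ell$. The one step you gloss over — writing the expectation of the product of conditional probabilities as a product of expectations — is handled in the paper by first passing to high-probability events that make the denominators $S_i$ deterministic and then relabelling the weights $W_{j_\ell}\mapsto W_\ell$, which is exactly the ``absorb the correlation into an error term'' mechanism you allude to.
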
 
	
	Note that condition~\ref{ass:weightzero} of Assumption~\ref{ass:weights} is required only in this lemma, where it was not necessary in Lemma~\ref{lemma:in0eps}. In fact, it is required for one inequality in the proof only, which convinces us that it could possibly be avoided.
	
	It is clear that~\eqref{eq:degdist} in Proposition~\ref{lemma:degprobasymp} immediately follows from using the results of Lemmas~\ref{lemma:in0eps} and~\ref{lemma:inepsterm} in~\eqref{eq:splitsum2}. Namely, in Lemma~\ref{lemma:in0eps} we can take $\eps $ arbitrarily small and the left-hand side of the equality in Lemma~\ref{lemma:inepsterm} is monotone decreasing as $\eps$ decreases, so that we can take $\eps$ as small as required. In what follows we first prove Lemma~\ref{lemma:in0eps} in Section~\ref{sec:proof5.10}, prove Lemma~\ref{lemma:inepsterm} in Section~\ref{sec:proof5.11}, and finally complete the proof of Proposition~\ref{lemma:degprobasymp} in Section~\ref{sec:proof5.1}. After each of the proofs we discuss the required adaptations to prove the results of Lemmas~\ref{lemma:in0eps} and~\ref{lemma:inepsterm} for the model with \emph{random out-degree}, as introduced in Remark~\ref{remark:def}$(ii)$.
	
	\subsection{Proof of Lemma~\ref{lemma:in0eps}\label{sec:proof5.10}}
	
	\begin{proof}[Proof of Lemma~\ref{lemma:in0eps}]
		We provide a matching upper bound and lower bound for
		\be
		\frac{1}{(n)_k}\sum_{n^{\eps}\leq j_1\neq \ldots \neq j_k\leq n}\!\!\!\!\!\!\!\!\!\!\!\!\P{\Zm_n(j_\ell)=m_\ell\text{ for all } \ell\in[k]}.
		\ee
		\paragraph{\textbf{Upper bound.}} Let  $\zeta_n=n^{-\delta\eps}/\E{W}$ for some $\delta\in(0,1/2)$. We define the event
		\be \label{eq:en}
		E^{(1)}_n:=\bigg\{ \sum_{\ell=1}^j W_\ell \in ((1-\zeta_n)\E{W}j,(1+\zeta_{n})\E{W}j),\text{ for all } n^{\eps}\leq j\leq n\bigg\},
		\ee 
		We use $E^{(1)}_n$ to mimic the simplified assumption that $S_j=(j+1)\E W$ in the non-rigorous proof in Section~\ref{sec:nonrigour}. We know that $\mathbb P((E^{(1)}_n)^c)=o(n^{-\gamma})$ for any $\gamma>0$ (and thus of smaller order than $\prod_{\ell=1}^k p_{m_\ell}n^{-\beta}$ for any $\beta>0$ and uniformly in $m_1,\ldots m_k<(\theta/(\theta-1))\log n$) from Lemma~\ref{lemma:weightsumbounds} in the~\hyperref[sec:appendix]{Appendix}. By also conditioning on the vertex-weights, this yields for any $\gamma>0$ the upper bound
		\be \ba \label{eq:enbound}
		\frac{1}{(n)_k}{}&\sum_{n^{\eps}\leq j_1\neq \ldots \neq j_k\leq n}\E{\Pf{\Zm_n(j_\ell)=m_\ell\text{ for all }\ell\in[k]}}\\
		\leq {}&\frac{1}{(n)_k}\sum_{n^{\eps}\leq j_1\neq \ldots \neq j_k\leq n}\!\!\!\!\!\!\!\!\!\!\!\!\!\!\mathbb E[\Pf{\Zm_n(j_\ell)=m_\ell\text{ for all }\ell\in[k]}\ind_{E_n^{(1)}}]+o(n^{-\gamma}),
		\ea \ee 
		Now, to express the first term on the right-hand side of~\eqref{eq:enbound}, we consider ordered indices rather than unordered ones. We provide details for the case $n^{\eps}\leq j_1<j_2<\ldots<j_k\leq n$ and discuss later on how the other permutations of $j_1,\ldots,j_k$ can be dealt with. Moreover, for every $\ell\in[k]$, we introduce the ordered indices $j_{\ell}<i_{1,\ell}<\ldots<i_{m_\ell,\ell}\leq n,\ell\in[k]$, which denote the steps at which vertex $\ell$ increases it degree by one. Note that for every $\ell\in[k]$ these indices are distinct by definition, but we also require that $i_{s,\ell}\neq i_{t,j}$ for any $\ell,j\in[k],s\in[m_\ell]$ and $t\in[m_j]$ (equality is allowed only when $\ell=j$ and $s=t$). Indeed, a new vertex can only connect to one already present vertex. We denote this constraint by adding a $*$ on the summation symbol. Finally, we define $j_{k+1}:=n$. Combining these additional steps, we arrive at
		\be\ba \label{eq:ubfirst}
		\frac{1}{(n)_k}{}&\sum_{n^{\eps}\leq j_1< \ldots< j_k\leq n}\E{\Pf{\Zm_n(j_\ell)=m_\ell\text{ for all } \ell\in[k]}\ind_{E^{(1)}_n}}\\
		={}&\frac{1}{(n)_k}\sum_{n^{\eps}\leq j_1<\ldots<j_k\leq n}\ \, \sideset{}{^*}\sum_{\substack{j_\ell<i_{1,\ell}<\ldots<i_{m_\ell,\ell}\leq n,\\\ell\in[k]}}\mathbb E\Bigg[\prod_{t=1}^k\prod_{s=1}^{m_t}\frac{W_{j_t}}{\sum_{\ell=1}^{i_{s,t}-1}W_\ell}\\ &\times \prod_{u=1}^k\!\!\prod_{\substack{s=j_u+1\\s\neq i_{\ell,t},\ell\in[m_t],t\in[k]}}^{j_{u+1}}\!\!\!\!\bigg(1-\frac{\sum_{\ell=1}^u W_{j_\ell}}{\sum_{\ell=1}^{s-1}W_{\ell}}\bigg)\ind_{E^{(1)}_n}\Bigg].
		\ea \ee   
		This step is equivalent to~\eqref{eq:probexplicit} in the non-rigorous proof. The terms in the first double product denote the probabilities that vertices $i_{s,t}$ connect to $j_t$, whereas the terms in the second double product denote the probabilities that vertices $i_{s,t}$ do \emph{not} connect to the vertices $j_\ell$ such that $j_\ell<i_{s,t}$. Similar to~\eqref{eq:connectsplit} in the non-rigorous proof, we then include the terms where $s=i_{\ell,t}$ for all $\ell\in[m_t]$ and $t\in[k]$ in the second double product. To do this, we change the first double product to
		\be \label{eq:fracbound}
		\prod_{t=1}^k \prod_{s=1}^{m_t}\frac{W_{j_t}}{\sum_{\ell=1}^{i_{s,t}-1}W_\ell-\sum_{\ell=1}^k W_{j_\ell}\ind_{\{i_{s,t}>j_\ell\}}}\leq\prod_{t=1}^k \prod_{s=1}^{m_t} \frac{W_{j_t}}{\sum_{\ell=1}^{i_{s,t}-1}W_\ell-k},
		\ee 
		that is, we subtract the vertex-weight $W_{j_\ell}$ in the numerator when the vertex  $j_\ell$ has already been introduced by step $i_{s,t}$. In the upper bound we use that the weights are bounded from above by one. We thus arrive at the upper bound		
		\be\label{eq:tailprobmiddle}
		\frac{1}{(n)_k}\sum_{n^{\eps}\leq j_1<\ldots<j_k\leq n}\ \, \sideset{}{^*}\sum_{\substack{j_\ell<i_{1,\ell}<\ldots<i_{m_\ell,\ell}\leq n,\\\ell\in[k]}}\!\!\!\mathbb E\Bigg[\prod_{t=1}^k\prod_{s=1}^{m_t}\frac{W_{j_t}}{\sum_{\ell=1}^{i_{s,t}-1}W_\ell-k}\prod_{u=1}^k\prod_{s=j_u+1}^{j_{u+1}}\bigg(1-\frac{\sum_{\ell=1}^u W_{j_\ell}}{\sum_{\ell=1}^{s-1}W_\ell}\bigg)\ind_{E^{(1)}_n}\Bigg].
		\ee  
		For ease of writing, we omit the first sum until we actually intend to sum over the indices $j_1,\ldots,j_k$. We use the bounds from the event $E^{(1)}_n$ to bound 
		\be
		\sum_{\ell=1}^{i_{s,t}-1}W_\ell\geq (i_{s,t}-1)\E W(1-\zeta_n),\qquad \sum_{\ell=1}^{s-1}W_\ell \leq s\E W (1+\zeta_n).
		\ee
		For $n$ sufficiently large, we observe that $(i_{s,t}-1)\E W(1-\zeta_n)-k\geq i_{s,t}\E W(1-2\zeta_n)$, which yields 
		\be  
		\frac{1}{(n)_k}\  \sideset{}{^*}\sum_{\substack{j_\ell<i_{1,\ell}<\ldots<i_{m_\ell,\ell}\leq n,\\\ell\in[k]}}\!\!\!\!\mathbb E\Bigg[\prod_{t=1}^k\prod_{s=1}^{m_t}\frac{W_{j_t}}{i_{s,t}\E W (1-2\zeta_n)} \prod_{u=1}^k\prod_{s=j_u+1}^{j_{u+1}}\!\!\bigg(1-\frac{\sum_{\ell=1}^u W_{j_\ell}}{s\E W(1+\zeta_n)}\bigg)\ind_{E^{(1)}_n}\Bigg].
		\ee
		Moreover, relabelling the vertex-weights $W_{j_t}$ to $W_t$ for $t\in[k]$ does not change the distribution of the terms within the expected value, so that the expected value remains unchanged. We can also bound the indicator from above by one, to arrive at the upper bound
		\be 
		\frac{1}{(n)_k}\  \sideset{}{^*}\sum_{\substack{j_\ell<i_{1,\ell}<\ldots<i_{m_\ell,\ell}\leq n,\\\ell\in[k]}}\mathbb E\Bigg[\prod_{t=1}^k\prod_{s=1}^{m_t}\frac{W_t}{i_{s,t}\E W (1-2\zeta_n)} \prod_{u=1}^k\prod_{s=j_u+1}^{j_{u+1}}\bigg(1-\frac{\sum_{\ell=1}^u W_\ell}{s\E W(1+\zeta_n)}\bigg)\Bigg].
		\ee 
		We bound the final product from above by
		\be \ba \label{eq:expbound}
		\prod_{s=j_u+1}^{j_{u+1}}\bigg(1-\frac{\sum_{\ell=1}^u W_\ell}{s\E W(1+\zeta_n)}\bigg)&\leq \exp\bigg(-\frac{1}{\E W(1+\zeta_n)} \sum_{s=j_u+1}^{j_{u+1}}\frac{\sum_{\ell=1}^u W_\ell}{s}\bigg)\\
		&\leq \exp\bigg(-\frac{1}{\E W(1+\zeta_n)} \sum_{\ell=1}^u W_\ell \log\Big(\frac{j_{u+1}}{j_u+1}\Big)\bigg)\\
		&= \Big(\frac{j_{u+1}}{j_u+1}\Big)^{-\sum_{\ell=1}^u W_\ell/(\E W(1+\zeta_n))}.
		\ea\ee 
		As the weights are almost surely bounded by one, we thus find
		\be
		\prod_{s=j_u+1}^{j_{u+1}}\bigg(1-\frac{\sum_{\ell=1}^u W_\ell}{s\E W(1+\zeta_n)}\bigg)\leq \Big(\frac{j_{u+1}}{j_u}\Big)^{-\sum_{\ell=1}^u W_\ell/(\E W(1+\zeta_n))}\Big(1+\mathcal O\big(n^{-\eps}\big)\Big),
		\ee 
		which is equivalent to~\eqref{eq:nonconnectasymp} in the non-rigorous proof.
		As a result, we obtain the upper bound
		\be\ba 
		\frac{1}{(n)_k}\, \sideset{}{^*}\sum_{\substack{j_\ell<i_{1,\ell}<\ldots<i_{m_\ell,\ell}\leq n,\\\ell\in[k]}}\!\!\!\!\!{}&\mathbb E\Bigg[\prod_{t=1}^k\! \bigg(\Big(\frac{W_t}{\E{W}}\Big)^{m_t}\prod_{s=1}^{m_t}\frac{1}{i_{s,t}(1-2\zeta_n)}\bigg)\!\prod_{u=1}^k\!\Big(\frac{j_{u+1}}{j_u}\Big)^{-\sum_{\ell=1}^u W_\ell/(\E{W}(1+\zeta_n))}\Bigg]\\
		&\times \Big(1+\mathcal O\big(n^{-\eps}\big)\Big).
		\ea\ee
		As $j_{k+1}=n$, we can simplify the final product to obtain 
		\be\ba
		\frac{1}{(n)_k}\,  \sideset{}{^*}\sum_{\substack{j_\ell<i_{1,\ell}<\ldots<i_{m_\ell,\ell}\leq n,\\\ell\in[k]}}\!\!\!\!\!\!\!(1-2\zeta_n)^{-\sum_{h=1}^k m_h}\mathbb E\Bigg[{}&\prod_{t=1}^k \Big(\frac{W_t}{\E{W}}\Big)^{m_t}\prod_{t=1}^k \Big(j_t^{W_t/(\E{W}(1+\zeta_n))}\prod_{s=1}^{m_t}i_{s,t}^{-1}\Big)\\
		&\times  n^{-\sum_{\ell=1}^k W_\ell/(\E{W}(1+\zeta_n))}\Bigg]\Big(1+\mathcal O\big(n^{-\eps}\big)\Big).
		\ea \ee 		
		We bound this from above even further by no longer constraining the indices $i_{s,t}$ to be distinct. That is, for distinct $t_1$ and $t_2\in[k]$, we allow $i_{s_1,t_1}=i_{s_2,t_2}$ to hold for any $s_1\in[m_{t_1}]$ and $s_2\in[m_{t_2}]$. This yields
		\be\ba\label{eq:ubexp} 
		\frac{1}{(n)_k}\, \sum_{\substack{j_\ell<i_{1,\ell}<\ldots<i_{m_\ell,\ell}\leq n,\\\ell\in[k]}}\!\!\!\!\!\!\!(1-2\zeta_n)^{-\sum_{h=1}^k m_h}\mathbb E\Bigg[{}&\prod_{t=1}^k \Big(\frac{W_t}{\E{W}}\Big)^{m_t}\prod_{t=1}^k \Big(j_t^{W_t/(\E{W}(1+\zeta_n))}\prod_{s=1}^{m_t}i_{s,t}^{-1}\Big)\\
		&\times  n^{-\sum_{\ell=1}^k W_\ell/(\E{W}(1+\zeta_n))}\Bigg]\Big(1+\mathcal O\big(n^{-\eps}\big)\Big).
		\ea\ee 		
		We set
		\be \label{eq:at}
		a_t:=W_t/(\E{W}(1+\zeta_n)), 
		\ee
		and look at the terms 
		\be \label{eq:simplify}
		\frac{n^{-\sum_{t=1}^k a_t}}{(n)_k}\ \sum_{\substack{j_\ell<i_{1,\ell}<\ldots<i_{m_\ell,\ell}\leq n,\\\ell\in[k]}}\prod_{t=1}^k \bigg( (a_t(1+\zeta_n))^{m_t}j_t^{a_t} \prod_{s=1}^{m_t}i_{s,t}^{-1}\bigg) .
		\ee 
		We bound the sums from above by multiple integrals, almost surely, which yields 
		\be \ba\label{eq:intbound}
		\frac{n^{-\sum_{t=1}^k a_t}}{(n)_k}\prod_{t=1}^k (a_t(1+\zeta_n))^{m_t}j_t^{a_t} \int_{j_t}^n \int _{x_{1,t}}^n\cdots \int_{x_{m_t-1,t}}^n \prod_{s=1}^{m_t}x_{s,t}^{-1}\,\d x_{m_t,t}\ldots\d x_{1,t}.
		\ea \ee 
		Using~\eqref{eq:logint} in Lemma~\ref{lemma:logints}, we obtain that this equals
			\be 
			\frac{n^{-\sum_{t=1}^k a_t}}{(n)_k}\prod_{t=1}^k (a_t(1+\zeta_n))^{m_t}j_t^{a_t} \frac{(\log(n/j_t))^{m_t}}{m_t!}.
			\ee 
		Substituting this in~\eqref{eq:simplify} and reintroducing the sum over the indices $j_1,\ldots, j_k$, we arrive at
		\be \label{eq:intstep1}
		\frac{(1+\zeta_n)^{\sum_{h=1}^k m_h}}{(n)_k}\sum_{n^{\eps}\leq j_1<\ldots<j_k\leq n}\prod_{t=1}^k\Big(\frac{j_t}{n}\Big)^{a_t}\frac{(a_t\log(n/j_t))^{m_t}}{m_t!}.
		\ee 
		We observe that switching the order of the indices $j_1,\ldots,j_k$ achieves the same result as permuting the $m_1,\ldots,m_k$ and $a_1,\ldots, a_k$. Hence, if we let $\pi:[k]\to[k]$ be a permutation, then considering the indices $n^{\eps}\leq j_{\pi(1)}<j_{\pi(2)}<\ldots<j_{\pi(k)}\leq n$ yields a similar result as in \eqref{eq:intstep1} but with a term $j_{\pi(t)}^{a_{\pi(t)}}(\log(n/j_{\pi(t)}))^{m_{\pi(t)}}/m_{\pi(t)}!$ in the final product. Since this product is invariant to such permutations of the $m_t$ and $a_t$, the only thing that would change is the summation order of the indices $j_1,\ldots,j_k$.
			
			By using~\eqref{eq:intstep1} in~\eqref{eq:ubexp} and incorporating all permutations of the indices, i.e.\ $n^\eps \leq j_{\pi(1)}<j_{\pi(2)}<\ldots<j_{\pi(k)}\leq n$ for all $\pi\in P_k$ (where $P_k$ denotes the set of all permutations on $[k]$), we arrive at 
			\be \ba \label{eq:permutations}
			\frac{1}{(n)_k}{}&\sum_{n^{\eps}\leq j_1\neq \ldots \neq j_k\leq n}\!\!\!\!\!\!\!\!\!\!\!\mathbb E[\Pf{\Zm_n(j_\ell)=m_\ell,\ell\in[k]}\ind_{E^{(1)}_n}]\\
			\leq{}& \frac{1}{(n)_k}\Big(\frac{1+\zeta_n}{1-2\zeta_n}\Big)^{\sum_{h=1}^k m_h}\E{\sum_{\pi\in P_k}\sum_{n^{\eps}\leq j_{\pi(1)}<\ldots<j_{\pi(k)}\leq n}\prod_{t=1}^k\Big(\frac{j_{\pi(t)}}{n}\Big)^{a_t}\frac{(a_t\log(n/j_{\pi(t)}))^{m_t}}{m_t!}}\\
			={}&\frac{1}{(n)_k}\Big(\frac{1+\zeta_n}{1-2\zeta_n}\Big)^{\sum_{h=1}^k m_h}\E{\sum_{n^{\eps}\leq j_1\neq\ldots\neq j_k\leq n}\prod_{t=1}^k\Big(\frac{j_t}{n}\Big)^{a_t}\frac{(a_t\log(n/j_t))^{m_t}}{m_t!}}.
			\ea \ee 
			We bound the last expression from above even further by allowing the indices $j_1,\ldots, j_k$ to take \emph{any} integer value in $[n^\eps,n]$. That is, $j_{\ell_1}=j_{\ell_2}$ is allowed for $\ell_1\neq \ell_2$. As this introduces more non-negative terms, it yields the upper bound 
			\be \ba
			\frac{1}{(n)_k}\Big(\frac{1+\zeta_n}{1-2\zeta_n}\Big)^{\sum_{h=1}^k m_h}\mathbb E\Bigg[\sum_{n^{\eps}\leq j_1\leq n}\cdots{}& \sum_{n^\eps \leq j_k\leq n}\prod_{t=1}^k\Big(\frac{j_t}{n}\Big)^{a_t}\frac{(a_t\log(n/j_t))^{m_t}}{m_t!}\Bigg]\\
			=\frac{1}{(n)_k}\Big(\frac{1+\zeta_n}{1-2\zeta_n}\Big)^{\sum_{h=1}^k m_h}\prod_{t=1}^k {}&\E{\sum_{n^{\eps}\leq j\leq n}\Big(\frac{j}{n}\Big)^{a_t}\frac{(a_t\log(n/j))^{m_t}}{m_t!}},
			\ea \ee 
			where we note that the product can be taken out in the second line, since the weights are independent. We now observe two things: First, we can redefine $a_t=a:=W/(\E W(1+\zeta_n))$. After all, the index of the vertex-weights $W_1,\ldots, W_k$ is irrelevant, since they are independent and identically distributed. Second, the summand is equal to $\Pf{P(a)=m_t}$, where $P(a)\sim \text{Poi}(a\log(n/j))$, conditionally on $W$. So we obtain, similar to~\eqref{eq:poiprob} in the non-rigorous proof,
			\be 
			\frac{1}{(n)_k}\Big(\frac{1+\zeta_n}{1-2\zeta_n}\Big)^{\sum_{h=1}^k m_h}\prod_{t=1}^k\E{\sum_{n^{\eps}\leq j\leq n}\Pf{P(a)=m_t}}.
			\ee 
			We now use the duality between Poisson and gamma random variables, as also explained in the intuitive proof after~\eqref{eq:poiprob}. That is, let $G_t\sim \text{Gamma}(m_t,1)$ and $V\sim \text{Exp}(1)$ be independent random variables. Then, $\{P(a)=m_t\}=\{G_t\leq a\log(n/j), G_t+V>a\log(n/j)\}$, so that
			\be\ba\label{eq:poitogamma}
			\Pf{P(a)=m_t}&=\Pf{G_t\leq a\log(n/j), G_t+V>a\log(n/j)}\\
			&=\Pf{Y_t\leq \log(n/j), Y_t+\wt V>\log(n/j)}\\
			&=\Pf{Y_t\leq \log(n/j)}-\mathbb P_\F(Y_t+\wt V\leq \log(n/j)),
			\ea \ee
			where $Y_t\sim \text{Gamma}(m_t,a)$ and $\wt V\sim \text{Exp}(a)$, conditionally on $W$ (and observe that $Y_t+\wt V\sim \text{Gamma}(m_t+1,a)$). We thus obtain 
			\be \label{eq:gammadiffprob}
			\frac{1}{(n)_k}\Big(\frac{1+\zeta_n}{1-2\zeta_n}\Big)^{\sum_{h=1}^k m_h}\prod_{t=1}^k\E{\sum_{n^{\eps}\leq j\leq n}\big(\Pf{Y_t\leq \log(n/j)}-\mathbb P_\F(Y_t+\wt V\leq \log(n/j))\big)},
			\ee 
			similar to~\eqref{eq:gammadif} in the non-rigorous proof. We now compare the sum of the gamma probabilities with an integral. That is, since both probabilities are decreasing in $j$, we can bound the sum from above by
			\be \ba \label{eq:sumtoint}
			\int_{\lfloor n^\eps \rfloor}^n{}& \Pf{Y_t\leq \log(n/x)}\,\d x-\int_{\lceil n^\eps\rceil}^n \mathbb P_\F(Y_t+\wt V\leq \log(n/x))\,\d x\\
			&=n\int_0^{\log(n/\lfloor n^\eps \rfloor)} \e^{-y}\Pf{Y_t\leq y}\,\d y-n\int_0^{\log(n/\lceil n^\eps\rceil)}\e^{-y}\mathbb P_\F(Y_t+\wt V\leq y)\,\d y,
			\ea \ee  
			where we use a variable substitution $y=\log(n/x)$ to obtain the second line. Writing the cumulative density function as an integral from zero to $y$ and switching the order of integration, we find
			\be\ba  
			n{}&\Ef{}{\e^{-Y_t}\ind_{\{Y_t\leq \log(n/\lfloor n^\eps\rfloor)\}}}-n\Ef{}{\e^{-(Y_t+\wt V)}\ind_{\{Y_t+\wt V\leq \log( n/\lceil n^\eps\rceil)\}}}\\
			&+\mathbb P_W(Y_t+\wt V\leq \log(n/\lceil n^\eps\rceil))+\lfloor n^\eps\rfloor \big(\mathbb P_W(Y_t+\wt V\leq \log(n/\lceil n^\eps\rceil))-\Pf{Y_t\leq \log(n/\lfloor n^\eps\rfloor)}\big).
			\ea \ee 
			As the last term is negative, we can omit it to obtain an upper bound. The first term on the second line can be bounded from above by
			\be 
			\mathbb P_W(Y_t+\wt V\leq \log(n/\lceil n^\eps\rceil))\leq \mathbb P_W(\e^{-(Y_t+\wt V)}\geq n^{-(1-\eps)})\leq n^{1-\eps}\E{\e^{-(Y_t+\wt V)}}=n^{1-\eps}\Big(\frac{a}{1+a}\Big)^{m_t+1}.
			\ee 
			Combined, this yields the upper bound
			\be \label{eq:gammaexp}
			n\Ef{}{\e^{-Y_t}\ind_{\{Y_t\leq \log(n/\lfloor n^\eps\rfloor)\}}}-n\Ef{}{\e^{-(Y_t+\wt V)}\ind_{\{Y_t+\wt V\leq \log( n/\lceil n^\eps\rceil)\}}}+n^{1-\eps}\Big(\frac{a}{1+a}\Big)^{m_t+1}.
			\ee 
			We now distinguish two cases: 
			\begin{enumerate}
				\item[\namedlabel{item:1}{$(1)$}] $m_t=c_t\log n+o(\log n)$ with $c_t\in[0,1/(\theta-1)]$, for all $t\in[k]$. 
				\item[\namedlabel{item:2}{$(2)$}] $m_t=c_t\log n+o(\log n)$ with $c_t\in(1/(\theta-1),c)$, for some $t\in[k]$.
			\end{enumerate}
			In case~\ref{item:1}, we bound the difference of the expected values from above by 
			\be\ba \label{eq:expdiff}
			n{}&\Ef{}{\e^{-Y_t}}-n\Ef{}{\e^{-(Y_t+\wt V)}}+n\Ef{}{\e^{-(Y_t+\wt V)}\ind_{\{Y_t+\wt V>\log(n/\lceil n^\eps\rceil)\}}}+n^{1-\eps}\Big(\frac{a}{1+a}\Big)^{m_t+1}\\
			&\leq n\frac{1}{1+a}\Big(\frac{a}{1+a}\Big)^{m_t}(1+\cO(n^{-\eps}))+\cO(n^\eps).
			\ea\ee  
			Substituting this for the argument of the expected value for each $t\in[k]$ in~\eqref{eq:gammadiffprob} yields
			\be \label{eq:expprod2}
			\Big(\frac{1+\zeta_n}{1-2\zeta_n}\Big)^{\sum_{h=1}^k m_h}(1+\cO(n^{-\eps}))\prod_{t=1}^k\bigg(\E{\frac{1}{1+a}\Big(\frac{a}{1+a}\Big)^{m_t}}+\cO(n^{-(1-\eps)})\bigg).
			\ee 
			We recall that $a:=W/(\E W(1+\zeta_n))$ and that $\zeta_n:=n^{-\delta\eps}/\E W$. Since $W\in[0,1]$ almost surely and $m_t=\cO(\log n)$, 
			\be \label{eq:aapprox}
			\E{\frac{1}{1+a}\Big(\frac{a}{1+a}\Big)^{m_t}}=\E{\frac{\E W}{\E W+W}\Big(\frac{W}{\E W+W}\Big)^{m_t}}(1+o(n^{-\beta}))=p_{m_t}(1+o(n^{-\beta})), 
			\ee 
			for some small $\beta>0$. We have by Lemma~\ref{lemma:pkbound} that $p_{m_t}\geq (\theta+\xi)^{-m_t}$ for any $\xi>0$. Now we use that in case~\ref{item:1} $m_t=c_t\log n+o(\log n)$ with $c_t<1/(\theta-1)$. As a result, for $\xi$, $\eps,$ and some $\beta>0$ sufficiently small and since $\log\theta<\theta-1$ for $\theta>1$, we then have $n^{-(1-\eps)}=o( n^{-(c_t+o(1))\log(\theta+\xi)-\beta})=o(p_{m_t}n^{-\beta})$. Together with~\eqref{eq:aapprox}, this implies we can write the argument of the product in~\eqref{eq:expprod2} as $p_{m_t}(1+o(n^{-\beta}))$ whenever $m_t$ satisfies case~\ref{item:1}. 
			
			In case~\ref{item:2}, these bounds do not suffice. First of all, we note that  because of the product structure in~\eqref{eq:gammadiffprob}, we can deal with any 
				terms that satisfy $c_t \in [0,1/(\theta -1)]$ in the same way as in~\ref{item:1}. Without loss of generality we thus assume that $c_t \in (1/(\theta-1), c)$ for all $t  \in[k]$.	In this case, we use that for any $N>0$, 
			\be \ba\label{eq:expre}
			\Ef{}{\e^{-Y_t}\ind_{\{Y_t\leq N\}}}&=\int_0^N \frac{a^{m_t}}{\Gamma(m_t)}x^{m_t-1}\e^{-(1+a)x}\,\d x\\
			&=\Big(\frac{a}{1+a}\Big)^{m_t}\int_0^N\frac{(1+a)^{m_t}}{\Gamma(m_t)}x^{m_t-1}\e^{-(1+a)x}\,\d x\\
			&=\Big(\frac{a}{1+a}\Big)^{m_t}\Pf{Y_t'\leq N},
			\ea \ee 
			where $Y_t'\sim \text{Gamma}(m_t,1+a)$, conditionally on $W$. As $Y_t+\wt V\sim \text{Gamma}(m_t+1,a)$, we also obtain a similar result for the second expected value in~\eqref{eq:gammaexp} with a random variable $Y''_t\sim \text{Gamma}(m_t+1,1+a)$. As in the non-rigorous proof, we can assume that $Y_t'$ and $Y''_t$ are defined on the same probability space and we have that $Y_t' \preceq Y_t''$. Using this in~\eqref{eq:gammaexp}, we thus obtain 
			\be \ba\label{eq:mainterm}
			n{}&\Big(\frac{a}{1+a}\Big)^{m_t}\bigg[\Pf{Y_t'\leq \log(n/\lfloor n^\eps\rfloor)}-\frac{a}{1+a}\Pf{Y_t''\leq \log(n/\lceil n^\eps\rceil)}\bigg]+n^{1-\eps}\Big(\frac{a}{1+a}\Big)^{m_t+1}\\
			&\leq n\Big(\frac{a}{1+a}\Big)^{m_t}\bigg[\frac{1}{1+a}\Pf{Y'_t\leq \log(n/\lfloor n^\eps\rfloor)}+\Pf{Y'_t\leq \log(n/\lceil n^\eps\rceil), Y''_t>\log(n/\lceil n^\eps\rceil)}\\
			&\hphantom{\leq n\Big(\frac{a}{1+a}\Big)^{m_t}\bigg[}\ +\Pf{\log(n/\lceil n^\eps\rceil)\leq Y'_t\leq \log(n/\lfloor n^\eps\rfloor)}\bigg]+n^{1-\eps}\Big(\frac{a}{1+a}\Big)^{m_t+1}.
			\ea \ee 
			By omitting the probability from the first term and bounding the other two probabilities from above by $\Pf{Y''_t>\log(n/\lceil n^\eps\rceil)}$, we obtain an upper bound
			\be 
			n\Big(\frac{a}{1+a}\Big)^{m_t}\Big(\frac{1}{1+a}(1+\cO(n^{-\eps}))+2\Pf{Y''_t>\log(n/\lceil n^\eps\rceil)}\Big).
			\ee 
			Again, we substitute this for the argument in the expected value in~\eqref{eq:gammadiffprob} to obtain for some large constant $C>0$,
			\be \ba\label{eq:expsum}
			\Big(\frac{1+\zeta_n}{1-2\zeta_n}\Big)^{\sum_{h=1}^k m_h}(1+\cO(n^{-\eps}))\prod_{t=1}^k\bigg({}&\E{\Big(\frac{1}{1+a}\Big(\frac{a}{1+a}\Big)^{m_t}}\\
			+{}&C\E{\Big(\frac{a}{1+a}\Big)^{m_t}\big(\Pf{Y''_t>\log(n/\lceil n^\eps\rceil)}+n^{-\eps}\big)}\bigg).
			\ea\ee  
			It thus remains to show that the second expected value in~\eqref{eq:expsum} is of sufficiently smaller order compared to the first. We obtain this by using the indicators $\ind_{\{(1+a)<c_t/(1-\mu)\}}$ and $\ind_{\{1+a\geq c_t/(1-\mu)\}}$ to split the expected value in two parts, for some $\mu>0$ small enough such that $c_t/(1-\mu)<\theta/(\theta-1)$ (which is possible since $c_t<c<\theta/(\theta-1)$). For the first indicator, we can bound
			\be\ba 
			\E{\Big(\frac{a}{1+a}\Big)^{m_t}\big(\Pf{Y''_t>\log(n/\lceil n^\eps\rceil)}+n^{-\eps}\big)\ind_{\{1+a<c_t/(1-\mu)\}}}&\leq \Big(\frac{c_t/(1-\mu)-1}{c_t/(1-\mu)}\Big)^{m_t}\\
			&=\Big(1-\frac{1-\mu}{c_t}\Big)^{m_t}<(\theta+\xi)^{-m_t}, 
			\ea \ee  
			for some small $\xi>0$ and $n$ sufficiently large. Since the first expected value in~\eqref{eq:expsum} equals $p_{m_t}\geq (\theta+\xi/2)^{-m_t}$, where the inequality follows from Lemma~\ref{lemma:pkbound}, we thus obtain that 
			\be \label{eq:ind}
			\mathbb E\bigg[\Big(\frac{a}{1+a}\Big)^{m_t}\Pf{Y''_t>\log(n/\lceil n^\eps\rceil)}\ind_{\{1+a<c_t/(1-\mu)\}}\bigg]=o\bigg(\mathbb E\bigg[\Big(\frac{1}{1+a}\Big(\frac{a}{1+a}\Big)^{m_t}\bigg]n^{-\beta}\bigg),
			\ee 
			for some small $\beta>0$, since $m_t=c_t\log n+o(\log n)$ with $c_t\in[1/\log \theta,c)$. For the second indicator, we observe that $\{Y''_t>\log(n/\lceil n^\eps\rceil)\}=\{Z_t>(1+a)(1-\eps)\log n(1+o(1))\}$, where $Z_t\sim \text{Gamma}(m_t+1, 1)$. We then have 
			\be \ba
			\mathbb E{}&\bigg[\Big(\frac{a}{1+a}\Big)^{m_t}\Pf{Z_t>(1+a)(1-\eps)\log n(1+o(1))}\ind_{\{1+a\geq c_t/(1-\mu)\}}\bigg]\\
			&\leq \mathbb E\bigg[\Big(\frac{a}{1+a}\Big)^{m_t}\bigg]\P{Z_t>\frac{c_t(1-\eps)}{1-\mu}\log n(1+o(1))}.
			\ea \ee 
			It thus suffices to show that the probability on the right-hand side is $o(n^{-\beta})$ for some small $\beta>0$. By choosing $\eps\in(0,\mu)$ we can apply a standard large deviation bound. Let $(V_i)_{i\in\N}$ be i.i.d.\ exponential rate $1$ random variables and let $I(a):=a-1-\log(a)$ be their rate function. Then, as we can think of $Z_t$ as the sum of $V_1,\ldots,V_{m_t+1}$,
			\be \ba 
			\mathbb P\Big(Z_t\geq \frac{c_t (1-\eps)}{1-\mu}\log n(1+o(1))\Big)&=\P{\sum_{i=1}^{m_t+1}V_i\geq (m_t+1) \frac{c_t  (1-\eps)\log n(1+o(1))}{(1-\mu)(m_t+1)}}\\
			&\leq\exp\Big(-(m_t+1)I\Big(\frac{c_t (1-\eps)\log n(1+o(1))}{(1-\mu)(m_t+1)}\Big)\Big).
			\ea \ee 
			In the first step, we express the upper bound within the probability in terms of the mean of the sum of random variables, which equals $m_t+1$. We then use the large deviations bound in the second step, which we can do as the argument of $I$ is strictly greater than $1$ when $n$ is sufficiently large (as $m_t+1\sim c_t\log n$) and $\eps\in(0,\mu)$ is sufficiently small. Since $I((c_t+o(1)) (1-\eps) \log n/((1-\mu)(m_t+1)))=  (1-\eps)/(1-\mu)-1-\log(  (1-\eps)/(1-\mu))+o(1)$, we thus arrive at 
			\be \ba \label{eq:ztbound}
			\mathbb P\Big(Z_t\geq \frac{c_t (1-\eps)}{1-\mu}\log n(1+o(1))\Big)&\leq \e^{-c_t\log n( (1-\eps)/(1-\mu)-1-\log( (1-\eps)/(1-\mu))+o(1))}\\
			&=n^{-c_{t,\mu,\eps}+o(1)},
			\ea\ee 
			where $c_{t,\mu,\eps}:=c_t(  (1-\eps)/(1-\mu)-1-\log(  (1-\eps)/(1-\mu)))>0$ as $\eps<\mu$ and $x-1-\log x>0$ holds for $x>1$. This yields~\eqref{eq:ind} when using $\ind_{\{1+a\geq c_t/(1-\mu)\}}$ for $\mu$ and $\eps$ sufficiently small as well. Together with~\eqref{eq:aapprox}, this yields the desired result.
			
			Combined with the analysis in case~\ref{item:1}, and the fact that 
			\be 
			\Big(\frac{1+\zeta_n}{1-2\zeta_n}\Big)^{\sum_{h=1}^k m_h}(1+\cO(n^{-\eps}))=1+o(n^{-\beta}), 
			\ee 
			for some small $\beta>0$, by the choice of $\zeta_n$, and the fact that $m_t=\cO(\log n)$ for all $t\in[k]$, it follows that~\eqref{eq:expprod2} is at most $(o(n^{-\beta})\prod_{t=1}^k p_{m_t})$ for some small $\beta>0$ in both cases~\ref{item:1} and~\ref{item:2}. Finally, using this in~\eqref{eq:enbound} yields 
				\be 
				\frac{1}{(n)_k}{}\sum_{n^{\eps}\leq j_1\neq \ldots \neq j_k\leq n}\P{\Zm_n(j_\ell)=m_\ell\text{ for all } \ell\in[k]}\leq \prod_{t=1}^k \big(p_{m_t}(1+o(n^{-\beta}))\big)+o(n^{-\gamma}).
				\ee 
				As we can choose $\gamma$ arbitrarily large (as discussed prior to~\eqref{eq:enbound}), this term can incorporated in the $o(n^{-\beta})$ term regardless of the choice of $m_t$, which completes the proof of the upper bound.		
		
		\paragraph{\textbf{Lower bound.}} 
		We then focus on proving a similar lower bound. We define the event 
		\be \label{eq:wten}
		E^{(2)}_n:=\Big\{\sum_{\ell=k+1 }^j W_\ell\in(\E{W}(1-\zeta_n)j,\E{W}(1+\zeta_n)j),\text{ for all } n^{\eps}\leq j\leq n\Big\}.
		\ee
		We again have from Lemma~\ref{lemma:weightsumbounds} in the~\hyperref[sec:appendix]{Appendix} that $\mathbb P((E^{(2)}_n)^c)=o(n^{-\gamma})$ for any $\gamma>0$. We obtain a lower bound for the probability of the event $\{\Zm_n(v_\ell)=m_\ell,\ell\in[k]\}$ by omitting the second term in~\eqref{eq:enbound}. This yields, together with the rearrangements in~\eqref{eq:ubfirst},
		\be \ba 
		\frac{1}{(n)_k}{}&\sum_{n^{\eps}\leq j_1\neq \ldots \neq j_k\leq n}\E{\Pf{\Zm_n(j_\ell)=m_\ell\text{ for all } \ell\in[k]}}\\
		\geq{}&\frac{1}{(n)_k}\sum_{n^{\eps}<j_1\neq \ldots\neq j_k\leq n}\ \sideset{}{^*}\sum_{\substack{j_\ell<i_{1,\ell}<\ldots<i_{m_\ell,\ell}\leq n,\\\ell\in[k]}}\!\!\!\!\!\mathbb E\Bigg[\!\prod_{t=1}^k\prod_{s=1}^{m_t}\frac{W_{j_t}}{\sum_{\ell=1}^{i_{s,t}-1}W_\ell}\prod_{u=1}^k\!\!\!\!\!\!\!\prod_{\substack{s=j_u+1\\s\neq i_{\ell,t},\ell\in[m_t],t\in[k]}}^{j_{u+1}}\!\!\!\!\!\!\!\!\!\!\!\!\!\bigg(1-\frac{\sum_{\ell=1}^u W_{j_\ell}}{\sum_{\ell=1}^{s-1}W_\ell}\bigg)\Bigg].
		\ea \ee  
		We again start by only considering the ordered indices $n^{\eps}<j_1<\ldots <j_k$ and also omit this sum for now for ease of writing. We also omit the constraint $s\neq i_{\ell,t},\ell\in[m_t],t\in[k]$ in the final product. As this introduces more terms smaller than one, we obtain a lower bound. Then, in the two denominators, we bound the vertex-weights $W_{j_1},\ldots, W_{j_k}$ from above and below by one and zero, respectively, to obtain a lower bound 
		\be 
		\frac{1}{(n)_k}\ \sideset{}{^*}\sum_{\substack{j_\ell<i_{1,\ell}<\ldots<i_{m_\ell,\ell}\leq n,\\\ell\in[k]}}\!\!\!\!\!\mathbb E\Bigg[\prod_{t=1}^k\prod_{s=1}^{m_t}\frac{W_{j_t}}{\sum_{\ell=1}^{i_{s,t}-1}W_\ell\ind_{\{\ell\neq j_t,t\in[k]\}}+k} \prod_{u=1}^k\prod_{s=j_u+1}^{j_{u+1}}\!\!\!\bigg(1-\frac{\sum_{\ell=1}^u W_{j_\ell}}{\sum_{\ell=1}^{s-1}W_\ell\ind_{\{\ell\neq j_t,t\in[k]\}}}\bigg)\Bigg].
		\ee 
		As a result, we can now swap the labels of $W_{j_t}$ and $W_t$ for each $t\in[k]$, which again does not change the expected value, but it changes the value of the two denominators to $\sum_{\ell=k+1}^{i_{s,t}}W_\ell+k$ and $\sum_{\ell=k+1}^{i_{s,t}}W_\ell$, respectively. After this we introduce the indicator $\ind_{E^{(2)}_n}$ and use the bounds in $E^{(2)}_n$ on these sums in the expected value to obtain a lower bound. Finally, we note that the (relabelled) weights $W_t,t\in[k],$ are independent of $E^{(2)}_n$ so that we can take the indicator out of the expected value. Combining all of the above steps, we arrive at the lower bound
		\be \ba \label{eq:lbas}
		\frac{1}{(n)_k}{}&\, \sideset{}{^*}\sum_{\substack{j_\ell<i_{1,\ell}<\ldots<i_{m_\ell,\ell}\leq n,\\\ell\in[k]}}\mathbb E\Bigg[\prod_{t=1}^k \Big(\frac{W_t}{\E{W}}\Big)^{m_t} \prod_{s=1}^{m_t}\frac{1}{i_{s,t}(1+2\zeta_n)}\\ &\times\prod_{u=1}^k\prod_{s=j_u+1}^{j_{u+1}}\bigg(1-\frac{\sum_{\ell=1}^u W_\ell}{(s-1)\E W(1-\zeta_n)}\bigg)\Bigg]\mathbb P(E^{(2)}_n).\\
		\ea \ee 
		The $1+2\zeta_n$ in the fraction on the first line arises from the fact that, for $n$ sufficiently large, $(i_{s,t}-1)(1+\zeta_n)+k\leq i_{s,t}(1+2\zeta_n)$. As stated above, $\mathbb P(E^{(2)}_n)=1-o(n^{-\gamma})$ for any $\gamma>0$. Similar to the calculations in \eqref{eq:expbound} and using $\log(1-x)\geq -x-x^2$ for $x$ small, we obtain an almost sure lower bound for the final product for $n$ sufficiently large of the form 
		\be\ba
		\prod_{s=j_u+1}^{j_{u+1}}\bigg(1-\frac{\sum_{\ell=1}^u W_\ell}{(s-1)\E W(1-\zeta_n)}\bigg)&\geq \exp\bigg(-\frac{1}{\E W(1-\zeta_n)}\sum_{\ell=1}^u W_\ell\sum_{s=j_u+1}^{j_{u+1}} \frac{1}{s-1}\\
		&\qquad\quad\ \ \, -\Big(\frac{1}{\E W(1-\zeta_n)}\sum_{\ell=1}^u W_\ell\Big)^2\!\sum_{s=j_u+1}^{j_{u+1}}\!\frac{1}{(s-1)^2}\bigg)\\
		&\geq \Big(\frac{j_{u+1}}{j_u}\Big)^{-\sum_{\ell=1}^u W_\ell/(\E W(1-\zeta_n))}\Big(1-\mathcal O\big(n^{-\eps}\big)\Big).
		\ea \ee 
		Using this in~\eqref{eq:lbas} yields the lower bound
		\be\label{eq:lbstep}
		\frac{1}{(n)_k}\  \sideset{}{^*}\sum_{\substack{j_\ell<i_{1,\ell}<\ldots<i_{m_\ell,\ell}\leq n,\\\ell\in[k]}}\!\!\!\!\!\!\!\!\!\!(1+2\zeta_n)^{-\sum_{h=1}^k m_h} \mathbb E\Bigg[\prod_{t=1}^k \Big(\frac{W_t}{\E W}\Big)^{m_t} \Big(\frac{j_t}{n}\Big)^{\wt a_t}\prod_{s=1}^{m_t}i_{s,t}^{-1} \Big)\Bigg]\Big(1-\mathcal O\big(n^{-\eps}\big)\Big),
		\ee  
		where $\wt a_t:=W_t/(\E W(1-\zeta_n))$. As in the upper bound, we can take the product over $t\in[k]$ out of the expected value by the independence of the vertex-weights. This yields
		\be 
		\frac{1}{(n)_k}\  \sideset{}{^*}\sum_{\substack{j_\ell<i_{1,\ell}<\ldots<i_{m_\ell,\ell}\leq n,\\\ell\in[k]}}\Big(\frac{1-\zeta_n}{1+2\zeta_n}\Big)^{\sum_{h=1}^k m_h}\prod_{t=1}^k \mathbb E\Bigg[ \wt a_t^{m_t} \Big(\frac{j_t}{n}\Big)^{\wt a_t}\prod_{s=1}^{m_t}i_{s,t}^{-1}\Bigg]\Big(1-\mathcal O\big(n^{-\eps}\big)\Big).
		\ee 
		As $\zeta_n=n^{-\delta\eps}/\E W$ and $m_h=\cO(\log n)$ for all $h\in[k]$, the fraction and the $1-\cO(n^{-\eps})$ can be combined to yield a $1-o(n^{-\beta})$, for some small $\beta>0$. We now reintroduce the sum over the indices $n^{\eps}\leq j_1<\ldots<j_k\leq n$ and bound the sum over the  indices $i_{s,\ell}$ from below. We note that the expression in the expected value is  decreasing in $i_{s,\ell}$ and we restrict the range of the indices to $j_\ell+\sum_{h=1}^k m_h<i_{1,\ell}<\ldots< i_{m_\ell,\ell}\leq n,\ell \in[k]$, but no longer constrain the indices to be distinct (so that we can drop the $*$ in the sum). In the distinct sums and the suggested lower bound, the number of values the $i_{s,\ell}$ take on equal
		\be 
		\prod_{\ell=1}^k \binom{n-(j_\ell-1)-\sum_{h=1}^{\ell-1}m_t}{m_h} \quad \text{and} \quad
		\prod_{\ell=1}^k \binom{n-(j_\ell-1)-\sum_{h=1}^k m_h}{m_\ell},
		\ee 
		respectively. It is straightforward to see that the former allows for more possibilities than the latter, as $\binom{b}{c}> \binom{a}{c}$ when $b> a\geq c$. As we omit the largest values of the expected value (since it decreases in $i_{s,\ell}$ and we omit the largest values of $i_{s,\ell}$), we thus arrive at the lower bound
		\be\ba \label{eq:lbstep2}
		((n)_k)^{-1}\!\!\!\!\!\!\!\!\!\!\!\!\!\!\!\!\!\!\!\!\sum_{n^{\eps}<j_1<\ldots<j_k\leq n-\sum_{h=1}^k m_h} \sum_{\substack{j_\ell+\sum_{h=1}^k m_h<i_{1,\ell}<\ldots<i_{m_\ell,\ell}\leq n,\\\ell\in[k]}}\! \prod_{t=1}^k\mathbb E\bigg[\wt a_t^{m_t} \Big(\frac{j_t}{n}\Big)^{\wt a_t}\!\prod_{s=1}^{m_t}\frac{1}{i_{s,t}}\bigg]\big(1-o(n^{-\beta})\big),
		\ea \ee 
		where we also restrict the range of indices in the upper bound of the outer sum, as otherwise there would be a contribution of zero from these values of $j_1,\ldots,j_k$. We now use similar techniques compared to the upper bound of the proof to switch from summation to integration. First, we observe that we can switch the inner summation and the product over $t$. Second, we restrict the upper bound on the outer sum to $n-1-2\sum_{h=1}^k m_h$. This will prove useful later. Combined, this yields
			\be \ba\label{eq:lbsum}
			((n)_k)^{-1}\!\!\!\!\!\!\!\!\!\!\!\!\!\!\!\!\!\!\!\!\!\!\!\sum_{n^{\eps}<j_1<\ldots<j_k\leq n-1-2\sum_{h=1}^k m_h}\prod_{t=1}^k \sum_{\substack{j_\ell+\sum_{h=1}^k m_h<i_{1,\ell}<\ldots<i_{m_\ell,\ell}\leq n,\\\ell\in[k]}}\!\!\!\!\!\!\mathbb E\bigg[\wt a_t^{m_t} \Big(\frac{j_t}{n}\Big)^{\wt a_t}\!\prod_{s=1}^{m_t}\frac{1}{i_{s,t}}\bigg](1-o(n^{-\beta})).
			\ea\ee 
			Then, we take the inner summation in the expected value and, for now, focus on 
			\be 
			\sum_{\substack{j_t+\sum_{h=1}^k m_h<i_{1,t}<\ldots <i_{m_\ell,\ell}\leq n\\ \ell\in[k]}} \prod_{s=1}^{m_t}i_{s,t}^{-1}\geq \int_{j_t+\sum_{h=1}^k m_h+1}^{n+1}\int_{x_{1,t}+1}^{n+1}\cdots \int_{x_{m_t-1,t}+1}^{n+1}\prod_{s=1}^{m_t}x_{s,t}^{-1}\,\d x_{m_t,t}\ldots \d x_{1,t}.
			\ee 
			Since $j_\ell\leq n-1-2\sum_{h=1}^k m_h$ for each $\ell\in[k]$, it follows in particular that $j_\ell+1+\sum_{h=1}^k m_h\leq n$. It thus follows that we can use~\eqref{eq:loglb} in Lemma~\ref{lemma:logints} with $a=j_\ell+1+\sum_{h=1}^k m_h, b=n+1$, and $k=m_{\ell}$ to obtain the lower bound
			\be \label{eq:lbist}
			\prod_{t=1}^k \frac{1}{m_t!}\log\Big(\frac{n+1}{j_t +\sum_{h=1}^k m_h+m_t}\Big)^{m_t}\geq \prod_{t=1}^k \frac{1}{m_t!}\Big(\log\Big(\frac{n}{j_t +2\sum_{h=1}^k m_h}\Big)\Big)^{m_t}.
			\ee 
			Substituting this in \eqref{eq:lbsum} yields the lower bound
			\be
			\frac{1}{(n)_k}\sum_{n^{\eps}<j_1<\ldots<j_k\leq n-1-2\sum_{h=1}^k m_h} \prod_{t=1}^k \mathbb E\Bigg[ \Big(\frac{j_t}{n}\Big)^{\wt a_t}\frac{\wt a_t^{m_t}}{m_t!}\Big(\log\Big(\frac{n}{j_t +2\sum_{h=1}^k m_h}\Big)\Big)^{m_t} \bigg]\big(1-o(n^{-\beta})\big).
			\ee 
		Note that, due to the independence of the vertex-weights $W_1,\ldots, W_k$, and hence of $\wt a_1, \ldots, \wt a_k$, we can interchange the product and the sum with the expected value. To simplify the summation over $j_1,\ldots,j_k$, we write the summand as 
		\be
		\prod_{t=1}^k\Big(j_t+2\sum_{h=1}^k m_h\Big)^{\wt a_t}\frac{n^{-\wt a_t}}{m_t!}\Big(\log\Big(\frac{n}{j_t +2\sum_{h=1}^k m_h}\Big)\Big)^{m_t}\bigg(1-\frac{2\sum_{h=1}^k m_h}{j_t +2\sum_{h=1}^k m_h}\bigg)^{\wt a_t}.
		\ee
		Using that $m_t<c \log n,j_t\geq n^{\eps}$ and almost surely, $x^{\wt a_t}\geq x^{1/(\E W(1-\zeta_n))}$ for $x\in[0,1]$, we obtain the lower bound
		\be
		\prod_{t=1}^k \Big(j_t+2\sum_{h=1}^k m_h\Big)^{\wt a_t}\frac{n^{-\wt a_t}}{m_t!}\Big(\log\Big(\frac{n}{j_t +2\sum_{h=1}^k m_h}\Big)\Big)^{m_t}\big(1-o(n^{-\beta})\big).
		\ee 
		We can then shift the bounds on the range of the sum to $n^{\eps}+2\sum_{h=1}^k m_h$ and $n-1$. As $j_k=n$ yields a contribution of zero to the sum, we can increase the upper range on the summation from $n-1$ to $n$. We thus obtain the lower bound
		\be 
		\frac{1}{(n)_k}\E{\sum_{n^{\eps}+2\sum_{h=1}^k m_h<j_1<\ldots<j_k\leq n}\prod_{t=1}^k \Big(\frac{j_t}{n}\Big)^{\wt a_t}\frac{1}{m_t!}(\log(n/j_t))^{m_t} }\big(1-o(n^{-\beta})\big).
		\ee		
			We can now use a similar approach as in~\eqref{eq:permutations} through~\eqref{eq:gammaexp}. By considering all permutations of the indices $j_1, \ldots, j_k$, we obtain
			\be\ba
			\frac{1}{(n)_k}{}&\sum_{n^{\eps}\leq j_1\neq \ldots \neq j_k\leq n}\mathbb E[\Pf{\Zm_n(j_t)=m_t\text{ for all } t\in[k]}]\\
			&\geq \frac{1-o(n^{-\beta})}{(n)_k}\mathbb E\Bigg[\sum_{n^{\eps}+2\sum_{h=1}^k m_h<j_1\neq \ldots\neq j_k\leq n}\prod_{t=1}^k (j_t/n)^{\wt a_t}\frac{1}{m_t!}(\wt a_t\log(n/j_t))^{m_t}\Bigg].
			\ea\ee 
			We now aim to again exchange the order of summation and the product in the expected value at the expense of some additional terms. To this end, we allow the indices $j_1, \ldots, j_k$ to take on the same values, but also subtract the largest terms from the sum to still obtain a lower bound. We are thus required to determine which terms are the largest. The following holds for any $t\in[k]$:  $j_t\mapsto (j_t/n)^{\wt a_t}(\wt a_t\log(n/j_t))^{m_t}/m_t!$ is increasing on $[1,n\exp(-m_t/\wt a_t))$ and decreasing on $(n\exp(-m_t/\wt a_t), n]$, with maximum value $\e^{-m_t}m_t^{m_t}/m_t!$. As such, we again distinguish the two cases~\ref{item:1} and~\ref{item:2} as in the upper bound (i.e.\ $m_t=c_t\log n+o(\log n)$ with either $c_t\leq 1/(\theta-1)$ for all $t \in [k]$ or $c_t>1/(\theta-1)$ for some $t \in [k]$). In case~\ref{item:2}, we know that $n\exp(-m_t/\wt a_t)\leq n^{1-c_t(\theta-1)(1+o(1))}=o(1)$, so that $(j_t/n)^{\wt a_t}(\wt a_t\log(n/j_t))^{m_t}/m_t!$ is maximised for the smallest value of $j_t$. Omitting these terms thus yields a lower bound. In case~\ref{item:1}, this is not the case, and hence we omit the largest terms at $n\exp(-m_t/\wt a_t)$.
			
			Let us deal with case~\ref{item:1} first. Since the sums contain
			\be 
			\Big(n-\Big\lceil n^\eps +\sum_{h=1}^k m_h\Big \rceil +1\Big)_k
			\ee 
			many terms, we bound these sums from below by 
			\be \ba
			\sum_{n^\eps+\sum_{h=1}^k m_h\leq j_1\leq n}{}&\!\!\cdots \!\!\sum_{n^\eps+\sum_{h=1}^k m_h\leq j_k\leq n}\prod_{t=1}^k \frac{(j_t/n)^{\wt a_t}}{m_t!}(\wt a_t\log(n/j_t))^{m_t}\\
			& - \cO\Big(\Big(n-\Big\lceil n^\eps -\sum_{h=1}^k m_h\Big \rceil +1\Big)^{k-1}\Big) \prod_{t=1}^k\e^{-m_t}\frac{m_t^{m_t}}{m_t!}\\
			={}&\prod_{t=1}^k \bigg(\sum_{n^\eps+\sum_{h=1}^k m_h\leq j\leq n}\frac{(j/n)^{\wt a_t}}{m_t!}(\wt a_t \log(n/j))^{m_t}\bigg)-\cO(n^{k-1}), 
			\ea\ee 
			where the last line follows from the fact that $\e^{-x} x^x/\Gamma(x+1)\leq 1$ for all $x>0$. We now identify the argument of the sum as $\Pf{P(\wt a_t,j)=m_t}$, where $P(\wt a_t)\sim \text{Poi}(\wt a_t \log(n/j))$. Again using the Poisson-gamma duality, we can rewrite this sum as 
			\be \ba
			\sum_{n^\eps+\sum_{h=1}^k m_h\leq j\leq n}{}&\Pf{Y_t\leq \log (n/j), Y_t+V>\log(n/j)}\\
			&=\sum_{n^\eps+\sum_{h=1}^k m_h\leq j\leq n}\big(\Pf{Y_t\leq  \log (n/j)}-\Pf{Y_t+V\leq \log(n/j)}\big), 
			\ea \ee  
			where $Y_t\sim \text{Gamma}(m_t, \wt a_t)$ and $V_t\sim \text{Exp}(\wt a_t)$ and both random variables are independent, conditionally on $W_t$. With a similar approach as in~\eqref{eq:sumtoint} and~\eqref{eq:gammaexp}, we can bound this sum from below by 
			\be 
			n\Ef{}{\e^{-Y_t}\ind_{\{Y_t\leq \log(n/f(n))\}}}-n\Ef{}{\e^{-(Y_t+V_t)}\ind_{\{Y_t+V_t\leq \log(n/\wt f(n))\}}}+\cO(n^\eps),
			\ee 
			with $f(n):=\lceil n^\eps +\sum_{h=1}^k m_h\rceil$, $\wt f(n):=f(n)-1$. As $f(n)\sim \wt f(n)\sim n^\eps$, the desired lower bound follows in an analogous way to the approach used in the upper bound in~\eqref{eq:expdiff} through the paragraph following~\eqref{eq:aapprox}. 
			
			In case~\ref{item:2}, we can as in the upper bound assume without loss of generality that the condition on $c_t$ holds for all $t \in [k]$. Then, the largest values of $\Pf{P(\wt a_t,j)=m_t}$ are attained for $j_t$ as small as possible. So, by omitting these small $j_t$, we obtain the lower bound 
			\be\ba
			\sum_{n^\eps+\sum_{h=1}^k m_h\leq j_1\leq n}{}&\sum_{n^\eps+1+\sum_{h=1}^k m_h\leq j_1\leq n}\!\!\cdots \!\!\sum_{n^\eps+(k-1)+\sum_{h=1}^k m_h\leq j_k\leq n}\prod_{t=1}^k \Pf{P(\wt a_t,j_t)=m_t}\\
			& =\prod_{t=1}^k \bigg(\sum_{n^\eps+(t-1)+\sum_{\ell=1}^k m_\ell\leq j\leq n}\Pf{P(\wt a_t,j)=m_t}\bigg).
			\ea \ee 
			As in case~\ref{item:1}, we can write each sum as a difference of expected values. Then, following the same steps as in~\eqref{eq:expre} through~\eqref{eq:ztbound}, we obtain the desired matching lower bound in case~\ref{item:2} for $\eps,\beta>0$ sufficiently small. The only difference is that we cannot bound the  conditional probability in the second line~\eqref{eq:mainterm} from above by one. Instead, the lower bound we obtain is
			\be\ba 
			n\Big(\frac{a}{1+a}\Big)^{m_t}\bigg[{}&\frac{1}{1+a}\Pf{Y'_t\leq \log\Big(n\Big/\Big\lfloor n^\eps+(t-1)+\sum_{\ell=1}^k m_\ell\Big\rfloor\Big)}\\
			&-\Pf{Y''_t\geq \log\Big(n\Big/\Big\lceil n^\eps+(t-1)+\sum_{\ell=1}^k m_\ell\Big\rceil\Big)}-n^{-\eps}\frac{a}{1+a}\bigg]\\
			\geq n\Bigg[{}&\frac{1}{1+a}\Big(\frac{a}{1+a}\Big)^{m_t}\Bigg(\Pf{Y'_t\leq \log\Big(n\Big/\Big\lfloor n^\eps+(t-1)+\sum_{\ell=1}^k m_\ell\Big\rfloor\Big)}-\cO(n^{-\eps})\Bigg)\\
			&-\cO\bigg(\Big(\frac{a}{1+a}\Big)^{m_t}\bigg)\Bigg].
			\ea \ee 
			From here on out, the same approach used to bound~\eqref{eq:expsum} from above can be used to yield the lower bound
			\be
			\frac{1}{(n)_k}\sum_{n^{\eps}\leq j_1\neq \ldots \neq j_k\leq n}\E{\Pf{\Zm_n(j_\ell)=m_\ell\text{ for all } \ell\in[k]}}\geq \prod_{t=1}^k p_{m_t}(1-o(n^{-\beta})).
			\ee 
			Combined with the upper bound, this yields the desired result and concludes the proof.
	\end{proof} 
	
	We now discuss the necessary changes to the proof for it to hold for the model with \emph{random out-degree}, as introduced in Remark~\ref{remark:def}$(ii)$, as well. The main difference between these two models is the fact that the events $\{n\to i\}$ and $\{n\to j\}$ are no longer disjoint, where $u\to v$ denotes that vertex $u$ connects to vertex $v$ with an edge directed towards $v$. As a result, the probability, conditional on the vertex-weights, that vertex $n$ does \emph{not} connect to vertices $j_1,\ldots, j_k\in[n-1]$ now equals
	\be \label{eq:notconnectprob}
	\prod_{\ell=1}^k \Big(1-\frac{W_{j_\ell}}{S_{n-1}}\Big)\qquad \text{instead of}\qquad \Big(1-\frac{\sum_{\ell=1}^k W_{j_\ell}}{S_{n-1}}\Big).
	\ee 
	Moreover, as a new vertex can connect to multiple vertices at once, it is no longer necessary to restrict ourselves to indices $j_\ell<i_{1,\ell}<\ldots< i_{m_\ell,\ell}\leq n,\ell\in[k]$ with $i_{s,\ell}\neq i_{t,j}$ for any $\ell,j\in[k],s\in[m_\ell],t\in[m_j]$. Instead, when $\ell\neq j$, $i_{s,\ell}= i_{t,j}$ is allowed for any $s\in[m_\ell],t\in[m_j]$. That is, the steps at which two vertices increase their degree by one no longer need to be disjoint. 
	
	In the proof of the upper bound, this changes the right-hand side of~\eqref{eq:ubfirst} into
	\be \ba
	\frac{1}{(n)_k}\sum_{n^{\eps}\leq j_1<\ldots<j_k\leq n}\ \sum_{\substack{j_\ell<i_{1,\ell}<\ldots<i_{m_\ell,\ell}\leq n,\\\ell\in[k]}}\mathbb E\Bigg[&\prod_{t=1}^k\prod_{s=1}^{m_t}\frac{W_{j_t}}{\sum_{\ell=1}^{i_{s,t}-1}W_\ell}\\ &\times \prod_{u=1}^k\!\!\prod_{\substack{s=j_u+1\\s\neq i_{\ell,t},\ell\in[m_t],t\in[k]}}^{j_{u+1}}\prod_{\ell=1}^u\bigg(1-\frac{W_{j_\ell}}{\sum_{r=1}^{s-1}W_r}\bigg)\ind_{E_n}\Bigg],
	\ea \ee 
	where we change the last term in the expected value due to~\eqref{eq:notconnectprob} and omit the $*$ in the inner sum due to the degree increments no longer being disjoint. We can then again use the event $E_n^{(1)}$ to bound the denominators with deterministic quantities. Then, the exact same steps to obtain the upper bound as in~\eqref{eq:expbound} can be performed. Most importantly, this yields that the difference in the upper bound due to~\eqref{eq:notconnectprob} is no longer present. Furthermore, as we omit the constraint of disjoint indices $i_{s,t}$ in~\eqref{eq:ubexp}, we obtain the exact same upper bound as we would here. As a result, the remaining steps of the proof of the upper bound carry through as well.
	
	The same is true for the lower bound. In fact, in the proof of the lower bound of Lemma~\ref{lemma:in0eps} we argue why we can omit the constraint of disjoint indices $i_{s,t}$ in a certain way and still obtain a lower bound in~\eqref{eq:lbstep2}, which is no longer necessary here. Hence, we obtain~\eqref{eq:lbist} without the factor two in front of $\sum_{h=1}^k m_h$. This change carries through all the steps, so that we arrive at the same desired lower bound. Together with the upper bound, this establishes the desired result.
	
	\subsection{Proof of Lemma~\ref{lemma:inepsterm}}\label{sec:proof5.11}
	
	\begin{proof}[Proof of Lemma~\ref{lemma:inepsterm}]
		We aim to bound 
		\be \label{eq:errorterm}
		\frac{1}{(n)_k}\sum_{\textbf{j}\in I_n(\eps)}\P{\Zm_n(j_\ell)=m_\ell\text{ for all } \ell\in[k]},
		\ee 	
		where we recall that $I_n(\eps)$ from~\eqref{eq:ineps}. This is the equivalent of the steps following~\eqref{eq:2ndtermbound} in the non-rigorous proof in Section~\ref{sec:nonrigour}. We first assume that $m_\ell=c_\ell \log n+o(\log n)$ for some $c_\ell\in[0,1/\log \theta)$ for all $\ell\in[k]$. We define 
		\be 
		I_n(\eps,i):=\{\textbf{j}\in I_n(\eps): |\{\ell\in[k]: j_\ell<n^{\eps}\}|=i\},\qquad i\in[k],
		\ee
		that is, $I_n(\eps,i)$ denotes the set of indices $\textbf{j}=(j_1,\ldots, j_k)$ such that exactly $i$ of the indices are smaller than $n^{\eps}$, and note that $I_n(\eps)=I_n(\eps,1)\cup\cdots\cup I_n(\eps,k)$. We then write 
		\be 
		\frac{1}{(n)_k}\!\sum_{\textbf{j}\in I_n(\eps)}\!\!\!\P{\Zm_n(j_\ell)=m_\ell\text{ for all }\ell\in[k]}=\sum_{i=1}^k \frac{1}{(n)_k}\!\sum_{\textbf{j}\in I_n(\eps,i)}\!\!\!\!\P{\Zm_n(j_\ell)=m_\ell\text{ for all }\ell\in[k]},
		\ee  
		and bound the probability on the right-hand side from above by omitting all events $\{\Zm_n(j_\ell)=m_\ell\}$ whenever $j_\ell<n^{\eps}$. This leaves us with 
		\be \label{eq:Ssum}
		\sum_{i=1}^{k-1} \frac{1}{(n)_k}n^{i\eps}\sum_{\substack{S\subseteq [k]\\ |S|=k-i}}\ \ \sideset{}{^*}\sum_{\substack{n^{\eps}\leq j_\ell \leq n\\ \ell\in S}}\P{\Zm_n(j_\ell)=m_\ell\text{ for all }\ell \in S}+\frac{n^{k\eps}}{(n)_k},
		\ee 
		where we recall that the $*$ on the inner summation denotes that we only consider distinct values of $j_\ell,\ell\in S$. We isolate the case $i=k$ here as in this case no indices are larger than $n^{\eps}$ and we hence bound the probability from above by one, whereas $i=k$ would yield a contribution of zero in the triple sum. The inner sum can then be dealt with in the same manner as in the derivation of the upper bound in the proof of Lemma~\ref{lemma:in0eps}, to yield an upper bound
		\be 
		\sum_{i=1}^{k-1} n^{-i (1-\eps)}\sum_{\substack{S\subseteq [k]\\ |S|=k-i}}\prod_{\ell\in S}\E{\frac{\E W}{\E W+W}\Big(\frac{W}{\E W+W}\Big)^{m_\ell}}(1+o\big(n^{-\beta}\big)\big)+2n^{-k (1-\eps)},
		\ee 
		for some $\beta>0$. It thus remains to show that for any {$m_\ell=c_\ell \log n+o(\log n)$} with $c_\ell\in[0,1/\log \theta)$ we can take $\eps$ and a $\beta>0$ sufficiently small, such that 
		\be 
		n^{- (1-\eps)}=o\Big(\E{\frac{\E W}{\E W+W}\Big(\frac{W}{\E W+W}\Big)^{m_\ell}}n^{-\beta}\Big).
		\ee  
		By Lemma \ref{lemma:pkbound}, we have for any $\xi>0$ and $n$ sufficiently large, that 
		\be\label{eq:pkbound}
		\E{\frac{\E W}{\E W+W}\Big(\frac{W}{\E W+W}\Big)^{m_\ell}}\geq (\theta+\xi)^{- m_\ell}=n^{- c_\ell \log(\theta+\xi)+o(1)},
		\ee 
		and $n^{- (1-\eps)}=o(n^{-\eta-c_\ell \log(\theta+\xi)+o(1)})$ when we choose $\eta,\xi,$ and $\eps$ sufficiently small, since $c_\ell \log \theta <1$ for any $\ell\in[k]$. As a result, 
		\be 
		\frac{1}{(n)_k}\sum_{\textbf{j}\in I_n(\eps)}\P{\Zm_n(j_\ell)=m_\ell\text{ for all }\ell \in [k]}=o\Big(\prod_{\ell=1}^k\E{\frac{\E W}{\E W+W}\Big(\frac{W}{\E W+W}\Big)^{m_\ell}}n^{-\eta}\Big).
		\ee 
		We now assume (as before in the proof of Lemma~\ref{lemma:in0eps} without loss of generality) that $m_\ell=c_\ell \log n+o(\log n)$ with $c_\ell\in[1/\log \theta,\theta/(\theta-1))$ for all $\ell\in[k]$. In this case, the crude bound used above no longer suffices. Now, the aim is to combine the above with a similar approach as at the start of the proof of \cite[Theorem $2.9$, Bounded case]{LodOrt21} and also use condition~\ref{ass:weightzero} in Assumption~\ref{ass:weights}. First, we consider the set of indices $I_n(\eps,k)$. To make use of the negative quadrant dependence of the degrees $(\zni)_{\inn}$  (see Remark~\ref{rem:degtail} and~\cite[Lemma $7.1$]{LodOrt21}), we create an upper bound by considering the event $\{\Zm_n(j_\ell)\geq m_\ell\text{ for all }\ell\in[k]\}$. Then, using the tail distribution and the negative quadrant dependency of the degrees under the conditional probability measure $\mathbb P_W$ yields
		\be 
		\frac{1}{(n)_k}\sum_{\textbf{j}\in I_n(\eps,k)}\P{\Zm_n(j_\ell)=m_\ell\text{ for all }\ell \in [k]}\leq 	\frac{1}{(n)_k}\sum_{1\leq j_1\neq \ldots \neq j_k<n^{\eps}}\E{\prod_{\ell=1}^k\Pf{\Zm_n(j_\ell)\geq m_\ell}}.
		\ee 
		We then allow the indices $j_1,\ldots, j_k$ to take any value between $1$ and $n^{\eps}$, to obtain the upper bound
		\be 
		\frac{1}{(n)_k}\E{\prod_{\ell=1}^k \bigg( \sum_{i<n^{\eps}} \Pf{\zni\geq m_\ell}\bigg)}.
		\ee 
		As in the proof of \cite[Theorem $2.9$, Bounded case]{LodOrt21}, we apply a Chernoff bound to the conditional probability measure $\mathbb P_W$ to obtain 
		\be 
		\frac{1}{(n)_k}\mathbb E\Bigg[\prod_{\ell=1}^k \bigg( \sum_{i<n^{\eps}}\!\! \Pf{\zni\geq m_\ell}\bigg)\Bigg]\leq \frac{1}{(n)_k}\mathbb E\Bigg[\prod_{\ell=1}^k \bigg( \sum_{i<n^{\eps}}\!\!\exp(m_\ell(1-u_{i,\ell}+\log u_{i,\ell}))\bigg)\Bigg],
		\ee 
		where 
		\be \label{eq:uil}
		u_{i,\ell}:=\frac{1}{m_\ell}\sum_{j=i}^{n-1}\frac{W_i}{S_j}.
		\ee 
		This is the equivalent of~\eqref{eq:chernbound} in the non-rigorous proof in Section~\ref{sec:nonrigour}. We then introduce the constants $\delta\in(0,1/2),C>kc_\theta^{-1} \theta \log(\theta)/(\theta-1)$, and $\alpha>0$ (with $c_\theta:=1/(2\theta^2)$). Also, we define the sequences $\zeta'_n:=(C\log n )^{-\delta/(1-2\delta)}/ \E W$, $f(n):=\lceil \log n/\log\log\log n\rceil$, and $g(n):=\lceil \log\log n\rceil$, $n\in\N$. We then define the events 
			\be \ba 
			E^{(3)}_n&:=\bigg\{\sum_{\ell=1}^j W_\ell \geq j\E W(1-\zeta'_n),\text{ for all } (C\log n)^{1/(1-2\delta)}\leq j\leq n\bigg\},\\
			E^{(4)}_n&:=\{S_{\lceil \alpha \log n\rceil}\geq c\log n\}, \\
			E^{(5)}_n&:=\{S_{f(n)}\geq g(n)\}.
			\ea \ee 
		The event $E^{(3)}_n$ is similar to the event $E^{(1)}_n$ introduced in~\eqref{eq:en}, but considers a larger range of indices~$j$. The particular choice of the lower bound on the indices $j$ follows from the fact that we want as much control over the partial sums of the vertex-weights as possible, but need to ensure that $\mathbb P((E^{(3)}_n)^c)$ decays sufficiently fast. It follows that for all three events, the probability of their complement is $o(n^{-\gamma})$ for any $\gamma>0$ (in the case of $E^{(3)}_n$ and $E^{(4)}_n$ when we choose $C$ and $\alpha $ sufficiently large, respectively) by Lemma~\ref{lemma:weightsumbounds} in the~\hyperref[sec:appendix]{Appendix}.  
		
		We can use these events in the expected value to arrive at the upper bound 
		\be \label{eq:e'n}
		\frac{1}{(n)_k}\E{\prod_{\ell=1}^k \bigg( \sum_{i<n^{\eps}}\exp(m_\ell(1-u_{i,\ell}+\log u_{i,\ell}))\bigg)\ind_{E^{(3)}_n\cap E^{(4)}_n\cap E^{(5)}_n}}+\mathbb P((E^{(3)}_n\cap E^{(4)}_n\cap E^{(5)}_n)^c).
		\ee 
		By the proof of Corollary~\ref{cor:uil} in the~\hyperref[sec:appendix]{Appendix} and since $m_\ell=c_\ell \log n+o(\log n)$ with $c_\ell>0$, we can bound $u_{i,\ell}$ from above uniformly in $i\in[n^\eps]$ and for any $\ell\in[k]$ by $(1+o(1))/(c_\ell\E W)$. Recalling that $\E W=\theta-1$ and that $c_\ell\geq 1/\log \theta>1/(\theta-1)$ (as $\theta>1$), we find that $u_{i,\ell}<1$ for all $i\in[n]$ and $\ell\in[k]$ as well. Since $x\mapsto 1-x+\log x$ is increasing on $(0,1)$ we can thus use this upper bound in the first term of~\eqref{eq:e'n} to bound it from above by 
		\be \ba \label{eq:remarkref}
		\frac{1}{(n)_k}{}&\prod_{\ell=1}^k \bigg( \sum_{i<n^{\eps}}\exp\Big(c_\ell\log n\Big(1-\frac{1}{c_\ell \E W}+\log \Big(\frac{1}{c_\ell \E W}\Big)\Big )(1+o(1))\Big )\bigg)\\
		&\leq \exp\Big(\log n(1+o(1))\sum_{\ell=1}^k \Big(- (1-\eps)+c_\ell\Big(1-\frac{1}{c_\ell \E W}+\log\Big(\frac{1}{c_\ell \E W}\Big)\Big)\Big)\Big),
		\ea \ee 
		which is the equivalent of~\eqref{eq:uiluse} in the non-rigorous proof in Section~\ref{sec:nonrigour}. We then require that
		\be \label{eq:littleo}
		\exp\Big(\log n(1+o(1))\sum_{\ell=1}^k \Big(\!- (1-\eps)+c_\ell\Big(1-\frac{1}{c_\ell \E W}+\log\Big(\frac{1}{c_\ell \E W}\Big)\Big)\Big)\Big)=o\Big(\frac{1}{n^{\beta}}\prod_{\ell=1}^k p_{m_\ell}\Big),
		\ee 
		for some $\beta>0$. As, by Lemma~\ref{lemma:pkbound}, $p_{m_\ell}\geq (\theta+\xi)^{-m_\ell}=\exp(-\log n(1+o(1))c_\ell\log(\theta+\xi))$ for any $\xi>0$ and $n$ sufficiently large, it suffices to show that 
		\be \label{eq:csineq}
		\sum_{\ell=1}^k \Big(- (1-\eps)+c_\ell\Big(1-\frac{1}{c_\ell \E W}+\log\Big(\frac{1}{c_\ell \E W}\Big)\Big)\Big)<-\sum_{\ell=1}^k c_\ell\log(\theta+\xi)-\beta,
		\ee  
		when $\xi,\beta,$ and $\eps$ are sufficiently small. We show that this strict inequality can be achieved for each term individually, by arguing that we can choose $\eps\in(0,1)$ small enough such that 
		\be 
		(1-\eps)>c_\ell\Big(1-\frac{1}{c_\ell(\theta-1)}+\log\Big(\frac{\theta+\xi}{c_\ell(\theta-1)}\Big)\Big),\qquad \ell\in[k],
		\ee 
		where we note that we have written $\E W$ as $\theta-1$. The right-hand side is increasing in $c_\ell$ when $c_\ell\in[1/\log\theta,\theta/(\theta-1))$, so that all $k$ inequalities are satisfied when we solve 
		\be 
		(1-\eps)>\wt c\Big(1-\frac{1}{\wt c(\theta-1)}+\log\Big(\frac{\theta+\xi}{\wt c(\theta-1)}\Big)\Big),
		\ee 
		with $\wt c:=\max_{\ell\in[k]}c_\ell$. We now show that the right-hand side is strictly smaller than one when $\xi$ is sufficiently small. We write
		\be \ba
		\wt c\Big (1-\frac{1}{\wt c(\theta-1)}+\log\Big(\frac{\theta+\xi}{\wt c(\theta-1)}\Big)\Big)&=\wt c\Big(1-\frac{1}{\wt c(\theta-1)}+\log\Big(\frac{\theta}{\wt c(\theta-1)}\Big)\Big)+\wt c\log\Big(1+\frac\xi\theta\Big)\\
		&\leq \wt c\Big (1-\frac{1}{\wt c(\theta-1)}+\log\Big(\frac{\theta}{\wt c(\theta-1)}\Big)\Big)+\frac{\xi}{\theta-1},
		\ea \ee 
		where the final upper bound follows from the fact that $\log(1+x)\leq x$ for $x>-1$ and $\wt c<\theta/(\theta-1)$. We denote the first term on the right-hand side by $\kappa=\kappa(\wt c,\theta)$. As $\kappa$ is increasing in $\wt c$ when $\wt c< \theta/(\theta-1)$ (which is the case when $c_\ell<\theta/(\theta-1)$ for all $\ell\in[k]$), we have $\kappa<1$, as $\wt c<\theta/(\theta-1)$. Thus, setting $\xi<(1-\kappa)(\theta-1)/2$ we achieve the desired result. Now, taking $\eps\in(0,1-(\kappa+\xi/(\theta-1)))$, we arrive at~\eqref{eq:littleo} for some small $\eta>0$. Recalling that we can bound the second term in~\eqref{eq:e'n} by $o(n^{-\gamma})$ for any $\gamma>0$ and that $p_{m_\ell}\geq n^{-\nu}$ for some $\nu>0$ uniformly in $m_\ell<c\log n$, we obtain
		\be\label{eq:inepsk}
		\frac{1}{(n)_k}\sum_{\textbf{j}\in I_n(\eps,k)}\P{\Zm_n(j_\ell)=m_\ell\text{ for all }\ell\in[k]}=o\Big(n^{-\eta}\prod_{\ell=1}^k p_{m_\ell}\Big),
		\ee 
		for some small $\eta>0$.
		
		We now consider the remaining sets $I_n(\eps,1),\ldots, I_n(\eps,k-1)$ and aim to bound 
		\be 
		\frac{1}{(n)_k}\sum_{i=1}^{k-1}\sum_{\textbf{j} \in I_n(\eps,i)}\P{\Zm_n(j_\ell)\geq m_\ell\text{ for all }\ell \in[k]}. 
		\ee 
		Again, using the negative quadrant dependence and introducing the events $E^{(1)}_n,E^{(3)}_n,E^{(4)}_n$, and $E^{(5)}_n$ yields the upper bound
		\be 
		\frac{1}{(n)_k}\sum_{i=1}^{k-1}\sum_{\textbf{j} \in I_n(\eps,i)}\E{\ind_{E^{(1)}_n\cap E^{(3)}_n\cap E^{(4)}_n\cap E^{(5)}_n}\prod_{\ell=1}^k \Pf{\Zm_n(j_\ell)\geq m_\ell}}+\P{(E^{(1)}_n\cap E^{(3)}_n\cap E^{(4)}_n\cap E^{(5)}_n)^c}.
		\ee 
		The aim is to treat the probabilities of indices which are at most $n^{\eps}$ in the same way as when dealing with the indices in $I_n(\eps,k)$ to reach a bound as in~\eqref{eq:littleo}, for which we use the events $E^{(i)}_n, i=3,4,5$. For the indices which are larger than $n^{\eps}$ such an upper bound will not suffice. Instead, we aim to bound $\Pf{\Zm_n(j_\ell)\geq m_\ell}$ when $n^{\eps}\leq j_\ell\leq n$ in a similar way as we bounded $\Pf{\Zm_n(j_\ell)=m_\ell}$ from above in the proof of Lemma~\ref{lemma:in0eps}, for which we use the event  $E^{(1)}_n$. 
		
		First, we split the summation over $I_n(\eps,i)$ over all possible configurations of indices with are at most and at least $n^{\eps}$, similar to~\eqref{eq:Ssum}. That is, 
		\be\ba
		\frac{1}{(n)_k}{}&\sum_{i=1}^{k-1}\sum_{\textbf{j} \in I_n(\eps,i)}\E{\ind_{E^{(1)}_n\cap E^{(3)}_n\cap E^{(4)}_n\cap E^{(5)}_n}\prod_{\ell=1}^k \Pf{\Zm_n(j_\ell)\geq m_\ell}}\\
		={}&\frac{1}{(n)_k}\sum_{i=1}^{k-1}\sum_{\substack{S\subseteq [k]\\ |S|=i}}\ \sideset{}{^*}\sum_{\substack{1\leq j_\ell <n^{\eps}\\ \ell\in S}}\ \sideset{}{^*}\sum_{\substack{n^{\eps}\leq j_\ell \leq n\\ \ell \in[k]\backslash S}}\!\!\!\!\!\mathbb E\Bigg[\!\ind_{E^{(1)}_n\cap E^{(3)}_n\cap E^{(4)}_n\cap E^{(5)}_n}\!\!\prod_{\ell\in S} \Pf{\Zm_n(j_\ell)\geq m_\ell}\!\!\!\!\prod_{\ell\in [k]\backslash S}\!\!\!\!\Pf{\Zm_n(j_\ell)\geq m_\ell}\Bigg].
		\ea \ee 
		Using the events $E^{(i)}_n, i=3,4,5$, we can follow similar steps as above to bound the sum over the indices $j_\ell$ and the product of probabilities $\Pf{\Zm_n(j_\ell)\geq m_\ell}$ for $\ell\in S$ from above by the deterministic upper bound 
		\be 
		\exp\Big(\log n(1+o(1))\sum_{\ell\in S} \Big(- (1-\eps)+c_\ell\Big(1-\frac{1}{c_\ell \E W}+\log\Big(\frac{1}{c_\ell \E W}\Big)\Big)\Big)\Big)=:n^{C(S)(1+o(1))},
		\ee 
		which yields
		\be \label{eq:detbound}
		\sum_{i=1}^{k-1}\frac{n^i}{(n)_k}\sum_{\substack{S\subseteq [k]\\ |S|=i}}n^{C(S)(1+o(1))}\ \sideset{}{^*}\sum_{\substack{n^{\eps}\leq j_\ell \leq n\\ \ell \in[k]\backslash S}}\!\!\!\!\E{\ind_{E^{(1)}_n}\prod_{\ell\in [k]\backslash S}\!\!\Pf{\Zm_n(j_\ell)\geq m_\ell}}.
		\ee 
		We now proceed to bound each individual probability $\Pf{\Zm_n(j_\ell)\geq m_\ell}$ when $\ell \in [k]\backslash S$. This follows a similar approach to the upper bound of $\Pf{\Zm_n(j_\ell)=m_\ell}$ in the proof of Lemma~\ref{lemma:in0eps} (as well as the non-rigorous proof provided in Section~\ref{sec:nonrigour}), with a couple of modifications. Introducing indices $j_\ell<i_1<\ldots <i_{m_\ell}\leq n$, which denote the steps at which vertex $j_\ell$ increases its degree, we can write
		\be 
		\Pf{\Zm_n(j_\ell)\geq m_\ell}=\sum_{j_\ell<i_1<\ldots <i_{m_\ell}\leq n}\prod_{t=1}^{m_\ell} \frac{W_{j_\ell}}{\sum_{r=1}^{i_t-1}W_r}\prod_{\substack{s=j_\ell+1\\ s\neq i_t,t\in [m_\ell]}}^{i_{m_\ell}-1}\Big(1-\frac{W_{j_\ell}}{\sum_{r=1}^{s-1} W_r}\Big).
		\ee 
		The second product, in comparison to dealing with the event $\{\Zm_n(j_\ell)=m_\ell\}$, goes up to $i_{m_\ell}-1$ instead of $n$. This is due to the fact that we now only need to control the connections vertex $j$ does and does not make up to its $m_\ell^{\text{th}}$ connection. Using the same idea as in~\eqref{eq:fracbound} and using the event $E^{(1)}_n$, we obtain the upper bound
		\be \ba
		\sum_{j_\ell<i_1<\ldots <i_{m_\ell}\leq n}{}&\prod_{t=1}^{m_\ell} \frac{W_{j_\ell}}{(i_t-1)\E W(1-\zeta_n)-1}\prod_{s=j_\ell+1}^{i_{m_\ell}-1}\Big(1-\frac{W_{j_\ell}}{s\E W(1+\zeta_n)}\Big)\\
		\leq{}& \sum_{j_\ell<i_1<\ldots <i_{m_\ell}\leq n}\prod_{t=1}^{m_\ell} \frac{W_{j_\ell}}{i_t\E W(1-2\zeta_n)}\prod_{s=j_\ell+1}^{i_{m_\ell}-1}\Big(1-\frac{W_{j_\ell}}{s\E W(1+\zeta_n)}\Big).
		\ea \ee 
		The last step follows from the fact that $(i_t-1)(1-\zeta_n)\E W-1\geq i_t(1-2\zeta_n)\E W$ for $n$ sufficiently large. Using this in the expected value of~\eqref{eq:detbound} yields
		\be 
		\ \sideset{}{^*}\sum_{\substack{n^{\eps}\leq j_\ell \leq n\\ \ell \in[k]\backslash S}}\E{\prod_{\ell\in [k]\backslash S}\bigg(\sum_{j_\ell<i_1<\ldots <i_{m_\ell}\leq n}\prod_{t=1}^{m_\ell} \frac{W_{j_\ell}}{i_t\E W(1-2\zeta_n)}\prod_{s=j_\ell+1}^{i_{m_\ell}-1}\Big(1-\frac{W_{j_\ell}}{s\E W(1+\zeta_n)}\Big)\bigg)}.
		\ee 
		We can now relabel the vertex-weights $W_{j_\ell}$ as $W_\ell$, $\ell\in [k]\backslash S$. This does not change the expected value and is possible since the indices $j_\ell$ are distinct for all $\ell\in [k]\backslash S$. Directly after this, we omit the requirement that the indices $j_\ell$ are distinct, which is now of no consequence as the weights have been relabelled already. We hence arrive at the upper bound
		\be \label{eq:expprod}
		\prod_{\ell\in [k]\backslash S}\!\!\!\mathbb E\Bigg[\sum_{n^{\eps}\leq j_\ell \leq n}\ \sum_{j_\ell<i_1<\ldots <i_{m_\ell}\leq n}\prod_{t=1}^{m_\ell} \frac{W_\ell}{i_t\E W(1-2\zeta_n)}\!\prod_{s=j_\ell+1}^{i_{m_\ell}-1}\!\!\Big(1-\frac{W_\ell}{s\E W(1+\zeta_n)}\Big)\!\Bigg],
		\ee 
		where the product can be taken out of the expected value due to the independence of the vertex-weights $W_1,\ldots W_\ell$. As a result, we can deal with each of expected values individually. Following the same approach as in~\eqref{eq:expbound} and setting $a_\ell:=W_\ell/(\E W(1+\zeta_n))$, we obtain the upper bound
		\be \ba\label{eq:exp}
		\mathbb E\Bigg[{}&\sum_{n^{\eps}\leq j_\ell \leq n}\ \sum_{j_\ell<i_1<\ldots <i_{m_\ell}\leq n}\prod_{t=1}^{m_\ell} \frac{W_\ell}{i_t\E W(1-2\zeta_n)}\Big(\frac{i_{m_\ell}}{j_\ell}\Big)^{-a_\ell}\Bigg]\Big(1+\mathcal O\big(n^{-\eps}\big)\Big)\\
		={}&\mathbb E\Bigg[a_\ell^{m_\ell}\!\!\!\!\!\!\sum_{n^{\eps}\leq j_\ell \leq n}\ \sum_{j_\ell<i_1<\ldots <i_{m_\ell}\leq n}\prod_{t=1}^{m_\ell} i_t^{-1}\Big(\frac{i_{m_\ell}}{j_\ell}\Big)^{-a_\ell}\Bigg]\Big(\frac{1+\zeta_n}{1-2\zeta_n}\Big)^{m_\ell}\Big(1+\mathcal O\big(n^{-\eps}\big)\Big).
		\ea\ee 
		Let us first take out the term $(a_\ell j_\ell)^{ a_\ell}$ from the inner sums. We then observe that the summand of the inner sum over the indices $i_1,\ldots, i_{m_\ell}$ is decreasing, so that we can bound it from above almost surely by the multiple integrals
		\be 
		\int_{j_\ell}^n \int_{x_1}^n \cdots \int_{x_{m_\ell-1}}^n \prod_{t=1}^{m_\ell-1}x_t^{-1} x_{m_\ell}^{-(1+a_\ell)}\,\d x_{m_\ell}\ldots \d x_1=a_\ell^{-m_\ell}j_\ell^{-a_\ell}\bigg(1-\Big(\frac{n}{j_\ell}\Big)^{-a_\ell}\!\sum_{s=0}^{m_\ell-1}a_\ell^{s}\frac{(\log(n/j_\ell))^s}{s!}\bigg).
		\ee 
		The equality follows from~\eqref{eq:logint2} in Lemma~\ref{lemma:logints} with $a=j_\ell, b=n, c=a_\ell$, and $k=m_{\ell}$.	Again reintroducing the term $a_\ell^{m_\ell}j^{a_\ell}$ in the expected value in~\eqref{eq:exp}, we arrive at 
		\be 
		1-\sum_{s=0}^{m_\ell-1}(n/j_\ell)^{-a_\ell}\frac{(a_\ell\log(n/j_\ell))^s}{s!}=\Pf{P(a_\ell)\geq m_\ell},
		\ee 
		where $P(a_\ell)\sim \text{Poi}(a_\ell \log(n/j_\ell))$, conditionally on $W_\ell$. With a similar duality between Poisson and gamma random variables, we obtain, as in~\eqref{eq:poitogamma}, 
			\be
			\Pf{P(a_\ell)\geq m_\ell}=\Pf{Y_\ell\leq \log(n/j_\ell)}, 
			\ee
			where $Y_\ell\sim \text{Gamma}(m_\ell,a_\ell)$, conditionally on $W_\ell$. Then, by the choice of $\zeta_n$, it follows that $((1+\zeta_n)/(1-2\zeta_n))^{m_\ell}=1+\mathcal O\big(n^{-\delta\eps}\log n \big)$. Using both these results in~\eqref{eq:exp}, we arrive at
			\be \label{eq:exppw}
			\mathbb E\Bigg[\sum_{n^{\eps}\leq j_\ell \leq n}\Pf{Y_\ell\leq \log(n/j_\ell)}\Bigg]\Big(1+\mathcal O\big( n^{-\delta\eps}\log n \big)\Big).
			\ee   
			As the conditional probability is decreasing in $j_\ell$, we can bound the sum from above by an integral almost surely to obtain, as in~\eqref{eq:sumtoint} and~\eqref{eq:gammaexp}, 
			\be
			\int_{\lfloor n^{\eps}\rfloor}^n \Pf{Y_\ell\leq \log(n/x)}\,\d x \leq n\Ef{}{\e^{-Y_\ell}\ind_{\{Y_\ell\leq \log(n/\lfloor n^\eps \rfloor)\}}}\leq n\Big(\frac{a_\ell}{1+a_\ell}\Big)^{m_\ell}.
			\ee 
		Combining this almost sure upper bound with~\eqref{eq:exppw} in~\eqref{eq:expprod}, we arrive at 
		\be \label{eq:tailfin}
		n^{k-|S|}\prod_{\ell\in[k]\backslash S} \E{\Big(\frac{a_\ell}{1+a_\ell}\Big)^{m_\ell}}\Big(1+\mathcal O \big(n^{-\delta\eps}\log n\big)\Big).
		\ee 
		Finally, with the same steps as in~\eqref{eq:aapprox}, we obtain 
		\be 
		n^{k-|S|}\prod_{\ell\in[k]\backslash S} \E{\Big(\frac{W}{\E W+W}\Big)^{m_\ell}}\big(1+o\big(n^{-\beta}\big)\big),
		\ee 
		for some small $\beta>0$. Using this bound in~\eqref{eq:detbound} yields, for some constant $K>0$, the upper bound
		\be 
		K\sum_{i=1}^{k-1}\sum_{\substack{S\subseteq [k]\\ |S|=i}}n^{C(S)(1+o(1))}\prod_{\ell\in [k]\backslash S}\E{\Big(\frac{W}{\E W+W}\Big)^{m_\ell}}=K\sum_{i=1}^{k-1}\sum_{\substack{S\subseteq [k]\\ |S|=i}}n^{C(S)(1+o(1))}\prod_{\ell\in [k]\backslash S}p_{\geq m_\ell}.
		\ee 
		By Remark~\ref{rem:pgeqk}, the tail probability $p_{\geq m_\ell}=\mathcal O(p_{m_\ell})$ and by~\eqref{eq:csineq} we have\\  $n^{C(S)(1+o(1))}=o\big(n^{-\eta(S)}\prod_{\ell\in S}p_{m_\ell}\big)$ for some $\eta(S)>0$. Combined, this yields 
		\be \ba 
		\frac{1}{(n)_k}\sum_{i=1}^{k-1} \sum_{\textbf{j}\in I_n(\eps,i)}\E{\ind_{E^{(3)}_n}\ind_{E_n^{(1)}}\prod_{\ell=1}^k \Pf{\Zm_n(j_\ell)\geq m_\ell}}&\leq K\sum_{i=1}^{k-1}\sum_{\substack{S\subseteq [k]\\ |S|=i}}n^{C(S)(1+o(1))}\prod_{\ell\in [k]\backslash S}p_{\geq m_\ell}\\
		&=o\big(n^{-\wt \eta}\prod_{\ell=1}^k p_{m_\ell}\big),
		\ea \ee 
		with 
		\be
		\wt \eta:=\min_{\substack{ S\subseteq [k]\\ 1\leq |S|\leq k-1}}\Big(C(S)-\sum_{\ell\in S}\log(\theta+\xi)c_\ell\Big),
		\ee 
		which is strictly positive when $\eps$ and $\xi$ are sufficiently small, similar to what is discussed above. Combining this with the fact that $\P{\big(E^{(1)}_n\cap E^{(3)}_n\cap E^{(4)}_n\cap E^{(5)}_n\big)^c}=o\big(n^{-\beta}\prod_{\ell=1}^k p_{m_\ell}\big)$ uniformly in $m_1,\ldots, m_k< c\log n$ for some $\beta>0$ by Lemma~\ref{lemma:weightsumbounds}, and the result in~\eqref{eq:inepsk}, we finally conclude,
		\be\ba\label{eq:finboundineps}  
		\frac{1}{(n)_k}\sum_{\textbf{j}\in I_n(\eps)}\!\!\!\P{\Zm_n(j_\ell)=m_\ell\text{ for all }\ell\in[k]}={}&\frac{1}{(n)_k}\sum_{\textbf{j}\in I_n(\eps,k)}\P{\Zm_n(j_\ell)=m_\ell\text{ for all }\ell\in[k]}\\
		&+\frac{1}{(n)_k}\sum_{i=1}^{k-1}\sum_{\textbf{j}\in I_n(\eps,i)}\!\!\!\P{\Zm_n(j_\ell)=m_\ell\text{ for all }\ell\in[k]}\\
		={}&o\Big(n^{-\beta}\prod_{\ell=1}^k p_{m_\ell}\Big),
		\ea \ee 
		for some $\beta>0$ in the case that $m_\ell=c_\ell \log n+o(\log n)$ with $c_\ell\in[1/\log \theta, \theta/(\theta-1))$ for all $\ell\in[k]$ as well. 
		
		When the $m_\ell$ do not all behave the same, that is, for some $\ell\in[k]$ $c_\ell\in[0,1/\log \theta)$ and for some $c_\ell\in[1/\log \theta, \theta/(\theta-1))$, we can use a combination of the approaches outlined for either of the cases, which concludes the proof.
	\end{proof}
	
	We now discuss the required adaptations so that the proof holds for the model with \emph{random out-degree} as well. This follows from the fact that Lemma~\ref{lemma:in0eps} holds for this model, together with the fact that the degrees are still negatively quadrant dependent when the out-degree is random. As a result, all probabilities related to the degrees of multiple vertices can either be dealt with using Lemma~\ref{lemma:in0eps} or can be split as a product of probabilities of individual vertices. From the perspective of the in-degree of an individual vertex $i\in[n]$, the model with out-degree one and the model with random out-degree are equivalent, as in every step the in-degree of vertex $i$ increases by one with the same probability. Hence, the proof follows through in exactly the same way and Lemma~\ref{lemma:inepsterm} holds for the model with random out-degree as well.
	
	\subsection{Proof of Proposition~\ref{lemma:degprobasymp}}\label{sec:proof5.1}
	
	We finally prove Proposition~\ref{lemma:degprobasymp}, using Lemmas~\ref{lemma:in0eps} and~\ref{lemma:inepsterm}. We remark that the proof does not use that the out-degree is deterministic but uses~\eqref{eq:degdist} only, so that the proof and hence~\eqref{eq:degtail} hold for both the model with fixed and \emph{random out-degree}.
	
	\begin{proof}[Proof of Proposition~\ref{lemma:degprobasymp}]
		As discussed before,~\eqref{eq:degdist} directly follows from~\eqref{eq:splitsum2} combined with Lemmas~\ref{lemma:in0eps} and~\ref{lemma:inepsterm}. Using~\eqref{eq:degdist}, we then prove~\eqref{eq:degtail}. For ease of writing, we recall that
		\be 
		p_k:=\E{\frac{\theta-1}{\theta-1+W}\Big(\frac{W}{\theta-1+W}\Big)^k},\quad\text{and} \quad \; p_{\geq k}:=\E{\Big(\frac{W}{\theta-1+W}\Big)^k}.
		\ee  
		We start by assuming that $m_\ell=c_\ell \log n+o(\log n)$ with $c_\ell\in(0,c)$ for each $\ell\in[k]$. We discuss how to adjust the proof when $m_\ell=o(\log n)$ for some or all $\ell\in[k]$ at the end. 
		
		For each $\ell\in [k]$ take an $\eta_\ell\in (0,c-c_\ell)$ so that $\lceil (1+\eta_\ell)m_\ell\rceil <c\log n$. Then, we use the upper bound
		\be \ba\label{eq:tailub}
		\mathbb P({}&\Zm_n(v_\ell)\geq m_\ell\text{ for all }\ell\in[k])\\
		\leq{}& \sum_{j_1=m_1}^{\lfloor(1+\eta_1)m_1\rfloor}\cdots \sum_{j_k=m_k}^{\lfloor (1+\eta_k)m_k\rfloor}\P{\Zm_n(v_\ell)=j_\ell\text{ for all }\ell\in[k]}\\
		&+\sum_{i=1}^k \P{\Zm_n(v_i)\geq \lceil (1+\eta_i)m_i\rceil, \Zm_n(v_\ell)\geq m_\ell\text{ for all }\ell \neq i}.
		\ea \ee 
		We first discuss the first term on the right-hand side. As~\eqref{eq:degdist} holds uniformly in $m_1,\ldots, m_k$ $<c\log n$, we find
		\be \ba\label{eq:degtailub}
		\sum_{j_1=m_1}^{\lfloor(1+\eta_1)m_1\rfloor}\!\!\!\!\cdots\!\!\!\! \sum_{j_k=m_k}^{\lfloor (1+\eta_k)m_k\rfloor}\!\!\!\!\!\!\!\P{\Zm_n(v_\ell)=j_\ell\text{ for all }\ell\in[k]}&=\!\!\!\sum_{j_1=m_1}^{\lfloor(1+\eta_1)m_1\rfloor}\!\!\!\!\cdots\!\!\!\! \sum_{j_k=m_k}^{\lfloor (1+\eta_k)m_k\rfloor}\!\prod_{\ell=1}^kp_{j_\ell}\big(1+o\big(n^{-\beta}\big)\big)\\
		&=\prod_{\ell=1}^k \big(p_{\geq m_\ell}-p_{\geq \lceil (1+\eta_\ell)m_\ell \rceil}\big)\big(1+o\big(n^{-\beta}\big)\big)\\
		&\leq \prod_{\ell=1}^k p_{\geq m_\ell}\big(1+o\big(n^{-\beta}\big)\big).
		\ea\ee
		To finish the upper bound, it remains to show that the term on the second line of~\eqref{eq:tailub} can be incorporated in the $o\big(n^{-\beta}\big)$ term, and it suffices to show this can be done for each term in the sum, independent of the value of $i$. Using the negative quadrant dependence of the degrees under the conditional probability measure $\mathbb P_W$ (see Remark~\ref{rem:degtail} and~\cite[Lemma $7.1$]{LodOrt21}), we find 
		\be \ba \label{eq:degtailend}
		\mathbb P(\Zm_n(v_i){}&\geq \lceil (1+\eta_i)m_i\rceil, \Zm_n(v_\ell)\geq m_\ell\text{ for all } \ell \in [k]\backslash \{i\})\\
		={}&\frac{1}{(n)_k}\sum_{1\leq j_1\neq \ldots \neq j_k\leq n}\mathbb E\Big[\Pf{\Zm_n(j_i)\geq \lceil (1+\eta_i)m_i\rceil}\prod_{\ell\in [k]\backslash \{i\}}\Pf{\Zm_n(j_\ell)\geq m_\ell}\Big]\\
		={}&\frac{1}{(n)_k}\sum_{\textbf{j}\in I_n(\eps)}\mathbb E\Big[\Pf{\Zm_n(j_i)\geq \lceil (1+\eta_i)m_i\rceil}\prod_{\ell\in [k]\backslash \{i\}}\Pf{\Zm_n(j_\ell)\geq m_\ell}\Big]\\
		&+\frac{1}{(n)_k}\sum_{n^{\eps}\leq j_1\neq \ldots \neq j_k\leq n}\!\!\!\!\!\!\!\!\mathbb E\Big[\Pf{\Zm_n(j_i)\geq \lceil (1+\eta_i)m_i\rceil}\prod_{\ell\in [k]\backslash \{i\}}\Pf{\Zm_n(j_\ell)\geq m_\ell}\Big].
		\ea \ee 
		The first term in the last step can be included in the little $o$ term in~\eqref{eq:degtailub} (even when considering $m_i$ rather than $\lceil (1+\eta_i)m_i\rceil$ in the probability), as follows from computations similar to the ones in~\eqref{eq:errorterm} through~\eqref{eq:finboundineps}, combined with Remark~\ref{rem:pgeqk} (which states that $p_{\geq k}=\mathcal O(p_k)$). It remains to show that the same holds for the second term in the last step. Again, we use an argument similar to the steps performed in~\eqref{eq:detbound} through~\eqref{eq:tailfin} to arrive at 
		\be \ba \label{eq:sumbound}
		\frac{1}{(n)_k}{}&\sum_{n^{\eps}\leq j_1\neq \ldots \neq j_k\leq n}\mathbb E\Big[\Pf{\Zm_n(j_i)\geq \lceil (1+\eta_i)m_i\rceil}\prod_{\ell\in [k]\backslash \{i\}}\Pf{\Zm_n(j_\ell)\geq m_\ell}\Big]\\
		&\leq Kp_{\geq \lceil (1+\eta_i)m_i\rceil}\prod_{\ell\in [k]\backslash \{i\}}p_{\geq m_\ell},
		\ea \ee 
		for some positive constant $K$. By Lemma~\ref{lemma:pkbound} we have the inequalities 
		\be 
		p_{\geq \lceil (1+\eta_i)m_i\rceil}\leq \theta^{-\lceil (1+\eta_i)m_i\rceil}\leq \theta^{-m_i}\theta^{-\eta_im_i},\quad\text{and}\quad p_{\geq m_i}\geq (\theta+\xi)^{-m_i},
		\ee 
		for any $\xi>0$. As a result, taking $\xi\in(0,\theta(\theta^{\eta_i}-1))$ and setting $\phi_i:=1-(1+\xi/\theta)\theta^{-\eta_i}>0$, we obtain
		\be \label{eq:expcomp}
		p_{\geq \lceil (1+\eta_i)m_i\rceil}\leq (\theta+\xi)^{-m_i}\big((1+\xi/\theta)\theta^{-\eta_i}\big)^{m_i}\leq p_{\geq m_i}(1-\phi_i)^{m_i}.
		\ee 
		As $m_i=c_i\log n(1+o(1))$, it follows that $(1-\phi_i)^{m_i}=n^{-c_i\log (1/(1-\phi_i))(1+o(1))}$, so that
		\be 
		\frac{1}{(n)_k}\sum_{n^{\eps}\leq j_1\neq \ldots \neq j_k\leq n}\mathbb E\Big[\Pf{\Zm_n(j_i)\geq \lceil (1+\eta_i)m_i\rceil}\prod_{\ell\in [k]\backslash \{i\}}\Pf{\Zm_n(j_\ell)\geq m_\ell}\Big]
		\ee 
		can be incorporated in the little $o$ term in~\eqref{eq:degtailub} for each $i\in[k]$ when we take a \\$\beta'<\beta\wedge \min_{i\in[k]}c_i\log(1/(1-\phi_i))$. This yields
		\be \label{eq:degtailfinub}
		\P{\Zm_n(v_\ell)\geq m_\ell\text{ for all } \ell \in [k]}\leq \prod_{\ell=1}^k \E{\Big(\frac{W}{\theta-1+W}\Big)^{m_\ell}}\big(1+o\big(n^{-\beta'}\big)\big).
		\ee 
		For a lower bound, we can omit the second line of~\eqref{eq:tailub} and use~\eqref{eq:degtailub} to immediately obtain 
		\be \ba \label{eq:geqpklb}
		\P{\Zm_n(v_\ell)\geq m_\ell\text{ for all }\ell\in[k]}\geq{}& \sum_{j_1=m_1}^{\lfloor(1+\eta_1)m_1\rfloor}\cdots \sum_{j_k=m_k}^{\lfloor (1+\eta_k)m_k\rfloor}\P{\Zm_n(v_\ell)=j_\ell\text{ for all }\ell\in[k]}\\
		={}& \prod_{\ell=1}^k \big(p_{\geq m_\ell}-p_{\geq \lceil (1+\eta_\ell)m_\ell\rceil}\big)\big(1+o\big(n^{-\beta}\big)\big).
		\ea \ee 
		Again using~\eqref{eq:expcomp} yields $p_{\geq m_\ell}-p_{\geq \lceil (1+\eta_\ell)m_\ell\rceil}=p_{\geq m_\ell}\big(1+o\big(n^{-\beta'}\big)\big)$ when we set \\$\beta'<\beta\wedge \min_{i\in[k]} c_i\log(1/(1-\phi_i))$. Combined with~\eqref{eq:degtailfinub} this yields~\eqref{eq:degtail} for the case $m_\ell=c_\ell\log n+o(\log n)$ with $c_\ell\in(0,c)$ for all $\ell\in[k]$.
		
		When $c_\ell=0$ and thus $m_\ell=o(\log n)$ for some (or all) $i\in[k]$, a few minor alterations are required. Most importantly, we substitute the quantity $\lfloor(c/2)\log n\rfloor $ for $\lfloor (1+\eta_\ell)m_\ell\rfloor$ in all the steps. With this, the upper bound in~\eqref{eq:degtailub} follows directly. Similarly, the first term on the right-hand side of~\eqref{eq:degtailend} can be included in the little $o$ term in~\eqref{eq:degtailub}, as the computations required (which are similar to those in~\eqref{eq:errorterm} through~\eqref{eq:inepsk}) include the case $m_\ell=o(\log n)$. We again use the upper bound in~\eqref{eq:sumbound} to obtain 
			\be\ba  
			\frac{1}{(n)_k}{}&\sum_{n^{\eps}\leq j_1\neq \ldots \neq j_k\leq n}\mathbb E\Big[\Pf{\Zm_n(j_i)\geq \lceil(c/2)\log n\rceil}\prod_{\ell\in [k]\backslash \{i\}}\Pf{\Zm_n(j_\ell)\geq m_\ell}\Big]\\
			&\leq Kp_{\geq \lceil (c/2)\log n\rceil}\prod_{\ell\in [k]\backslash \{i\}}p_{\geq m_\ell}\\
			&=K\frac{p_{\geq \lceil (c/2)\log n\rceil}}{p_{\geq m_i}}\prod_{\ell\in [k]}p_{\geq m_{\ell}}. 
			\ea\ee
			By Lemma~\ref{lemma:pkbound}, we obtain that the fraction is at most $\theta^{-(c/2)\log n}(\theta+\xi)^{m_i}\leq \theta^{-(c/4)\log n}=n^{-c\log(\theta)/4}$ for all $n$ sufficiently large, when $m_i=o(\log n)$. As a result, this term can be included in the $o(n^{-\beta})$ term in~\eqref{eq:degtailub} for any $i\in[k]$ such that $m_i=o(\log n)$, which proves the upper bound. 
			
			For the lower bound, we again substitute $\lfloor(c/2)\log n\rfloor$ for $\lfloor (1+\eta_i)m_i\rfloor$ when $m_i=o(\log n)$. In~\eqref{eq:geqpklb}, this yields a term 
			\be 
			(p_{\geq m_i}-p_{\geq\lceil (c/2)\log n\rceil})(1+o(n^{-\beta}))=p_{\geq m_i}\Big(1-\frac{p_{\geq\lceil (c/2)\log n\rceil}}{p_{\geq m_i}}\Big)(1+o(n^{-\beta})).
			\ee 
			Again, the fraction is at most $n^{-\eta}$ for some $\eta>0$, which proves the lower bound and concludes the proof.
	\end{proof}
	
	\section{Proofs of the main theorems}\label{sec:mainproof}
	With the tools developed in Section~\ref{sec:taildeg}, in particular Propositions~\ref{lemma:degprobasymp} and~\ref{prop:factmean} and Lemma~\ref{lemma:maxdegwhp}, we now prove the main results formulated in Section~\ref{sec:results}. 
	
	First, we prove the main result for high degree vertices when the vertex-weight distribution has an atom at one, as in the~\ref{ass:weightatom} case.
	
	\begin{proof}[Proof of Theorem~\ref{thrm:mainatom}]
		The proof follows the same approach as~\cite[Theorem $1.2$]{AddEsl18}. For an integer subsequence $(n_\ell)_{\ell\in\N}$ such that $\eps_{n_\ell}\to\eps$ as $\ell\to\infty$, it suffices to prove that for any $i<i'\in \Z$,			
		\be 
		(X^{(n_\ell)}_i,X^{(n_\ell)}_{i+1},\ldots,X^{(n_\ell)}_{i'-1},X^{(n_\ell)}_{\geq i'})\toindis (\mathcal P^\eps(i),\mathcal P^\eps(i+1),\ldots,\mathcal P^\eps (i'-1),\mathcal P^\eps([i',\infty)))\qquad\text{as }\ell\to\infty
		\ee 
		holds. We obtain this via the convergence of the factorial moments of $X^{(n_\ell)}_i,\ldots,X^{(n_\ell)}_{i'-1},X^{(n_\ell)}_{\geq i'}$. Recall $r_k$ from~\eqref{eq:ek}. By Proposition~\ref{prop:factmean}, for any non-negative integers $a_i,\ldots,a_{i'}$, 
		\be \ba
		\E{\Big(X^{(n_\ell)}_{\geq i'}\Big)_{a_{i'}}\prod_{k=i}^{i'-1}\Big(X_k^{(n_\ell)}\Big)_{a_k}}={}&\Big(q_0\theta^{-i'+\eps_{n_\ell}}\Big)^{a_{i'}}\prod_{k=i}^{i'-1}\Big(q_0(1-\theta^{-1})\theta^{-k+\eps_{n_\ell}}\Big)^{a_k}\\
		&\times\big(1+\mathcal O\big(r_{\lfloor \log_\theta n_\ell\rfloor +i}\vee n_\ell^{-\beta}\big)\big)\\
		\to{}& \Big(q_0\theta^{-i'+\eps}\Big)^{a_{i'}}\prod_{k=i}^{i'-1}\Big(q_0(1-\theta^{-1})\theta^{-k+\eps}\Big)^{a_k},
		\ea\ee 
		as $\ell\to\infty$. By using the properties of Poisson processes, it follows that the limit equals
		\be 
		\E{\big(\mathcal P^\eps[i',\infty)\big)_{a_{i'}}\prod_{k=i}^{i'-1}\big(\mathcal P^\eps(k)\big)_{a_k}},
		\ee 
		due to the particular form of the intensity measure of the Poisson process $\mathcal P$ (which is used in the definition of the Poisson process $\mathcal P^\eps$). The result then follows from~\cite[Theorem $6.10$]{JanLucRuc11}.
	\end{proof}
	
	For the results for the~\ref{ass:weightweibull} and~\ref{ass:weightgumbel} cases, as outlined in Theorems~\ref{thrm:mainweibull} and~\ref{thrm:maingumbel}, respectively, we combine the asymptotic behaviour of $p_{\geq k}$ in Theorem~\ref{thrm:pkasymp} with Proposition~\ref{lemma:degprobasymp} and Lemma~\ref{lemma:maxdegwhp}. 
	
	\begin{proof}[Proof of Theorem~\ref{thrm:mainweibull}]
		To establish the convergence in probability, it follows from Lemma~\ref{lemma:maxdegwhp} that we need only consider $n\P{\Zm_n(v_1)\geq k_n}$ for some adequate integer-valued $k_n$ such that $k_n<c\log n$ for some $c\in(0,\theta/(\theta-1))$ and where $v_1$ is a vertex selected from $[n]$ uniformly at random. By Proposition~\ref{lemma:degprobasymp}, this quantity equals $np_{\geq k_n}(1+o(1))$. Then, we use Theorem~\ref{thrm:pkasymp} and Remark~\ref{rem:pgeqk} to obtain that, when $W$ satisfies the~\ref{ass:weightweibull} case in Assumption~\ref{ass:weights}, this quantity is at most 
		\be 
		n\overline C\, \overline L(k_n)k_n^{-(\alpha-1)}\theta^{-k_n},
		\ee 
		where $\overline C>1$ is a constant. Now fix an arbitrary $\eta>0$ and set $k_n:=\lfloor \log_\theta n-(\alpha-1)(1-\eta) \log_\theta\log_\theta n\rfloor$. This yields
		\be \ba\label{eq:weibullub}
		n\overline C{}&\,\overline L(\log_\theta n(1+o(1)))(\log_\theta n)^{-(\alpha-1)}\theta^{-\lfloor \log_\theta n-(\alpha-1)(1-\eta) \log_\theta\log_\theta n\rfloor}(1+o(1)) \\
		&\leq \overline C_2 \overline L(\log_\theta n)(\log_\theta n)^{-(\alpha-1)}(\log_\theta n)^{(\alpha-1)(1-\eta)}\\
		&=\overline  C_2\overline L(\log_\theta n)(\log _\theta n)^{-(\alpha-1)\eta}.
		\ea \ee 
		Here, $\overline C_2>0$ is a suitable constant and we use that $k_n=\log_\theta n(1+o(1))$ in the first step. Furthermore, we use~\cite[Theorem $1.5.2$]{BinGolTeu87}, which states that for a slowly-varying function $\overline L$, $\overline L(\lambda x)/\overline L(x)$ converges to one uniformly in $\lambda$ on any interval $[a,b]$ such that $0<a\leq b<\infty$. As a result, $\overline L(\log_\theta n(1+o(1)))\leq c\overline  L(\log_\theta n)$ for some constant $c>1$ and $n$ sufficiently large. Finally, we use~\cite[Proposition $1.3.6$ $(v)$]{BinGolTeu87} to obtain that for any $\eta>0$, the final line of~\eqref{eq:weibullub} tends to zero with $n$. This shows that for any $\eta>0$, with high probability,
		\be 
		\max_{\jnn}\frac{\znj-\log_\theta n}{\log_\theta\log_\theta n}\leq -(\alpha-1)(1-\eta)
		\ee 
		holds, due to the first result in Lemma~\ref{lemma:maxdegwhp}. A similar approach, when setting $k_n:=\lfloor \log_\theta n-(\alpha-1)(1+\eta)\log_\theta \log_\theta n\rfloor$, yields 
		\be 
		n\P{\Zm_n(v_1)\geq k_n}\to\infty,
		\ee 
		so that for any $\eta>0$, with high probability,
		\be 
		\max_{\jnn}\frac{\znj-\log_\theta n}{\log_\theta\log_\theta n}\geq -(\alpha-1)(1+\eta)
		\ee 
		holds. Together, these two bounds prove the desired result.
	\end{proof}
	
	\begin{proof}[Proof of Theorem~\ref{thrm:maingumbel}]
		The proof of this theorem follows a similar approach to the proof of Theorem~\ref{thrm:mainweibull}. That is, we again apply the results from Theorem~\ref{thrm:pkasymp} together with the fact that 
		\be 
		n\P{\Zm_n(v_1)\geq k_n}=np_{\geq k_n}(1+o(1)),
		\ee 
		for some adequate integer-valued $k_n$ such that $k_n<c\log n$ for some $c\in(0,\theta/(\theta-1))$, as follows from Proposition~\ref{lemma:degprobasymp} and Lemma~\ref{lemma:maxdegwhp}. In the~\ref{ass:weightgumbel}-\ref{ass:weighttaufin} sub-case, we know that
		\be \ba\label{eq:pkgumblb}
		p_{\geq k_n}=\exp\Big(-\frac{\tau^\gamma}{1-\gamma}\Big(\frac{(1-\theta^{-1})k_n}{c_1}\Big)^{1-\gamma}(1+o(1))\Big)\theta^{-k_n},
		\ea\ee 
		where we recall that $\gamma=1/(\tau+1)$. To prove the desired results, we first set $k_n=\lfloor \log_\theta n-(1+\eta)C_{\theta,\tau,c_1}(\log_\theta n)^{1-\gamma}\rfloor $ for any $\eta>0$, where we recall $C_{\theta,\tau,c_1}$ from~\eqref{eq:gumbrv2nd}. Using this in~\eqref{eq:pkgumblb} then yields 
		\be \ba
		np_{\geq k_n}&= \frac{n}{\theta^{k_n}}\e^{-\log (\theta) C_{\theta,\tau,c_1}k_n^{1-\gamma}(1+o(1))}\geq \e^{\eta\log (\theta) C_{\theta,\tau,c_1}(\log_\theta n)^{1-\gamma}(1+o(1))},
		\ea\ee 
		where we use that $k_n^{1-\gamma}=(\log_\theta n)^{1-\gamma}(1+o(1))$ in the last step.	Hence, $n\P{\Zm_n(v_1)\geq k_n}$ diverges. We thus conclude from Lemma~\ref{lemma:maxdegwhp} that 
		\be 
		\max_{\jnn}\frac{\znj-\log_\theta n}{(\log_\theta n)^{1-\gamma}}\geq -(1+\eta)C_{\theta,\tau,c_1}
		\ee  
		holds with high probability. A similar approach, setting \\ $k_n:=\lceil  \log_\theta n-(1-\eta)C_{\theta,\tau,c_1}(\log_\theta n)^{1-\gamma}\rceil$ and combining this with the first result of Lemma~\ref{lemma:maxdegwhp} yields
		\be 
		\max_{\jnn}\frac{\znj-\log_\theta n}{(\log_\theta n)^{1-\gamma}}\leq -(1-\eta)C_{\theta,\tau,c_1}
		\ee 
		holds with high probability. Together, these two bounds prove~\eqref{eq:gumbrv2nd}.
		
		To prove~\eqref{eq:gumbrav2nd} we apply the same methodology but use the asymptotic expression of $p_k$ (and $p_{\geq k}$ by adjusting constants), as in~\eqref{eq:pkbddgumbelrav}. We recall the constants $C_1,C_2,C_3$ from~\eqref{eq:c123} and set $k_n=:\lceil \log_\theta n-C_1(\log_\theta \log_\theta n)^\tau+C_2(\log_\theta \log_\theta n)^{\tau-1}\log_\theta\log_\theta \log_\theta n+(C_3+\eta)(\log_\theta\log_\theta n)^{\tau-1}\rceil$, for any $\eta>0$. Then,~\eqref{eq:pkbddgumbelrav} yields
		\be 
		np_{\geq k_n}= \frac{n}{\theta^{k_n}} \exp\Big(-\Big(\frac{\log k_n}{c_1}\Big)^\tau\Big(1+\frac{\tau(\tau-1)\log\log k_n}{\log k_n}-\frac{K_{\tau,c_1,\theta}}{\log k_n}(1+o(1))\Big)\Big).
		\ee 
		Using Taylor expansions, we obtain
		\be \ba
		-\Big(\frac{\log k_n}{c_1}\Big)^\tau={}&-\Big(\frac{\log \log_\theta n}{c_1}\Big)^\tau +o(1)=-\log (\theta) C_1(\log_\theta \log_\theta n)^\tau+o(1),\\
		\frac{\tau(\tau-1)}{c_1^\tau}(\log k_n)^{\tau-1}\log\log k_n={}&\frac{\tau(\tau-1)}{c_1^\tau}(\log\log_\theta n)^{\tau-1}\log\log\log_\theta n+o(1)\\
		={}&\log(\theta )C_2(\log_\theta\log_\theta n)^{\tau-1}\log_\theta\log_\theta\log_\theta n\\
		&+ (\log_\theta(\log\theta))(\log\theta)^\tau\frac{\tau(\tau-1)}{c_1^\tau}(\log_\theta\log_\theta n)^{\tau-1}+o(1),\\
		-\frac{K_{\tau,c_1,\theta}}{c_1^\tau}(\log k_n)^{\tau-1}={}&-\frac{K_{\tau,c_1,\theta}}{c_1^\tau}(\log\log_\theta n)^{\tau-1}+o(1)\\
		={}&-(\log \theta)^{\tau-1}\frac{K_{\tau,c_1,\theta}}{c_1^\tau}( \log_\theta\log_\theta n)^{\tau-1}+o(1),
		\ea \ee 
		where we recall $K_{\tau,c_1,\theta}$ from~\eqref{eq:pkbddgumbelrav} in Theorem~\ref{thrm:pkasymp}. As a result, 
		\be \ba 
		n{}&\exp\Big(-\Big(\frac{\log k_n}{c_1}\Big)^\tau\!\!+\frac{\tau(\tau-1)}{c_1^\tau}(\log k_n)^{\tau-1}\log\log k_n-\frac{K_{\tau,c_1,\theta}}{c_1^{\tau}}(\log k_n)^{\tau-1}(1+o(1))\Big)\theta^{-k_n}\\
		&= n\exp\big(-\log (\theta) C_1(\log_\theta\log_\theta n)^\tau+\log(\theta )C_2 (\log_\theta \log_\theta n)^{\tau-1}\log_\theta\log_\theta\log_\theta n\\
		&\hspace{1.8cm}+\log( \theta) C_3 (\log_\theta\log_\theta n)^{\tau-1}(1+o(1))\big)\theta^{-k_n}\\
		&\leq \exp\big(-(\eta-o(1)) (\log_\theta\log_\theta n)^{\tau-1}\big),
		\ea\ee 
		where in the last step we use that $k_n\geq \log_\theta n-C_1(\log_\theta \log_\theta n)^\tau+C_2(\log_\theta \log_\theta n)^{\tau-1}\log_\theta\log_\theta \log_\theta n+(C_3+\eta)(\log_\theta\log_\theta n)^{\tau-1}$. As the right-hand side tends to zero with $n$, Lemma~\ref{lemma:maxdegwhp} yields for any fixed $\eta>0$, with high probability,
		\be
		\max_{\inn}\frac{\zni-\big(\log_\theta n-C_1(\log_\theta\log_\theta n)^\tau+C_2(\log_\theta \log_\theta n)^{\tau-1}\log_\theta \log_\theta\log_\theta n\big)}{(\log_\theta\log_\theta n)^{\tau-1}}\leq C_3+\eta.
		\ee 
		With a similar approach, setting
		\be 
		k_n=\lfloor \log_\theta n-C_1(\log_\theta \log_\theta n)^\tau+C_2(\log_\theta \log_\theta n)^{\tau-1}\log_\theta\log_\theta \log_\theta n+(C_3-\eta)(\log_\theta\log_\theta n)^{\tau-1}\rfloor, 
		\ee
		we can obtain that for any fixed $\eta>0$, with high probability,
		\be 
		\max_{\inn}\frac{\zni-\big(\log_\theta n-C_1(\log_\theta\log_\theta n)^\tau+C_2(\log_\theta \log_\theta n)^{\tau-1}\log_\theta \log_\theta\log_\theta n\big)}{(\log_\theta\log_\theta n)^{\tau-1}}\geq C_3-\eta.
		\ee  
		Together these two bounds yield~\eqref{eq:gumbrav2nd}, which concludes the proof.
	\end{proof}
	
	\begin{proof}[Proof of Theorem~\ref{thrm:maxtail}]
		We first prove the asymptotic distribution of the maximum degree, whose proof follows the same approach as the proof of~\cite[Theorem $1.3$]{AddEsl18}. We need to consider two cases: $i_n=\mathcal O(1)$ and $i_n\to\infty$ such that $i_n+\log_\theta n <(\theta/(\theta-1))\log n$ and $\liminf_{n\to\infty}i_n >-\infty$. For the former case, as $\exp(-q_0\theta^{-i_n+\eps_n})=\mathcal O(1)$, it suffices to prove
		\be 
		\P{\max_{\jnn}\znj\geq \lfloor \log_\theta n\rfloor +i_n}-(1-\exp(-q_0\theta^{-i_n+\eps_n}))\to 0\qquad \text{as }n\to\infty.
		\ee  
		By the definition of $X^{(n)}_{\geq i}$ in~\eqref{eq:xni}, this is equivalent to 
		\be \label{eq:limconv}
		\P{X^{(n)}_{\geq i_n}=0}-\exp(-q_0\theta^{-i_n+\eps_n})\to0\qquad \text{as }n\to\infty.
		\ee 
		This follows from Theorem~\ref{thrm:mainatom} and the subsubsequence principle. That is, if we assume the convergence in~\eqref{eq:limconv} does not hold, then there exists a subsequence $(n_\ell)_{\ell\in\N}$ and a $\delta>0$ such that 
		\be \label{eq:subseq}
		\P{X^{(n_\ell)}_{\geq i_{n_\ell}}=0}-\exp(-q_0\theta^{-i_{n_\ell}+\eps_{n_\ell}})>\delta\ \ \forall\,\ell\in\N.
		\ee 
		However, as $\eps_{n_\ell}$ is bounded, there exists a subsubsequence $\eps_{n_{\ell_k}}$ such that $\eps_{n_{\ell_k}}\to\eps$ for some $\eps\in[0,1]$. Then, by Theorem~\ref{thrm:mainatom}, the statement in~\eqref{eq:subseq} is false, from which the result follows.
		
		In the latter case, we need only consider $\E{X^{(n)}_{\geq i_n}}$ and $\E{\big(X^{(n)}_{\geq i_n}\big)_2}$, as by~\cite[Corollary $1.11$]{Bol01},
		\be \label{eq:probbounds}
		\E{X^{(n)}_{\geq i_n}}-\frac{1}{2}\E{\big(X^{(n)}_{\geq i_n}\big)_2}\leq \P{X^{(n)}_{\geq i_n}>0}\leq \E{X^{(n)}_{\geq i_n}}.
		\ee 
		By Proposition~\ref{prop:factmean}, we have that 
		\be 
		\E{X^{(n)}_{\geq i_n}}=q_0\theta^{-i_n+\eps_n}(1+o(1)),\qquad \E{\big(X^{(n)}_{\geq i_n}\big)_2}=\big(q_0\theta^{-i_n+\eps_n}\big)^2(1+o(1)).
		\ee 
		Hence, as $i_n\to\infty$ and $\eps_n$ is bounded,
		\be \ba
		\E{X^{(n)}_{\geq i_n}}&=\big(1-\exp\big(-q_0\theta^{-i_n+\eps_n}\big)\big)(1+o(1)),\\ \E{X^{(n)}_{\geq i_n}}-\frac12 \E{\big(X^{(n)}_{\geq i_n}\big)_2}&=\big(1-\exp\big(-q_0\theta^{-i_n+\eps_n}\big)\big)(1+o(1)).
		\ea\ee 
		Combining this with~\eqref{eq:probbounds} yields the desired result.
		
		Recall $\eps_n:=\log_\theta n-\lfloor \log_\theta n\rfloor$. We now prove the limiting distribution of $|\M_{n_\ell}|$ has the desired distribution, as in~\eqref{eq:meps}, when the subsequence $(n_\ell)_{\ell\in\N}$ is such that $\eps_{n_\ell}\to \eps\in[0,1]$, which follows the same approach as~\cite[Theorem $1.1$]{Esl16}. Consider the event $\mathcal E_{j,k}:=\{X^{(n_\ell)}_j=k, X^{(n_\ell)}_{\geq j+1}=0\}$ for $j\in\Z$ and $k\in\N$. $\mathcal E_{j,k}$ implies that there are exactly $k$ vertices which attain the maximum degree $\lfloor \log_\theta n\rfloor+j$ in the tree $T_{n_\ell}$. We observe that the events $\mathcal E_{j,k}$ are pairwise disjoint for different values of $j$. As a result, we obtain the following inequalities: For any $M\in\N$,
		\be\ba 
		\P{|\M_{n_\ell}|=k}=\mathbb P\bigg(\bigcup_{j\in\Z}\mathcal E_{j,k}\bigg)&\leq \mathbb P\bigg(\bigcup_{j\leq -(M+1)} \mathcal E_{j,k}\bigg)+\sum_{j=-M}^{M-1}\P{\mathcal E_{j,k}}+\mathbb P\bigg(\bigcup_{j\geq M}\mathcal E_{j,k}\bigg)\\
		&\leq \P{X^{(n_\ell)}_{\geq -M}=0}+\sum_{j=-M}^{M-1}\P{\mathcal E_{j,k}}+\P{X^{(n_\ell)}_{\geq M}>0},
		\ea\ee 
		and
		\be
		\P{|\M_{n_\ell}|=k}=\mathbb P\bigg(\bigcup_{j\in\Z}\mathcal E_{j,k}\bigg)\geq \sum_{j=-M}^{M-1}\P{\mathcal E_{j,k}}.
		\ee 
		By Theorem~\ref{thrm:mainatom} it thus follows that 
		\be \ba 
		\limsup_{n_\ell\to\infty}\P{|\M_{n_\ell}|=k}&\leq \liminf_{M\in\N}\bigg(\P{X_{\geq -M}^\eps=0}+\sum_{j=-M}^{M-1}\P{X_j^\eps=k,X^\eps_{\geq j}=0}+\P{X^\eps_{\geq M}>0}\bigg),\\
		\liminf_{n_\ell\to\infty}\P{|\M_{n_\ell}|=k}&\geq \limsup_{M\in\N}\sum_{j=-M}^M \P{X^\eps_j=k,X^\eps_{\geq j+1}=0},
		\ea\ee 
		where $X_j^\eps\sim \text{Poi}\big(q_0(1-\theta^{-1})\theta^{-j+\eps}\big)$ and $X^\eps_{\geq j}\sim\text{Poi}\big(q_0\theta^{-j+\eps}\big)$ are independent Poisson random variables, for $j\in\Z$. As a result, 
		\be \ba
		\P{X_{\geq -M}^\eps=0}&=\e^{-q_0\theta^{M+\eps}}, \qquad \P{X^\eps_{\geq M}>0}=1-\e^{-q_0\theta^{-M+\eps}}, \\
		\P{X^\eps_j=k, X^\eps_{\geq j+1}=0}&=\P{X^\eps_j=k}\P{X^\eps_{\geq j+1}=0}=\frac{1}{k!}\big(q_0(1-\theta^{-1})\theta^{-j+\eps}\big)^k\e^{-q_0\theta^{-j+\eps}}.
		\ea\ee 
		Hence, we obtain 
		\be \ba
		\limsup_{n_\ell\to\infty}\P{|\M_{n_\ell}|=k}&\leq \liminf_{M\in\N}\bigg(\e^{-q_0\theta^{M+\eps}}\!\!+\!\sum_{j=-M}^{M-1}\!\frac{1}{k!}\big(q_0(1-\theta^{-1})\theta^{-j+\eps}\big)^k\e^{-q_0\theta^{-j+\eps}}\!+1-\e^{-q_0\theta^{-M+\eps}}\bigg)\\
		&\leq\sum_{j\in\Z}\frac{1}{k!}\big(q_0(1-\theta^{-1})\theta^{-j+\eps}\big)^k\e^{-q_0\theta^{-j+\eps}},\\
		\liminf_{n_\ell\to\infty}\P{|\M_{n_\ell}|=k}&\geq\limsup_{M\in\N}\sum_{j=-M}^M\frac{1}{k!}\big(q_0(1-\theta^{-1})\theta^{-j+\eps}\big)^k\e^{-q_0\theta^{-j+\eps}}\\
		&=\sum_{j\in\Z}\frac{1}{k!}\big(q_0(1-\theta^{-1})\theta^{-j+\eps}\big)^k\e^{-q_0\theta^{-j+\eps}}.
		\ea \ee 
		It is then readily checked that the limit is indeed finite and that summing over all $k\in\N$ yields one, which concludes the proof.
	\end{proof}
	
	\begin{proof}[Proof of Theorem~\ref{thrm:asympnormal}]
		The proof follows the same argument as the proof of~\cite[Theorem $1.4$]{AddEsl18}, which is based on~\cite[Theorem $1.24$]{Bol01}. Let $1\leq a\leq b$ be integers. Then, by Proposition~\ref{prop:factmean} and since $i_n=o(\log n)$,
		\be 
		\E{\Big(X^{(n)}_{i_n}\Big)_a}-\Big(q_0(1-\theta^{-1})\theta^{-i_n+\eps_n}\Big)^a=\mathcal O\big(\theta^{-i_na}\big(r_{\lfloor \log_\theta n+i_n\rfloor}\vee n^{-\beta}\big)\big).
		\ee 
		It then remains to show that the right-hand side is in fact $o(\theta^{i_nb})$. We note that $i_n=o(\log r_{\log_\theta n}\wedge \log n)$, so that we can write the right-hand side as
		\be 
		\mathcal O\big((r_{\log_\theta n})^{1-i_na\log \theta/\log r_{\log_\theta n}}\vee n^{-\beta -i_na\log\theta/\log n}\big)=\mathcal O\big((r_{\log_\theta n})^{1-o(1)}\vee n^{-\beta -o(1)}\big) =o(\theta^{i_nb}), 
		\ee 
		by the constraints on $i_n$, from which the result follows.
	\end{proof} 
	
	\section{Technical details of examples}\label{sec:ex}
	
	In this section we discuss some technical details of the examples discussed in Section \ref{sec:exres}. In particular, for each example we provide a precise asymptotic expression of $p_k$ and $p_{\geq k}$ as well as a key element that leads to the results in Section \ref{sec:exres}. That is, for each of the examples we state and prove the analogue of Proposition~\ref{prop:factmean}. The three theorems presented in each of the examples in Section \ref{sec:exres} mimic three of the theorems presented in Section~\ref{sec:results}. That is, Theorems~\ref{thrm:betappp} and~\ref{thrm:gumbppp} are the analogue of Theorems~\ref{thrm:mainatom}, Theorems~\ref{thrm:betamaxtail} and~\ref{thrm:gumbmaxtail} are the analogue of Theorem~\ref{thrm:maxtail}, and Theorems~\ref{thrm:betaasympnorm} and~\ref{thrm:gumbasympnorm} are the analogue of Theorem~\ref{thrm:asympnormal}. As a result, their proofs are very similar to the proofs of Theorems~\ref{thrm:mainatom},~\ref{thrm:maxtail}, and~\ref{thrm:asympnormal},  so we omit proving the theorems stated in Section~\ref{sec:exres}.
	
	\subsection{Example \ref{ex:beta}, beta distribution}
	Let $W$ be beta distributed, i.e.\ its distribution function satisfies~\eqref{eq:betacdf} for some $\alpha,\beta>0$. We prove a result on (the tail of) the limiting degree distribution and provide additional building blocks required to prove the results in Example \ref{ex:beta}.		 
	
	\begin{lemma}\label{lemma:betapkasymp}
		Let the distribution of $W$ satisfy~\eqref{eq:betacdf} for some $\alpha,\beta>0$ and recall $p_k,p_{\geq k}$ from~\eqref{eq:pk}. Then,
		\be\ba \label{eq:betapk}
		p_k&= \frac{\Gamma(\alpha+\beta)}{\Gamma(\alpha)}(1-\theta^{-1})^{1-\beta}k^{-\beta}\theta^{-k}\big(1+\mathcal O(1/k)\big),\\ 
		p_{\geq k}&=\frac{\Gamma(\alpha+\beta)}{\Gamma(\alpha)} (1-\theta^{-1})^{-\beta} k^{-\beta}\theta ^{-k}\big(1+\mathcal O(1/k)\big).
		\ea \ee 
	\end{lemma}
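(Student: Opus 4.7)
The plan is to apply Laplace's method to the integral representation
\[
p_{\geq k} = \frac{\Gamma(\alpha+\beta)}{\Gamma(\alpha)\Gamma(\beta)}\theta^{-k}\int_0^1 u^{\beta-1}(1-u)^{\alpha-1}\exp(k\psi(u))\,\d u,
\]
obtained by substituting $u=1-W$, where $\psi(u):=\log((1-u)/(\theta-u))+\log\theta$ satisfies $\psi(0)=0$, $\psi'(0)=-(1-\theta^{-1})$, $\psi''(0)=-(1-\theta^{-2})$, and $\psi(u)<0$ on $(0,1]$ (since $\theta>1$ makes $u\mapsto(1-u)/(\theta-u)$ strictly decreasing). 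The analogous integrand for $p_k$ differs only by the extra factor $(\theta-1)/(\theta-u)$, but I will instead recover $p_k$ at the end from the identity $p_k=p_{\geq k}-p_{\geq k+1}$.

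The first step is to split the integral at a small fixed $\epsilon\in(0,1)$. On $[\epsilon,1]$ the integrand is bounded by $u^{\beta-1}(1-u)^{\alpha-1}\e^{k\psi(\epsilon)}$, so this region contributes $O(\theta^{-k}\e^{-ck})$ with $c:=-\psi(\epsilon)>0$, which is exponentially smaller than the anticipated leading order $k^{-\beta}\theta^{-k}$. On $[0,\epsilon]$ I change variables $v=\lambda u$ with $\lambda:=k(1-\theta^{-1})$, obtaining
\[
\int_0^\epsilon u^{\beta-1}(1-u)^{\alpha-1}\e^{k\psi(u)}\,\d u = \lambda^{-\beta}\int_0^{\lambda\epsilon}v^{\beta-1}\e^{-v}H_k(v)\,\d v,
\]
where $H_k(v):=(1-v/\lambda)^{\alpha-1}\exp(k\psi(v/\lambda)+v)$. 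Taylor expansion of $\psi$ gives $k\psi(v/\lambda)+v=-av^2/k+O(v^3/k^2)$ with $a:=(1+\theta^{-1})/(2(1-\theta^{-1}))$, and $(1-v/\lambda)^{\alpha-1}=1-(\alpha-1)v/\lambda+O(v^2/k^2)$.

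The key estimate is $\int_0^{\lambda\epsilon}v^{\beta-1}\e^{-v}(H_k(v)-1)\,\d v=O(1/k)$, which I establish by splitting at $v=\sqrt{k}$. On $v\in[0,\sqrt{k}]$ all arguments inside the exponential and power factors of $H_k$ are of bounded size, so a first-order expansion using $|\e^x-1|\leq|x|\e^{|x|}$ yields $|H_k(v)-1|=O((v+v^2)/k)$ uniformly; integrating against $v^{\beta-1}\e^{-v}$ gives $O(1/k)$ since $\int_0^\infty v^{\beta+j-1}\e^{-v}\,\d v=\Gamma(\beta+j)<\infty$ for $j=1,2$. For $v\in[\sqrt{k},\lambda\epsilon]$ I use concavity of $\psi$ on $[0,\epsilon]$, which for $\epsilon$ small enough gives $\psi(u)\leq-(1-\theta^{-1})u/2$ and hence $H_k(v)\e^{-v}\leq C\e^{-v/2}$, producing an exponentially small contribution. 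Combining these estimates yields the second formula in~\eqref{eq:betapk}.

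The first formula for $p_k$ then follows from $p_k=p_{\geq k}-p_{\geq k+1}$ together with $(k+1)^{-\beta}\theta^{-(k+1)}=k^{-\beta}\theta^{-(k+1)}(1+O(1/k))$: factoring out the common leading term gives $p_k=A k^{-\beta}\theta^{-k}[(1-\theta^{-1})+O(1/k)]$ with $A:=\Gamma(\alpha+\beta)(1-\theta^{-1})^{-\beta}/\Gamma(\alpha)$, and $A(1-\theta^{-1})=\Gamma(\alpha+\beta)(1-\theta^{-1})^{1-\beta}/\Gamma(\alpha)$ matches the claim. The main technical obstacle is obtaining an $O(1/k)$ rather than $O(1/\sqrt{k})$ error: the pointwise Taylor error $O(v^2/k)$ in $H_k$ is $O(1)$ for $v\sim\sqrt{k}$, yet the integrated error is still $O(1/k)$ precisely because the weight $v^{\beta-1}\e^{-v}$ decays fast enough that the extra factor $v^2$ only introduces a finite $\Gamma$-moment.
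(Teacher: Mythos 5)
Your proof is correct, and it takes a genuinely different route from the paper. The paper evaluates the beta integral exactly via Euler's integral representation of the hypergeometric function, then applies an Euler transformation and a Pfaff transformation to rewrite $p_k$ as $\theta^{-k}\tfrac{\Gamma(\alpha+\beta)\Gamma(k+\alpha)}{\Gamma(\alpha)\Gamma(k+\alpha+\beta)}(\theta/(\theta-1))^{\beta-1}\,{}_2F_1(\alpha+\beta-1,\beta,k+\alpha+\beta,-1/(\theta-1))$, and finally shows through term-by-term bounds on the power series (comparing $(k+1)^{(j)}/(k+\alpha+\beta)^{(j)}$ with $1$ and with $(k/(k-b))^j$-type ratios) that the hypergeometric factor is $1+\mathcal O(1/k)$; the $k^{-\beta}$ emerges from $\Gamma(k+\alpha)/\Gamma(k+\alpha+\beta)$. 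You instead run Laplace's method directly on the substituted integral, which avoids all special-function identities: the checks that matter — $\psi$ strictly decreasing and concave on $[0,1)$ so that the tail $[\epsilon,1]$ and the range $v\in[\sqrt k,\lambda\epsilon]$ are superpolynomially small, the uniform bound $|H_k(v)-1|=\mathcal O((v+v^2)/k)$ on $[0,\sqrt k]$, and the finiteness of $\Gamma(\beta+1),\Gamma(\beta+2)$ — all go through, and recovering $p_k$ from $p_{\geq k}-p_{\geq k+1}$ is legitimate since the leading constant $1-\theta^{-1}$ is bounded away from zero, so no cancellation degrades the $\mathcal O(1/k)$ error. Your argument is more elementary and more portable (it would apply to any density behaving like $c\,u^{\beta-1}$ near $u=0$ after the substitution $u=1-W$), whereas the paper's computation yields an exact closed form at the intermediate stage; both deliver the same constants and the same error rate.
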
 
	
	Note that this lemma improves on the bounds in~\eqref{eq:pkbddweibull} by providing a precise multiplicative constant, rather than two slowly-varying functions that are (possibly) of different order.
	
	\begin{proof}
		By the distribution of $W$, we immediately obtain that
		\be\ba 
		p_k&=\int_0^1 (\theta-1)x^k (\theta-1+x)^{-(k+1)}\frac{\Gamma(\alpha+\beta)}{\Gamma(\alpha)\Gamma(\beta)}x^{\alpha-1}(1-x)^{\beta-1}\,\d x\\
		&=(\theta-1)^{-k}\frac{\Gamma(\alpha+\beta)}{\Gamma(\alpha)\Gamma(\beta)}\int_0^1 x^{k+\alpha-1}(1-x)^{\beta-1}(1+x/(\theta-1))^{-(k+1)}\,\d x,
		\ea\ee 
		We now use Euler's integral representation of the hypergeometric function. That is, for $a,b,c,z\in\mathbb C$ such that $\mathrm{Re}(c)>\mathrm{Re}(b)>0$ and $z$ is not a real number greater than one,
		\be 
		\int_0^1 x^{b-1}(1-x)^{c-b-1}(1-zx)^{-a}\,\d x=\frac{\Gamma(c-b)\Gamma(b)}{\Gamma(c)} {}_2F_1(a,b,c,z),
		\ee 
		where $_2F_1$ is the hypergeometric function. We thus obtain 
		\be 
		(\theta-1)^{-k}\frac{\Gamma(\alpha+\beta)\Gamma(k+\alpha)}{\Gamma(\alpha)\Gamma(k+\alpha+\beta)} {}_2F_1(k+1,k+\alpha,k+\alpha+\beta,-1/(\theta-1)).
		\ee  
		We then use one of the Euler transformations of the hypergeometric function, 
		\be 
		_2F_1(a,b,c,z)=(1-z)^{c-a-b} {}_2F_1(c-a,c-b,c,z),
		\ee 
		to arrive at
		\be \label{eq:betapkexpl}
		\theta^{-k}\frac{\Gamma(\alpha+\beta)\Gamma(k+\alpha)}{\Gamma(\alpha)\Gamma(k+\alpha+\beta)} \Big(\frac{\theta}{\theta-1}\Big)^{\beta-1} {}_2F_1(\alpha+\beta-1,\beta,k+\alpha+\beta,-1/(\theta-1)).
		\ee 
		We directly find a particular case in which we can find the value of the hypergeometric function explicitly, namely when $\alpha+\beta=1$. When $\alpha+\beta=1$, we find that the hypergeometric function on the right-hand side of~\eqref{eq:betapkexpl} equals one as the first argument equals zero, independent of the other arguments, so that we arrive at
		\be 
		\frac{(1-\theta^{-1})^{1-\beta}\Gamma(k+\alpha)}{\Gamma(\alpha)\Gamma(k+1)}\theta^{-k}=\frac{(1-\theta^{-1})^{1-\beta}}{\Gamma(\alpha)}k^{-\beta}\theta^{-k}\big(1+\mathcal O(1/k)\big),
		\ee 
		since $\Gamma(x+a)/\Gamma(x)=x^a\big(1+\mathcal O(1/x)\big)$ as $x\to\infty$ and $\alpha=1-\beta$ in this particular case. When $\alpha+\beta\neq 1$, we can obtain a similar expression. First, we use one of Pfaff's transformations for the hypergeometric function, 
		\be 
		_2F_1(a,b,c,z)=(1-z)^{-b}{}_2F_1(b,c-a,c,z/(z-1)).
		\ee 
		Then, applying this to the right-hand side of~\eqref{eq:betapkexpl}, so that $z/(z-1)=1/\theta\in(-1,1)$, we can express the hypergeometric function as a power series. Namely, for $z$ such that $|z|<1$, 
		\be 
		_2F_1(a,b,c,z)=\sum_{j=0}^\infty \frac{a^{(j)}b^{(j)}}{c^{(j)}}\frac{z^j}{\Gamma(j)},
		\ee 
		where $a^{(j)}:=\prod_{\ell=1}^j (a+(\ell-1))$ (and similarly for $b^{(j)},c^{(j)}$). Thus, combining the Pfaff transformation and the power series representation yields
		\be \label{eq:alphabetaseries}
		{}_2F_1(\alpha+\beta-1,\beta,k+\alpha+\beta,-1/(\theta-1))=\Big(\frac{\theta}{\theta-1}\Big)^{-\beta}\sum_{j=0}^\infty \frac{\beta^{(j)}(k+1)^{(j)}}{(k+\alpha+\beta)^{(j)}}\frac{\theta^{-j}}{j!}.
		\ee 
		From the $\alpha+\beta=1$ case, we immediately distil that 
		\be \label{eq:betathetasum}
		\sum_{j=0}^\infty \frac{\beta^{(j)}}{j!}\theta^{-j}=\Big(\frac{\theta}{\theta-1}\Big)^\beta.
		\ee 
		The aim is to show that for $k$ large, the series in~\eqref{eq:alphabetaseries} is close to $(\theta/(\theta-1))^\beta$ for any choice of $\alpha,\beta>0$, so that the entire term in~\eqref{eq:alphabetaseries} is close to one. We consider two cases, namely $\alpha+\beta<1$ and $\alpha+\beta>1$. Let us start with the latter. We immediately obtain the upper bound $(k+\alpha+\beta)^{(j)}>(k+1)^{(j)}$, so that using~\eqref{eq:betathetasum} yields that the right-hand side of~\eqref{eq:alphabetaseries} is at most one. For a lower bound, note that 
		\be 
		\frac{(k+1)^{(j)}}{(k+\alpha+\beta)^{(j)}}=\prod_{\ell=1}^j \Big(1-\frac{\alpha+\beta-1}{k+\alpha+\beta+(\ell-1)}\Big)\geq \Big(1-\frac{\alpha+\beta-1}{k+\alpha+\beta}\Big)^j,
		\ee  
		as the fraction in the second step in decreasing in $\ell$, since $\alpha+\beta-1>0$. We thus have
		\be
		\Big(\frac{\theta}{\theta-1}\Big)^{-\beta}\sum_{j=0}^\infty \frac{\beta^{(j)}(k+1)^{(j)}}{(k+\alpha+\beta)^{(j)}}\frac{\theta^{-j}}{j!}\geq \Big(\frac{\theta}{\theta-1}\Big)^{-\beta}\sum_{j=0}^\infty \frac{\beta^{(j)}}{j!}\Big( \Big(1-\frac{\alpha+\beta-1}{k+\alpha+\beta}\Big)\frac{1}{\theta}\Big)^j,
		\ee 
		which, using~\eqref{eq:betathetasum}, equals
		\be \Big(\frac{\theta-1}{\theta-1+\frac{\alpha+\beta-1}{k+\alpha+\beta}}\Big)^\beta=\Big(1-\frac{\alpha+\beta-1}{(\theta-1)(k+\alpha+\beta)+(\alpha+\beta-1)}\Big)^\beta =1-\mathcal O(1/k).
		\ee 
		A similar approach can be used for $\alpha+\beta<1$, where one would have an elementary lower bound and an upper bound that is $1+\mathcal O(1/k)$. In total, combining the above in~\eqref{eq:alphabetaseries} and in~\eqref{eq:betapkexpl} yields
		\be 
		p_k=\frac{\Gamma(\alpha+\beta)}{\Gamma(\alpha)}(1-\theta^{-1})^{1-\beta}k^{-\beta}\theta^{-k}\big(1+\mathcal O(1/k)\big).
		\ee 
		An equivalent computation can be performed for
		\be \label{eq:int2}
		\int_0^1 x^k (\theta-1+x)^{-k}\frac{\Gamma(\alpha+\beta)}{\Gamma(\alpha)\Gamma(\beta)}x^{\alpha-1}(1-x)^{\beta-1}\,\d x,
		\ee 
		to obtain that it equals 
		\be \ba
		\theta ^{-k}{}&\frac{\Gamma(\alpha+\beta)\Gamma(k+\alpha)}{\Gamma(\alpha)\Gamma(k+\alpha+\beta)} \Big(\frac{\theta}{\theta-1}\Big)^\beta {}_2F_1(\alpha+\beta,\beta,k+\alpha+\beta,-1/(\theta-1))\\
		&=\theta ^{-k}\frac{\Gamma(\alpha+\beta)\Gamma(k+\alpha)}{\Gamma(\alpha)\Gamma(k+\alpha+\beta)} {}_2F_1(\beta,k,k+\alpha+\beta,1/\theta)\\
		&=\theta ^{-k}\frac{\Gamma(\alpha+\beta)\Gamma(k+\alpha)}{\Gamma(\alpha)\Gamma(k+\alpha+\beta)} \sum_{j=0}^\infty \frac{\beta^{(j)}k^{(j)}}{(k+\alpha+\beta)^{(j)}}\frac{\theta^{-j}}{j!}.
		\ea \ee 
		In this case an equivalent approach for bounding the sum on the right-hand side works for all $\alpha,\beta>0$. Hence, for~\eqref{eq:int2} we obtain the asymptotic expression 
		\be 
		p_{\geq k}=\frac{\Gamma(\alpha+\beta)}{\Gamma(\alpha)} (1-\theta^{-1})^{-\beta} k^{-\beta}\theta ^{-k}\big(1+\mathcal O(1/k)\big),
		\ee 
		which concludes the proof.
	\end{proof}
	
	Recall that in this example we set
	\be\ba
	X^{(n)}_i&:=|\{\jnn: \znj=\lfloor \log_\theta n-\beta\log_\theta\log_\theta n \rfloor +i\}|,\\ 
	X^{(n)}_{\geq i}&:=|\{\jnn: \znj\geq \lfloor \log_\theta n-\beta \log_\theta\log_\theta n\rfloor +i\}|,\\
	\eps_n&:=(\log_\theta n-\beta \log_\theta\log_\theta n)-\lfloor\log_\theta n-\beta\log_\theta\log_\theta n \rfloor.
	\ea\ee 
	We then state the analogue of Proposition~\ref{prop:factmean}.
	
	\begin{proposition}\label{prop:betafactmean}
		Consider the WRT model as in Definition~\ref{def:WRT} with vertex-weights $(W_i)_{\inn}$ whose distribution satisfies~\eqref{eq:betacdf} for some $\alpha,\beta>0$, and recall $c_{\alpha,\beta,\theta}$ from~\eqref{eq:epsnbeta}. For a fixed $K\in\N,c\in(0,\theta/(\theta-1))$, the following holds. For any integer-valued $i_n,i_n',$ such that $0<\log_\theta n+i_n<\log_\theta n+i_n'< c\log n$ and $i_n,i_n'= \delta\log_\theta n+o(\log n)$, for some $\delta\in(-1,c\log\theta-1)\cup\{0\}$ and for $a_{i_n},\ldots,a_{i_n'}\in\N_0$ satisfying $a_{i_n}+\cdots+a_{i_n'}=K$,
		\be\ba
		\mathbb E\bigg[\Big(X^{(n_\ell)}_{\geq i_n'}\Big)_{a_{i_n'}}\prod_{k=i_n}^{i_n'-1}\Big(X_k^{(n_\ell)}\Big)_{a_k}\bigg]={}&\Big(\frac{c_{\alpha,\beta,\theta}}{(1+\delta)^{\beta}}\theta^{-i_n'+\eps_n}\Big)^{a_{i_n'}}\prod_{k=i_n}^{i_n'-1}\Big(\frac{c_{\alpha,\beta,\theta}(1-\theta^{-1})}{(1+\delta)^{\beta}}\theta^{-k+\eps_n}\Big)^{a_k}\\
		&\times\Big(1+\mathcal O\Big(\frac{\log\log n}{\log n}\vee \frac{|i_n-\delta\log_\theta n|\vee |i_n'-\delta\log_\theta n|}{\log n}\Big)\Big).
		\ea\ee
	\end{proposition}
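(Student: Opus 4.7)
The plan is to mirror the proof of Proposition~\ref{prop:factmean}, substituting the beta asymptotics of Lemma~\ref{lemma:betapkasymp} for the atom asymptotics used there, and then tracking the additional polynomial correction $m^{-\beta}$ through the combinatorial identity. Set $L_n:=\lfloor\log_\theta n-\beta\log_\theta\log_\theta n\rfloor$ and $K':=K-a_{i_n'}$, and for $u\in[K]$ assign $m_u:=L_n+k$ whenever $\sum_{\ell=i_n}^{k-1}a_\ell<u\le\sum_{\ell=i_n}^k a_\ell$. Expressing $X^{(n)}_k$ and $X^{(n)}_{\geq i_n'}$ as sums of indicators of disjoint events and applying the inclusion--exclusion identity from the proof of Proposition~\ref{prop:factmean} reduces the left-hand side to
\begin{equation*}
(n)_K\sum_{\ell=0}^{K'}\sum_{\substack{S\subseteq[K']\\|S|=\ell}}(-1)^\ell\,\P{\Zm_n(v_u)\geq m_u+\ind_{\{u\in S\}}\text{ for all }u\in[K]},
\end{equation*}
where $(v_u)_{u\in[K]}$ are drawn uniformly without replacement from $[n]$. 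Since $m_u<c\log n$ by the constraint $\log_\theta n+i_n'<c\log n$, Proposition~\ref{lemma:degprobasymp} factorises each probability as $\prod_{u=1}^K p_{\geq m_u+\ind_{\{u\in S\}}}(1+o(n^{-\beta'}))$ for some $\beta'>0$.

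Next I would apply Lemma~\ref{lemma:betapkasymp} to each factor. The key point is that, in the beta case, $p_{\geq m+1}=\theta^{-1}p_{\geq m}(1+\cO(1/m))$ since $(1+1/m)^{-\beta}=1+\cO(1/m)$, so the indicator contributes only a factor $\theta^{-1}$ up to the same error. Consequently
\begin{equation*}
\prod_{u=1}^K p_{\geq m_u+\ind_{\{u\in S\}}}=c_{\alpha,\beta,\theta}^K\Big(\prod_{u=1}^K m_u^{-\beta}\Big)\theta^{-|S|-\sum_u m_u}\big(1+\cO(1/\log n)\big).
\end{equation*}
The hypothesis $i_n,i_n'=\delta\log_\theta n+o(\log n)$ forces $m_u=(1+\delta)\log_\theta n(1+o(1))$ uniformly in $u\in[K]$, hence
\begin{equation*}
m_u^{-\beta}=\big((1+\delta)\log_\theta n\big)^{-\beta}\Big(1+\cO\Big(\tfrac{\log\log n}{\log n}\vee\tfrac{|i_n-\delta\log_\theta n|\vee|i_n'-\delta\log_\theta n|}{\log n}\Big)\Big),
\end{equation*}
and this correction dominates the other error sources, dictating the rate advertised in the proposition.

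Substituting and collapsing the binomial sum via $\sum_{\ell=0}^{K'}\binom{K'}{\ell}(-1)^\ell\theta^{-\ell}=(1-\theta^{-1})^{K'}$, the expression becomes
\begin{equation*}
(n)_K\,c_{\alpha,\beta,\theta}^K\big((1+\delta)\log_\theta n\big)^{-K\beta}(1-\theta^{-1})^{K'}\theta^{-\sum_u m_u}\big(1+\text{error}\big).
\end{equation*}
Using $(n)_K=n^K(1+\cO(1/n))$, the identity $\theta^{-KL_n}=n^{-K}(\log_\theta n)^{K\beta}\theta^{K\eps_n}$ coming from $L_n=\log_\theta n-\beta\log_\theta\log_\theta n-\eps_n$, and the decomposition $\sum_u m_u=KL_n+i_n'a_{i_n'}+\sum_{k=i_n}^{i_n'-1}ka_k$, the factors $(\log_\theta n)^{K\beta}$ and $((1+\delta)\log_\theta n)^{-K\beta}$ combine into $(1+\delta)^{-K\beta}$. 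Splitting $K=a_{i_n'}+K'$ and distributing the remaining factors over $k$ yields exactly the claimed product. The main obstacle is simply the bookkeeping of multiplicative errors: the dominant contribution comes from the replacement $m_u^{-\beta}\mapsto((1+\delta)\log_\theta n)^{-\beta}$, which is only of order $\log\log n/\log n$ (or worse when $i_n$, $i_n'$ fluctuate around $\delta\log_\theta n$ on a scale comparable to $\log n$) rather than the polynomially small $n^{-\beta'}$ seen in Proposition~\ref{prop:factmean}, and this is why the stated error term takes the form it does.
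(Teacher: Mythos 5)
Your proposal is correct and follows essentially the same route as the paper: the same inclusion--exclusion reduction to joint tail probabilities of uniform vertices, factorisation via Proposition~\ref{lemma:degprobasymp}, substitution of the beta asymptotics from Lemma~\ref{lemma:betapkasymp}, and the uniform replacement $m_u^{-\beta}\mapsto((1+\delta)\log_\theta n)^{-\beta}$ as the dominant error source. Your final bookkeeping (collapsing the binomial sum and cancelling $(\log_\theta n)^{K\beta}$ against $((1+\delta)\log_\theta n)^{-K\beta}$) is in fact spelled out in more detail than in the paper, which simply refers back to the argument of Proposition~\ref{prop:factmean}.
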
 
	
	\begin{proof}
		Set $K':=K-a_{i_n'}$ and for each $i_n\leq k\leq i_n'$ and each $u$ such that $\sum_{\ell=i_n}^{k-1}a_\ell<u\leq \sum_{\ell=i_n}^k a_\ell$, let $m_u=\lfloor \log_\theta n-\log_\theta\log_\theta n \rfloor +k$. Also, let $(v_u)_{u\in[K]}$ be $K$ vertices selected uniformly at random without replacement from $[n]$. Then, as the $X^{(n)}_{\geq k}$ and $X^{(n)}_{k}$ can be expressed as sums of indicators, following the same steps as in the proof of Proposition~\ref{prop:factmean},
		\be\label{eq:meanexunif}
		\E{\Big(X^{(n)}_{\geq i_n'}\Big)_{a_{i_n'}}\prod_{k=i_n}^{i_n'-1}\Big(X_k^{(n)}\Big)_{a_k}}=(n)_K\sum_{\ell=0}^{K'}\sum_{\substack{S\subseteq [K']\\ |S|=\ell}}(-1)^\ell \P{\Zm_n(v_u)\geq m_u+\ind_{\{u\in S\}}\text{ for all } u\in [K]}.
		\ee
		By Proposition~\ref{lemma:degprobasymp},
		\be \label{eq:tailprobunif}
		\P{\Zm_n(v_\ell)\geq m_u+\ind_{\{u\in S\}}\text{ for all } u\in [K]}=\prod_{u=1}^K \E{\Big(\frac{W}{\E{W}+W}\Big)^{m_u+\ind_{\{u\in S\}}}}(1+o(n^{-\beta})),
		\ee 
		for some $\beta>0$. By Lemma~\ref{lemma:betapkasymp}, when $|S|=\ell$,
		\be\ba
		\prod_{u=1}^K {}&\E{\Big(\frac{W}{\E{W}+W}\Big)^{m_u+\ind_{\{u\in S\}}}}\\
		&=\Big(\frac{\Gamma(\alpha+\beta)}{\Gamma(\alpha)}(1-\theta^{-1})^{-\beta}\Big)^K \theta^{-\sum_{u=1}^K m_u-\ell}\prod_{u=1}^K  (m_u+\ind_{\{u\in S\}})^{-\beta}(1+\mathcal O (1/\log n)).
		\ea\ee 
		Here, we are able to obtain the error term $1-\mathcal O(1/\log n)$ due to the fact that $\log_\theta n+i_n>\eta \log n$ for some $\eta\in(0,1+\delta)$ when $n$ is large. Moreover, as $i_n,i_n'= \delta\log_\theta n+o(\log n)$ and thus $m_u\sim (1+\delta)\log_\theta n$ for each $u\in[K]$, 
		\be
		\prod_{u=1}^K(m_u+\ind_{\{u\in S\}})^{-\beta }=( (1+\delta)\log_\theta n)^{-\beta K}\Big(1+\mathcal O\Big(\frac{\log\log n}{\log n}\vee \frac{|i_n-\delta\log_\theta n|\vee |i_n'-\delta\log_\theta n|}{\log n}\Big)\Big),
		\ee
		uniformly in $S$ (and $\ell$). Recalling that $c_{\alpha,\beta,\theta}=(\Gamma(\alpha+\beta)/\Gamma(\alpha))(1-\theta^{-1})^{-\beta}$, we thus arrive at
		\be \ba
		(n)_K{}&\sum_{\ell=0}^{K'}\sum_{\substack{S\subseteq [K']\\ |S|=\ell}}(-1)^\ell\big(c_{\alpha,\beta,\theta}(1+\delta)^{-\beta}(\log_\theta n)^{-\beta}\big)^K \theta^{-\sum_{u=1}^K m_u-\ell}\\
		&\times \Big(1+\mathcal O\Big(\frac{\log\log n}{\log n}\vee \frac{|i_n-\delta\log_\theta n|\vee |i_n'-\delta\log_\theta n|}{\log n}\Big)\Big)\\
		={}&\big(c_{\alpha,\beta,\theta}(1+\delta)^{-\beta}\theta^{-i_n'+\eps_n}\big)^{a_{i_n'}}\prod_{k=i_n}^{i_n'-1}\!\big(c_{\alpha,\beta,\theta}(1+\delta)^{-\beta}(1-\theta^{-1})\theta^{-k+\eps_n}\big)^{a_k}\\
		&\times\Big(1+\mathcal O\Big(\frac{\log\log n}{\log n}\vee \frac{|i_n-\delta\log_\theta n|\vee |i_n'-\delta\log_\theta n|}{\log n}\Big)\Big),
		\ea\ee 
		where the last step follows from a similar argument as in the proof of Proposition~\ref{prop:factmean}.
	\end{proof}
	
	With Proposition~\ref{prop:betafactmean} at hand, the proofs of Theorems~\ref{thrm:betappp},~\ref{thrm:betamaxtail}, and~\ref{thrm:betaasympnorm} follow the same approach as the proofs of Theorems~\ref{thrm:mainatom},~\ref{thrm:maxtail}, and~\ref{thrm:asympnormal}, respectively.
	
	\subsection{Example \ref{ex:gumb}, fraction of `gamma' random variables}
	Let the distribution of $W$ satisfy~\eqref{eq:gumbex} for some $b\in\R,c_1>0$ with $bc_1\leq 1$. We prove a result on (the tail of) the limiting degree distribution and provide additional building blocks required to prove the results in Example \ref{ex:gumb}.
	
	\begin{lemma}\label{lemma:gumbpkasymp}
		Let the distribution of $W$ satisfy~\eqref{eq:gumbex} for some $b\in\R,c_1>0$ such that $bc_1\leq 1$, and recall $p_k,p_{\geq k},$ and $C$ from~\eqref{eq:pk} and~\eqref{eq:c}, respectively. Then,
		\be\ba \label{eq:gumbpk}
		p_k&=(1-\theta^{-1})Ck^{b/2+1/4}\e^{-2\sqrt{c_1^{-1}(1-\theta^{-1})k}}\theta^{-k}\big(1+\mathcal O\big(1/\sqrt k\big)\big),\\ 
		p_{\geq k}&=Ck^{b/2+1/4}\e^{-2\sqrt{c_1^{-1}(1-\theta^{-1})k}}\theta^{-k}\big(1+\mathcal O\big(1/\sqrt k\big)\big).
		\ea \ee 
	\end{lemma}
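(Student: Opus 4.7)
The plan is to apply Laplace's method (saddle-point analysis) to the integral representations
\[
p_k = \int_0^1 \frac{\theta-1}{\theta-1+x}\Big(\frac{x}{\theta-1+x}\Big)^{\!k}\! f_W(x)\,dx,\qquad p_{\geq k}= \int_0^1 \Big(\frac{x}{\theta-1+x}\Big)^{\!k}\! f_W(x)\,dx,
\]
using the explicit density $f_W(x)=(c_1^{-1}-b(1-x))(1-x)^{-(b+2)}e^{-x/(c_1(1-x))}$ given in Example~\ref{ex:gumb}. After the substitution $y=1-x$ and pulling the constant factor $e^{1/c_1}$ out of $e^{-(1-y)/(c_1 y)}$, the integrand of $p_k$ becomes, up to the overall constant $e^{1/c_1}\theta^{-k}$, the quantity
\[
\frac{\theta-1}{\theta-y}\,(c_1^{-1}-by)\,y^{-(b+2)}\,\exp\!\big(\psi_k(y)\big),\qquad \psi_k(y):=k\big[\log(1-y)-\log(1-y/\theta)\big]-\tfrac{1}{c_1 y},
\]
where $-k\log\theta$ has been absorbed into $\theta^{-k}$.

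Writing $a:=1-\theta^{-1}$, the Taylor expansion $\psi_k(y)=-k\sum_{n\ge 1}(1-\theta^{-n})y^n/n-1/(c_1 y)$ has leading terms $-aky-1/(c_1 y)$, giving a unique saddle at $y^\ast=(a c_1 k)^{-1/2}$ with leading value $\psi_k^{(0)}(y^\ast)=-2\sqrt{ak/c_1}$. The $n=2$ term contributes $-k(1-\theta^{-2})y^{\ast 2}/2=-(1+\theta^{-1})/(2c_1)$ (using $1-\theta^{-2}=a(1+\theta^{-1})$), and all higher terms are $O(ky^{\ast 3})=O(k^{-1/2})$. Combined with the outer $e^{1/c_1}$ factor, the exponential constant becomes
\[
e^{1/c_1-(1+\theta^{-1})/(2c_1)}=e^{(1-\theta^{-1})/(2c_1)}=e^{c_1^{-1}(1-\theta^{-1})/2},
\]
which is precisely the exponential constant in $C$ (Equation~\eqref{eq:c}); getting this sub-leading $O(1)$ correction right is the principal bookkeeping challenge.

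Next, the second derivative is dominated by the singular term: $\psi_k''(y^\ast)=-2/(c_1 y^{\ast 3})(1+o(1))=-2c_1^{1/2}(ak)^{3/2}(1+o(1))$, so the standard Gaussian factor is $\sqrt{2\pi/|\psi_k''(y^\ast)|}=\sqrt{\pi}\,c_1^{-1/4}(ak)^{-3/4}(1+o(1))$. Evaluating the polynomial prefactor at $y^\ast$ gives
\[
\frac{\theta-1}{\theta-y^\ast}(c_1^{-1}-by^\ast)(y^\ast)^{-(b+2)}=(1-\theta^{-1})\,c_1^{-1}(a c_1 k)^{(b+2)/2}\big(1+O(k^{-1/2})\big).
\]
Multiplying these three factors and $\theta^{-k}e^{-2\sqrt{ak/c_1}}e^{a/(2c_1)}$ together yields the stated asymptotics for $p_k$, after checking that $c_1^{b/2-1/4}\,a^{b/2+1/4}\sqrt{\pi}\,e^{a/(2c_1)}=C$. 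For $p_{\geq k}$ the only change is the missing factor $(\theta-1)/(\theta-y)$, which evaluated at $y^\ast$ equals $1-\theta^{-1}+O(k^{-1/2})$, giving the claimed ratio.

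The main obstacle is justifying the $O(k^{-1/2})$ error rigorously. Since $ky^{\ast 2}=1/(ac_1)$ is only $O(1)$, the cubic remainder in the Taylor expansion of $\psi_k$ around $y^\ast$ contributes at exactly the $O(k^{-1/2})$ level and must be tracked carefully rather than absorbed. The standard remedy is to split the integral into a shrinking window $|y-y^\ast|\le y^\ast k^{-\alpha}$ for some $\alpha\in(0,1/4)$ — on which one performs the Gaussian expansion with controlled cubic remainder — and a tail region, on which one uses convexity of $-\psi_k$ (clear from the decomposition above) together with the monotone polynomial prefactor to obtain a bound decaying faster than any power of the main term. Combined with a tail estimate near the endpoint $y\downarrow 0$, where $e^{-1/(c_1 y)}$ kills all polynomial blow-up, this completes the proof.
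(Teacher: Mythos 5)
Your computation is correct and the constants all check out (I verified the saddle location $y^\ast=(ac_1k)^{-1/2}$ with $a=1-\theta^{-1}$, the leading exponent $-2\sqrt{ak/c_1}$, the $O(1)$ correction $\e^{1/c_1-(1+\theta^{-1})/(2c_1)}=\e^{a/(2c_1)}$, and the product of the Gaussian factor with the prefactor giving $C k^{b/2+1/4}$), but your route is genuinely different from the paper's. The paper does not do a saddle-point analysis directly: after the substitution $u=x/(1-x)$ it expands $(1-(\theta(1+u))^{-1})^{-k}$ as a negative binomial series, recognises each resulting integral as a confluent hypergeometric function $U(a,b,z)$ via~\eqref{eq:confhyp}, applies a Kummer transform to resum the series into a single $U(k-b,-b,c_1^{-1}(1-\theta^{-1}))$ (up to sandwiching bounds on the ratios $(k)^{(j)}/(k-b)^{(j)}$), and then invokes the classical uniform asymptotics of $\Gamma(a)U(a,d,z^2)$ in terms of modified Bessel functions, as in~\eqref{eq:uasymp}, which delivers both the $\e^{-2\sqrt{\cdot}}$ factor and the $1+\mathcal O(1/\sqrt{k})$ error for free. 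In effect you are rederiving from scratch the Laplace-type asymptotics that the paper outsources to special-function theory; your version is more elementary and self-contained, while the paper's buys a rigorous, citable error term at the cost of some algebraic gymnastics with hypergeometric identities. The one place where your sketch is thinner than it looks: since the effective large parameter is $\lambda=\sqrt{ak/c_1}$, the cubic term $\lambda s^3$ in the rescaled variable $y=y^\ast(1+s)$ is of size $\lambda^{-1/2}=O(k^{-1/4})$ on the Gaussian window, and you only recover the claimed $O(k^{-1/2})$ error because its first-order contribution cancels by parity against the symmetric Gaussian, with the surviving contributions (the squared cubic and the quartic terms) being $O(\lambda^{-1})$. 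You flag that the cubic remainder must be tracked but do not say this cancellation is what saves the stated rate; also your window exponent should be taken in $(1/6,1/4)$ rather than $(0,1/4)$ so that $\lambda s^3$ is uniformly small there. These are standard repairs, so I would classify your proposal as a correct alternative proof modulo routine Laplace-method bookkeeping.
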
 
	
	Note that this lemma improves on the bounds in~\eqref{eq:pkbddgumbelrv} by providing a polynomial correction term and a precise multiplicative constant.
	
	\begin{proof} 
		We start by proving the equality for $p_{\geq k}$ and then show the similar result for $p_k$. By~\eqref{eq:gumbex}, we obtain the following expression for $p_{\geq k}$.
		\be \ba\label{eq:intrep}
		p_{\geq k}={}&\int_0^1 x^k (\theta-1+x)^{-k}c_1^{-1}(1-x)^{-(2+b)}\e^{-c_1^{-1}x/(1-x)}\,\d x\\
		&-\int_0^1 x^k (\theta-1+x)^{-k}b(1-x)^{-(1+b)}\e^{-c_1^{-1}x/(1-x)}\,\d x.
		\ea\ee  
		The second integral is of a similar form as the first, with a different constant in front and a different polynomial exponent. We hence only provide an explicit analysis of the first integral. Using a variable transform $u=x/(1-x)$, we find the first integral equals
		\be 
		\theta^{-k}c_1^{-1}\int_0^\infty u^k (1+u)^{b-k}\Big(1-\frac{1}{\theta(1+u)}\Big)^{-k}\e^{-c_1^{-1}u}\,\d u.
		\ee 
		We now define $X_u$ to be a negative binomial random variable with parameters $k$ and $p_u:=(\theta(1+u))^{-1}$, for any $u>0$. As the sum over the probability mass function of $X_u$ is one irrespectively of the value of $u$, we obtain that the above equals
		\be \ba
		\theta^{-k}c_1^{-1}{}&\int_0^\infty \sum_{j=0}^\infty \binom{j+k-1}{j} p_u^j(1-p_u)^{k} u^k (1+u)^{b-k}\Big(1-\frac{1}{\theta(1+u)}\Big)^{-k}\e^{-c_1^{-1}u}\,\d u\\
		&=\theta^{-k}c_1^{-1}\int_0^\infty \sum_{j=0}^\infty \binom{j+k-1}{j}\theta^{-j}u^k (1+u)^{b-(j+k)}\e^{-c_1^{-1}u}\,\d u\\
		&=\theta^{-k}c_1^{-1}\sum_{j=0}^\infty \binom{j+k-1}{j}\theta^{-j}\Gamma(k+1)\, U(k+1,2+b-j,c_1^{-1}),
		\ea \ee 
		where $U(a,b,z)$ is the confluent hypergeometric function of the second kind, defined as 
		\be \label{eq:confhyp}
		U(a,b,z):=\frac{1}{\Gamma(a)}\int_0^\infty x^{a-1}(1+x)^{b-a-1}\e^{-zx}\,\d x,
		\ee 
		whenever $\text{Re}(a)>0$. Using the Kummer transform $U(a,b,z)=z^{1-b}U(1+a-b,2-b,z)$ yields
		\be\label{eq:finexpr}
		\theta^{-k}{}c_1^{-1}\sum_{j=0}^\infty \binom{j+k-1}{j}\theta^{-j}\Gamma(k+1)\,c_1^{b-(j-1)}U(j+k-b,j-b,c_1^{-1}).
		\ee 
		Again using the definition of the confluent hypergeometric function of the second kind, we obtain
		\be\ba 
		c_1^b\theta^{-k}{}&\sum_{j=0}^\infty \frac{\Gamma(j+k)\Gamma(k+1)}{\Gamma(k)\Gamma(j+1)\Gamma(j+k-b)}(c_1\theta)^{-j}\int_0^\infty u^{j+k-b-1}(1+u)^{-(k+1)}\e^{-c_1^{-1}u}\, \d u\\
		={}& c_1^b k\theta^{-k}\frac{\Gamma(k)}{\Gamma(k-b)}\sum_{j=0}^\infty \frac{\Gamma(j+k)\Gamma(k-b)}{\Gamma(j+k-b)\Gamma(k)}\frac{1}{j!}(c_1\theta)^{-j}\int_0^\infty u^{j+k-b-1}(1+u)^{-(k+1)}\e^{-c_1^{-1}u}\, \d u\\
		={}& c_1^b k\theta^{-k}\frac{\Gamma(k)}{\Gamma(k-b)}\sum_{j=0}^\infty \frac{(k)^{(j)}}{(k-b)^{(j)}}\frac{1}{j!}(c_1\theta)^{-j}\int_0^\infty u^{j+k-b-1}(1+u)^{-(k+1)}\e^{-c_1^{-1}u}\, \d u,
		\ea\ee 
		where $(x)^{(j)}:=x(x+1)\cdots (x+(j-1))$, for any $x\in\R$  and $j\in\N_0$. We can then bound
		\be \label{eq:bbounds}
		\frac{(k)^{(j)}}{(k-b)^{(j)}}\geq \begin{cases}
			1 & \mbox{if }  k>b\geq 0,\\
			\Big(\frac{k}{k-b}\Big)^j & \mbox{if }  b<0.
		\end{cases} \  \text{ and }\  
		\frac{(k)^{(j)}}{(k-b)^{(j)}}\leq \begin{cases}
			1 & \mbox{if } b<0,\\
			\Big(\frac{k}{k-b}\Big)^j & \mbox{if }  k>b\geq 0.
		\end{cases}
		\ee  
		As the bounds are symmetric, we can assume that $b\geq 0$ without loss of generality; the other case follows similarly. We deal with the lower bound first. This yields
		\be \ba \label{eq:uexp}
		c_1^b{}&k \theta^{-k}\frac{\Gamma(k)}{\Gamma(k-b)}\sum_{j=0}^\infty \frac{1}{j!}(c_1\theta)^{-j}\int_0^\infty u^{j+k-b-1}(1+u)^{-(k+1)}\e^{-c_1^{-1}u}\, \d u\\
		&=c_1^bk \theta^{-k}\frac{\Gamma(k)}{\Gamma(k-b)}\int_0^\infty u^{k-b-1}(1+u)^{-(k+1)}\e^{-c_1^{-1}(1-\theta^{-1})u}\, \d u\\
		&=c_1^bk \theta^{-k}\frac{\Gamma(k)}{\Gamma(k-b)}\Gamma(k-b)U(k-b,-b,c_1^{-1}(1-\theta^{-1})).
		\ea \ee 
		It follows from~\cite[$(3.12)$ and $(3.15)$]{Tem13} that, when $a>d/2$ is large and $d,z$ are bounded,
		\be  
		\Gamma(a)U(a,d,z^2)=2\e^{z^2/2}\Big(\frac{2z}{u}\Big)^{1-d}K_{1-d}(uz)\big(1+\mathcal O(1/u)\big), 
		\ee 
		where $u=2\sqrt{a-d/2}$ and $K_{1-d}(uz)$ is a modified Bessel function. Combined with the asymptotic expression for the modified Bessel function as in~\cite[$(10.40.2)$]{Loz03}, we obtain 
		\be \label{eq:uasymp}
		\Gamma(a)U(a,d,z^2)=2\sqrt{\frac{\pi}{2uz}}\e^{z^2/2-uz}\Big(\frac{2z}{u}\Big)^{1-d}\big(1+\mathcal O(1/u)\big).
		\ee  
		In this particular case, it yields
		\be \ba
		\Gamma(k-b)&U(k-b,-b,c_1^{-1}(1-\theta^{-1}))\\
		&=\e^{c_1^{-1}(1-\theta^{-1})/2}\sqrt{\pi}(c_1^{-1}(1-\theta^{-1}))^{1/4+b/2}k^{-b/2-3/4}\e^{-2\sqrt{c_1^{-1}(1-\theta^{-1})k}}\big(1+\mathcal O\big(1/\sqrt k\big)\big).
		\ea \ee 
		Using this, as well as $\Gamma(k)/\Gamma(k-b)=k^b(1+\mathcal O(1/k))$, in~\eqref{eq:uexp}, we arrive at
		\be \label{eq:finuasymp}
		\e^{c_1^{-1}(1-\theta^{-1})/2}\sqrt{\pi}c_1^{-1/4+b/2}((1-\theta^{-1})k)^{1/4+b/2}\e^{-2\sqrt{c_1^{-1}(1-\theta^{-1})k}}\theta^{-k}\big(1+\mathcal O\big(1/\sqrt k\big)\big).
		\ee 
		We then tend to the upper bound in~\eqref{eq:bbounds} for $b\geq 0$, which yields 
		\be \ba
		c_1^b{}&k \theta^{-k}\frac{\Gamma(k)}{\Gamma(k-b)}\sum_{j=0}^\infty \frac{1}{j!}\Big(\frac{(c_1\theta)^{-1} k}{k-b}\Big)^{j}\int_0^\infty u^{j+k-b-1}(1+u)^{-(k+1)}\e^{-c_1^{-1}u}\, \d u\\
		&=c_1^bk \theta^{-k}\frac{\Gamma(k)}{\Gamma(k-b)}\int_0^\infty u^{k-b-1}(1+u)^{-(k+1)}\e^{-(c_1^{-1}(1-\theta^{-1})-(c_1\theta)^{-1}b/(k-b))u}\, \d u\\
		&=c_1^bk \theta^{-k}\frac{\Gamma(k)}{\Gamma(k-b)}\Gamma(k-b)U(k-b,-b,c_1^{-1}(1-\theta^{-1})-(c_1\theta)^{-1}b/(k-b)).
		\ea \ee 
		From the asymptotic result in~\eqref{eq:uasymp} we find that 
		\be 
		U\Big(k-b,-b,\frac{1}{c_1}\big(1-\theta^{-1}\big)-\frac{1}{c_1\theta}\frac{b}{k-b}\Big)=U\Big(k-b,-b,\frac{1}{c_1}\big(1-\theta^{-1}\big)\Big)\big(1+\mathcal O(1/\sqrt{k})\big),
		\ee 
		so that the lower and upper bound match up to error terms. By~\eqref{eq:uasymp}, we thus arrive at 
		\be\ba \label{eq:finuasymp2}
		\int_0^1{}& x^k (\theta-1+x)^{-k}c_1^{-1}(1-x)^{-(2+b)}\e^{-c_1^{-1}x/(1-x)}\,\d x\\
		&=\e^{c_1^{-1}(1-\theta^{-1})/2}\sqrt{\pi}c_1^{-1/4+b/2}((1-\theta^{-1})k)^{1/4+b/2}\e^{-2\sqrt{c_1^{-1}(1-\theta^{-1})k}}\theta^{-k}\big(1+\mathcal O\big(1/\sqrt k\big)\big)\\
		&=C k^{1/4+b/2}\e^{-2\sqrt{c_1^{-1}(1-\theta^{-1})k}}\theta^{-k}\big(1+\mathcal O\big(1/\sqrt k\big)\big).
		\ea \ee  
		Finally, when considering the second integral in~\eqref{eq:intrep}, we observe its integrand is similar to that of the first integral but with a different constant in front and with a constant $\wt b=b-1$ in the polynomial exponent. We can thus follow the exact same steps as for the first integral in~\eqref{eq:intrep} to conclude that it can be included in the $\mathcal O(1/\sqrt k)$ term in~\eqref{eq:finuasymp2}. We thus obtain that 
		\be 
		p_{\geq k}=Ck^{1/4+b/2}\e^{-2\sqrt{c_1^{-1}(1-\theta^{-1})k}}\theta^{-k}\big(1+\mathcal O\big(1/\sqrt k\big)\big),
		\ee 
		as required.
		
		We now show the result for $p_k$, which uses the above steps with several minor adjustments. First, 
		\be \ba \label{eq:pkintrep}
		p_k={}&(\theta -1)\int_0^1 x^k (\theta-1+x)^{-(k+1)}c_1^{-1}(1-x)^{-(2+b)}\e^{-c_1^{-1}x/(1-x)}\,\d x\\
		&-(\theta-1)\int_0^1  x^k (\theta-1+x)^{-(k+1)}b(1-x)^{-(b+1)}\e^{-c_1^{-1}x/(1-x)}\,\d x.
		\ea \ee
		As for the proof of the asymptotic expression of $p_{\geq k}$, we consider the first integral only as the second one is of lower order. This yields
		\be\ba
		(\theta -1){}&\int_0^1 x^k (\theta-1+x)^{-(k+1)}c_1^{-1}(1-x)^{-(2+b)}\e^{c_1^{-1}x/(1-x)}\,\d x\\
		={}&(1-\theta^{-1})c_1^{-1}\theta^{-k}\int_0^\infty u^k (1+u)^{b-k}\Big(1-\frac{1}{\theta(1+u)}\Big)^{-(k+1)}\e^{-c_1^{-1}u}\,\d u\\
		={}&(1-\theta^{-1})c_1^{-1}\theta^{-k}\sum_{j=0}^\infty \binom{j+k}{j}\theta^{-j}\int_0^\infty u^k (1+u)^{b-(j+k)}\e^{-c_1^{-1}u}\,\d u\\
		={}&(1-\theta^{-1})c_1^{-1}\theta^{-k}\sum_{j=0}^\infty \binom{j+k}{j}\theta^{-j}\Gamma(k+1)U(k+1,2+b-j,c_1^{-1})\\
		={}&(1-\theta^{-1})c_1^b\theta^{-k}\sum_{j=0}^\infty \binom{j+k}{j}(c_1\theta)^{-j}\Gamma(k+1)U(k+j-b,j-b,c_1^{-1})\\
		={}&(1-\theta^{-1})c_1^b\theta^{-k}\frac{\Gamma(k+1)}{\Gamma(k-b)}\sum_{j=0}^\infty\frac{(k+1)^{(j)} }{(k-b)^{(j)}}\frac{(c_1\theta)^{-j}}{j!}\int_0^\infty \!\!u^{k+j-b-1}(1+u)^{-(k+1)}\e^{-c_1^{-1}u}\,\d u.
		\ea\ee 
		Similar to~\eqref{eq:bbounds}, we bound
		\be \label{eq:bbounds2}
		\frac{(k-1)^{(j)}}{(k-b)^{(j)}}\geq \begin{cases}
			1 & \mbox{if }  k>b\geq -1,\\
			\Big(\frac{k+1}{k-b}\Big)^j & \mbox{if }  b<-1.
		\end{cases} \quad \text{ and }\quad 
		\frac{(k)^{(j)}}{(k-b)^{(j)}}\leq \begin{cases}
			1 & \mbox{if } b<-1,\\
			\Big(\frac{k+1}{k-b}\Big)^j & \mbox{if }  k>b\geq -1.
		\end{cases}
		\ee  
		Again, we assume without loss of generality that $b\geq -1$. Moreover, we only concern ourselves with the lower bound on $(k+1)^{(j)}/(k-b)^{(j)}$ when $b\geq -1$, since we obtain a matching upper bound with the required error term when using the upper bound on $(k+1)^{(j)}/(k-b)^{(j)}$ when $b\geq -1$, as in the proof for $p_{\geq k}$. Thus, we obtain the lower bound 
		\be \ba 
		(1-\theta^{-1}){}&c_1^b\theta^{-k}\frac{\Gamma(k+1)}{\Gamma(k-b)}\sum_{j=0}^\infty\frac{(c_1\theta)^{-j}}{j!}\int_0^\infty u^{k+j-b-1}(1+u)^{-(k+1)}\e^{-c_1^{-1}u}\,\d u\\
		&=(1-\theta^{-1})c_1^bk\theta^{-k}\frac{\Gamma(k)}{\Gamma(k-b)}\int_0^\infty u^{k+j-b-1}(1+u)^{-(k+1)}\e^{-c_1^{-1}(1-\theta^{-1})u}\,\d u,
		\ea \ee 
		which, up to the constant $(1-\theta^{-1})$, is the exact same expression as in~\eqref{eq:uexp}. As discussed above, using the upper bound on $(k+1)^{(j)}/(k-b)^{(j)}$ yields a matching upper bound (up to error terms), from which the result follows.
	\end{proof}
	
	Recall that in this example we set 
	\be\ba
	X^{(n)}_i:={}&\big|\big\{\jnn: \znj=\big\lfloor \log_\theta n-C_{\theta,1,c_1}\sqrt{\log_\theta n}+(b/2+1/4) \log_\theta \log_\theta n \big\rfloor +i\big\}\big|,\\ 
	X^{(n)}_{\geq i}:={}&\big|\big\{\jnn: \znj\geq \big\lfloor \log_\theta n-C_{\theta,1,c_1}\sqrt{\log_\theta n}+(b/2+1/4) \log_\theta \log_\theta n \big\rfloor +i\big\}\big|,\\
	\eps_n:={}&\big(\log_\theta n-C_{\theta,1,c_1}\sqrt{\log_\theta n}+(b/2+1/4) \log_\theta \log_\theta n\big )\\
	&-\big\lfloor\log_\theta n-C_{\theta,1,c_1}\sqrt{\log_\theta n}+(b/2+1/4) \log_\theta \log_\theta n \big \rfloor.
	\ea\ee
	We then state the analogue of Proposition~\ref{prop:factmean}.
	
	\begin{proposition}\label{prop:gumbfactmean}
		Consider the WRT model as in Definition~\ref{def:WRT} with vertex-weights $(W_i)_{\inn}$ whose distribution satisfies~\eqref{eq:gumbex} for some $b\in\R,c_1>0$ such that $bc_1\leq 1$, and recall $c_{c_1,b,\theta}$ from~\eqref{eq:c}. For a fixed $K\in\N,c\in(1,\theta/(\theta-1))$ the following holds. For any integer-valued $i_n,i_n'$ such that $0<\log_\theta n+i_n<\log_\theta n+i_n'<c\log_\theta n$ and $i_n,i_n'=\delta\sqrt{\log_\theta n}+o(\sqrt{\log n})$ for some $\delta\in\R$ and for $a_{i_n},\ldots,a_{i_n'}\in\N_0$ satisfying $a_{i_n}+\ldots+a_{i_n'}=K$,
		\be\ba
		\mathbb E\bigg[\!\Big(X^{(n_\ell)}_{\geq i_n'}\Big)_{a_{i_n'}}\!\prod_{k=i_n}^{i_n'-1}\!\!\Big(X_k^{(n_\ell)}\Big)_{a_k}\bigg]
		={}&\big(c_{c_1,b,\theta}\theta^{-i_n'+\eps_n-C_{\theta,1,c_1}\delta/2}\big)^{a_{i_n'}}\!\!\prod_{k=i_n}^{i_n'-1}\!\!\big(c_{c_1,b,\theta}(1-\theta^{-1})\theta^{-k+\eps_n-C_{\theta,1,c_1}\delta/2}\big)^{a_k}\\
		&\times \Big(1+\mathcal O\Big(\frac{\log_\theta\log_\theta n}{\sqrt{\log_\theta n}}\vee \frac{|i_n-\sqrt{\log_\theta n}|\vee |i_n'-\sqrt{\log_\theta n}|}{\sqrt{\log_\theta n}}\Big)\Big).
		\ea\ee
	\end{proposition}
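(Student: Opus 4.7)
The plan is to mirror the proof of Proposition~\ref{prop:factmean} line by line, substituting the precise Gumbel-case asymptotic from Lemma~\ref{lemma:gumbpkasymp} for the atom-case asymptotic from Theorem~\ref{thrm:pkasymp} used there. Set $K':=K-a_{i_n'}$ and, for each $i_n\leq k\leq i_n'$ and each $u$ with $\sum_{\ell=i_n}^{k-1}a_\ell<u\leq \sum_{\ell=i_n}^k a_\ell$, let $m_u:=\lfloor \log_\theta n-C_{\theta,1,c_1}\sqrt{\log_\theta n}+(b/2+1/4)\log_\theta \log_\theta n\rfloor + k$. Letting $(v_u)_{u\in[K]}$ be $K$ vertices sampled uniformly at random without replacement, expanding the factorial moment into indicators and applying inclusion--exclusion exactly as in~\eqref{eq:meanex} yields
\be
\E{\Big(X^{(n)}_{\geq i_n'}\Big)_{a_{i_n'}}\prod_{k=i_n}^{i_n'-1}\Big(X_k^{(n)}\Big)_{a_k}}=(n)_K\sum_{\ell=0}^{K'}\sum_{\substack{S\subseteq[K']\\|S|=\ell}}(-1)^\ell\P{\Zm_n(v_u)\geq m_u+\ind_{\{u\in S\}}\text{ for all }u\in[K]}.
\ee
Since $m_u<c\log_\theta n$ and the admissible range of $c$ is compatible with the hypothesis of Proposition~\ref{lemma:degprobasymp}, that proposition factorises the joint tail as $\prod_{u=1}^K p_{\geq m_u+\ind_{\{u\in S\}}}(1+o(n^{-\beta}))$ for some $\beta>0$.

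The heart of the argument is the asymptotic expansion of each $p_{\geq m_u}$ via Lemma~\ref{lemma:gumbpkasymp}. Writing
\be
m_u=\log_\theta n+(\delta-C_{\theta,1,c_1})\sqrt{\log_\theta n}+\mathcal O(R_n),\quad R_n:=\log_\theta\log_\theta n\vee|i_n-\delta\sqrt{\log_\theta n}|\vee|i_n'-\delta\sqrt{\log_\theta n}|,
\ee
uniformly in $k\in\{i_n,\ldots,i_n'\}$, a first-order Taylor expansion of $\sqrt{1+x}$ together with the algebraic identity $2\sqrt{c_1^{-1}(1-\theta^{-1})}=C_{\theta,1,c_1}\log\theta$ (which follows from the definition of $C_{\theta,\tau,c_1}$ in~\eqref{eq:gumbrv2nd} with $\tau=1,\gamma=1/2$) gives
\be
2\sqrt{c_1^{-1}(1-\theta^{-1})m_u}=C_{\theta,1,c_1}(\log\theta)\sqrt{\log_\theta n}+\tfrac{\log\theta}{2}\big(\delta C_{\theta,1,c_1}-C_{\theta,1,c_1}^2\big)+\mathcal O(R_n/\sqrt{\log_\theta n}).
\ee
Combining this with the polynomial factor $m_u^{b/2+1/4}=(\log_\theta n)^{b/2+1/4}(1+\mathcal O(1/\sqrt{\log n}))$ and with $\theta^{-m_u}=n^{-1}\theta^{C_{\theta,1,c_1}\sqrt{\log_\theta n}}(\log_\theta n)^{-(b/2+1/4)}\theta^{\eps_n-k}$, the $\pm C_{\theta,1,c_1}\sqrt{\log_\theta n}$ exponents and the $\pm(b/2+1/4)$ logarithmic powers cancel, leaving
\be
p_{\geq m_u}=n^{-1}\,c_{c_1,b,\theta}\,\theta^{\eps_n-k-\delta C_{\theta,1,c_1}/2}\big(1+\mathcal O(R_n/\sqrt{\log_\theta n})\big),
\ee
where the definition $c_{c_1,b,\theta}=C\theta^{C_{\theta,1,c_1}^2/2}$ from~\eqref{eq:c} is exactly what absorbs the cross term $-C_{\theta,1,c_1}^2(\log\theta)/2$ in the Taylor expansion.

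With this clean form for $p_{\geq m_u}$, the remainder of the argument is a direct copy of~\eqref{eq:finstep}: the identity $\sum_{\ell=0}^{K'}\binom{K'}{\ell}(-1)^\ell\theta^{K'-\ell}=(\theta-1)^{K'}$ executes the inclusion--exclusion and produces the factor $(1-\theta^{-1})^{K'}$ that attaches to the $K'=\sum_{k=i_n}^{i_n'-1}a_k$ exact-degree terms, the remaining $a_{i_n'}$ factors correspond to the tail event, and $(n)_K=n^K(1+\mathcal O(1/n))$ cancels the $n^{-K}$ coming from the $K$ copies of $n^{-1}$ in the expansion of $p_{\geq m_u}$. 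The main obstacle is establishing the Taylor expansion of $\sqrt{m_u}$ uniformly in $k\in[i_n,i_n']$ with the advertised error rate and verifying the exact algebraic match $C\theta^{C_{\theta,1,c_1}^2/2}=c_{c_1,b,\theta}$; this identity, which has no counterpart in the atom case of Proposition~\ref{prop:factmean}, is precisely what produces the shift $\theta^{-\delta C_{\theta,1,c_1}/2}$ in the final expression and pins down the correct multiplicative constant.
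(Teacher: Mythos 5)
Your proposal is correct and follows essentially the same route as the paper's proof: inclusion--exclusion via~\eqref{eq:meanex}, factorisation through Proposition~\ref{lemma:degprobasymp}, the expansion of $p_{\geq m_u}$ from Lemma~\ref{lemma:gumbpkasymp} with the Taylor expansion $\sqrt{m_u}=\sqrt{\log_\theta n}-(C_{\theta,1,c_1}-\delta)/2+\mathcal O(R_n/\sqrt{\log_\theta n})$, and the observation that $c_{c_1,b,\theta}=C\theta^{C_{\theta,1,c_1}^2/2}$ absorbs the cross term while $\theta^{-\delta C_{\theta,1,c_1}/2}$ survives as the shift. The only cosmetic difference is that you simplify each $p_{\geq m_u}$ to its clean form before summing over $S$, whereas the paper carries the product form through and simplifies at the end; the computation is identical.
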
 
	
	\begin{proof}
		Set $K':=K-a_{i_n'}$ and for each $i_n\leq k\leq i_n'$ and each $u$ such that $\sum_{\ell=i_n}^{k-1}a_\ell<u\leq \sum_{\ell=i_n}^k a_\ell$, let $m_u=\big\lfloor\log_\theta n-C_{\theta,1,c_1}\sqrt{\log_\theta n}+(b/2+1/4) \log_\theta \log_\theta n \big \rfloor +k$. Also, let $(v_u)_{u\in[K]}$ be $K$ vertices selected uniformly at random without replacement from $[n]$. Then, as the $X^{(n)}_{\geq k}$ and $X^{(n)}_{k}$ can be expressed as sums of indicators, using the same steps as in the proof of Proposition~\ref{prop:factmean},
		\be \label{eq:gumbmeanexunif}
		\E{\Big(X^{(n)}_{\geq i_n'}\Big)_{a_{i_n'}}\prod_{k=i}^{i_n'-1}\Big(X_k^{(n)}\Big)_{a_k}}=(n)_K\sum_{\ell=0}^{K'}\sum_{\substack{S\subseteq [K']\\ |S|=\ell}}(-1)^\ell \P{\Zm_n(v_u)\geq m_u+\ind_{\{u\in S\}}\text{ for all } u\in [K]}.
		\ee
		By Proposition~\ref{lemma:degprobasymp},
		\be \label{eq:gumbtailprobunif}
		\P{\Zm_n(v_\ell)\geq m_u+\ind_{\{u\in S\}}\text{ for all } u\in [K]}=\prod_{u=1}^K \E{\Big(\frac{W}{\E{W}+W}\Big)^{m_u+\ind_{\{u\in S\}}}}(1+o(n^{-\beta})),
		\ee 
		for some $\beta>0$. By Lemma~\ref{lemma:gumbpkasymp} (and recalling the constant $C$ in~\eqref{eq:c}), when $|S|=\ell$,
		\be\ba
		\prod_{u=1}^K {}&\E{\Big(\frac{W}{\E{W}+W}\Big)^{m_u+\ind_{\{u\in S\}}}}\\
		={}&(C)^K \theta^{-\sum_{u=1}^K m_u-\ell}\exp\Bigg(\!-2\sum_{u=1}^K\sqrt{\frac{1-\theta^{-1}}{c_1}\big(m_u+\ind_{\{u\in S\}}\big)}\Bigg)\prod_{u=1}^K  (m_u+\ind_{\{u\in S\}})^{b/2+1/4}\\
		&\times (1+\mathcal O (1/\sqrt{\log n})).
		\ea\ee 
		Here, we are able to obtain the error term $1+\mathcal O(1/\sqrt{\log n})$ due to the fact that $\log_\theta n+i_n>\eta  \log n$ for some $\eta>0$ when $n$ is large. We note that $C_{\theta,1,c_1}\log\theta=2\sqrt{c_1^{-1}(1-\theta^{-1})}$. As $i_n,i_n'\sim \delta\sqrt{\log_\theta n}$,
		\be
		\prod_{u=1}^K(m_u+\ind_{\{u\in S\}})^{b/2+1/4 }=(\log_\theta n)^{ K(b/2+1/4)}\big(1+\mathcal O\big(1/\sqrt{\log_\theta n}\big)\big),
		\ee
		uniformly in $S$ (and $\ell$). Moreover, again uniform in $S$ and $\ell$,
		\be \ba
		\exp{}&\bigg(-C_{\theta,1,c_1}\log\theta\sum_{u=1}^K\sqrt{m_u+\ind_{\{u\in S\}}}\bigg)\\
		={}&\exp\Big(-\Big( C_{\theta,1,c_1}\log\theta\sqrt{\log_\theta n}-\frac{C_{\theta,1,c_1}-\delta}{2}\Big)\Big)^K\\
		&\times \Big(1+\mathcal O\Big(\frac{\log_\theta\log_\theta n}{\sqrt{\log_\theta n}}\vee \frac{|i_n-\sqrt{\log_\theta n}|\vee |i_n'-\sqrt{\log_\theta n}|}{\sqrt{\log_\theta n}}\Big)\Big).
		\ea \ee 
		This last step follows from the fact that the first-order term of $m_u$ is $\log_\theta n$ and its second-order term is $-(C_{\theta,1,c_1}-\delta)\sqrt{\log_\theta n}$. Finally, its lower-order terms are $\log_\theta\log_\theta n+(|i-\sqrt{\log_\theta n}|\vee |i_n'-\sqrt{\log_\theta n}|)$. Then, using a Taylor expansion for the square root yields the result. Combining all of the above and recalling that $c_{c_1,b,\theta}=C\theta^{C_{\theta,1,c_1}^2/2}$, we thus arrive at
		\be \ba
		(n)_K{}&\sum_{\ell=0}^{K'}\sum_{\substack{S\subseteq [K']\\ |S|=\ell}}(-1)^\ell\Big(C (\log_\theta n)^{b/2+1/4}\exp\Big(-\Big(C_{\theta,1,c_1}\log\theta\Big(\sqrt{\log_\theta n}-\frac{C_{\theta,1,c_1}-\delta}{2}\Big)\Big)\Big)\Big)^K \\
		&\times  \theta^{-\sum_{u=1}^K m_u-\ell}\Big(1+\mathcal O\Big(\frac{\log_\theta\log_\theta n}{\sqrt{\log_\theta n}}\vee \frac{|i_n-\sqrt{\log_\theta n}|\vee |i_n'-\sqrt{\log_\theta n}|}{\sqrt{\log_\theta n}}\Big)\Big)\\
		={}&\big(c_{c_1,b,\theta}\theta^{-i_n'+\eps_n-C_{\theta,1,c_1}\delta/2}\big)^{a_{i_n'}}\prod_{k=i_n}^{i_n'-1}\big(c_{c_1,b,\theta}(1-\theta^{-1})\theta^{-k+\eps_n-C_{\theta,1,c_1}\delta/2}\big)^{a_k}\\
		&\times \Big(1+\mathcal O\Big(\frac{\log_\theta\log_\theta n}{\sqrt{\log_\theta n}}\vee \frac{|i_n-\sqrt{\log_\theta n}|\vee |i_n'-\sqrt{\log_\theta n}|}{\sqrt{\log_\theta n}}\Big)\Big),
		\ea\ee 
		where the last step follows from a similar argument as in the proof of Proposition~\ref{prop:factmean}.
	\end{proof}
	
	With Proposition~\ref{prop:gumbfactmean} at hand, the proofs of Theorems~\ref{thrm:gumbppp},~\ref{thrm:gumbmaxtail}, and~\ref{thrm:gumbasympnorm} follow the same approach as the proofs of Theorems~\ref{thrm:mainatom},~\ref{thrm:maxtail}, and~\ref{thrm:asympnormal}.

	\textbf{Acknowledgements}\\
	The authors would like to sincerely thank the referees for their feedback, which helped to significantly improve the presentation of the article.
	
	BL has been funded by an URSA whilst at the University of Bath and is currently funded by the grant GrHyDy ANR-20-CE40-0002. LE was partially supported by the grant PAPIIT IN102822.
	
	\bibliographystyle{abbrv}
	\bibliography{wrtbib}

	\appendix
	\section{}
	\label{sec:appendix}
	
	In this appendix we collect various estimates on the sum of i.i.d.\ random variables, including a quantitative version of the law of large numbers,
		see Lemmas~\ref{lemma:sumbound} and~\ref{lemma:weightsumbounds} and Corolary~\ref{cor:uil}. Finally, we also include the
		details of the calculations of certain iterated integrals in Lemma~\ref{lemma:logints} and the proof of some of the properties of the sequences defined in~\eqref{eq:ek} in Lemma~\ref{lemma:rk}.

		\begin{lemma}\label{lemma:sumbound}
			Let $(W_i)_{i\in\N}$ be i.i.d.\ copies of a random variable $W$ that satisfies conditions~\ref{ass:weightsup} and~\ref{ass:weightzero} of Assumption~\ref{ass:weights}. Then, there exists $C' >0$ such that for any $n\in\N$ and any $x>0$, 
			\be 
			\P{S_N\leq x}\leq \big(C'\Gamma(\rho)x^\rho\big)^N \Gamma(\rho N+1)^{-1}.
			\ee 
		\end{lemma}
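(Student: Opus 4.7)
The plan is a Chernoff-type (exponential Markov) bound applied to $-S_N$, with a clean estimate on the Laplace transform of $W$ extracted from condition~\ref{ass:weightzero}, followed by a Stirling cleanup to manufacture the factor $\Gamma(\rho N+1)$ in the denominator.

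First I would upgrade condition~\ref{ass:weightzero} so that it applies for every $u\ge 0$. Since $\P{W\le u}\le 1$ and for $u\ge x_0$ one has $1\le x_0^{-\rho}u^\rho$, combining with~\ref{ass:weightzero} gives $\P{W\le u}\le C_0\,u^\rho$ for all $u\ge 0$, where $C_0:=\max\{C,x_0^{-\rho}\}$. Using this together with $\P{W\le 0}=0$ (from~\ref{ass:weightsup}) and integrating by parts yields, for any $t>0$,
\[
\E{e^{-tW}}=t\int_0^\infty e^{-tu}\P{W\le u}\,du\;\le\;C_0\,t\int_0^\infty u^\rho e^{-tu}\,du\;=\;C_0\,\rho\,\Gamma(\rho)\,t^{-\rho}.
\]

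Next I would apply Markov's inequality and optimize in $t$. For any $t>0$, independence gives
\[
\P{S_N\le x}\le e^{tx}\E{e^{-tS_N}}=e^{tx}\bigl(\E{e^{-tW}}\bigr)^N\le \bigl(C_0\rho\Gamma(\rho)\bigr)^N\,e^{tx}t^{-\rho N}.
\]
The map $t\mapsto e^{tx}t^{-\rho N}$ is minimized on $(0,\infty)$ at $t^\star=\rho N/x$, and this choice produces
\[
\P{S_N\le x}\le \bigl(C_0\rho\Gamma(\rho)\bigr)^N\,\frac{e^{\rho N}x^{\rho N}}{(\rho N)^{\rho N}}\;=\;\bigl(C_0\rho\Gamma(\rho)\,x^{\rho}\bigr)^N\,\frac{e^{\rho N}}{(\rho N)^{\rho N}}.
\]

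Finally, Stirling's formula gives $e^{\rho N}/(\rho N)^{\rho N}=\sqrt{2\pi\rho N}\,(1+O(1/N))/\Gamma(\rho N+1)$, and the prefactor $\sqrt{2\pi\rho N}(1+O(1/N))$ grows at most polynomially in $N$, so it is bounded by $K^N$ for some constant $K=K(\rho)\ge 1$, uniformly for $N\in\N$. Setting $C':=K\,C_0\,\rho$ then yields the claimed inequality
\[
\P{S_N\le x}\le \bigl(C'\Gamma(\rho)\,x^{\rho}\bigr)^N\,\Gamma(\rho N+1)^{-1}.
\]
The argument is essentially routine; the only point requiring a little care is Step~4, where one must verify that the subexponential Stirling correction and the polynomial factor $C_0\rho$ can be absorbed into a single constant $C'$ that is independent of both $N$ and $x$, which is what gives the final bound its clean form.
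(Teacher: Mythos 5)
Your proof is correct, but it takes a genuinely different route from the paper. The paper proves the bound by induction on $N$: it first couples $W$ from below (in the stochastic order) to a random variable $X$ with density $C\rho x^{\rho-1}$ on $[0,x_0]$ (after normalising so that $Cx_0^\rho=1$), and then evaluates the convolution $\P{S'_{N+1}\leq x}=\int_0^{x\wedge 1}\P{S'_N\leq x-t}f_X(t)\,\d t$ explicitly via the Beta-function identity $B(\rho,\rho N+1)=\Gamma(\rho)\Gamma(\rho N+1)/\Gamma(\rho N+\rho+1)$, which produces the factor $\Gamma(\rho N+1)^{-1}$ exactly, with no loss. Your argument instead bounds the Laplace transform $\E{\e^{-tW}}\leq C_0\rho\Gamma(\rho)t^{-\rho}$ directly from condition~\ref{ass:weightzero} via $\E{\e^{-tW}}=t\int_0^\infty \e^{-tu}\P{W\leq u}\,\d u$, applies the exponential Markov inequality to $-S_N$, optimises at $t^\star=\rho N/x$, and then converts $\e^{\rho N}(\rho N)^{-\rho N}$ into $\Gamma(\rho N+1)^{-1}$ by Stirling, absorbing the residual $\sqrt{2\pi\rho N}\,(1+\mathcal O(1/N))$ factor into $C'^N$. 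Each step checks out (in particular the extension of~\ref{ass:weightzero} to all $u\geq 0$ via $C_0=\max\{C,x_0^{-\rho}\}$, and the uniform-in-$N$ absorption of the polynomial Stirling correction). What your approach buys is a shorter, more standard argument that avoids the coupling construction and the induction; what the paper's approach buys is an exact appearance of $\Gamma(\rho N+1)$ without any Stirling slack, so the constant $C'$ is not inflated by the $K^N$ absorption. Since the lemma is only ever used with an unspecified constant $C'$, both proofs serve equally well.
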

		
		\begin{proof}
				We prove the bound by induction. We first couple the random variable $W$ and $(W_i)_{i\in\N}$ to i.i.d.\ random variables $X$ and $(X_i)_{i\in\N}$, respectively. $W$ satisfies condition~\ref{ass:weightzero} for some $C,x_0,\rho>0$, and we can without loss of generality assume that $Cx_0^\rho=1$. Indeed, if $Cx_0^\rho<1$ holds, then we can increase $C$, and if $Cx_0^\rho>1$ then we can decrease $x_0$. Hence, we let $X$ be a random variable with probability density function $f_X:[0,1]\to [0,1]$
				\be 
				f_X(x):=\begin{cases} C\rho x^{\rho-1} &\mbox{if } x\in[0,x_0],\\
					0 &\mbox{if } x\in(x_0,1].
				\end{cases} 
				\ee 
				It is clear, by the assumption $Cx_0^\rho=1$, that $f_X$ indeed is a probability density function, and that $\P{W\leq x}\leq \P{X\leq x}$ for all $x\in [0,1]$ by Condition~\ref{ass:weightzero}. If we thus let $(X_i)_{i\in\N}$ be i.i.d.\ copies of $X$, then $\P{S_N\leq x}\leq \P{S'_N\leq x} $ for any $N\in\N$ and $x\in\R$, where $S'_N:=\sum_{i=1}^N X_i$. It thus suffices to bound $\P{S_N'\leq x}$ from above. 
				
				For $N=1$ it directly follows that $\P{X_1\leq x}\leq Cx^\rho$ for any $x>0$, so that the case $N=1$ is satisfied by setting $C':=C\rho=C\Gamma(\rho+1)/\Gamma(\rho)$. Let us then assume that the bound 
				\be 
				\P{S_N'\leq x}\leq (C'\Gamma(\rho)x^\rho)^N\Gamma(\rho N+1)^{-1}
				\ee 
				holds for any $x>0$ and some $N\in\N$. Then, by the induction hypothesis and the definition of $f_X$,
				\be 
				\P{S_{N+1}'\leq x}=\int_0^{x\wedge 1}\P{S_N'\leq x-t}f_X(t)\,\d t\leq \int_0^{x\wedge 1}(C'\Gamma(\rho)(x-t)^\rho)^N\Gamma(\rho N+1)^{-1}C\rho t^{\rho-1}\,\d t.
				\ee 
				By extending the upper range of the integral to $x$, using the substitution $s=t/x$, and recalling that $C\rho=C'$, we arrive at the upper bound
				\be 
				\frac{C'^{N+1}\Gamma(\rho)^Nx^{(N+1)\rho}}{\Gamma(\rho N+1)}\int_0^1 (1-s)^{\rho N}s^{\rho-1}\,\d s=\frac{C'^{ N+1} \Gamma(\rho)^Nx^{(N+1)\rho}}{\Gamma(\rho N+1)}B(\rho,\rho N+1)=\frac{(C'\Gamma(\rho)x^\rho)^{N+1}}{\Gamma(\rho(N+1)+1)},
				\ee 
				where we use in the final step that, for $x,y>0$, $B(x,y):=\Gamma(x)\Gamma(y)/\Gamma(x+y)$ is the Beta function. This yields the desired result and concludes the proof.
		\end{proof}
		
		\begin{lemma}[Bounds on partial sums of vertex-weights]\label{lemma:weightsumbounds}
			Let $(W_i)_{i\in\N}$ be i.i.d.\ copies of a random variable $W$ with finite mean. Let $\eps\in(0,1), \delta\in(0,1/2)$, $k\in\N$, $C>k\log(\theta)/(2\theta(\theta-1))$, and $c,\alpha>0$, and set $\zeta_n:=n^{-\delta\eps}/\E{W}$ and $\zeta_n':=(C\log n)^{-\delta/(1-2\delta)}/\E W$. Consider the events 
			\be\ba  \label{eq:events}
			E_n^{(1)}&:=\bigg\{ \sum_{\ell=1}^j W_\ell \in ((1-\zeta_n)\E{W}j,(1+\zeta_{n})\E{W}j),\text{ for all } n^{\eps}\leq j\leq n\bigg\},\\
			E_n^{(2)}&:=\Big\{\sum_{\ell=k+1 }^j W_\ell\in((1-\zeta_n)\E{W}j,(1+\zeta_n)\E{W}j),\text{ for all } n^{\eps}\leq j\leq n\Big\},\\
			E_n^{(3)}&:=\bigg\{\sum_{\ell=1}^j W_\ell \geq (1-\zeta'_n)\E{W} j,\text{ for all } (C\log n)^{1/(1-2\delta)}\leq j\leq n\bigg\},\\
			E_n^{(4)}&:=\{S_{\lceil \alpha \log n\rceil}\geq c\log n\}.
			\ea \ee 
			Then, for any $\gamma>0$ and any $i\in[4]$ (where, for $i=3,4$, we choose $C$ and $\alpha$ sufficiently large depending on $\gamma$, respectively), for all $n$ large,
			\be 
			\P{(E_n^{(i)})^c}\leq n^{-\gamma}.
			\ee 
			Finally, additionally assume that $W$ satisfies conditions~\ref{ass:weightsup} and~\ref{ass:weightzero} of Assumption~\ref{ass:weights}, set $f(n):=\lceil \log n/\log\log\log n\rceil, g(n):=\lceil \log\log n\rceil$, and define $E_n^{(5)}:=\{S_{f(n)}\geq g(n)\}$. Then, for any $\gamma>0$ and all $n$ large, 
			\be 
			\P{(E_n^{(5)})^c}\leq n^{-\gamma}.
			\ee 
		\end{lemma}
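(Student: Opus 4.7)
Since the $W_i$ take values in $(0,1]$ under Assumption~\ref{ass:weights}\ref{ass:weightsup}, all five events concern partial sums of independent random variables supported on $[0,1]$, so the natural tool is Hoeffding's inequality together with a union bound. The plan is to treat $E_n^{(1)}$ through $E_n^{(4)}$ by direct concentration, and to treat $E_n^{(5)}$ by invoking Lemma~\ref{lemma:sumbound}, which is where condition~\ref{ass:weightzero} enters.

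For $E_n^{(1)}$, the two-sided Hoeffding bound gives
\[
\P{\bigl|S_j - j\E W\bigr| \ge \zeta_n \E W j} \le 2\exp\bigl(-2 (\zeta_n \E W)^2 j\bigr) = 2\exp\bigl(-2 n^{-2\delta\eps} j\bigr),
\]
so that for $j \ge n^\eps$ the right-hand side is at most $2\exp(-2 n^{\eps(1-2\delta)})$. Because $\delta < 1/2$, the exponent is a positive power of $n$, and a union bound over the at most $n$ values of $j$ still yields a super-polynomially small bound on $\P{(E_n^{(1)})^c}$. The same argument, applied to the independent tail $W_{k+1}, W_{k+2}, \dots$, gives the bound for $E_n^{(2)}$. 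For the one-sided event $E_n^{(3)}$, the same computation with $\zeta_n'$ in place of $\zeta_n$ yields
\[
\P{S_j - j\E W \le -\zeta_n' \E W j} \le \exp\bigl(-2 (C\log n)^{-2\delta/(1-2\delta)} j\bigr),
\]
and for $j \ge (C\log n)^{1/(1-2\delta)}$ the exponent simplifies to $-2C\log n$; a union bound then gives $n^{1-2C}$, which is at most $n^{-\gamma}$ once $C$ is chosen large enough (in particular large enough to also satisfy the separate requirement $C > k\log\theta/(2\theta(\theta-1))$). For $E_n^{(4)}$, Hoeffding applied to $S_{\lceil\alpha\log n\rceil}$ around its mean $\lceil\alpha\log n\rceil(\theta-1)$ gives a bound of the form $\exp(-2(\alpha(\theta-1)-c)^2\log n/\alpha(1+o(1)))$, once $\alpha(\theta-1) > c$; this is $n^{-\gamma}$ for all sufficiently large $\alpha$.

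The main obstacle is $E_n^{(5)}$: here the target $g(n) = \lceil \log\log n\rceil$ is much smaller than the mean $f(n)\E W \sim (\theta-1)\log n/\log\log\log n$, but $W$ may put positive mass arbitrarily close to zero, so Hoeffding cannot rule out super-polynomially fast this small-value behaviour on its own. This is exactly where condition~\ref{ass:weightzero} is needed, via Lemma~\ref{lemma:sumbound}, which gives
\[
\P{S_{f(n)} \le g(n)} \le \frac{\bigl(C'\Gamma(\rho)\,g(n)^\rho\bigr)^{f(n)}}{\Gamma(\rho f(n)+1)}.
\]
The plan is to take logarithms and observe that the numerator contributes $f(n)\rho\log g(n) + O(f(n)) \sim \rho\log n$, because the choice $f(n) = \lceil \log n/\log\log\log n\rceil$ is calibrated so that $f(n)\log g(n) \sim \log n$. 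Meanwhile Stirling's formula gives $\log\Gamma(\rho f(n)+1) \sim \rho f(n)\log(\rho f(n)) \sim \rho\log n\cdot\log\log n/\log\log\log n$, which dominates $\rho\log n$ by a factor $\log\log n/\log\log\log n \to \infty$. Hence $\log\P{(E_n^{(5)})^c}$ is asymptotic to $-\rho\log n\cdot \log\log n/\log\log\log n$, which beats any fixed negative power of $n$, yielding $\P{(E_n^{(5)})^c} \le n^{-\gamma}$ for all large $n$ and every $\gamma > 0$. The only nontrivial bookkeeping is verifying the cancellation $f(n)\log g(n) \sim \log n$ and checking that the Stirling asymptotics are applicable since $\rho f(n)\to\infty$; both are routine.
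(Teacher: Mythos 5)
Your proof is correct and follows essentially the same route as the paper: the paper handles $E_n^{(1)}$--$E_n^{(4)}$ via Azuma--Hoeffding applied to the martingale $S_j - j\E W$ (which, for independent bounded increments, is interchangeable with the Hoeffding bound you use), and it handles $E_n^{(5)}$ exactly as you propose, by combining Lemma~\ref{lemma:sumbound} with Stirling's formula and the calibration $f(n)\log g(n)\sim\log n$ against the dominant term $\rho f(n)\log f(n)\sim \rho\log n\cdot\log\log n/\log\log\log n$. The only cosmetic difference is that for $E_n^{(1)}$ the paper uses the $j$-dependent bound $\zeta_n\ge j^{-\delta}/\E W$ and sums the resulting stretched-exponential tail, whereas you take the worst case $j=n^\eps$ uniformly and apply a union bound; both yield super-polynomial decay, and both arguments (yours and the paper's) implicitly use the boundedness of $W$ rather than only the stated finite-mean hypothesis.
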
 
		
		\begin{proof}
			We prove the bounds for the complement of the events one by one. By noting that $\wt S_j:=\sum_{\ell=1}^j W_\ell-j\E{W}$ is a martingale, that $|\wt S_j-\wt S_{j-1}|\leq 1+\E{W}=\theta$ and that $\zeta_n\geq j^{-\delta}/\E W$ for $j\geq n^{\eps}$, we can use the Azuma-Hoeffding inequality to obtain 
			\be \label{eq:proben}
			\P{(E_n^{(1)})^c}\leq \sum_{j\geq n^{\eps}}\P{\big|\wt S_j\big|\geq \zeta_n j\E W}\leq 2\sum_{j\geq n^{\eps}}\exp\Big(-\frac{j^{1-2\delta}}{2\theta^2}\Big).
			\ee 
			Writing $c_\theta:=1/(2\theta^2)$, we further bound the sum from above by 
			\be \label{eq:ecnbound}
			2\int_{\lfloor n^{\eps}\rfloor}^\infty \exp\big(-c_\theta x^{1-2\delta}\big)\,\d x=2\frac{c_{\theta}^{-1/(1-2\delta)}}{1-2\delta}\Gamma\Big(\frac{1}{1-2\delta},c_\theta \lfloor n^{\eps}\rfloor ^{1-2\delta}\Big),
			\ee 	
			where $\Gamma(a,x)$ is the incomplete Gamma function. As, for $a>0$ fixed, $\Gamma(a,x)= (1+o(1))x^{a-1}\e^{-x}$, this yields the desired bound. The proof for the event $E^{(2)}_n$ follows in an analogous way. 
			
			For the event $E^{(3)}_n$, we again use the Azuma-Hoeffding inequality to obtain
			that there exist constants $C_{\theta,\delta}, \tilde C_{\theta,\delta} > 0$ such that
			\be \ba
			\P{(E^{(3)}_n)^c}&\leq C_{\theta,\delta}\Big(c_\theta C\log n\Big)^{2\delta/(1-2\delta)}\exp\Big(-c_\theta \lfloor (C\log n)^{1/(1-2\delta)}\rfloor^{1-2\delta}\Big)(1+o(1))\\
			&=\wt C_{\theta,\delta}(\log n)^{2\delta/(1-2\delta)}\exp(-c_\theta C\log n)(1+o(1))\\
			&=n^{-c_\theta C(1+o(1))}.
			\ea \ee 
			Choosing $C$ sufficiently large then yields the desired result. A similar application of the same inequality for $E^{(4)}_n$ yields
			\be 
			\P{(E^{(4)}_n)^c}\leq \P{|\wt S_{\lceil \alpha \log n\rceil}|\geq |c-\alpha \E W|\log n}\leq 2\exp\Big(-\frac{(\alpha \E W-c)^2 \log n}{2\alpha \theta^2}\Big).
			\ee 
			Again, choosing $\alpha$ sufficiently large yields the desired result. Finally, we use Lemma~\ref{lemma:sumbound} to obtain 
			\be \ba
			\P{(E_n^{(5)})^c}&\leq \big(C'\Gamma(\rho)g(n)^\rho\big)^{f(n)}\Gamma(\rho f(n)+1)^{-1}\\
			&=\frac{1+o(1)}{\sqrt{2\pi \rho f(n)}}\exp(\rho f(n)(1+\log g(n))+f(n)\log(C\Gamma(\rho))-\rho f(n)\log(\rho f(n)))\\
			&\leq \exp\big(-\tfrac{\rho}{2}f(n)\log f(n)\big),
			\ea \ee 
			where the final inequality holds for all $n$ sufficiently large. By the choice of $f(n)$, this yields the desired result and concludes the proof.		 
		\end{proof}
		
		\begin{corollary}\label{cor:uil}
			Let $(W_i)_{i\in\N}$ be i.i.d.\ copies of a random variable $W$ that satisfies conditions~\ref{ass:weightsup} and~\ref{ass:weightzero} of Assumption~\ref{ass:weights} and define, for $i\in\N$, 
			\be 
			U_{i,n}:=\frac{1}{\log n}\sum_{j=i}^{n-1} \frac{W_i}{S_j}. 
			\ee 
			Then, for any $i\in[n]$ and any $\gamma>0$, there exists a sequence $\eta_n \downarrow 0$ such that
			\be 
			\P{U_{i,n}\leq \frac{1+\eta_n}{\E W}}=1-\mathcal O(n^{-\gamma}).
			\ee 
		\end{corollary}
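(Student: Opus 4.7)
\textbf{Proof proposal for Corollary~\ref{cor:uil}.}
The plan is to work on the intersection of the high-probability events provided by Lemma~\ref{lemma:weightsumbounds}. Set
\[
  E := E_n^{(3)} \cap E_n^{(4)} \cap E_n^{(5)},
\]
with the parameter $\delta \in (0,1/4)$ fixed (so that $1/(1-2\delta)<2$) and the constants $C,\alpha$ chosen sufficiently large depending on $\gamma$. By Lemma~\ref{lemma:weightsumbounds} and a union bound, $\P{E^c} = \mathcal O(n^{-\gamma})$. It suffices to prove that, on $E$, $U_{i,n}\le (1+\eta_n)/\E W$ for some deterministic $\eta_n\downarrow 0$ uniform in $i\in[n]$.

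Since $W_i\le 1$ almost surely by~\ref{ass:weightsup}, we bound $W_i/S_j\le 1/S_j$ and split the sum $\sum_{j=i}^{n-1} 1/S_j$ into ranges according to the lower bound on $S_j$ supplied by each of the three events. Let $M := \lceil (C\log n)^{1/(1-2\delta)}\rceil$, $f(n):=\lceil \log n/\log\log\log n\rceil$, $g(n):=\lceil\log\log n\rceil$, and split the summation range $\{i,\ldots,n-1\}$ into the four sub-ranges
\[
  [i,f(n)),\quad [f(n)\vee i,\alpha\log n),\quad [\alpha\log n, M),\quad [M,n-1].
\]
(Empty sub-ranges contribute nothing and are ignored.) On the last sub-range, $E_n^{(3)}$ yields
\[
  \sum_{j=M}^{n-1}\frac{1}{S_j}\le \frac{1}{(1-\zeta_n')\E W}\sum_{j=M}^{n-1}\frac{1}{j}\le \frac{\log n}{(1-\zeta_n')\E W}(1+o(1)),
\]
which after division by $\log n$ gives the target $(1+o(1))/\E W$. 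On the remaining three sub-ranges I will show the contribution is $o(\log n)$, so that after dividing by $\log n$ it is absorbed into $\eta_n$.

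On $[\alpha\log n, M)$, the event $E_n^{(4)}$ gives $S_j\ge c\log n$, hence this part contributes at most $M/(c\log n) = \mathcal O((\log n)^{1/(1-2\delta)-1}) = o(\log n)$ by the choice of $\delta$. On $[f(n)\vee i,\alpha\log n)$, the event $E_n^{(5)}$ gives $S_j\ge S_{f(n)}\ge g(n)=\lceil\log\log n\rceil$, contributing at most $\alpha\log n/g(n)=o(\log n)$. Finally, on $[i,f(n))$ we use only the trivial bound $W_i/S_j\le 1$ (valid since $W_i$ is itself one of the summands of $S_j$ for $j\ge i$), contributing at most $f(n) = \mathcal O(\log n/\log\log\log n) = o(\log n)$. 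Combining the four estimates and dividing by $\log n$ yields, on $E$,
\[
  U_{i,n} \le \frac{1+o(1)}{(1-\zeta_n')\E W} + o(1) =: \frac{1+\eta_n}{\E W},
\]
with $\eta_n\to 0$, uniformly in $i\in[n]$, which proves the claim.

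The only subtlety is the calibration of parameters: we need Lemma~\ref{lemma:weightsumbounds} with $\delta\in(0,1/4)$ so that the range $[\alpha\log n,M)$ contributes $o(\log n)$, and we need $C,\alpha$ large enough (depending on $\gamma$) to make each of $\P{(E_n^{(i)})^c}$ be $\mathcal O(n^{-\gamma})$. The trivial bound $W_i/S_j\le 1$ for $j\in[i,f(n))$ is unavoidable because $S_j$ may be arbitrarily small without extra assumptions, but it is harmless thanks to the factor $1/\log n$ in the definition of $U_{i,n}$ and the slow growth of $f(n)$. No deeper estimate is required.
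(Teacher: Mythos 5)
Your proposal is correct and follows essentially the same route as the paper: the same decomposition of the summation range at $f(n)=\lceil\log n/\log\log\log n\rceil$, $\alpha\log n$, and $(C\log n)^{1/(1-2\delta)}$, the same use of $E_n^{(5)}$, $E_n^{(4)}$, $E_n^{(3)}$ on the respective ranges, and the same trivial bound $W_i/S_j\le 1$ on the initial segment. The only cosmetic difference is that the paper first writes the first block as $\sum_j 1\wedge S_j^{-1}$ and splits it afterwards, whereas you split into four ranges from the start; the parameter calibration ($\delta<1/4$, $C,\alpha$ large depending on $\gamma$) matches the paper's.
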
 
		
		\begin{proof}
			For some constants $\alpha>0$ as in Lemma~\ref{lemma:weightsumbounds} and $\delta\in(0,1/4)$ we bound $U_{i,n}$ from above by 
			\be \label{eq:splitsum}
			U_{i,n}\leq \frac{1}{\log n}\bigg[\sum_{j=1}^{\lfloor\alpha \log n\rfloor }1\wedge \frac{1}{S_j}+\sum_{j=\lceil \alpha \log n\rceil}^{\lfloor (C\log n)^{1/(1-2\delta)}\rfloor}\frac{1}{S_j}+\sum_{j=\lceil (C\log n)^{1/(1-2\delta)}\rceil}^{n-1}\frac{1}{S_j}\bigg],
			\ee  
			where in the first sum on the right-hand side we use that $W_i\leq 1\wedge S_j$ for any $j\geq i$. We now consider each of the sums one at a time. For the final sum, we use the event $E_n^{(3)}$ from Lemma~\ref{lemma:weightsumbounds} to obtain that on this event,
			\be \label{eq:uil1}
			\frac{1}{\log n}\sum_{j=\lceil (C\log n)^{1/(1-2\delta)}\rceil}^{n-1}\frac{1}{S_j} \leq \frac{1}{\E W} \frac{1}{1-\zeta_n'} =\frac{1+o(1)}{\E W}.
			\ee 
			For the middle sum in~\eqref{eq:splitsum}, we can bound $S_j\geq S_{\lceil \alpha \log n\rceil}$ for each index $j$ in the sum. Together with the event $E^{(4)}_n$ from Lemma~\ref{lemma:weightsumbounds}, we obtain the upper bound 
			\be \label{eq:uil2}
			\frac{(C\log n)^{1/(1-2\delta)}}{\log n}\frac{1}{S_{\lceil \alpha \log n\rceil}}\leq \frac{C^{1/(1-2\delta)}}{c} (\log n)^{(4\delta-1)/(1-2\delta)}=o(1),
			\ee 
			by the choice of $\delta$. Finally, we bound the first sum in~\eqref{eq:splitsum}. We set $f(n):=\lceil \log n/\log\log\log n\rceil$ and $g(n):=\lceil \log\log n\rceil$. On the event $E_n^{(5)}$ from Lemma~\ref{lemma:weightsumbounds}, we then have
			\be \label{eq:uil3}
			\frac{1}{\log n}\sum_{j=1}^{\lfloor \alpha \log n\rfloor} 1\wedge \frac{1}{S_j}\leq \frac{f(n)}{\log n}+\frac{1}{\log n}\sum_{j=f(n)+1}^{\lfloor \alpha \log n\rfloor} \frac{1}{S_j}\leq \frac{f(n)}{\log n}+\frac{\alpha }{ S_{f(n)}}\leq \frac{f(n)}{\log n}+\frac{\alpha}{g(n)}=o(1).
			\ee
			Combining~\eqref{eq:uil1}, \eqref{eq:uil2} and~\eqref{eq:uil3} yields  a sequence $(\eta_n)_{n \geq 1}$ with $\eta_n \downarrow 0$ such that
			\be 
			\P{U_{i,n}\leq \frac{1+\eta_n}{\E W}}\leq \P{\big(E_n^{(3)}\cap  E_n^{(4)}\cap E_n^{(5)}\big)^c}=O(n^{-\gamma}), 
			\ee 
			for any $\gamma>0$, by Lemma~\ref{lemma:weightsumbounds}, which concludes the proof.
		\end{proof}
		
		\begin{lemma}\label{lemma:logints}
			For any $k\in\N$ and any $0< a\leq b<\infty$, 
			\be \label{eq:logint}
			\int_a^b \int_{x_1}^b\cdots \int_{x_{k-1}}^b \prod_{j=1}^kx_j^{-1}\,\d x_k\ldots\d x_1= \frac{(\log(b/a))^k}{k!}.
			\ee 
			Similarly, for any $k\in\N$ and any $0< a\leq b-k<\infty$,
			\be \label{eq:loglb}
			\int_{a+1}^b\int_{x_1+1}^b\cdots \int_{x_{k-1}+1}^b\prod_{j=1}^{k}x_{j}^{-1}\,\d x_k\ldots \d x_1\geq \frac{(\log(b/(a+k)))^k}{k!}.
			\ee 
			Moreover, for any $k\in\N$ and any $0<a\leq b<\infty$ and $c>0$,
			\be \label{eq:logint2}
			\int_{a}^b \int_{x_1}^b \cdots \int_{x_{k-1}}^b x_k^{-(1+c)}\prod_{j=1}^{k-1}x_j^{-1} \,\d x_k\ldots \d x_1=c^{-k}a^{-c}\bigg(1-(b/a)^{-c}\sum_{j=0}^{k-1}c^j\frac{(\log(b/a))^j}{j!}\bigg).
			\ee 
		\end{lemma}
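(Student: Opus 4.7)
The three identities are elementary but deserve distinct treatments, and I plan to handle them in order.

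For (1), the cleanest route is the change of variables $u_j = \log x_j$, under which $x_j^{-1}\,\d x_j = \d u_j$ and the iterated region transforms into the simplex $\log a \leq u_1 \leq u_2 \leq \cdots \leq u_k \leq \log b$ of volume $(\log(b/a))^k/k!$. Equivalently, one can induct on $k$, since $\int_{x_{k-1}}^b x_k^{-1}\,\d x_k = \log(b/x_{k-1})$ and then $\int_{x_{k-2}}^b x_{k-1}^{-1}\log(b/x_{k-1})\,\d x_{k-1} = \tfrac12(\log(b/x_{k-2}))^2$, and so on; the factorial arises from the repeated application.

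For (2), the key idea is the shift $y_j := x_j + (k-j)$. This sends the iterated constraints $x_1 \geq a+1$ and $x_{j+1} \geq x_j + 1$ to $y_1 \geq a+k$ and $y_{j+1} \geq y_j$, while the upper bounds become $y_j \leq b + (k-j)$. Since $x_j = y_j - (k-j) \leq y_j$, we have $x_j^{-1} \geq y_j^{-1}$, and restricting further to the smaller region $\{a+k \leq y_1 \leq \cdots \leq y_k \leq b\}$ (which is contained in the transformed region because $y_j \leq b \leq b+(k-j)$) only decreases the integral. Applying (1) with lower endpoint $a+k$ on $[a+k,b]$ then yields $(\log(b/(a+k)))^k/k!$ as claimed.

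For (3), I would proceed by induction on $k$, integrating the innermost variable first. The base case $k=1$ reduces to $\int_a^b x^{-(1+c)}\,\d x = c^{-1}(a^{-c}-b^{-c})$, which coincides with $c^{-1}a^{-c}(1-(b/a)^{-c})$. For the inductive step, write $I_k(a,b) = \int_a^b x_1^{-1} I_{k-1}(x_1,b)\,\d x_1$ where $I_{k-1}(x_1,b)$ is the same type of iterated integral starting at $x_1$. Substituting the induction hypothesis $I_{k-1}(x_1,b) = c^{-(k-1)}x_1^{-c} - c^{-(k-1)}b^{-c}\sum_{j=0}^{k-2}c^j (\log(b/x_1))^j/j!$ splits $I_k$ into $c^{-(k-1)}\int_a^b x_1^{-(1+c)}\,\d x_1$ and a sum of integrals $\int_a^b x_1^{-1}(\log(b/x_1))^j\,\d x_1$, each of which is evaluated via the substitution $u = \log(b/x_1)$ to give $(\log(b/a))^{j+1}/(j+1)$. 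After reindexing $i = j+1$ in the sum and combining with the first piece, everything collapses to $c^{-k}a^{-c}(1 - (b/a)^{-c}\sum_{i=0}^{k-1} c^i(\log(b/a))^i/i!)$.

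None of these steps is genuinely difficult; the only place where care is required is the bookkeeping in (3), where one must align the index shift $j \mapsto j+1$ in the sum so that the constant term $c^{-k}b^{-c}$ from the first piece combines cleanly with the $i=0$ term of the reindexed sum to recover the stated closed form.
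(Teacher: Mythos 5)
Your treatments of \eqref{eq:logint} and \eqref{eq:logint2} are correct and essentially coincide with the paper's: for \eqref{eq:logint} the paper performs exactly the recursive computation you offer as the alternative (substituting $y=\log(b/x)$ one level at a time), and for \eqref{eq:logint2} it sets up the same recursion by peeling off the innermost integral and unrolling, which is your induction read in the opposite direction; your index bookkeeping ($j\mapsto j+1$, with the $c^{-k}b^{-c}$ term absorbed as the $i=0$ summand) checks out. The genuine difference is in \eqref{eq:loglb}. The paper again works level by level: it computes the innermost integral, bounds $x_{k-1}^{-1}\ge (x_{k-1}+1)^{-1}$, truncates the upper limit of integration from $b$ to $b-1$ at each stage, and substitutes. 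Your single global shift $y_j=x_j+(k-j)$, which carries the constraint set into one containing the simplex $\{a+k\le y_1\le\cdots\le y_k\le b\}$ and reduces the bound to an application of \eqref{eq:logint}, is cleaner and avoids the repeated domain surgery. One caveat applies equally to your proof, to the paper's, and indeed to the statement itself: the step ``restricting to a smaller region only decreases the integral'' (like the paper's truncation of the upper limits) is justified only when the iterated integral is read as the integral of the nonnegative function $\prod_j x_j^{-1}$ over the region $\{a+1\le x_1,\ x_j+1\le x_{j+1}\le b\}$, i.e.\ discarding the part of the nominal range where $x_j+1>b$ and the inner limits are inverted. Under the literal oriented convention $\int_c^d=-\int_d^c$ those inverted pieces contribute negatively and the displayed inequality can fail near the boundary of the admissible range (for instance $k=2$, $a=1$, $b=4$ gives a left-hand side of about $0.028$ against a right-hand side of about $0.041$). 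Since the lemma is only used in \eqref{eq:lbist} to lower-bound a sum over a genuine (unoriented) discrete region, the region reading is clearly the intended one, and under that reading your argument is complete and, for \eqref{eq:loglb}, more transparent than the original.
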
 
		
		\begin{proof}
			We prove~\eqref{eq:logint} by recursively computing the integrals. For the inner integral, we directly obtain 
			\be \label{eq:log}
			\int_{x_{k-1}}^b x_k^{-1}\,\d x_k=\log(b/x_{k-1}).
			\ee 
			Now, for the $j^{\text{th}}$ innermost integral we use the substitution $y_{k-(j-1)}=\log(b/ x_{k-(j-1)})$, with $2\leq j\leq k$. This yields, for the second innermost integral (i.e.\ $j=2$), by~\eqref{eq:log},
			\be
			\int_{x_{k-2}}^b x_{k-1}^{-1}\log (b/x_{k-1})\,\d x_{k-1}=\int_0^{\log(b/x_{k-2})} y_{k-1}\,\d y_{k-1}=\frac12 (\log(b/x_{k-2}))^2. 
			\ee 
			Reiterating this approach for $j=3,\ldots, k$ yields the desired result.
			
			We use the same approach for the proofs of~\eqref{eq:loglb} and~\eqref{eq:logint2}. For the former, the inner integral equals $\log(b/(x_{k-1}+1))$. The integrand we then obtain can be bounded from below by using $x_{k-1}^{-1}\geq (x_{k-1}+1)^{-1}$. Moreover, we restrict the upper limit of the integral over $x_{k-1}$ from $b$ to $b-1$ (note that this is possible since $a\leq b-1$). By using the substitution $y_{k-1}=\log(b/(x_{k-1}+1))$, this yields the lower bound, for any $x_{k-2} \leq b-2$,
			\be 
			\int_{x_{k-2}+1}^{b-1} (x_{k-1}+1)^{-1}\log(b/(x_{k-1}+1))\,\d x_{k-1}=\int_0^{\log(b/(x_{k-2}+2))} y_{k-1}\,\d y_{k-1}=\frac12 (\log(b/(x_{k-2}+2)))^2.
			\ee 
			By repeating this procedure for the remaining integrals, we obtain the desired result.
			
			To finally prove~\eqref{eq:logint2}, let us define the multiple integrals on the left-hand side as $I_k$. Then, by computing the value of the innermost integral, 
			\be 
			I_k=\frac{I_{k-1}}{c}-\frac{1}{b^cc} \int_a^b\int_{x_1}^b \cdots \int_{x_{k-2}}^b \prod_{j=1}^{k-1}x_j^{-1}\,\ d x_{k-1}\ldots \d x_1=\frac{I_{k-1}}{c}-\frac{(\log(b/a))^{k-1}}{b^cc(k-1)!}, 
			\ee 
			where the last step follows from~\eqref{eq:logint}. Continuing this recursion yields
			\be 
			I_k=\frac{I_1}{c^{k-1}}-\frac{1}{b^c}\sum_{j=1}^{k-1}c^{-j}\frac{(\log(b/a))^{k-j}}{(k-j)!}
			=c^{-k}a^{-c}\bigg(1-(b/a)^{-c}\sum_{j=0}^{k-1}c^j\frac{(\log(b/a))^j}{j!}\bigg),
			\ee 
			which concludes the proof.
		\end{proof} 
	
	\begin{lemma}\label{lemma:rk}
		Consider the sequences $(s_k,r_k)_{k\in\N}$ in~\eqref{eq:ek}. These sequences have the following properties:
		\begin{enumerate}
			\item[$(i)$] $s_k$ is increasing,
			\item[$(ii)$] $r_k$ is decreasing and $\lim_{k\to\infty}r_k=0$.
		\end{enumerate}
	\end{lemma}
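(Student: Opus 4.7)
The plan is to phrase everything in terms of the tail function $h(x) := \P{W \in (x,1)}$ and the continuous barrier $g_k(x) := \exp(-(1-\theta^{-1})(1-x)k)$ (write $c := 1-\theta^{-1} > 0$), so that $s_k = \inf\{x \in (0,1) : h(x) \leq g_k(x)\}$ and $r_k = g_k(s_k)$. Two structural observations drive the argument: $h$ is non-increasing and right-continuous (as the tail probability of the decreasing family of sets $(x,1)$), while $g_k$ is strictly increasing and continuous. Consequently the sublevel set $A_k := \{x \in (0,1) : h(x) \leq g_k(x)\}$ is an upward-closed interval, and right-continuity of $h$ at $s_k$ forces $s_k \in A_k$, so $A_k = [s_k,1)$.

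For part (i), observe that $g_{k+1}(x) = g_k(x) \e^{-c(1-x)} < g_k(x)$ for every $x \in (0,1)$. Hence $A_{k+1} \subseteq A_k$, and taking infima gives $s_{k+1} \geq s_k$.

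For part (ii), I first prove that $r_k$ is non-increasing by splitting into two sub-cases. If $s_{k+1} = s_k$, then $r_{k+1} = g_{k+1}(s_k) < g_k(s_k) = r_k$ immediately from the pointwise inequality $g_{k+1} < g_k$. If $s_{k+1} > s_k$, I sandwich $h(s_{k+1}^-) := \lim_{x \uparrow s_{k+1}} h(x)$ between $r_{k+1}$ and $r_k$ as follows. The inequality $h(x) > g_{k+1}(x)$ for all $x < s_{k+1}$ yields, by passing to the limit $x \uparrow s_{k+1}$ and using continuity of $g_{k+1}$, the bound $h(s_{k+1}^-) \geq g_{k+1}(s_{k+1}) = r_{k+1}$. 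Conversely, for $x \in (s_k,s_{k+1})$, monotonicity of $h$ gives $h(x) \leq h(s_k)$, and since $s_k \in A_k$, $h(s_k) \leq g_k(s_k) = r_k$; letting $x \uparrow s_{k+1}$ produces $h(s_{k+1}^-) \leq r_k$. Chaining the two bounds yields $r_{k+1} \leq r_k$.

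Finally, I show $r_k \to 0$ by cases on the limit $s_\infty := \lim_k s_k \in (0,1]$ (which exists since $(s_k)$ is non-decreasing and bounded by $1$). If $s_\infty < 1$, then $r_k = \exp(-c(1-s_k)k) \leq \exp(-c(1-s_\infty)k) \to 0$ immediately. If $s_\infty = 1$, I pass to the limit $x \uparrow s_k$ in $h(x) > g_k(x)$ to get $r_k \leq h(s_k^-) = \P{s_k \leq W < 1}$; since $(s_k)$ is non-decreasing with $s_k \uparrow 1$, the events $\{s_k \leq W < 1\}$ form a decreasing family whose intersection is empty, so continuity of probability gives $h(s_k^-) \to 0$, hence $r_k \to 0$.

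The only delicate point is correctly identifying $A_k$ as the closed-on-the-left interval $[s_k,1)$, which hinges on right-continuity of $h$; everything else is bookkeeping of monotone limits. The case split $s_\infty < 1$ versus $s_\infty = 1$ is unavoidable and reflects the two qualitatively different regimes described in Remark~\ref{rem:asympnorm}.
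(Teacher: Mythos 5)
Your proof is correct and follows essentially the same route as the paper's: monotonicity plus right-continuity of $x \mapsto \P{W\in(x,1)}$ give the nesting of the sublevel sets for (i) and the two-sided sandwich $r_{k+1} \le \lim_{x\uparrow s_{k+1}}\P{W\in(x,1)} \le r_k$ for (ii), with the same case split on $\lim_k s_k$ at the end. Your handling of the sub-case $s_\infty = 1$, bounding $r_k \le \P{s_k \le W < 1}$ and invoking continuity from above, is a slightly cleaner packaging of the paper's argument via the auxiliary indices $\ell_k$, but it is the same underlying idea.
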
 
	
	\begin{proof}
		$(i)$ Assume that $s_{k+1}<s_k$ for some $k\in\N$ and take $x\in(s_{k+1},s_k)$. By the definition of $s_{k+1},s_k$ and the choice of $x$,
		\be 
		\P{W\in(x,1)}\leq \e^{-(1-\theta^{-1})(1-x)(k+1)}< \e^{-(1-\theta^{-1})(1-x)k}<\P{W\in(x,1)},
		\ee 
		which leads to a contradiction. 
		
		$(ii)$ 	Assume that $s_k < s_{k+1}$ (otherwise the claim is immediately clear).
		Note that since the function $\P{W \in (x,1)}$ is c\`adl\`ag, we have for any $x < s_k$,
		\be\label{eq:rk} 
		\P{ W \in (s_k,1)} \leq 	r_k \leq \lim_{y \uparrow s_k} \P{W \in (y,1)} \leq \P{W \in (x,1)}. 
		\ee
		Hence, we have that for any $x \in (s_k, s_{k+1})$.
		\[ r_k \geq \P{ W \in (s_k,1)} \geq \P{W \in (x,1)}
		\geq r_{k+1} .\]
		For the second part, since $s_k$ is increasing by $(i)$, we have that $s_k \rightarrow s^* \in (0,1]$. 
		Suppose that $s^* \in (0,1)$. Then, for $k$ sufficiently large, we have 
		$s_k\leq (1+s^*)/2$ and so $r_k \leq \e^{-(1-\theta^{-1})(1-s^*) k/2}$ and so $r_k$ converges to $0$. 
		
		Therefore, we can assume that $s_k \uparrow 1$.
		Let $k_0$ be the smallest $k$ such that $s_k < s_{k+1}$. 
		Such a $k_0$ exists,  otherwise $s^* < 1$ since each $s_k <1$.
		Then, for $k \geq k_0$,
		let $\ell_k$ be the largest integer such  that $s_{\ell_k} < s_k$.
		
		The assumption that $s_k \uparrow 1$ also excludes that $s_{\ell_k}$ is  eventually constant and so 
		$\ell_k \rightarrow \infty$. 
		In particular, we can argue as in~\eqref{eq:rk} to see that 
		\[ r_{k} \leq \P{ W \in (s_{\ell_k},1)}. \]
		Moreover, as $s_{\ell_k} \rightarrow 1$ as $k \rightarrow \infty$, we deduce that $r_k \rightarrow 0$.
	\end{proof}
\end{document}